\newtheorem{theo}{Th\'eor\'eme}[section]
\def\A{\mathbb{A}}
\def\C{\mathbb{C}}
\def\D{\mathscr{D}}
\def\N{\mathbb{N}}
\def\Q{\mathbb{Q}}
\def\aR{\mathcal{R}}
\def\K{\mathbf{K}}
\def\Z{\mathbb{Z}}
\def\a{\mathbf{a}}
\def\l{\ell}
\def\e{\mathbf{e}}
\def\c{\mathbf{c}}
\def\f{\mathbf{f}}
\def\b{\mathbf{b}}
\def\d{\mathbf{d}}
\def\O{\mathcal{O}}
\def\line{\overline}
\def\Id{\mathop{\mathrm{Id}}\nolimits}
\def\Hom{\mathop{\mathrm{Hom}}\nolimits}
\def\ker{\mathop{\mathrm{ker}}\nolimits}
\def\dim{\mathop{\mathrm{dim}}\nolimits}
\def\wt{\mathop{\mathrm{wt}}\nolimits}
\def\GL{\mathop{\mathrm{GL}}\nolimits}
\def\End{\mathop{\mathrm{End}}\nolimits}
\def\hat{\widehat}
\def\remk{\noindent\textit{Remark:~}}
\newtheorem{prop}[theo]{Proposition}
\newtheorem{prop-def}[theo]{Proposition-Definition}
\newtheorem{def-prop}[theo]{Definition-Proposition}
\newtheorem{cor}[theo]{Corollary}
\newtheorem{lemma}[theo]{Lemma}
\newtheorem{teo}[theo]{Theorem}
\newtheorem{definition}[theo]{Definition}
\newtheorem{notation}[theo]{Notation}
\newtheorem{example}[theo]{Example}
\title[Derivatives]{A geometric study of BZ operator on representations of $\GL_n$ over non-archimedean field}
\author{Taiwang DENG}
\address{ 
Beijing Institute of Mathematical Sciences and Applications (BIMSA), Traffic Light Gate, No. 544 Hefangkou Village, Huairou District, Beijing}
\email{dengtaiw@bimsa.cn}
\date{}
\keywords{Parabolic induction, Graded nilpotent classes, 
Kazhdan-Lusztig polynomials, BZ operator, 
Symmetric reduction , Schubert varieties. 
}
\begin{document}

\begin{abstract}
In this article, we geometrically study the partial Bernstein-Zelevinsky operator introduced in the author's thesis, which generalizes the original Bernstein-Zelevinsky operator. We relate the partial Bernstein-Zelevinsky operator to the geometric induction of Lusztig and then perform explicit computations in special cases. Finally, we develop a symmetric reduction to the previously mentioned special cases
\end{abstract}

\maketitle

\tableofcontents
\section{Introduction}
Zelevinsky \cite{Z2} classifies
the admissible irreducible representations of $\GL_n(F)$ 
in terms of multi-segments. More precisely, 
given a multi-segment $\a$, one can attach
to it an irreducible representation $L_{\a}$, 
described as the unique irreducible sub-representation in 
some standard representation $\pi(\a)$ constructed by
parabolic induction. In other words, in the
Grothendieck group $\aR$ of the category of admissible 
representations, we have
\[
\pi(\a)=L_{\a}+\sum_{\b<\a} m(\b, \a)L_{\b}, \quad m(\b, \a)\in \Z_{\geq 0}, 
\]
where "$<$" is suitable partial order imposed on the set of multi-segments.
In \cite[4.5]{Z1}, Bernstein and Zelevinsky define an operator  $\mathscr{D}$ (We call it BZ  operator) to be 
 an algebra  homomorphism 
 \[
  \mathscr{D}: \mathcal{R}\rightarrow \mathcal{R},
 \]
which plays a crucial role in Zelevinsky's classification theorem. We introduced a partial analogue of the BZ operator
in \cite[Definition 2.19]{Deng23} (this is part of the author's thesis \cite{Deng16}).

In this article, 
we study the problem of computing the partial BZ operator
$\D^k(L_{\a})$ of the irreducible representation 
$L_{\a}$ attached to a multisegment $\a$. There are two motivations for this. The first  is to use these computations
to calculate the multiplicities in certain induced representation  $L_{\a} \times L_{\b}$ in a forthcoming work. The second
is also related to  forthcoming work, in which we study the relations between the category of equivariant perverse sheaves on quiver 
varieties (in the sense of \cite[\S 4]{Deng23}) and the category of perverse sheaves on flag varieties base on ideas of \cite[\S 4]{Deng23}.
We understand that this is a dual version of \cite{Chan23}. We further want to relate the partial BZ operator to the translation functor
in representation of real Lie groups based on work of the present paper.

Let us discuss in more details about the content of the paper.
From Zelevinsky's classification theorem, we can write 
\[
 L_{\a}=\sum_{\b}\tilde{m}_{\b, \a}\pi(\a).
\]
Thus, the task boils down to calculating:
\[
 \D^k(\pi(\a))=\sum_{\b} n_{\b, \a} L_{\b},~ \qquad n_{\b, \a}\geq 0.
\]

We first  introduce a new poset structure $\preceq_k$
on the set of multisegments so that  we show the equivalence 
between $n_{\b, \a}>0$ and $\b\preceq_k \a$, cf. Proposition \ref{prop: 7.4.2}. 

The primary result of this paper 
is the interpretation of the coefficient 
$n_{\b, \a}$ as the value at $q=1$ of some 
Poincar\'e series of the Lusztig product of 
two explicit perverse sheaves on the space of graded nilpotent classes, cf. Theorem
\ref{prop: 7.3.8}. This relies crucially on the work of Lusztig, as discussed in section \ref{sec-recall-Lusztig}.
Note that the category of equivariant perverse sheaves should be 
understood as the dual of the category of representations of $\GL_n(F)$. Given that the BZ operator is related to the Jacquet functor, on the dual side, it should relate to geometric induction.

In section \ref{sec-Lusztig-prod}, we compute these Lusztig products as the push forward by a projection 
$\beta''$, cf. Theorem \ref{cor: 7.4.19}, of some concrete perverse sheaf on 
some space of graded nilpotent classes. This is our second main results.

Section \ref{sec-geom-grass}-\ref{section-Grass}  are dedicated to a concrete study of the coefficient $n(\b, \a)$ in the Grassmanian case, from which we deduce some combinatorial formulae for it, cf. Proposition \ref{prop-combinatorial-formula-BZ}.

In section \ref{sec-Para-case},  we continue to study the  case where the 
multisegments are of Grassmanian type. In this case the projection $\beta''$ is simple
(cf. Proposition \ref{prop: 7.7.8}), being the natural projection 
\[
 GL_n/P\rightarrow GL_n/P'
\]
with $P\subseteq P'$ two parabolic subgroups.
The constructions and proofs in this case are very close to the case of 
Grassmanian type.

Finally in the last section \ref{Explict-calculation-irred}, we obtain a complete formula for 
$\D^k(L_{\a})$ in the general case, cf. Corollary \ref{coro-fornula-derivative}. It's worth noting that to achieve this, we require a reduction process to the parabolic cases (cf. Proposition \ref{prop-red-to-para}) in the style of \cite[\S 6.2]{Deng23}. This elucidates the extensive discussion thereof.

\par \vskip 1pc
{\bf Acknowledgements}
This paper is part of my thesis at University Paris 13, which is funded by the program DIM of the
region Ile de France. I would like to thank my advisor Pascal Boyer for his keen interest in this work and 
his continuing support and countless advice.
It was rewritten during my stay as a postdoc at Max Planck Institute for Mathematics and Yau Mathematical Sciences Center, I thank their hospitality. In addition, 
I would like to thank Alberto M\'inguez ,Vincent S\'echerre, Yichao Tian and Bin Xu for their helpful discussions on the subject.


\section{New Poset Structure on Multisegments}

We keep the same notations as in \cite[\S 2]{Deng23}.
Recall that in \cite[Definition 2.12]{Deng23} we have introduced a partial order $\leq$ on the set of multisegments and the finite set
\[
S(\a)=\{\b: \b\leq \a\}
\]
following \cite[7.1]{Z2}. 
We also recall that in \cite[Notation 2.15]{Deng23} the standard representation $\pi(\a)$ was attached to a multisegment
$\a$ and its decomposition in the Grothendieck group into irreducible ones was considered:
\begin{equation}\label{eqn-decom-standard-irr}
  \pi(\a)=\sum_{\b\in S(\a)} m(\b,\a)L_{\b}, \quad m(\b, \a)\geq 0
\end{equation}
where $L_{\a}$ denotes the irreducible representation attached to $\a$.
Here we have taken the cuspidal representation $\rho=1$ to be the trivial representation of $GL_1$, an assumption that we will
make throughout this paper.
In this section we define a new poset structure $\preceq_k$ depending on an 
integer $k$ on
the set of multisegments and show that the term $L_{\b}$ appears in
$\D^k(\pi(\a))$ (cf. \cite[Definition 2.19]{Deng23}) if and only if $\b\preceq_k \a $.

Let $b(\Delta)$ (resp. $e(\Delta)$ )denote the beginning (resp. the end) of a segment $\Delta$.
We also use the notation 
\begin{equation}\label{eqn-set-beginning}
b(\a)( \text{ \ resp. \ } e(\a))
\end{equation}
to denote the multi-set of beginnings (resp. ends) in a multisegment $\a$.
If $\Delta=[i, j]$ we set $\Delta^{-}=[i, j-1]$.
\begin{definition}\label{def: 2.2.3}
Let $k\in \Z$ and $\Delta$ be a segment, we define 
\begin{displaymath}
 \Delta^{(k)}=\left\{ \begin{array}{cc}
         &\Delta^-, \text{ if } e(\Delta)=k;\\
         &\hspace{-0.5cm}\Delta, \text{ otherwise }.
         \end{array}\right.
\end{displaymath}
For a multisegment 
 $
 \a=\{\Delta_{1}, \cdots, \Delta_{r}\},
 $
we define
  \[
  \a^{(k)}=\{\Delta_{1}^{(k)}, \cdots,\Delta_{r}^{(k)} \}.
 \]
\end{definition}

\begin{definition}\label{def-new-partial-order}
For a well ordered multisegment $\a=\{\Delta_1, \cdots, \Delta_s\}$
with $\Delta_1\preceq \cdots \preceq \Delta_s$,
let 
\[
 \a(k):=\{\Delta\in \a: e(\Delta)=k\}=\{\Delta_{i_{0}}, \Delta_{i_0+1}, \cdots, \Delta_{i_1}\}. 
\]
Now let $\Gamma \subseteq \a(k)$, let 
\[
 \a(k)_\Gamma: =(\a(k)\setminus \Gamma)\cup \{\Delta^{(k)}: \Delta\in \Gamma\},
\]
and 
\[
 \a_\Gamma: =(\a\setminus \a(k))\cup \a(k)_\Gamma.
\] 

We say $\b\preceq_k \a$ if there exist a multisegment $\c\in S(\a)$ such that 
\[
 \b\leq \a_\Gamma (\text{ cf. \cite[Definition 2.12]{Deng23}})
\]
for some $\Gamma$.  
\end{definition}

\begin{lemma}
We have 
\addtocounter{theo}{1}
\begin{equation}\label{eq: 7.41}
 \D^k(\pi(\a))=\pi(\a)+\sum_{\Gamma\subseteq \a(k), \Gamma\neq \emptyset}\pi(\a_\Gamma).
\end{equation}  
\end{lemma}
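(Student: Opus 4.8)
The plan is to unwind the definition of the partial BZ operator $\D^k$ from \cite[Definition 2.19]{Deng23} and compute it on the standard representation $\pi(\a)$ directly, using the fact that $\pi(\a)$ is built from $\a$ by parabolic induction so that $\D^k$ can be evaluated via the geometric (or Bernstein--Zelevinsky) comultiplication. Recall that the ordinary BZ derivative is a Hopf-algebra derivation whose effect on an induced representation is governed by Leibniz-type formulas; the partial version $\D^k$ singles out the cuspidal support at the point $k$ (equivalently, acts only on segments ending at $k$). So the first step is to write $\pi(\a) = \pi(\a \setminus \a(k)) \times \pi(\a(k))$ (after reordering the segments, which is harmless in the Grothendieck group since induction is commutative there), and observe that $\D^k$ is trivial on the factor $\pi(\a \setminus \a(k))$ because none of those segments ends at $k$.

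Next I would compute $\D^k$ on the remaining factor $\pi(\a(k)) = \Delta_{i_0} \times \cdots \times \Delta_{i_1}$. For a single segment $\Delta$ with $e(\Delta) = k$, the defining property of $\D^k$ should give $\D^k(\langle \Delta \rangle) = \langle \Delta \rangle + \langle \Delta^{-} \rangle = \langle \Delta \rangle + \langle \Delta^{(k)}\rangle$, while for a segment not ending at $k$ it acts as the identity. Since $\D^k$ is an algebra homomorphism (this is exactly the point of \cite[Definition 2.19]{Deng23}; the partial BZ operator is a ring homomorphism on $\aR$), applying it to the product $\prod_{\Delta \in \a(k)} \Delta$ and expanding by multiplicativity yields
\[
\D^k(\pi(\a)) = \pi(\a\setminus\a(k)) \times \prod_{\Delta \in \a(k)} \bigl(\langle\Delta\rangle + \langle\Delta^{(k)}\rangle\bigr) = \sum_{\Gamma \subseteq \a(k)} \pi(\a_\Gamma),
\]
since choosing, for each $\Delta \in \a(k)$, either the term $\langle\Delta\rangle$ or the term $\langle\Delta^{(k)}\rangle$ amounts precisely to choosing the subset $\Gamma \subseteq \a(k)$ of segments to be replaced by $\Delta^{(k)}$, and the resulting product of segments is by definition $\pi(\a_\Gamma)$ (note $\a_\Gamma = \a$ when $\Gamma = \emptyset$, giving the leading term $\pi(\a)$). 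Splitting off the $\Gamma = \emptyset$ term gives the stated formula \eqref{eq: 7.41}.

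The step I expect to be the main obstacle is justifying the precise form $\D^k(\langle\Delta\rangle) = \langle\Delta\rangle + \langle\Delta^{(k)}\rangle$ for a single segment and, more importantly, confirming that $\D^k$ genuinely behaves multiplicatively with respect to the relevant product — i.e. that the Leibniz/coproduct structure of the full BZ operator degenerates, for the partial operator restricted to cuspidal support $k$, into an honest algebra homomorphism with no cross terms. This requires going back to \cite[Definition 2.19, Lemma 2.20]{Deng23} and checking that the comultiplication used there, when localized at the single cuspidal line through $k$, is group-like on the classes of segments ending at $k$ and trivial on all other segments; once that is in hand, the combinatorial bookkeeping identifying the expansion with the sum over subsets $\Gamma$ is routine. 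A secondary point to be careful about is that reordering the segments of $\a$ so that those ending at $k$ come last does not change $\pi(\a)$ in $\aR$ (true because parabolic induction is commutative in the Grothendieck group) and does not affect the definition of $\a_\Gamma$, so the formula is independent of the chosen well-ordering.
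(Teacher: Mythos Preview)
Your proposal is correct and follows essentially the same approach as the paper: write $\pi(\a)$ as a product of $L_{\Delta_i}$'s, use that $\D^k$ is an algebra homomorphism with $\D^k(L_\Delta)=L_\Delta+L_{\Delta^-}$ when $e(\Delta)=k$ and $\D^k(L_\Delta)=L_\Delta$ otherwise, and expand the resulting product as a sum over subsets $\Gamma\subseteq\a(k)$. The paper's proof is terser and simply invokes these facts without the extra commentary you provide on commutativity and on why multiplicativity holds, but the argument is the same.
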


\begin{proof}
 
Let 
\[
\a=\{\Delta_1, \cdots, \Delta_r, \Delta_{r+1}, \cdots, \}.     
\]
Then 
\[
 \pi(\a)=\prod_{i=1}^{r}L_{\Delta_i} \times \prod_{i>r}L_{\Delta_i}
\]
 and 
 \begin{align*}
  \D^k(\pi(\a))&=\prod_{i=1}^{r}(L_{\Delta_i}+L_{\Delta_i^{(k)}})\times \prod_{i>r}L_{\Delta_i}\\
               &=\pi(\a)+\sum_{\Gamma\subseteq \a(k), \Gamma\neq \emptyset}\pi(\a_{\Gamma}).
\end{align*}
\end{proof}

Recall that in \cite[Notation 2.16]{Deng23}, we use the notation $\aR$ to denote the Grothendieck group
of all finite length unipotent representations of $GL_n(F)$ with $n=0,1,2, \ldots$ for a non-archimedean local field $F$.
Note that $\aR$ is a polynomial algebra endowed with a natural Hopf algebra structure (\cite[Proposition 2.17 and Corollary 2.18]{Deng23}).
\begin{lemma}\label{lem: 3.0.8}
 Let $\b\in S(\a)$, then $\pi(\a)-\pi(\b)\geq 0$ in $\mathcal{R}$.
\end{lemma}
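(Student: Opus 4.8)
The plan is to reduce the inequality to a single elementary move on multisegments and then invoke Zelevinsky's product formula for a linked pair of segments. (Equivalently, the statement amounts to the monotonicity $m(\gamma,\b)\leq m(\gamma,\a)$ of the decomposition numbers of \eqref{eqn-decom-standard-irr} for all $\gamma$ whenever $\b\in S(\a)$, and the elementary-move reduction is the natural route to such monotonicity.) First I would recall, following \cite[7.1]{Z2}, the combinatorial description of the order: $\leq$ is generated by the elementary operations, where $\b$ is obtained from $\a$ by one such operation when $\a=\{\Delta,\Delta'\}\cup\a_0$ (as a multiset), with $\Delta,\Delta'$ linked, and $\b=\{\Delta\cup\Delta',\ \Delta\cap\Delta'\}\cup\a_0$ (an empty intersection being omitted); in that case $\b<\a$. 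Hence for $\b\in S(\a)$ there is a chain $\a=\a_0>\a_1>\dots>\a_m=\b$ in which each $\a_{i+1}$ arises from $\a_i$ by one elementary operation, and since being $\geq 0$ in $\mathcal{R}$ is preserved under addition we may write $\pi(\a)-\pi(\b)=\sum_{i}\bigl(\pi(\a_i)-\pi(\a_{i+1})\bigr)$ and so reduce to the case $m=1$.

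So suppose $\b$ comes from $\a$ by one elementary operation as above. Because $\mathcal{R}$ is commutative with multiplication given by parabolic induction, we have, as elements of $\mathcal{R}$ and independently of any ordering of the segments,
\[
\pi(\a)=L_{\Delta}\times L_{\Delta'}\times\prod_{\Delta''\in\a_0}L_{\Delta''},
\qquad
\pi(\b)=L_{\Delta\cup\Delta'}\times L_{\Delta\cap\Delta'}\times\prod_{\Delta''\in\a_0}L_{\Delta''},
\]
with the convention $L_{\emptyset}=1$. By Zelevinsky's length-two decomposition for a linked pair, $L_{\Delta}\times L_{\Delta'}=L_{\{\Delta,\Delta'\}}+L_{\{\Delta\cup\Delta',\,\Delta\cap\Delta'\}}$; and since $\Delta\cap\Delta'\subseteq\Delta\cup\Delta'$ the pair $\{\Delta\cup\Delta',\Delta\cap\Delta'\}$ is unlinked, so $L_{\Delta\cup\Delta'}\times L_{\Delta\cap\Delta'}=L_{\{\Delta\cup\Delta',\,\Delta\cap\Delta'\}}$ is irreducible. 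Subtracting gives
\[
\pi(\a)-\pi(\b)=L_{\{\Delta,\Delta'\}}\times\prod_{\Delta''\in\a_0}L_{\Delta''},
\]
which is the class of an honest representation (a parabolic induction of irreducibles), hence $\geq 0$ in $\mathcal{R}$.

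The proof rests on two classical inputs: the description of $\leq$ via elementary operations, and, for segments, the length-two decomposition of $L_{\Delta}\times L_{\Delta'}$ for a linked pair together with the irreducibility of $L_{\Delta_1}\times L_{\Delta_2}$ for a nested (unlinked) pair; both are available from \cite{Z1,Z2}. Granting them, everything else is bookkeeping: one checks that each $\a_i$ in the chain still lies in $S(\a)$ (immediate from transitivity of $\leq$) and that multiplying an effective class by $\prod_{\Delta''\in\a_0}L_{\Delta''}$ preserves effectivity (the product is again a parabolic induction, hence $\geq 0$). The only mild friction I anticipate is matching the elementary-operation description of $\leq$ with the definition adopted in \cite{Deng23}; the representation-theoretic content is routine.
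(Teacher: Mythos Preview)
Your proof is correct and follows essentially the same approach as the paper's: reduce along a chain of elementary operations to the case where $\b$ differs from $\a$ by a single elementary move on a linked pair $\{\Delta,\Delta'\}$, then invoke Zelevinsky's length-two decomposition $L_{\Delta}\times L_{\Delta'}=L_{\{\Delta,\Delta'\}}+L_{\Delta\cup\Delta'}\times L_{\Delta\cap\Delta'}$ to conclude that $\pi(\a)-\pi(\b)$ is the class of an honest representation. The paper cites this as \cite[section 4.6]{Z3} and writes the resulting identity in one line; your version spells out the intermediate step about the irreducibility of the nested product, but the argument is the same.
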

 
\begin{proof}
 By choosing a maximal chain of multisegments between $\a$ and 
 $\b$, we can assume that 
\[
 \a=\{\Delta_{1}, \cdots, \Delta_{r}\},
 \]
 \[
 \b=(\a\backslash\{\Delta_{j}, \Delta_{k}\})\cup \{\Delta_{j}\cap \Delta_{k}, \Delta_{j}\cup \Delta_{k}\}.
\]
Then by \cite[section 4.6]{Z3},
\[ 
 \pi(\a)=\pi(\b)+L_{\Delta_{1}}\times \cdots \times \hat{L}_{\Delta_{j}}
 \times\cdots \times \hat{L}_{\Delta_{k}}\times \cdots\times  L_{\Delta_{r}}
\times L_{\{\Delta_{j}, \Delta_{k}\}}.
 \]
\end{proof}

\begin{prop}\label{prop: 7.4.2}
Let  
\addtocounter{theo}{1}
\begin{equation}\label{eqn-derivation-decomposition-standard}
\D^k(\pi(\a))=\sum_{\b}n_{\b, \a}L_{\b}.
\end{equation}
Then $n_{\b, \a}>0$ if and only if $\b\preceq_k \a$.  
\end{prop}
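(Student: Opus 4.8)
The plan is to combine the expansion of $\D^k(\pi(\a))$ from \eqref{eq: 7.41} with the triangular decomposition \eqref{eqn-decom-standard-irr} of standard modules into irreducibles, and then analyze which $\b$ survive with positive coefficient. Writing $\D^k(\pi(\a))=\pi(\a)+\sum_{\emptyset\neq\Gamma\subseteq\a(k)}\pi(\a_\Gamma)$ and substituting $\pi(\a_\Gamma)=\sum_{\b\in S(\a_\Gamma)}m(\b,\a_\Gamma)L_\b$ (and similarly for $\pi(\a)$), we obtain
\[
n_{\b,\a}=m(\b,\a)+\sum_{\emptyset\neq\Gamma\subseteq\a(k)}m(\b,\a_\Gamma).
\]
Since all $m(\b,\a_\Gamma)\geq 0$ and $m(\b,\a_\Gamma)>0$ exactly when $\b\in S(\a_\Gamma)$, i.e. $\b\leq\a_\Gamma$, we get $n_{\b,\a}>0$ if and only if $\b\leq\a_\Gamma$ for some $\Gamma\subseteq\a(k)$ (including $\Gamma=\emptyset$, which gives $\a_\Gamma=\a$ and recovers $\b\leq\a$). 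This is very nearly the definition of $\preceq_k$, so the bulk of the argument is already in place.

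The one genuine gap is that Definition \ref{def-new-partial-order} declares $\b\preceq_k\a$ when there exists $\c\in S(\a)$ with $\b\leq\c_\Gamma$ (or more precisely $\b\le \a_\Gamma$ after the notation is read carefully) for some $\Gamma\subseteq\c(k)$ — the definition allows one first to move down from $\a$ to some $\c\leq\a$ and only then to apply the $(k)$-shortening operation at the ends. So I need to reconcile ``$\b\leq\a_\Gamma$ for some $\Gamma\subseteq\a(k)$'' with ``$\b\leq\c_\Gamma$ for some $\c\in S(\a)$ and some $\Gamma\subseteq\c(k)$.'' One inclusion is trivial: taking $\c=\a$ shows the first condition implies the second. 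For the reverse I would show that if $\c\in S(\a)$ and $\Gamma\subseteq\c(k)$, then $\c_\Gamma\in S(\a_{\Gamma'})$ for a suitable $\Gamma'\subseteq\a(k)$ — that is, the elementary moves realizing $\c\leq\a$ (intersection/union of pairs of segments, as in the proof of Lemma \ref{lem: 3.0.8}) can be performed after the end-shortening, possibly at the cost of enlarging $\Gamma$. Equivalently, $\preceq_k$ can be computed by shortening first and descending afterwards, because the operations $\a\mapsto\a_\Gamma$ and $\a\mapsto$ (elementary descent) essentially commute up to adjusting $\Gamma$.

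Concretely, I would argue by reducing to a single elementary move $\a\rightsquigarrow\a'$ with $\a'=(\a\setminus\{\Delta_j,\Delta_k\})\cup\{\Delta_j\cap\Delta_k,\Delta_j\cup\Delta_k\}$ and chase the four cases according to whether $e(\Delta_j)=k$ and/or $e(\Delta_k)=k$; note $e(\Delta_j\cup\Delta_k)$ and $e(\Delta_j\cap\Delta_k)$ are $\max$ and $\min$ of $e(\Delta_j),e(\Delta_k)$, so whether the new segments end at $k$ is controlled by the old data. In each case one checks directly that the shortened configuration after the elementary move is dominated, in Zelevinsky's order $\leq$, by the configuration obtained by shortening first (with $\Gamma$ adjusted to include the relevant new segments); here I'd invoke Lemma \ref{lem: 3.0.8} or directly \cite[section 4.6]{Z3} and the combinatorics of $\leq$ from \cite[Definition 2.12]{Deng23}. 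Iterating along a maximal chain from $\a$ down to $\c$ then yields the desired $\Gamma'$.

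The main obstacle is precisely this commutation/reordering step — verifying that descending in $\leq$ and then shortening the $k$-ends can be replayed as shortening first and then descending, with an appropriately enlarged $\Gamma$. The positivity half of the proposition is essentially free from \eqref{eq: 7.41} and nonnegativity of the $m(\b,\a_\Gamma)$; it is the bookkeeping of end-segments under elementary moves that requires care, especially tracking what happens to a segment $\Delta$ with $e(\Delta)=k$ when it is merged with or split against another segment, and ensuring no $\b$ that ``should not'' appear sneaks in through a $\Gamma$ that is not a subset of $\a(k)$.
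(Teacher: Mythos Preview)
Your core argument is correct and is essentially the paper's proof: expand $\D^k(\pi(\a))$ via \eqref{eq: 7.41}, decompose each $\pi(\a_\Gamma)$ by \eqref{eqn-decom-standard-irr}, and use nonnegativity of the $m(\b,\a_\Gamma)$ to conclude $n_{\b,\a}>0$ iff $\b\leq\a_\Gamma$ for some $\Gamma\subseteq\a(k)$. The paper's proof is exactly this, in two lines.

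Your ``one genuine gap'' is not a gap. Despite the stray mention of $\c\in S(\a)$ in Definition~\ref{def-new-partial-order}, the displayed condition there is literally $\b\leq\a_\Gamma$, and this is how the paper uses the definition in its own proof of the proposition (and in Corollary~\ref{cor: 7.4.3}). The alternative formulation you worry about, namely $\b=\c_\Gamma$ for some $\c\in S(\a)$, is established as an \emph{equivalent} characterization only later, in Proposition~\ref{prop: 7.3.5}; note that the converse direction of that proposition actually invokes Proposition~\ref{prop: 7.4.2}, so building that equivalence into the present proof would be circular. Drop the commutation argument and your proof is complete.
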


\begin{proof}
By \cite[Theorem 2.22]{Deng23} 
the coefficients $n_{\b, \a}$ in (\ref{eqn-derivation-decomposition-standard}) are
all non-negative (but notice that the $n_{\b, \a}$ are not the $n_{\b, \a}$ in loc.cit.).
Let $\b\preceq_k \a$, 
then by definition we have 
$\b\leq \a_\Gamma$ for some $\Gamma$. 
Therefore $m_{\b, \a_\Gamma}>0$ (cf. (\ref{eqn-decom-standard-irr})), now 
we have $n_{\b, \a}>0$ by  
(\ref{eq: 7.41}). Conversely, 
if $n_{\b, \a}>0$, then by 
(\ref{eq: 7.41}), 
 $\b\leq \a_\Gamma$ for some $\Gamma$. 
\end{proof}

\begin{cor}\label{cor: 7.4.3}
We have $\b\preceq_k \a$ 
 if and only if $\D^k(\pi(\a))-\pi(\b)\geq 0$
 in $\aR$. 
\end{cor}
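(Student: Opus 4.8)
The plan is to chain together the three facts already established in this section. We want to prove the equivalence
\[
\b\preceq_k \a \iff \D^k(\pi(\a))-\pi(\b)\geq 0 \text{ in } \aR.
\]

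\textbf{Forward direction.} Suppose $\b\preceq_k\a$. By Definition \ref{def-new-partial-order} there is some $\Gamma\subseteq\a(k)$ with $\b\leq\a_\Gamma$. First I would invoke Lemma \ref{lem: 3.0.8} applied to the pair $\b\leq\a_\Gamma$ (more precisely, $\b\in S(\a_\Gamma)$), which gives $\pi(\a_\Gamma)-\pi(\b)\geq 0$ in $\aR$. Then I would use the explicit expansion (\ref{eq: 7.41}), namely
\[
\D^k(\pi(\a))=\pi(\a)+\sum_{\Gamma'\subseteq\a(k),\,\Gamma'\neq\emptyset}\pi(\a_{\Gamma'}).
\]
Writing $\D^k(\pi(\a))-\pi(\b)=\bigl(\pi(\a_\Gamma)-\pi(\b)\bigr)+\sum_{\Gamma'\neq\Gamma}\pi(\a_{\Gamma'})$ (where the index set includes $\Gamma'=\emptyset$, for which $\a_{\emptyset}=\a$), every summand on the right is $\geq 0$ in $\aR$: the first by Lemma \ref{lem: 3.0.8}, and each $\pi(\a_{\Gamma'})$ because standard representations are genuine representations, hence effective in the Grothendieck group. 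Therefore the difference is $\geq 0$. One should be slightly careful that $\Gamma$ could be empty (i.e. $\b\leq\a$), but then the term $\pi(\a_\Gamma)-\pi(\b)=\pi(\a)-\pi(\b)\geq 0$ is simply the $\Gamma'=\emptyset$ slot, and the argument goes through verbatim.

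\textbf{Reverse direction.} Suppose $\D^k(\pi(\a))-\pi(\b)\geq 0$ in $\aR$. Decompose $\pi(\b)=\sum_{\gamma\in S(\b)}m(\gamma,\b)L_\gamma$ with $m(\b,\b)=1$, so the coefficient of $L_\b$ in $\pi(\b)$ equals $1$. Since $\aR$ has a basis given by the $L_\gamma$ and the difference $\D^k(\pi(\a))-\pi(\b)$ has non-negative coordinates in this basis, the coefficient of $L_\b$ in $\D^k(\pi(\a))$ is at least $1$; that is, $n_{\b,\a}>0$ in the notation of (\ref{eqn-derivation-decomposition-standard}). By Proposition \ref{prop: 7.4.2} this is equivalent to $\b\preceq_k\a$, which is what we wanted.

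\textbf{Main obstacle.} The only genuinely nontrivial input is the positivity/effectivity bookkeeping in $\aR$: one needs that comparison ``$\geq 0$'' is taken with respect to the basis of irreducibles $\{L_\gamma\}$, so that reading off a single coordinate is legitimate, and one needs Lemma \ref{lem: 3.0.8} for the forward direction. Both are available. I expect no serious difficulty beyond making sure the index $\Gamma=\emptyset$ case (the plain term $\pi(\a)$) is correctly folded into the sum in (\ref{eq: 7.41}) and that $\b\leq\a_\Gamma$ really does put $\b$ in $S(\a_\Gamma)$ so that Lemma \ref{lem: 3.0.8} applies — this is immediate from the definition of $S(-)$ recalled at the start of the section.
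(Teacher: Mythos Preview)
Your proof is correct and follows essentially the same approach as the paper: for the forward direction you split $\D^k(\pi(\a))-\pi(\b)$ as $(\pi(\a_\Gamma)-\pi(\b))$ plus remaining nonnegative standard terms via Lemma~\ref{lem: 3.0.8} and (\ref{eq: 7.41}), and for the reverse direction you read off $n_{\b,\a}>0$ and invoke Proposition~\ref{prop: 7.4.2}. The only difference is that you spell out explicitly why $\D^k(\pi(\a))-\pi(\b)\geq 0$ forces $n_{\b,\a}>0$ (via $m(\b,\b)=1$), which the paper leaves implicit.
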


\begin{proof}
By
Proposition \ref{prop: 7.4.2}  $\b\preceq_k \a$ implies
$\b\leq \a_\Gamma$ for some $\Gamma\subseteq \a(k)$. 
By Lemma \ref{lem: 3.0.8}
$\b\leq \a_\Gamma$ implies that 
$\pi(\a_\Gamma)-\pi(\b)\geq 0$ in $\aR$. 
Since $\D^k(\pi(\a))-\pi(\a_\Gamma)\geq 0$ by 
 (\ref{eq: 7.41}), we have $\D^k(\pi(\a))-\pi(\b)\geq 0$.
Conversely, if $\D^k(\pi(\a))-\pi(\b)\geq 0$, 
we have $n(\b, \a)>0$, hence $\b\preceq_k \a$
by Proposition \ref{prop: 7.4.2}.
\end{proof}

\begin{definition}\label{def: 1.2.10}
 We define for $\b\leq \a$,
 \[
  \ell(\b,\a)=\max_{n}\{n: \a=\b_{0}\geq \b_{1}\cdots \geq\b_{n}=\b\},
 \]
and $\ell(a)=\ell(\a_{\min}, \a)$.
\end{definition}

\begin{prop}\label{prop: 7.3.5}
For any $\b\preceq_k \a$, there exists $\c\in S(\a)$,
and some subset $\Gamma\subseteq \c(k)$,  
such that 
\[
 \b=\c_{\Gamma}.
\]
Conversely, if $\b=\c_{\Gamma}$ for some $\c\in S(\a)$,
 then $\b\preceq_k \a$.
\end{prop}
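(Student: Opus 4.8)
The plan is to prove the two implications separately, the converse being the short one. For it, suppose $\b=\c_\Gamma$ with $\c\in S(\a)$ and $\Gamma\subseteq\c(k)$; by Corollary~\ref{cor: 7.4.3} it suffices to show $\D^k(\pi(\a))-\pi(\b)\geq 0$ in $\aR$. I would split this as $\bigl(\D^k(\pi(\a))-\D^k(\pi(\c))\bigr)+\bigl(\D^k(\pi(\c))-\pi(\c_\Gamma)\bigr)$. By (\ref{eq: 7.41}) one has $\D^k(\pi(\c))=\sum_{\Gamma'\subseteq\c(k)}\pi(\c_{\Gamma'})$, so the second bracket equals $\sum_{\Gamma'\neq\Gamma}\pi(\c_{\Gamma'})\geq 0$. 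For the first bracket, Lemma~\ref{lem: 3.0.8} gives $\pi(\a)-\pi(\c)\geq 0$, and its proof in fact realizes $\pi(\a)-\pi(\c)$ --- assembled step by step along a maximal chain from $\a$ to $\c$ --- as the class of a genuine representation; since $\D^k$ is additive and sends classes of representations to classes of representations (\cite[Definition~2.19]{Deng23}), $\D^k(\pi(\a))-\D^k(\pi(\c))=\D^k\bigl(\pi(\a)-\pi(\c)\bigr)\geq 0$. Hence $\b\preceq_k\a$.

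For the forward implication, recall (Definition~\ref{def-new-partial-order}, applied exactly this way in the proof of Proposition~\ref{prop: 7.4.2}) that $\b\preceq_k\a$ means $\b\leq\a_{\Gamma_0}$ for some $\Gamma_0\subseteq\a(k)$. I would prove, by induction on the length $N$ of a chain $\a_{\Gamma_0}=\gamma_0>\gamma_1>\cdots>\gamma_N=\b$ of elementary Zelevinsky operations, the following statement for all $\a$, all $\Gamma_0\subseteq\a(k)$ and all $\b\leq\a_{\Gamma_0}$: there exist $\c\in S(\a)$ and $\Gamma\subseteq\c(k)$ with $\b=\c_\Gamma$. If $N=0$, then $\b=\a_{\Gamma_0}$ and one takes $\c=\a$, $\Gamma=\Gamma_0$. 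For $N\geq 1$ everything hinges on a lifting lemma: if $\Gamma_0\subseteq\a(k)$ and $\a_{\Gamma_0}\to\gamma_1$ is a single elementary operation, then there exist a multisegment $\a'\leq\a$, obtained from $\a$ by at most one elementary operation, and a subset $\Gamma_1\subseteq\a'(k)$, with $\a'_{\Gamma_1}=\gamma_1$. Granting this, $\b\leq\gamma_1=\a'_{\Gamma_1}$ through a chain of length $N-1$, so the induction hypothesis applied to $(\a',\Gamma_1)$ produces $\c\in S(\a')$ and $\Gamma\subseteq\c(k)$ with $\b=\c_\Gamma$; and $\a'\leq\a$ forces $S(\a')\subseteq S(\a)$, so $\c\in S(\a)$ and the induction closes.

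To prove the lifting lemma, note that $\a_{\Gamma_0}\to\gamma_1$ merges two linked segments $\Delta,\Delta'$ of $\a_{\Gamma_0}$ into $\Delta\cap\Delta'$ and $\Delta\cup\Delta'$ (dropping the intersection when empty), while $\a_{\Gamma_0}$ is obtained from $\a$ by replacing each $\Theta\in\Gamma_0\subseteq\a(k)$ with $\Theta^{(k)}=\Theta^-$, a segment ending at $k-1$ (in particular $[k,k]\notin\Gamma_0$). I would split into cases by how many of $\Delta,\Delta'$ lie among the truncated segments $\{\Theta^-:\Theta\in\Gamma_0\}$. Two is impossible, as two segments ending at $k-1$ are nested and hence not linked. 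If neither is truncated, then $\Delta,\Delta'$ are already linked segments of $\a$, $\a'$ is their merge in $\a$, and $\Gamma_1=\Gamma_0$ works, since truncating the segments of $\Gamma_0$ commutes with a merge disjoint from them. If exactly one is truncated, say $\Delta=\Theta^-$ with $\Theta=[x,k]\in\Gamma_0$, then either $\Theta$ and $\Delta'$ are linked in $\a$ --- and $\a'$ is their merge in $\a$, with $\Gamma_1$ obtained from $\Gamma_0$ by replacing $\Theta$ with whichever of $\Theta\cap\Delta'$, $\Theta\cup\Delta'$ ends at $k$ --- or $\Theta$ and $\Delta'$ are nested in $\a$ (the disjoint, non-adjacent case being ruled out because truncation cannot create linkage), which a short analysis shows forces $\Delta'=[y,k]$ to be a proper subsegment of $\Theta$ with $x<y\leq k$; then one takes $\a'=\a$ and $\Gamma_1=(\Gamma_0\setminus\{\Theta\})\cup\{\Delta'\}$. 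In every case one verifies $\a'_{\Gamma_1}=\gamma_1$ by comparing the resulting multisets of segments.

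The single real obstacle is the lifting lemma, and inside it the one-truncated case: one must pin down which of the two newly produced segments (if either) ends at $k$, so as to define $\Gamma_1$ correctly, and must recognize that the nested sub-case amounts merely to moving a segment into or out of $\Gamma_0$, not to a strict descent below $\a$. Since Zelevinsky's elementary operation need not preserve the multiset of endpoints --- it drops a segment when the intersection is empty --- one cannot settle this by an endpoint count and must instead track the segments ending at $k$ explicitly; I expect the degenerate configurations (an empty $\Theta\cap\Delta'$, a singleton $[k,k]$ among the new segments, repeated segments in $\a$) to be the fiddliest, if entirely routine, part of the verification.
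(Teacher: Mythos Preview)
Your argument is correct and mirrors the paper's: the converse via positivity of $\D^k$ together with (\ref{eq: 7.41}) and Lemma~\ref{lem: 3.0.8}, and the direct implication by induction on the length of a chain below $\a_{\Gamma_0}$, lifting one elementary operation at a time through the truncation. Your case analysis is in fact slightly more careful than the paper's, whose third bullet tacitly assumes $\Delta^{+}$ and $\Delta'$ remain linked in $\a$; when instead they are nested (i.e.\ $e(\Delta')=k$ with $\Delta'\in\a(k)\setminus\Gamma_1$) you correctly observe that no descent is needed and one simply swaps $\Theta$ for $\Delta'$ inside $\Gamma_0$.
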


\begin{proof}
For the converse part, suppose $\c\neq \a$,
by (\ref{eq: 7.41}), we have $\D^k(\pi(\c))-\pi(\b)\geq 0$ in $\mathcal{R}$.
By Lemme \ref{lem: 3.0.8}, we 
know that $\pi(\a)-\pi(\c)\geq 0$ in $\aR$, hence $\D^k(\pi(\a))-\D^k(\pi(\c))\geq 0$
by  \cite[Theorem 2.22]{Deng23}. Therefore $n_{\b,\a}>0$. 
Hence we have $\b\preceq_k \a$.

For the direct part, suppose that 
$\b\preceq_k \a$, hence $\b<\a_{\Gamma_1}$ for some $\Gamma_1$. We prove by induction on $\ell(\b, \a_T)$.
If $\ell(\b, \a_{\Gamma_1})=0$, then $\b=\a_{\Gamma_1}$, we are done.
Now let $\b<\d\leq \a_{\Gamma_1}$ such that 
$\ell(\b, \d)=1$, by induction,
\[
 \d=\c'_{\Gamma_0}, 
\]
for some $\c'\in S(\a)$.
Note that by replacing $\c'$ by $\a$, we can assume that 
$\d=\a_{\Gamma_1}$ and $\ell(\b, \a_{\Gamma_1})=1$.
 
By definition $\b$ is obtained by 
applying the elementary operation to a pair of segments $\{\Delta\preceq \Delta'\}$
in $\a_T$. Now we set out to construct $\c$. 

\begin{itemize}
 \item If $\{\Delta, \Delta'\}\subseteq \a\setminus \{\Delta^{(k)}: \Delta\in \Gamma_1\}\subseteq \a$,
 let $\c$ be the multisegment obtained by applying the elementary operations to 
 $\{\Delta, \Delta'\}$. And we have 
 \[
  \b=\c_{\Gamma_1}.
 \]
\item If $\{\Delta, \Delta'\}\cap \{\Delta^{(k)}: \Delta\in \Gamma_1\}=\{\Delta'\}$, 
then $\{\Delta, \Delta'^{+}\}\in \a$
let $\c$ be the 
multisegment obtained by applying the elementary operations to 
 $\{\Delta, \Delta'^+\}$. Then 
 let 
 \[
  \Gamma=(\Gamma_1\setminus \{\Delta'^+\})\cup\{\Delta\cup \Delta'^+\}
 \]
and we have 
 \[
  \b=\c_{\Gamma}.
 \]
\item If $\{\Delta, \Delta'\}\cap \{\Delta^{(k)}: \Delta\in \Gamma_1\}=\{\Delta\}$, 
then $\{\Delta^{+}, \Delta'\}\in \a$
let $\c$ be the 
multisegment obtained by applying the elementary operations to 
 $\{\Delta^{+}, \Delta'\}$. Then let 
 \[
  \Gamma=(\Gamma_1\setminus \{\Delta^+\})\cup\{\Delta\cap \Delta'\}
 \]
 and we have 
  \[
  \b=\c_{\Gamma}.
 \]
\end{itemize}
Hence we are done.

\end{proof}

We use the notation $\O$ to denote the set of multisegments with support in $\Z$(cf. \cite[Definition 2.14]{Deng23}).
\begin{prop}
The relation $\preceq_k$ defines a 
poset structure on $\O$. 
\end{prop}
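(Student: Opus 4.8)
The plan is to verify the three axioms of a partial order on $\O$: reflexivity, antisymmetry and transitivity. Reflexivity is immediate: in Definition \ref{def-new-partial-order} take $\c=\a\in S(\a)$ and $\Gamma=\emptyset$, so that $\a=\a_\emptyset$ and hence $\a\preceq_k\a$; equivalently, $\D^k(\pi(\a))-\pi(\a)\ge 0$ by (\ref{eq: 7.41}), so $\a\preceq_k\a$ by Corollary \ref{cor: 7.4.3}.

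For antisymmetry I would argue by a degree count. For a multisegment $\gamma$ let $\deg(\gamma)$ denote the sum of the lengths of its segments. An elementary operation replaces a pair $\{\Delta,\Delta'\}$ by $\{\Delta\cap\Delta',\Delta\cup\Delta'\}$ and so preserves $\deg$; hence $\deg$ is constant on $S(\gamma)$. On the other hand, for $\Gamma\subseteq\gamma(k)$ one has $\deg(\gamma_\Gamma)=\deg(\gamma)-|\Gamma|$, since forming $\gamma_\Gamma$ replaces each $\Delta\in\Gamma$ by $\Delta^{(k)}=\Delta^-$, of length one smaller. Now if $\b\preceq_k\a$ and $\a\preceq_k\b$, Proposition \ref{prop: 7.3.5} lets us write $\b=\c_{\Gamma_1}$ with $\c\in S(\a)$, $\Gamma_1\subseteq\c(k)$, and $\a=\c'_{\Gamma_2}$ with $\c'\in S(\b)$, $\Gamma_2\subseteq\c'(k)$. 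Then $\deg(\b)=\deg(\a)-|\Gamma_1|$ and $\deg(\a)=\deg(\b)-|\Gamma_2|$, forcing $\Gamma_1=\Gamma_2=\emptyset$; hence $\b=\c\le\a$ and $\a=\c'\le\b$, and since $\le$ is a partial order (\cite[Definition 2.12]{Deng23}) we conclude $\a=\b$.

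Transitivity is the substantial point. Given $\d\preceq_k\b$ and $\b\preceq_k\a$, I want $\d\preceq_k\a$. The crux is the inequality: if $\b\preceq_k\a$ then $\D^k(\pi(\b))\le\D^k(\pi(\a))$ in $\aR$. To prove it, write $\b=\c_\Gamma$ with $\c\in S(\a)$, $\Gamma\subseteq\c(k)$, by Proposition \ref{prop: 7.3.5}. From Definition \ref{def-new-partial-order} one reads off that $(\c_\Gamma)(k)=\c(k)\setminus\Gamma$ and that $(\c_\Gamma)_{\Gamma'}=\c_{\Gamma\cup\Gamma'}$ for $\Gamma'\subseteq\c(k)\setminus\Gamma$; feeding this into (\ref{eq: 7.41}) applied to $\c_\Gamma$ and reindexing by $\Gamma''=\Gamma\cup\Gamma'$ gives
\[
\D^k(\pi(\c_\Gamma))=\sum_{\Gamma\subseteq\Gamma''\subseteq\c(k)}\pi(\c_{\Gamma''}),
\]
whereas (\ref{eq: 7.41}) applied to $\c$ gives $\D^k(\pi(\c))=\sum_{\Gamma''\subseteq\c(k)}\pi(\c_{\Gamma''})$. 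Subtracting, $\D^k(\pi(\c))-\D^k(\pi(\b))=\sum_{\Gamma''\subseteq\c(k),\ \Gamma\not\subseteq\Gamma''}\pi(\c_{\Gamma''})\ge 0$, each $\pi(\c_{\Gamma''})$ being effective. Finally $\c\in S(\a)$ gives $\pi(\a)-\pi(\c)\ge 0$ by Lemma \ref{lem: 3.0.8}, hence $\D^k(\pi(\a))-\D^k(\pi(\c))\ge 0$ by \cite[Theorem 2.22]{Deng23}, and chaining the two inequalities yields $\D^k(\pi(\b))\le\D^k(\pi(\c))\le\D^k(\pi(\a))$. With this in hand, transitivity follows from Corollary \ref{cor: 7.4.3}: $\d\preceq_k\b$ means $\D^k(\pi(\b))-\pi(\d)\ge 0$, so $\D^k(\pi(\a))-\pi(\d)\ge 0$, i.e.\ $\d\preceq_k\a$.

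The step I expect to be the main obstacle is this last inequality $\D^k(\pi(\b))\le\D^k(\pi(\a))$: it does not follow by simply applying $\D^k$ to $\pi(\b)\le\D^k(\pi(\a))$, since $\D^k\circ\D^k\ne\D^k$, and it genuinely requires both the structural description $\b=\c_\Gamma$ with $\c\le\a$ from Proposition \ref{prop: 7.3.5} and the exact expansion (\ref{eq: 7.41}). By comparison, reflexivity and antisymmetry are routine bookkeeping with degrees.
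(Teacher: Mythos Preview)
Your proof is correct. Reflexivity matches the paper's; your antisymmetry argument via the degree count is more explicit than the paper's terse ``by definition we have $\a=\b$'', and in fact supplies the justification that line implicitly needs.

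For transitivity the two arguments diverge. The paper works combinatorially through Proposition~\ref{prop: 7.3.5}: given $\a_1\preceq_k\a_2\preceq_k\a_3$, it writes $\a_1=\c_{\Gamma_1}$ with $\c\in S(\a_2)$, shows $\c\preceq_k\a_3$ (from $\D^k(\pi(\a_3))-\pi(\a_2)\ge 0$ and $\c\le\a_2$), writes $\c=\c'_{\Gamma_2}$ with $\c'\in S(\a_3)$, observes $\c(k)\subseteq\c'(k)$, and concludes $\a_1=\c'_{\Gamma_1\cup\Gamma_2}$. You instead prove the monotonicity inequality $\D^k(\pi(\b))\le\D^k(\pi(\a))$ whenever $\b\preceq_k\a$, via the explicit expansion $\D^k(\pi(\c_\Gamma))=\sum_{\Gamma\subseteq\Gamma''\subseteq\c(k)}\pi(\c_{\Gamma''})$, and then apply Corollary~\ref{cor: 7.4.3} once. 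Your route isolates a clean reusable lemma and avoids the bookkeeping with $\c(k)\subseteq\c'(k)$; the paper's route stays closer to the structural description and produces an explicit witness $\c'_{\Gamma_1\cup\Gamma_2}$ for the relation $\a_1\preceq_k\a_3$.
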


\begin{proof}
By definition we have $\a\preceq_k \a$ 
for any $\a\in \O$. 
Suppose $\a_1\preceq_k \a_2, \a_2\preceq_k \a_3$, 
we want to show that $\a_1\preceq_k \a_3$. 
By proposition \ref{prop: 7.3.5}, there exists $\c\in S(\a_2)$
and $\Gamma_{1}\subseteq \c(k)$
, such that 
\[
 \a_1=\c_{\Gamma_1}.
\]
 Note that 
by Corollary \ref{cor: 7.4.3},  
the fact $\a_2\preceq_k \a_3$ implies $\D^k(\pi(\a_3))-\pi(\a_2)\geq 0$.
Hence we have $n(\a_3, \c)>0$, therefore $\c\preceq_k \a_3$ by Proposition \ref{prop: 7.4.2}.  
In turn there exists a multisegment $\c'\in S(\a_3)$
and $\Gamma_2\subseteq \c'(k)$, such that 
\[
 \c=\c'_{\Gamma_2}.
\]
Since we have $\c(k)\subseteq \c'(k)$, we take 
\[
 \Gamma_3: =\Gamma_1\cup \Gamma_2\subseteq \c'(k).
\]
Now we get 
\[
 \a_1=\c'_{\Gamma_3},
\]
which implies $\a_1\preceq_k \a_3$ by Proposition \ref{prop: 7.3.5}. 
Finally, if $\a\preceq_k \b$ and $\b\preceq_k \a$, then by definition
we have $\a=\b$. 

\end{proof}

\begin{definition}\label{def-gamma-a-k}
We let 
\[
\Gamma(\a, k)=\{\b: \b\preceq_k \a\}.
\]
\end{definition}

\section{Canonical Basis and Quantum Algebras}\label{sec-recall-Lusztig}

In this section, we recall 
the results of Lusztig \cite{Lu} on canonical basis while keeping the notations of \cite{Deng23}
and draw the connection to the algebra $\mathcal{R}$ base on \cite{LNT}. No new results are
contained in this section.


\begin{definition}
Let $\N^{(\Z)}$ be the semi-group of sequences $(d_j)_{j\in \Z}$ of non negative integers 
which are zero for all but finitely many $j$.   
Let $\alpha_i$ be the element whose $i$-th term is 1 and other terms are zero.
\end{definition}

\begin{definition}
 We define a symmetric bilinear form on $\N^{(\Z)}$ given by 
 \begin{displaymath}
 (\alpha_i, \alpha_j)=
 \left\{\begin{array}{cc}
 2, &\text{ for } i=j;\\
 -1,  &\text{ for } |i-j|=1;\\
 0,   &\text{ otherwise }.
   \end{array}\right. 
 \end{displaymath}
\end{definition}

\begin{definition}
Let $q$ be an indeterminate and $\Q(q^{1/2})$ be the fractional field of $\Z[q^{1/2}]$.
Let $U_q^{\geq 0}$ be the $Q(q^{1/2})$-algebra generated by the elements $E_i$ and $K_i^{\pm 1}$ for $i\in \Z$
with the following relations:
\begin{align*}
K_iK_j=K_jK_i,~K_iK_i^{-1}=1;&\\
K_iE_i=q^{1/2(\alpha_i, \alpha_j)}E_iK_i ;&\\
E_iE_j=E_jE_i, &\text{ if } |i-j|>1;\\
E_i^2E_{j}-(q^{1/2}+q^{-1/2})E_iE_jE_i+E_jE_i^2=0, &\text{ if }|i-j|=1.
\end{align*}
and let $U^{+}$ be the subalgebra generated by 
the $E_i$'s. 
\end{definition}

\remk This is the $+$ part of the quantized enveloping algebra $U$ associated by Drinfeld
and Jimbo to the root system $A_{\infty}$ of $SL_{\infty}$. And for $q=1$, this specializes to the 
classical enveloping algebra of the nilpotent radical of a Borel subalgebra.

\begin{definition}\label{def: 7.2.4}
We define a new order on the set of segments $\Sigma$

\begin{displaymath} \left\{ \begin{array}{cc}
&[j, k]\lhd  [m,n], \text{ if } k< n,\\
&[j,k]\rhd  [m,n], \text{ if } j<m, n=k.
\end{array}\right. 
\end{displaymath}
We also denote $[j,k]\lhd [m, n]$ or $[j, k]=[m, n]$ by $\unlhd [m, n]$. 
\end{definition}
\remk This should be compared with \cite[Definition 2.5]{Deng23}.
\begin{lemma}
The algebra $U_q^+$ is $\N^{(\Z)}$-graded via the weight function $\wt(E_i)=\alpha_i$. 
Moreover, for a given weight $\alpha$, the homogeneous component of $U_q^+$ with weight $\alpha$ 
is of finite dimension, and its basis are naturally parametrized by the multisegments of the same 
weight.
\end{lemma}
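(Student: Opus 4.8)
The plan is to establish the claimed properties of $U_q^+$ in three stages, each resting on the standard theory of the quantized enveloping algebra and its PBW-type description for type $A_\infty$. First I would verify the $\N^{(\Z)}$-grading. The defining relations of $U_q^{\geq 0}$ (hence of $U^+$) are each homogeneous for the assignment $\wt(E_i) = \alpha_i$: the commutation relations $E_iE_j = E_jE_i$ for $|i-j|>1$ are visibly homogeneous of weight $\alpha_i+\alpha_j$, and the quantum Serre relation $E_i^2E_j - (q^{1/2}+q^{-1/2})E_iE_jE_i + E_jE_i^2 = 0$ is homogeneous of weight $2\alpha_i + \alpha_j$. Since $U^+$ is the quotient of the free algebra on the $E_i$ by a two-sided ideal generated by homogeneous elements, the grading descends, giving $U_q^+ = \bigoplus_{\alpha \in \N^{(\Z)}} (U_q^+)_\alpha$.

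Next I would treat finite-dimensionality of $(U_q^+)_\alpha$. The generic upper bound is immediate: monomials $E_{i_1}E_{i_2}\cdots E_{i_m}$ with $\sum \alpha_{i_j} = \alpha$ span $(U_q^+)_\alpha$, and there are only finitely many such monomials because $\alpha$ has finite total length $|\alpha| = \sum_j d_j$ and only finitely many indices $j$ with $d_j \neq 0$ can occur (the Serre relations never introduce new indices). Hence $\dim_{\Q(q^{1/2})}(U_q^+)_\alpha < \infty$. The more delicate point is the \emph{exact} parametrization of a basis by multisegments of weight $\alpha$; for this I would invoke Lusztig's PBW basis for $U^+$ in type $A_\infty$ (equivalently, the description via the canonical/semicanonical basis recalled from \cite{Lu}), which produces a basis indexed by functions recording multiplicities of positive roots — and positive roots of $A_\infty$ are precisely the segments $[j,k]$, so such a function is exactly a multisegment. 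One checks the weight of the PBW monomial attached to a multisegment $\a = \{\Delta_1,\dots,\Delta_r\}$ is $\sum_i \sum_{j \in \Delta_i} \alpha_j$, matching the notion of weight of a multisegment; this uses the order $\lhd$ of Definition \ref{def: 7.2.4} to fix a convex ordering of the positive roots and thereby a well-defined PBW monomial.

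The main obstacle, and the step requiring genuine care rather than bookkeeping, is the passage from ``monomials in the $E_i$ span'' to ``PBW monomials indexed by multisegments form a basis'': this is exactly where Lusztig's straightening/Levendorskii–Soibelman type combinatorics enter, reducing an arbitrary monomial to a linear combination of ordered PBW monomials and proving their linear independence. Since the excerpt permits us to assume Lusztig's results from \cite{Lu}, I would cite the relevant statement rather than reprove it, and devote the written proof only to (i) the homogeneity check, (ii) the finite-length argument bounding $\dim(U_q^+)_\alpha$, and (iii) the identification of the indexing set of the PBW basis in weight $\alpha$ with the set of multisegments of weight $\alpha$ via the positive-root/segment correspondence. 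A remark could note that specializing $q=1$ recovers the classical statement for $U(\mathfrak n)$ and that this compatibility with \cite{LNT} is what connects $U_q^+$ to $\mathcal R$.
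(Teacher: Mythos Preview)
Your proposal is correct and follows essentially the same strategy as the paper: verify the grading via homogeneity of the relations, bound the weight spaces, and exhibit a basis indexed by multisegments. The paper's own proof is much terser than yours---it skips the homogeneity and finite-dimensionality arguments entirely and simply writes down, for each multisegment $\a=\{[i_1,j_1]\unlhd\cdots\unlhd[i_r,j_r]\}$, the ordinary monomial $(E_{j_1}\cdots E_{i_1})\cdots(E_{j_r}\cdots E_{i_r})$ in the simple generators, declaring this to be the desired parametrization; it does not invoke the PBW basis built from the $q$-commutator root vectors $E([i,j])$ (that basis is introduced only afterwards). So the one substantive difference is your choice of basis: you point to Lusztig's PBW basis in root vectors, whereas the paper uses a straight monomial basis in the $E_i$. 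Both are indexed by multisegments of the given weight, so either works; your version has the advantage of making the linear-independence step a direct citation, while the paper's version keeps closer to the generators but leaves spanning and independence implicit.
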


\begin{proof}
Let $\a=\sum_{s=1}^rm_{i_s, j_s} [i_s, j_s]$ be a multisegment of weight $\alpha$ (cf. \cite[Definition 3.1]{Deng23}), note that 
here we identify the weight $\varphi_{[i]}$ with $\alpha_i$, and that 
\[
 [i_1, j_1]\unlhd \cdots \unlhd [i_r, j_r] ( \text{ cf. Definition \ref{def: 7.2.4}} )
\]
Then we associate to $\a$ the element
\[
 (E_{j_1}\cdots E_{i_1})\cdots (E_{j_r}\cdots E_{i_r}).
\] 
In this way we obtain the desired parametrization.
\end{proof}

\begin{notation}
For $x\in U^+$ be an element of degree $\alpha$, we will denote $\wt(x)=\alpha$. 
\end{notation}

\begin{example}
For $i\leq j$, let $\alpha_{ij}=\alpha_i+\cdots+\alpha_j$. Consider the homogeneous components 
of $U^+$ with weight $\alpha=2\alpha_{12}$, whose basis is given by 
 \[
  E_1E_2E_1E_2, ~E_1E_1E_2E_2. 
 \]
The element $ E_1E_2E_1E_2$ is parametrized by the 
multisegment $[1]+[1,2]+[2]$, while $E_1E_1E_2E_2$ is parametrized
by the multisegment $2[1]+2[2]$.
\end{example}

In \cite{Lu}, Lusztig has defined certain bases for $U_q^+$
associated to the orientations of a Dynkin diagram, called PBW( Poincar\'e-Birkhoff-Witt) basis, 
which
specializes to the classical PBW type bases.
Following \cite{LNT}, we describe the PBW-basis as follows.

\begin{definition}
We define 
\[
 E([i])=E_i, ~E([i, j])=[E_j[\cdots [E_{i+1}, E_i]_{q^{1/2}}\cdots ]_{q^{1/2}}  ]_{q^{1/2}},
\]
where $[x, y]_{q^{1/2}}=xy-q^{-1/2(\wt(x), \wt(y))}yx$.  
More generally, let 
$\a=\sum_s a_{i_s, j_s}[i_s, j_s]$ be a multisegment, such that 
\[
 [i_1, j_1]\unlhd \cdots \unlhd [i_r, j_r](  \text{ cf. Definition \ref{def: 7.2.4}}), 
\]
we define 
\[
 E(\a)=\frac{1}{\prod_s [a_{i_s,j_s}]_{q^{1/2}}!}E([i_1,j_1])^{a_{i_1, j_1}}\cdots E([i_r, j_r])^{a_{i_r,j_r}},
\]
here $[m]_{q^{1/2}}=\frac{q^{1/2m}-q^{-1/2m}}{q^{1/2}-q^{-1/2}}$ for $m\in \Z$ and 
$[m]_{q^{1/2}}!=[m]_{q^{1/2}}[m-1]_{q^{1/2}}\cdots [2]_{q^{1/2}}$.

\end{definition}

\begin{definition}
Let $x\mapsto \line{x}$ be the involution defined as the unique ring  automorphism of $U_q^+$ defined by 
\[
 \line{q^{1/2}}=q^{-1/2},~ \line{E_i}=E_i.
\]
\end{definition}

\begin{prop}(cf. \cite[Theorem 8.10]{Lu})
Let $\mathcal{L}:=\bigoplus_{\a\in \O}\Z[q^{1/2}]E(\a)\subseteq U_q^+$. Then 
there exists a unique $\Q(q^{1/2})$-basis $\{G(\a): \a\in \O\}$ of $U_q^+$ such 
that 
\[
 \line{G(\a)}=G(\a), ~ G(\a)=E(\a) \text{ modulo }q^{1/2}\mathcal{L}.
\]
This is called Lusztig's canonical basis. 
\end{prop}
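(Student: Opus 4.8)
The plan is to establish the existence and uniqueness of Lusztig's canonical basis $\{G(\a):\a\in\O\}$ by the standard bar-involution argument, which in this PBW setting reduces to a purely combinatorial triangularity statement with respect to a suitable order on multisegments. The uniqueness part is formal: if $G(\a)$ and $G'(\a)$ both satisfy $\line{G(\a)}=G(\a)$ and $G(\a)\equiv E(\a)\bmod q^{1/2}\mathcal{L}$, then $G(\a)-G'(\a)\in q^{1/2}\mathcal{L}$ is bar-invariant, and a bar-invariant element of $q^{1/2}\mathcal{L}$ must vanish because bar sends $q^{1/2}\mathcal{L}$-coefficients into $q^{-1/2}\mathcal{L}$-coefficients; writing the difference in the $\Q(q^{1/2})$-basis $\{E(\a)\}$ and comparing shows each coefficient is both in $q^{1/2}\Z[q^{1/2}]$ and fixed by $q^{1/2}\mapsto q^{-1/2}$, forcing it to be zero.

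For existence, first I would compute the action of the bar involution on the PBW basis. Since $\line{q^{1/2}}=q^{-1/2}$ and $\line{E_i}=E_i$, one checks that $\line{E([i,j])}$ differs from $E([i,j])$ only by replacing $q^{1/2}$ by $q^{-1/2}$ in the iterated $q$-commutators, and hence $\line{E(\a)}$ expands in the $E(\b)$ with coefficients in $\Z[q^{\pm 1/2}]$, upper-triangular with respect to the partial order on multisegments (this is essentially \cite[Theorem 8.10]{Lu} and its reformulation in \cite{LNT}, combined with the order $\unlhd$ of Definition \ref{def: 7.2.4} and the order $\leq$ of \cite[Definition 2.12]{Deng23}); moreover the leading coefficient is $1$. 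Granting such a triangular, bar-compatible transition matrix with $1$'s on the diagonal, one runs the usual inductive Gram--Schmidt-type procedure: define $G(\a)=E(\a)$ when $\a$ is minimal, and inductively set $G(\a)=E(\a)-\sum_{\b<\a}c_{\b,\a}(q^{1/2})\,G(\b)$, where the coefficients $c_{\b,\a}\in q^{1/2}\Z[q^{1/2}]$ are determined uniquely by the requirement that $\line{G(\a)}=G(\a)$ — this amounts at each step to solving $p(q^{1/2})-\line{p(q^{1/2})}=(\text{known bar-antisymmetric Laurent polynomial})$ for $p\in q^{1/2}\Z[q^{1/2}]$, which has a unique solution. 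Finiteness of all sums is guaranteed because each $S(\a)=\{\b:\b\leq\a\}$ is finite (recalled at the start of Section 2).

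The main obstacle, and the only genuinely substantive point, is verifying the triangularity of the bar involution on the PBW basis with respect to a well-founded order for which all down-sets are finite — everything else is the formal Kazhdan--Lusztig-style bootstrap. Here I would lean on \cite[Theorem 8.10]{Lu}, where Lusztig proves exactly this for the PBW bases attached to an orientation of the Dynkin diagram (the order being the one induced by the chosen orientation, which in type $A_\infty$ translates into the combinatorics of segments via \cite{LNT}), so the proof can legitimately cite that result for the triangularity and then carry out the short uniqueness/existence argument above. It is worth remarking in the write-up that the specific order is the one making $E(\a)=G(\a)$ for $\a$ a single segment or a ``multiplicity-free increasing'' multisegment, matching the examples computed just before the statement.
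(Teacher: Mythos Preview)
Your argument is the standard Kazhdan--Lusztig bootstrap and is correct in outline. However, there is nothing to compare against: the paper does not prove this proposition at all. Section~\ref{sec-recall-Lusztig} opens by stating explicitly that ``No new results are contained in this section,'' and the proposition is marked ``(cf.\ \cite[Theorem 8.10]{Lu})'' precisely because it is a direct citation of Lusztig's result, stated without proof. So your proposal is not an alternative to the paper's proof but rather a sketch of Lusztig's original argument, which the paper simply imports.

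One small point of feedback on the sketch itself: your uniqueness argument is fine, but in the existence paragraph you write that $\line{E(\a)}$ is upper-triangular in the $E(\b)$ ``with respect to the partial order on multisegments,'' gesturing at both the order $\unlhd$ of Definition~\ref{def: 7.2.4} and the order $\leq$ of \cite[Definition 2.12]{Deng23}. These are different orders, and the relevant one for the PBW triangularity in Lusztig's setting is the degeneration order $\leq$ on orbits (equivalently, closure order), not the lexicographic-type order $\unlhd$ used merely to write down the PBW monomials. If you were to expand this into a full proof rather than a citation, you would want to be precise about which order governs the triangularity and why the down-sets are finite (which follows from $S(\a)$ being finite, as you note).
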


Lusztig also gave a geometric description 
of his canonical basis in terms of 
the orbital varieties $\line{O}_{\a}$ \cite[Proposition 3.3]{Deng23}.

\begin{definition}
Let $\A$ be the group ring of $\line{\Q}^*_{\l}$ over $\Z$.
Let $\K_{\varphi}$ be the Grothendieck group 
over $\A$ of the category of constructible $G_{\varphi}$-equivariant
$\Q_{\l}$ sheaves over $E_{\varphi}$ (cf. \cite[Definition 3.2]{Deng23}),  considered as a variety over a 
finite field $\mathbb{F}_q$. 
\end{definition}

Given a weight function $\varphi$ \cite[Definition 3.2]{Deng23}, we use $S(\varphi)$ to denote the finite set of 
multisegments with weight $\varphi$. We also use $\mathcal{S}$ to denote the space of weight functions.

\begin{lemma}(cf. \cite[\S 9.4]{Lu})
The $\A$-module $\K_{\varphi}$ admits  a basis  $\{\gamma_{\a}: \a\in S(\varphi)\}$
indexed by the $G_{\varphi}$ orbits of $E_{\varphi}$, where $\gamma_{\a}$ corresponds 
to the constant  sheaf $\line{\Q}_{\l}$ on the orbit $O_{\a}$, extending by $0$ to the complement.
\end{lemma}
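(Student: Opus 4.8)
The plan is to exploit the orbit stratification of $E_{\varphi}$ together with the standard recollement dévissage for equivariant constructible sheaves, following \cite[\S 9.4]{Lu}. Recall from \cite[Proposition 3.3]{Deng23} that $E_{\varphi}$ is the disjoint union of the finitely many $G_{\varphi}$-orbits $O_{\a}$, $\a\in S(\varphi)$, with closure relations governed by the partial order on multisegments. Fix a linear refinement $\a_1,\dots,\a_N$ of the opposite closure order, so that $Z_m:=\bigcup_{t\le m}\line{O}_{\a_t}$ is closed and $Z_m\setminus Z_{m-1}=O_{\a_m}$; write $j_{\a}\colon O_{\a}\hookrightarrow E_{\varphi}$ for the locally closed inclusion and $\gamma_{\a}=[j_{\a !}\line{\Q}_{\l}]$. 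Every $G_{\varphi}$-equivariant constructible $\line{\Q}_{\l}$-sheaf is constructible with respect to this stratification.

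The first point is that a $G_{\varphi}$-equivariant constructible sheaf on a single orbit $O_{\a}$ is automatically a local system, hence — since $G_{\varphi}$ is a product of general linear groups and the stabilizer in $G_{\varphi}$ of a point of $E_{\varphi}$ is again such a product, in particular connected — a finite direct sum of twists of $\line{\Q}_{\l}$ by geometrically constant rank-one $\line{\Q}_{\l}$-sheaves (these twists realizing the $\A$-module structure, $\A$ being the group ring of $\line{\Q}^{*}_{\l}$). For a general equivariant $\F$ on $E_{\varphi}$, let $O$ be a maximal orbit in its support and $i$ the inclusion of the closed complement; the short exact sequence $0\to j_{O!}j_O^{*}\F\to \F\to i_{*}i^{*}\F\to 0$ gives $[\F]=[j_{O!}j_O^{*}\F]+[i_{*}i^{*}\F]$ in $\K_{\varphi}$, where $j_O^{*}\F$ is as above and $i^{*}\F$ is equivariant constructible on a $G_{\varphi}$-variety with strictly fewer orbits. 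Induction on the number of orbits then shows that the $\gamma_{\a}$ generate $\K_{\varphi}$ as an $\A$-module.

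For linear independence I would argue by triangularity with respect to the closure order. Applying the exact functor $j_{\a}^{*}$ followed by passage to the Grothendieck group of equivariant sheaves on the homogeneous space $O_{\a}$ — which by the connectedness of the isotropy group is $\A$-free of rank one on $[\line{\Q}_{\l}]$ — kills $\gamma_{\b}$ unless $O_{\a}\subseteq\line{O}_{\b}$ and sends $\gamma_{\a}$ to the generator. In the chosen linear ordering of $S(\varphi)$ the resulting matrix of functionals is unitriangular over $\A$, so the $\gamma_{\a}$ are $\A$-linearly independent. Alternatively one compares $\{\gamma_{\a}\}$ with the basis $\{[\mathrm{IC}(\line{O}_{\a})]\}$ furnished by Lusztig's theory, to which it is related by a unitriangular change of basis.

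The only genuinely geometric input is the connectedness of the isotropy groups $G_{\varphi}^{x}$: this is what guarantees that the \emph{constant} sheaf, rather than a larger supply of irreducible equivariant local systems, is the correct object to index the basis, and it is also where the concrete description of $E_{\varphi}$ and $G_{\varphi}$ as (graded) quiver data enters. Once this is granted, the remainder is the formal recollement argument sketched above, which is exactly \cite[\S 9.4]{Lu}; I therefore expect this connectedness verification, rather than any homological subtlety, to be the main obstacle.
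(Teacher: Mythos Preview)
The paper gives no proof of this lemma at all; it is stated with a bare citation to \cite[\S 9.4]{Lu} and the text moves on. Your proposal is therefore not to be compared against any argument in the paper --- rather, you have supplied the proof that the paper outsources to Lusztig, and your sketch is essentially the argument found there.

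One small correction: the stabilizer in $G_{\varphi}$ of a point $T\in E_{\varphi}$ is the automorphism group of the graded module $(V_{\varphi},T)$, i.e.\ the group of units in the finite-dimensional associative algebra $\End_{G_{\varphi}}(V_{\varphi},T)$. This is not in general a product of general linear groups (already for a single Jordan block the endomorphism algebra is $k[t]/(t^n)$), so your parenthetical ``again such a product'' is inaccurate. What you actually need, and what does hold, is connectedness: the unit group of any finite-dimensional algebra over an algebraically closed field is Zariski-open in an affine space, hence irreducible, hence connected. With that adjustment your recollement/d\'evissage argument is correct and is exactly the content of \cite[\S 9.4]{Lu}.
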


\begin{definition}\label{def: 7.4.12}
Let $\varphi=\varphi_1+\varphi_2\in \mathcal{S}$.
We define a diagram of varieties
\addtocounter{theo}{1}
\begin{equation}\label{di: 7.3}
\xymatrix
{
E_{\varphi_1}\times E_{\varphi_2}& E'{\ar[l]_{\hspace{0.8cm}\beta}}\ar[r]^{\beta'}&E''\ar[r]^{\beta''}&E_{\varphi},
}
\end{equation}
where 
\begin{align*}
 E'':=&\{(T, W): W=\bigoplus W_i,~ W_i \subseteq V_{\varphi, i},~ T(W_i)\subseteq W_{i+1}, ~\dim(W_i)=\varphi_2(i)\},\\
 E':=& \{(T, W, \mu, \mu'): (T, W)\in E'',~ \mu: W\simeq V_{\varphi_2}, ~\mu': V_{\varphi}/W\simeq V_{\varphi_1}\},
\end{align*}
and 
\[
 \beta''((T, W))=W, ~ \beta'((T, W, \mu, \mu'))=(T, W), ~\beta((T, W, \mu, \mu'))=(T_1, T_2),
\]
such that 
\[
 T_1=\mu' \circ T\circ \mu'^{-1}, ~ T_2=\mu\circ T\circ \mu^{-1}.
\]
\end{definition}

\begin{prop}(cf. \cite[\S 6.1]{Lu})
The group $G_{\varphi}\times G_{\varphi_1}\times G_{\varphi_2}$ 
acts naturally on the varieties in the diagram (\ref{di: 7.3})
with $G_{\varphi}$ acting trivially on $E_{\varphi_1}\times E_{\varphi_2}$
and $G_{\varphi_1}\times G_{\varphi_2}$ acting trivially on $E_{\varphi}$.
And all the maps there are compatible with such actions. Moreover,
we have 
\begin{description}
 \item[(1)]The morphism $\beta'$ is 
 a principle $G_{\varphi_1}\times G_{\varphi_2}$-fibration.
 \item[(2)]The morphism $\beta$ is a locally trivial trivial 
 fibration with smooth connected fibers.
 \item[(3)]The morphism $\beta''$ is proper.
\end{description} 
\end{prop}

\begin{example}\label{ex: 7.4.14}
Let $\varphi_1=\chi_{1}$ (the singleton supported at the integer $1\in \O$) and $\varphi_{2}=\chi_2$. Then $\varphi=\chi_1+\chi_2$
and 
\[
 E_{\varphi_1}=E_{\varphi_2}=0, ~ E_{\varphi}=\line{\mathbb{F}}_{q}.
\]
Moreover, we have 
\[
 E''=\{(T, W): W=V_{\varphi_2}, T\in \line{\mathbb{F}}_q\}\simeq \line{\mathbb{F}}_q, 
\]
and 
\[
 E'=\{(T, W, \mu, \mu'): (T, W)\in E'', \mu, \mu'\in \line{\mathbb{F}}_q^{\times}\}
 \simeq \line{\mathbb{F}}_q\times (\line{\mathbb{F}}_q^{\times})^2.
\]
\end{example}

\begin{cor}(cf. \cite[\S 9.5]{Lu})\label{cor: 7.4.15}
Let $\a\in S(\varphi_1), \a'\in S(\varphi_2)$ and $\varphi=\varphi_1+\varphi_2$ be a weight function.
There exists a simple perverse sheaf( up to shift )  $\mathcal{P}$ on $E_{\varphi}$ such that 
\[
  \beta^*(IC(\line{O}_{\a})\otimes IC(\line{O}_{\a'}))=\beta'^{*}(\mathcal{P}). 
\]
\end{cor}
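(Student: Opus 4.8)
The plan is to deduce this from the structure of the diagram \eqref{di: 7.3} together with the fact, recalled just above, that $\beta'$ is a principal $G_{\varphi_1}\times G_{\varphi_2}$-fibration while $\beta$ is a locally trivial fibration with smooth connected fibers. First I would transport the external tensor product $IC(\line{O}_{\a})\otimes IC(\line{O}_{\a'})$, which is a $G_{\varphi_1}\times G_{\varphi_2}$-equivariant perverse sheaf on $E_{\varphi_1}\times E_{\varphi_2}$, along $\beta$ to $E'$. Since $\beta$ is smooth with connected fibers of some fixed relative dimension $d_\beta$, the pullback $\beta^*(IC(\line{O}_{\a})\otimes IC(\line{O}_{\a'}))[d_\beta]$ is again perverse on $E'$ (smooth pullback along a fibration with connected fibers preserves perversity up to the shift by relative dimension, and preserves the simple/IC property because the fibers are smooth connected).

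Next I would push this down along $\beta'$. Because $\beta'$ is a principal $G_{\varphi_1}\times G_{\varphi_2}$-bundle, the $G_{\varphi_1}\times G_{\varphi_2}$-equivariant perverse sheaf $\beta^*(IC(\line{O}_{\a})\otimes IC(\line{O}_{\a'}))$ on $E'$ descends: there is a (unique up to canonical isomorphism) perverse sheaf $\mathcal{Q}$ on $E''$ with $\beta'^*\mathcal{Q}[d_{\beta'}] \cong \beta^*(IC(\line{O}_{\a})\otimes IC(\line{O}_{\a'}))[d_\beta]$, where $d_{\beta'}=\dim(G_{\varphi_1}\times G_{\varphi_2})$. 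Here I use the standard equivalence between $G$-equivariant sheaves on a space with free $G$-action and sheaves on the quotient; since $IC(\line{O}_{\a})\otimes IC(\line{O}_{\a'})$ is simple perverse and equivariant, its pullback is equivariant simple perverse, hence so is the descended $\mathcal{Q}$. Setting $\mathcal{P}:=\mathcal{Q}$ (up to the appropriate shift, which is exactly the "up to shift" in the statement) gives the desired object, and unwinding the shifts yields $\beta^*(IC(\line{O}_{\a})\otimes IC(\line{O}_{\a'})) = \beta'^*(\mathcal{P})$ as claimed.

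The one genuinely delicate point—and the step I expect to be the main obstacle—is the bookkeeping of shifts and the verification that all three sheaves in play are \emph{simple} perverse, not merely perverse. For simplicity of $\beta^*(IC(\line{O}_{\a})\otimes IC(\line{O}_{\a'}))$ one needs that $\beta$ has irreducible (or at least connected, which here suffices since the fibers are moreover smooth) fibers, so that the pullback of an IC sheaf is again an IC sheaf (of the closure of the preimage of the support); this is exactly where hypothesis (2) on $\beta$ is used. For descent along $\beta'$ one invokes (1), the freeness of the $G_{\varphi_1}\times G_{\varphi_2}$-action, so that $\beta'^*$ is fully faithful on the relevant equivariant derived categories and detects simplicity. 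I would also remark that this is precisely the content of \cite[\S 9.5]{Lu}, so the proof can legitimately be reduced to a citation; the argument above merely spells out why the two pullbacks agree. A secondary subtlety is that $E_{\varphi_1}\times E_{\varphi_2}$, $E'$, $E''$ may be empty or zero-dimensional in degenerate cases (cf. Example \ref{ex: 7.4.14}), but there the statement is vacuous or trivial, so no separate treatment is needed.

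Concretely I would organize the write-up as: (i) recall that $IC(\line{O}_{\a})\otimes IC(\line{O}_{\a'})$ is $G_{\varphi_1}\times G_{\varphi_2}$-equivariant simple perverse on $E_{\varphi_1}\times E_{\varphi_2}$; (ii) apply $\beta^*$ and use (2) to see the result is equivariant simple perverse on $E'$ up to shift; (iii) apply the descent equivalence afforded by (1) to produce $\mathcal{P}$ on $E''$ with the stated property; (iv) reconcile shifts. The heart of the matter is entirely contained in Lusztig's construction, so beyond the shift accounting there is little new to prove.
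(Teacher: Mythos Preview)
Your proposal is correct and is precisely the standard argument underlying Lusztig's construction. The paper itself gives no proof at all---the corollary is stated with a bare citation to \cite[\S 9.5]{Lu} and is followed immediately by an example and the definition of the product; so your write-up is strictly more detailed than what the paper offers, while following the same (and essentially only) route.

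One remark worth making: you correctly place $\mathcal{P}$ on $E''$, which is what is required for $\beta'^{*}(\mathcal{P})$ to make sense (since $\beta'\colon E'\to E''$) and for the subsequent definition $IC(\line{O}_{\a})\star IC(\line{O}_{\a'})=\beta''_{*}(\mathcal{P})$ to land in $E_{\varphi}$. The paper's statement says ``on $E_{\varphi}$'', which is a typo; your reading is the intended one, as is clear from the definition that follows and from the role of $\beta''$.
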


\begin{example}
As in Example \ref{ex: 7.4.14}, 
let $\a=\{[1]\}, \a'=\{[2]\}$, then 
\[
 IC(\line{O}_{\a})=\line{\Q}_{\l}, ~IC(\line{O}_{\a'})=\line{\Q}_{\l}.
\]
Hence if we let 
\[
 \mathcal{P}=\line{\Q}_{\l},
\]
then 
\[
 \beta^{*}(IC(\line{O}_{\a})\otimes IC(\line{O}_{\a'}))=\beta''^{*}(\mathcal{P}).
\]

\end{example}

\begin{definition}
 We define a multiplication  
\[
  IC(\line{O}_{\a})\star IC(\line{O}_{\a'})=\beta''_{*}(\mathcal{P}).
 \]
\end{definition}

\begin{example}
As in the Example \ref{ex: 7.4.14}, we have 
\[
 IC(\line{O}_{\a})\star IC(\line{O}_{\a'})=\beta''_{*}(\mathcal{P})=IC(E_{\varphi}),
\]
note that here $\beta''$ is an isomorphism. 
\end{example}

\begin{prop}(cf. \cite[\S 9.5]{Lu})
 Let $\a\in S(\varphi_1), \a'\in S(\varphi_2)$ $\varphi=\varphi_1+\varphi_2$ be a weight function.
We associate to the intersection cohomology complex $IC(\line{O}_{\a})$
\[
 \tilde{\gamma}_{\a}=\sum_{\b\geq \a}p_{\b, \a}(q)\gamma_{\b},
\]
where $p_{\b, \a}(q)$ is the formal alternative sum 
of eigenvalues of the Frobenius map on the stalks of the cohomology sheaves 
of $IC(\line{O}_{\a})$ at any $\mathbb{F}_q$ rational point of $O_{\b}$. 
Moreover, the multiplication $\star$ gives a  $\A$-bilinear map 
\[
 \K_{\varphi_1}\times \K_{\varphi_2}\rightarrow \K_{\varphi},
\]
which defines an associative algebra structure over $\K=\bigoplus_{\varphi}\K_{\varphi}$. 
\end{prop}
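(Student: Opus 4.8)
The statement is due to Lusztig, so the plan is to recall the three ingredients of \cite[\S 6, \S 9]{Lu} in the present notation. First I would check that $\tilde{\gamma}_{\a}$ is well defined. The space $E_{\varphi}$ is the representation variety of the linearly oriented quiver of type $A_{\infty}$ with dimension vector $\varphi$, hence the disjoint union of the finitely many $G_{\varphi}$-orbits $O_{\b}$, $\b\in S(\varphi)$; moreover $IC(\line{O}_{\a})$ is supported on $\line{O}_{\a}$, which is the union of those strata $O_{\b}$ with $\b\geq\a$. Restricting to the strata and passing to the Grothendieck group gives
\[
[IC(\line{O}_{\a})]=\sum_{\b\geq\a}\sum_{i}(-1)^{i}\,[\mathcal{H}^{i}(IC(\line{O}_{\a}))|_{O_{\b}}],
\]
and each $\mathcal{H}^{i}(IC(\line{O}_{\a}))|_{O_{\b}}$ is a $G_{\varphi}$-equivariant $\line{\Q}_{\l}$-sheaf on the homogeneous space $O_{\b}$, whose class in $\K_{\varphi}$ is $\gamma_{\b}$ times the alternating sum over $i$ of the Frobenius eigenvalues on a stalk, an element of $\A$. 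Summing over $i$ yields $p_{\b,\a}(q)\in\A$, with $p_{\a,\a}(q)=1$ since $IC(\line{O}_{\a})$ restricts to the constant sheaf in degree $0$ over the open orbit $O_{\a}$. Hence the transition matrix $(p_{\b,\a})$ is unitriangular for $\geq$ and $\{\tilde{\gamma}_{\b}\}$ is again an $\A$-basis of $\K_{\varphi}$.

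Next I would promote the Definition of $\star$ to a convolution bifunctor. Given $G_{\varphi_{i}}$-equivariant complexes $A_{i}$ on $E_{\varphi_{i}}$ for $i=1,2$, the complex $\beta^{*}(A_{1}\boxtimes A_{2})$ (with the normalizing shift of \cite{Lu}) is equivariant for the free $G_{\varphi_{1}}\times G_{\varphi_{2}}$-action along the fibres of the principal bundle $\beta'$, hence descends uniquely to a complex $\mathcal{P}$ on $E''$ with $\beta'^{*}\mathcal{P}\cong\beta^{*}(A_{1}\boxtimes A_{2})$; for $A_{i}=IC(\line{O}_{\a_{i}})$ this is Corollary \ref{cor: 7.4.15}, with $\mathcal{P}$ simple perverse up to shift. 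Putting $A_{1}* A_{2}:=\beta''_{!}\mathcal{P}=\beta''_{*}\mathcal{P}$, the two agreeing because $\beta''$ is proper, defines a bifunctor which is triangulated in each variable and commutes with Tate twists, since $\boxtimes$, $\beta^{*}$, descent along $\beta'$ and $\beta''_{*}$ all are and all do. An exact bifunctor of triangulated categories induces a bi-additive map of Grothendieck groups, and compatibility with Tate twists makes it $\A$-bilinear, so that the induced product of classes is $[IC(\line{O}_{\a})]\star[IC(\line{O}_{\a'})]=[\beta''_{*}\mathcal{P}]$ as in the Definition. This produces the asserted $\A$-bilinear map $\K_{\varphi_{1}}\times\K_{\varphi_{2}}\to\K_{\varphi}$.

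Finally, associativity is the substantive point. I would introduce the triple analogue of the diagram (\ref{di: 7.3}) for $\varphi=\varphi_{1}+\varphi_{2}+\varphi_{3}$: replace $E''$ by the variety of $T$-stable graded flags $W^{1}\subseteq W^{2}\subseteq V_{\varphi}$ with $\dim W^{1}=\varphi_{3}$ and $\dim W^{2}/W^{1}=\varphi_{2}$ (so $\dim V_{\varphi}/W^{2}=\varphi_{1}$), together with the corresponding framings, and check that both $(A_{1}* A_{2})* A_{3}$ and $A_{1}*(A_{2}* A_{3})$ compute the pushforward to $E_{\varphi}$ of the complex obtained from $A_{1}\boxtimes A_{2}\boxtimes A_{3}$ by pullback and descent along this larger diagram. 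This reduces to exhibiting the comparison squares between the one-step and the two-step constructions as Cartesian, so that proper base change applies along the $\beta''$-type maps and smooth base change together with uniqueness of descent applies along the $\beta$- and $\beta'$-type maps; this is carried out in \cite[\S 6]{Lu}, and the resulting associativity of $\star$ on $\K=\bigoplus_{\varphi}\K_{\varphi}$ is \cite[\S 9.5]{Lu}. The main obstacle is exactly the bookkeeping of this triple diagram and the verification that its comparison squares are Cartesian; the first two ingredients are essentially formal once the fibration properties of $\beta$, $\beta'$, $\beta''$ from the Proposition following Definition \ref{def: 7.4.12} are in hand. The associativity can also be read off from the identification of $\K$ with $U_{q}^{+}$ in \cite{Lu} and with $\aR$ in \cite{LNT}, but the direct geometric argument is the one intrinsic to this section.
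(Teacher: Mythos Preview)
Your sketch is correct and faithfully outlines Lusztig's argument from \cite[\S 6, \S 9]{Lu}. Note, however, that the paper does not give its own proof of this proposition: the section is explicitly a recall of Lusztig's results (``No new results are contained in this section''), and the statement carries only the citation ``(cf.\ \cite[\S 9.5]{Lu})'' without an accompanying argument. So there is nothing to compare against beyond the original source, and your write-up is effectively an expansion of the cited reference rather than an alternative to anything in the paper.
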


\begin{prop}(\cite[Proposition 9.8 and Theorem 9.13]{Lu} )\label{prop: 7.4.20}

\begin{itemize}
 \item
The elements $\gamma_{i}:=\gamma_{[i]}$ for all $i\in \Z$
generate the algebra $\K$ over $\A$.
 
 \item Let $U^{\geq 0}_{\A}=U_q^{\geq 0}\otimes_{\Z} \A$. 
 Then we have a unique $\A$-algebra morphism 
 $\Gamma: \K\rightarrow U_{\A}^{\geq 0}$ such that 
 \[
  \Gamma(\gamma_{j})=K_j^{-j}E_{j}; 
 \]
for all $j\in \Z$.  Moreover, for $\varphi\in \mathcal{S}$, let
\begin{equation}\label{eqn-weight-fractor-quantum}
 d(\varphi)=\sum_{i\in \Z}(\varphi(i)-1)\varphi(i)/2- \sum_{i\in \Z}\varphi(i)\varphi(i+1).
\end{equation}
Then there is an $\A$-linear map $\Theta: K_{\varphi}\rightarrow U_{\A}^{+}$, such that 
\[
 \Gamma(\xi)=q^{1/2d(\varphi)}K(\varphi)\Theta(\xi),
\]
where $K(\varphi)=\prod_{i\in \Z}K_{i}^{-i\varphi(i)}$. 
 \item 
 We have 
 \[
 \Gamma(\gamma_{\c})=
 q^{1/2(r-\delta_{\c})}K(\varphi_{\c})E(\c), 
 \]
where 
\[
 r=\sum_{i}\varphi_{\c}(i)(\varphi_{\c}(i)-1)(2i-1)/2-\sum_{i}i\varphi_{\c}(i-1)\varphi_{\c}(i), 
\]
and $\delta_{\c}$ is the co-dimension of the orbit $O_{\c}$ in $E_{\varphi_{\c}}$.

 \item We have 
 \[
  \Theta(\gamma_{\a})=q^{1/2\dim(O_{\a})}E(\a),~ \Theta(\tilde{\gamma}_{\a})=q^{1/2\dim(O_{\a})}G(\a).
 \]
Hence 
\[
 G(\a)=\sum_{\b\geq \a}P_{\b, \a}(q)E(\b). 
\]

\end{itemize}
\end{prop}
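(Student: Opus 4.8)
\emph{Plan.} All four assertions are, up to a translation of conventions, contained in \cite[\S 9]{Lu} in the form refined by \cite{LNT}; the plan is to recall how each is obtained inside the framework set up in this section, and then to carry out the bookkeeping of $q^{1/2}$-powers that our normalizations force, which is the only genuinely delicate point.

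For the first assertion (generation), I would argue by triangularity with respect to the closure order. Since $\star$ is associative and is realized by the \emph{proper} morphism $\beta''$ of (\ref{di: 7.3}), the decomposition theorem shows that for any word $j_1,\dots,j_m$ the product $\gamma_{j_1}\star\cdots\star\gamma_{j_m}=\beta''_*(\mathcal{P})$ (for the appropriate iterated $\mathcal{P}$ of Corollary \ref{cor: 7.4.15}) is an $\A$-linear combination of the classes $\tilde\gamma_{\b}$ attached to $IC(\line{O}_{\b})$. Conversely, given $\a=\sum_s a_{i_s,j_s}[i_s,j_s]\in\O$ with $[i_1,j_1]\unlhd\cdots\unlhd[i_r,j_r]$, one takes the word obtained by concatenating the blocks $(j_s,j_s-1,\dots,i_s)$, each repeated $a_{i_s,j_s}$ times; since each $\gamma_{[i]}$ is supported on the point $E_{\chi_i}$, an argument parallel to the one parametrizing the homogeneous components of $U_q^+$ shows that in the corresponding $\star$-product $IC(\line{O}_{\a})$ occurs with coefficient a power of $q^{1/2}$ while every other constituent is supported on an orbit strictly larger than $O_{\a}$. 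A triangularity argument then shows that all $\tilde\gamma_{\a}$, hence --- inverting the unitriangular relation $\tilde\gamma_{\a}=\sum_{\b\geq\a}p_{\b,\a}(q)\gamma_{\b}$ --- all $\gamma_{\a}$, lie in the $\A$-subalgebra generated by the $\gamma_i$.

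For the second and third assertions, I would first bring in Lusztig's geometric map $\Theta\colon\K_\varphi\to U^{+}_{\A}$, which sends the class of a $G_\varphi$-equivariant complex to a suitably $q^{1/2}$-renormalized graded trace of Frobenius along the orbit stratification; an analysis of (\ref{di: 7.3}) --- the fibration properties of $\beta,\beta'$ and the properness of $\beta''$ --- shows that it carries $\star$ to the product on $U^{+}_{\A}$ twisted by the Euler form of the quiver, and that $\Theta(\gamma_{\a})=q^{1/2\dim(O_{\a})}E(\a)$; as $\{E(\a)\}_{\a\in S(\varphi)}$ and $\{\gamma_{\a}\}_{\a\in S(\varphi)}$ are $\A$-bases, $\Theta$ is an isomorphism in each weight. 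I would then set $\Gamma(\xi)=q^{1/2 d(\varphi)}K(\varphi)\Theta(\xi)$ for $\xi\in\K_\varphi$; this is $\A$-linear with values in $U^{\geq 0}_{\A}$, and since $d(\chi_j)=0$ and $\dim(O_{[j]})=0$ it satisfies $\Gamma(\gamma_j)=K_j^{-j}E_j$. Multiplicativity of $\Gamma$ follows by moving all $K_i$ to the left via $K_iE_j=q^{1/2(\alpha_i,\alpha_j)}E_jK_i$ and matching the resulting $q^{1/2}$-exponent, via $\tfrac12 d(\varphi_1+\varphi_2)$ against $\tfrac12 d(\varphi_1)+\tfrac12 d(\varphi_2)$ and the Euler-form twist --- a numerical identity read directly off (\ref{eqn-weight-fractor-quantum}); uniqueness of $\Gamma$ is then immediate from the generation statement, and the displayed factorization holds with this $\Theta$ by construction (so in particular $q^{-1/2 d(\varphi)}K(\varphi)^{-1}\Gamma(\xi)\in U^{+}_{\A}$). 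Finally, applying $\Gamma=q^{1/2 d(\varphi)}K(\varphi)\Theta$ to $\gamma_{\c}$ and substituting $\Theta(\gamma_{\c})=q^{1/2\dim(O_{\c})}E(\c)$ gives $\Gamma(\gamma_{\c})=q^{1/2(d(\varphi_{\c})+\dim(O_{\c}))}K(\varphi_{\c})E(\c)$, and one identifies the exponent with $\tfrac12(r-\delta_{\c})$ by expanding $r$, $d$, and the tautology $\delta_{\c}+\dim(O_{\c})=\dim E_{\varphi_{\c}}$ from the definitions.

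It remains, for the last assertion, to identify $\Theta(\tilde\gamma_{\a})$. Here I would use that Verdier self-duality of $IC(\line{O}_{\a})$ corresponds under $\Theta$ to the bar involution $x\mapsto\line{x}$ of $U^{+}$ and that $IC(\line{O}_{\a})$ restricts to $\line{\Q}_{\l}$ on the open orbit $O_{\a}$; hence $\Theta(\tilde\gamma_{\a})=q^{1/2\dim(O_{\a})}x_{\a}$ with $\line{x_{\a}}=x_{\a}$ and $x_{\a}\equiv E(\a)$ modulo $q^{1/2}\mathcal{L}$, so $x_{\a}=G(\a)$ by the uniqueness characterising the canonical basis. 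Applying $\Theta$ to $\tilde\gamma_{\a}=\sum_{\b\geq\a}p_{\b,\a}(q)\gamma_{\b}$ and cancelling $q^{1/2\dim(O_{\a})}$ then yields $G(\a)=\sum_{\b\geq\a}P_{\b,\a}(q)E(\b)$ with $P_{\b,\a}(q)=q^{1/2(\dim(O_{\b})-\dim(O_{\a}))}p_{\b,\a}(q)$. The main obstacle I anticipate is not structural but precisely the consistent tracking of these $q^{1/2}$-powers --- the exponents $d(\varphi)$, $r-\delta_{\c}$, $\dim(O_{\a})$ and the Euler-form twist intertwined by $\Theta$ --- which requires a careful matching of the normalizations used here with those of \cite{Lu} and \cite{LNT}.
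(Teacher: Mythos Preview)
The paper does not supply a proof for this proposition: it is stated with the attribution ``(cf.\ \cite[Proposition 9.8 and Theorem 9.13]{Lu})'' and no \texttt{proof} environment follows. So there is nothing to compare your argument against here; the statement is quoted as a known result from Lusztig's work (with the normalizations adapted to \cite{LNT}).

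Your sketch is a reasonable outline of how the result is obtained in \cite{Lu}: generation via the decomposition theorem and triangularity on orbit closures, construction of $\Gamma$ (resp.\ $\Theta$) by tracking the Euler-form twist, and identification of $\Theta(\tilde\gamma_{\a})$ with $q^{1/2\dim(O_{\a})}G(\a)$ via self-duality plus the characterizing property of the canonical basis. That said, since the paper treats this as a black-box citation, the appropriate ``proof'' here is simply a pointer to \cite{Lu} and \cite{LNT}; your detailed bookkeeping of the $q^{1/2}$-exponents, while correct in spirit, is exactly the sort of normalization-matching the paper is deliberately outsourcing. If you want to include anything beyond the citation, a one-line remark that $\Theta$ is Lusztig's map and that the exponent identities follow from $\dim E_{\varphi}=\sum_i\varphi(i)\varphi(i+1)$ would suffice.
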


\begin{prop}\label{prop: 7.4.21}\cite[\S 4.3]{LTV}
The canonical basis of $U_q^+$ are almost orthogonal with 
respect to a scalar product introduced by Kashiwara \cite{K}, 
which are given by 
\[
 (E(\a), E(\b))=\frac{(1-q)^{\deg(\a)}}{\prod_{i\leq j}h_{a_{ij}}(q)}\delta_{\a, \b},
\]
where $\a=\sum_{i\leq j}a_{ij}[i,j]$,
$h_{k}(z)=(1-z)\cdots (1-z^k)$ and $\delta$ is the Kronecker symbol.
And we have 
\[
 (G(\a), G(\b))=\delta_{\a, \b}\text{  mod }q^{1/2}\A.
\]

\end{prop}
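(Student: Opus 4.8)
The plan is to reduce the statement to three inputs: the defining properties of Kashiwara's form, the convexity of the order $\unlhd$ of Definition~\ref{def: 7.2.4}, and a rank-one norm computation. Recall that Kashiwara's form $(\cdot,\cdot)$ is the unique symmetric $\Q(q^{1/2})$-bilinear form on $U_q^+$ with $(1,1)=1$ and
\[
(E_ix,y)=(x,r_i(y)),\qquad i\in\Z,
\]
where $r_i\colon U_q^+\to U_q^+$ is the left skew-derivation with $r_i(1)=0$, $r_i(E_j)=\delta_{ij}$ and $r_i(xy)=r_i(x)\,y+q^{-\frac12(\wt(x),\alpha_i)}\,x\,r_i(y)$ on homogeneous elements. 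Since $r_i$ strictly lowers the $i$-th coordinate of the weight, the form is block-diagonal for the $\N^{(\Z)}$-grading, so I may work in a single homogeneous component and prove, in turn: (i) $(E(\a),E(\b))=0$ for $\a\neq\b$ of equal weight; (ii) the explicit value of $(E(\a),E(\a))$; (iii) the congruence for the canonical basis. (One could instead transport the natural Euler pairing of the geometric side through the map $\Theta$ of Proposition~\ref{prop: 7.4.20}, but the algebraic route is self-contained.)

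The main obstacle is (i). The point is that $\unlhd$, restricted to the segments supported in any fixed finite interval, is a \emph{convex order} on the positive roots of the corresponding finite type $A$ root subsystem, so the monomials $E([i_1,j_1])^{a_1}\cdots E([i_r,j_r])^{a_r}$ with $[i_1,j_1]\unlhd\cdots\unlhd[i_r,j_r]$ form a PBW basis of Levendorskii--Soibelman type; in particular the twisted comultiplication $\mathbf r\colon U_q^+\to U_q^+\otimes U_q^+$ satisfies
\[
\mathbf r(E([i,j]))=E([i,j])\otimes 1+1\otimes E([i,j])+(\text{sum of }E_{\beta'}\otimes E_{\beta''}\text{, both factors of strictly smaller weight}).
\]
Combining this with the multiplicativity of Kashiwara's form, $(xy,z)=(x\otimes y,\mathbf r(z))$ (the pairing on $U_q^+\otimes U_q^+$ being the tensor square of $(\cdot,\cdot)$), and peeling the root vectors off one at a time from the largest in the order, a triangularity argument with respect to the convex order shows that only the ``diagonal'' matching of root vectors survives---distinct segments having distinct weights---whence (i). The same bookkeeping also yields the multiplicativity
\[
(E(\a),E(\a))=\prod_{i\le j}\bigl(E([i,j])^{(a_{ij})},E([i,j])^{(a_{ij})}\bigr),\qquad E(\Delta)^{(a)}:=E(\Delta)^a/[a]_{q^{1/2}}!.
\]

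For (ii) it then suffices to compute one divided power. One has $(E_i,E_i)=(1,r_i(E_i))=1$, and by induction on $j-i$, using $E([i,j])=[E_j,E([i,j-1])]_{q^{1/2}}$, the vanishing $r_j(E([i,j-1]))=0$ and the adjunction, one finds $r_j(E([i,j]))=(1-q)E([i,j-1])$; a short computation with the comultiplication shows the remaining cross term vanishes, so $(E([i,j]),E([i,j]))=(1-q)\,(E([i,j-1]),E([i,j-1]))=(1-q)^{j-i}$. The norm of a divided power $E([i,j])^{(a)}$ is then obtained from $(E([i,j]),E([i,j]))$ by the standard rank-one ($\mathfrak{sl}_2$-type) recursion, yielding $(E([i,j])^{(a)},E([i,j])^{(a)})=(1-q)^{a(j-i+1)}/h_a(q)$. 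Substituting into the product above,
\[
(E(\a),E(\a))=\frac{(1-q)^{\sum_{i\le j}a_{ij}(j-i+1)}}{\prod_{i\le j}h_{a_{ij}}(q)}=\frac{(1-q)^{\deg(\a)}}{\prod_{i\le j}h_{a_{ij}}(q)},
\]
which, with (i), is the first assertion, $\deg(\a)=\sum_{i\le j}a_{ij}(j-i+1)$ being the total number of boxes of $\a$.

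Finally, (iii) is formal. By the last item of Proposition~\ref{prop: 7.4.20}, $G(\a)=\sum_{\b\ge\a}P_{\b,\a}(q)E(\b)$ with $P_{\a,\a}=1$ and $P_{\b,\a}\in q^{1/2}\Z[q^{1/2}]$ for $\b\neq\a$, while each $(E(\c),E(\c))$ lies in $\Z[[q]]$ and equals $1$ at $q=0$. Bilinearity and the orthogonality (i) give, when $\wt(\a)=\wt(\b)$ (both sides vanishing otherwise),
\[
(G(\a),G(\b))=\sum_{\c}P_{\c,\a}(q)\,P_{\c,\b}(q)\,(E(\c),E(\c)).
\]
For $\a\neq\b$ no summand has $\c=\a=\b$, so each coefficient $P_{\c,\a}P_{\c,\b}$ is divisible by $q^{1/2}$ and the whole sum lies in $q^{1/2}\Z[[q^{1/2}]]$; for $\a=\b$ the summand $\c=\a$ contributes $(E(\a),E(\a))\equiv 1$ while all other summands lie in $q\,\Z[[q^{1/2}]]$. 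Hence $(G(\a),G(\b))\equiv\delta_{\a,\b}\pmod{q^{1/2}\A}$, completing the proof.
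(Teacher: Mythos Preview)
The paper does not supply its own proof of this proposition; it is quoted from \cite[\S4.3]{LTV}. Your arguments for the norm computation (ii) and the almost-orthogonality of the canonical basis (iii) are correct once the orthogonality (i) of the PBW vectors is known. However, your justification of (i) contains a genuine error: the order $\unlhd$ of Definition~\ref{def: 7.2.4} is \emph{not} a convex order on the positive roots. Already in type $A_2$ one has $[1]\lhd[2]\lhd[1,2]$, whereas convexity would force $\alpha_1+\alpha_2$ to lie strictly between $\alpha_1$ and $\alpha_2$; in $A_3$ one likewise finds $[1,2]\lhd[3]\lhd[1,3]$. Consequently the Levendorskii--Soibelman straightening you invoke is not available for this order, and the ``peel off the largest root vector'' step is unjustified as written.

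The orthogonality is nevertheless true, and the gap can be closed in either of two ways. The geometric route you mention in passing is the most transparent: by the last item of Proposition~\ref{prop: 7.4.20}, $E(\a)$ is, up to a power of $q^{1/2}$, the image under $\Theta$ of the orbit class $\gamma_\a$, and under the identification of Kashiwara's form with the natural pairing on $\K_\varphi$ distinct orbits are automatically orthogonal; this is essentially how the cited reference proceeds. Alternatively, one can argue algebraically without convexity by exploiting the fact that $\unlhd$ sorts first by \emph{end}: the factors $E([i,k])$ with $k$ maximal among $e(\a)$ sit at the far right of the monomial $E(\a)$, every other factor is annihilated by $r_k$, and one checks the explicit coproduct $\mathbf r(E([i,k]))=E([i,k])\otimes1+1\otimes E([i,k])+(1-q)\sum_{i\le m<k}E([m{+}1,k])\otimes E([i,m])$, which lets one strip off the top layer and induct on the maximal end. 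Either repair restores (i), after which your (ii) and (iii) go through verbatim.
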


\begin{notation}
We denote by $\{E^*(\a)\}$ and $\{G^*(\a)\}$ 
the dual basis of $\{E(\a)\}$ and $\{G(\a)\}$ with 
respect to the Kashiwara scalar product.
\end{notation}

\remk Note that $\{G^*(\a)\}$  is referred as the dual canonical basis.

\begin{prop}(cf. \cite[\S 3.4]{LNT})
Let $\a=\sum_{i\leq j}a_{ij}[i,j]$. Then 
 \begin{itemize}
  \item We have 
 \[
  E^*(\a)=\frac{\prod\limits_{i\leq j}h_{a_{ij}}(q)}{(1-q)^{\deg(\a)}}E(\a)=
  \overrightarrow{\prod\limits_{ij}}q^{1/2\binom{a_{ij}}{2}}E^*([i,j])^{a_{ij}},
 \]
here the product is taken with respect to the order $\leq $. 

\item And 
  \[
   E^*(\a)=\sum_{\b\leq \a}P_{\a, \b}(q)G^*(\b).
  \]

 \end{itemize} 
\end{prop}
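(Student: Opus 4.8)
The plan is to deduce both identities from the defining property of $E(\a)$ as an ordered product of the bracketed generators $E([i,j])$, combined with the standard $q$-combinatorics of divided powers. For the first equality, I would start from the definition
\[
 E(\a)=\frac{1}{\prod_s [a_{i_s,j_s}]_{q^{1/2}}!}\,E([i_1,j_1])^{a_{i_1,j_1}}\cdots E([i_r,j_r])^{a_{i_r,j_r}},
\]
where the segments are arranged by $\unlhd$. The claim $E^*(\a)=\frac{\prod_{i\le j}h_{a_{ij}}(q)}{(1-q)^{\deg(\a)}}E(\a)$ is immediate from Proposition \ref{prop: 7.4.21}: the Kashiwara form is diagonal on the $E(\a)$ with the explicit scalar $(E(\a),E(\a))=(1-q)^{\deg(\a)}/\prod h_{a_{ij}}(q)$, so the dual basis vector is just $E(\a)$ rescaled by the reciprocal of that scalar. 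The second expression, writing $E^*(\a)$ as an ordered product $\overrightarrow{\prod}_{ij} q^{1/2\binom{a_{ij}}{2}}E^*([i,j])^{a_{ij}}$, then reduces to checking the single-segment case: one must verify that $E^*([i,j])^{a} = \frac{(1-q)^a}{h_a(q)}\cdot q^{-1/2\binom{a}{2}}\cdot a\text{-th divided power of }E([i,j])$ up to the bookkeeping constants, i.e.\ that the normalization constant $[a]_{q^{1/2}}!$ relating $E([i,j])^a$ to its divided power matches $h_a(q)$ up to the stated power of $q$. This is the identity $[a]_{q^{1/2}}! = q^{-\frac14 a(a-1)}\,\frac{h_a(q)}{(1-q)^a}$ (after clearing the $q^{1/2}$ versus $q$ conventions), which I would verify by induction on $a$ using $[a]_{q^{1/2}} = \frac{q^{a/2}-q^{-a/2}}{q^{1/2}-q^{-1/2}} = q^{-(a-1)/2}\frac{1-q^a}{1-q}$.

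For the second equality, $E^*(\a)=\sum_{\b\le\a}P_{\a,\b}(q)\,G^*(\b)$, I would dualize the triangular expansion already recorded in Proposition \ref{prop: 7.4.20}, namely $G(\a)=\sum_{\b\ge\a}P_{\b,\a}(q)E(\b)$. Writing the transition matrix $P = (P_{\b,\a})$ so that $G = P\cdot E$ in the $E$-basis (upper triangular for $\le$ with $1$'s on the diagonal), passing to dual bases with respect to a \emph{fixed} bilinear form inverts and transposes: $G^* = (P^{-1})^{T} E^*$, hence $E^* = P^{T} G^*$, which is exactly $E^*(\a) = \sum_{\b} P_{\a,\b}(q) G^*(\b)$ with the sum over $\b\le\a$ by the triangularity. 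The one subtlety is that Proposition \ref{prop: 7.4.21} only asserts $(G(\a),G(\b)) = \delta_{\a,\b}$ modulo $q^{1/2}\A$ rather than exactly; I would note that $\{G^*(\b)\}$ is nonetheless an honest basis dual to $\{G(\b)\}$, and that the purely formal linear-algebra manipulation above uses only that $\{E(\b)\}$ and $\{G(\b)\}$ are two bases related by the unitriangular matrix $P$, together with the definition of the respective dual bases — the approximate orthogonality of the canonical basis is not needed for this identity, only for identifying $\{G^*(\b)\}$ with the dual canonical basis in the earlier remark.

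The main obstacle I anticipate is purely notational rather than conceptual: reconciling the $q^{1/2}$-conventions in the quantum group (where everything is written with $q^{1/2}$, and $[m]_{q^{1/2}}$, $[m]_{q^{1/2}}!$ appear) against the $q$-conventions of the Kashiwara form and the polynomials $h_k(q)$, $P_{\b,\a}(q)$. Concretely, one has to track where the half-integer powers of $q$ collapse: the product of normalization constants $\prod_s [a_{i_s,j_s}]_{q^{1/2}}!$ in the denominator of $E(\a)$ must be matched against $\prod_{i\le j} h_{a_{ij}}(q)/(1-q)^{\deg\a}$, and the discrepancy is absorbed precisely into the factors $q^{1/2\binom{a_{ij}}{2}}$ and into the difference between $E([i,j])^{a}$ and its $q$-divided power. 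Getting every exponent right — in particular checking that the cross-terms between different segments contribute no extra power of $q$ because distinct $E^*([i,j])$ for the $\unlhd$-ordered segments $q$-commute in the required way (this is where the specific order in Definition \ref{def: 7.2.4} enters, mirroring the shuffle structure of $U^+$) — is the part that requires care, but it is a finite check on root-datum combinatorics of type $A_\infty$ and presents no real difficulty.
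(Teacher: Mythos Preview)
The paper does not prove this proposition: it is quoted from \cite[\S 3.4]{LNT} and stated without argument, so there is no ``paper's own proof'' to compare against. Your proposal is correct and is essentially the standard derivation one finds in the cited reference. The first equality is immediate from the orthogonality in Proposition~\ref{prop: 7.4.21}, and your identity $q^{\frac{1}{2}\binom{a}{2}}[a]_{q^{1/2}}! = h_a(q)/(1-q)^a$ is exactly the constant-matching needed for the product formula; your dualization argument for the second item (computing $(E^*(\a),G(\c))$ two ways) is the clean way to get $E^*(\a)=\sum_{\b\le\a}P_{\a,\b}(q)G^*(\b)$, and you are right that only the unitriangularity of $G$ over $E$ is used, not the near-orthonormality of the $G(\a)$.

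One small remark: you flag possible cross-terms between different segments as something to check. In fact no check is needed here, because both $E(\a)$ and the claimed product $\overrightarrow{\prod}q^{\frac12\binom{a_{ij}}{2}}E^*([i,j])^{a_{ij}}$ are ordered products in the \emph{same} order (the paper writes $\leq$ in the proposition but $\unlhd$ in the definition of $E(\a)$; these are meant to agree). So the comparison is segment-by-segment and the only nontrivial content is the single-segment constant you already isolated.
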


\begin{example}
 Let $\a=[1]+[2]$. Then 
 \[
  E^*(\a)=E([1])E([2])=E(\a),
 \]
 and 
 \[
  G^*([1,2])=E^*([1,2]),
 \]
\[
 G^*(\a)=E^*(\a)-q^{1/2}E^*([1,2]).
\]
\end{example}

Finally, we establish the relation of 
between the algebras $\aR$ and $U^+$.

\begin{definition}
 Let $B$ be the polynomial algebra generated by 
 the set of coordinate functions $\{t_{ij}: i< j\}$.
 Following \cite[\S 2.6]{LNT}, we write $t_{ii}=1$, 
 $t_{ij}=0$ if $i>j$, and index the non-trivial $t_{i,j}$'s 
 by segments, namely, $t_{[ij]}=t_{i,j-1}$ for $i<j$. 
\end{definition}

\begin{prop}\label{prop: 7.4.25} \cite[Corollary 2.18]{Deng23},
 We have an algebra isomorphism $\phi: B\simeq \aR$ by identifying 
 $t_{[ij]}$ with $L_{[ij]}$ for all $i<j$.  
\end{prop}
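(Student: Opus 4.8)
The plan is to reduce the statement to Zelevinsky's classification theorem, which has already been recalled: the key point is that the standard modules $\pi(\a)$ form a $\Z$-basis of $\aR$ and that, with respect to the multiplication on $\aR$, each $\pi(\a)$ is literally a monomial in the irreducibles $L_\Delta$ attached to single segments. Granting this, $B$ and $\aR$ are both polynomial rings whose free generators are indexed by the same set of segments, and $\phi$ simply matches the two generating sets.

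First I would recall that $\aR=\bigoplus_{n\ge 0}\aR_n$ is a commutative associative $\Z$-algebra, the product being (normalized) parabolic induction; this is part of the Hopf-algebra structure already cited from \cite[Proposition 2.17]{Deng23}. In particular, for a multisegment $\a=\{\Delta_1,\dots,\Delta_r\}$ the standard module $\pi(\a)=L_{\Delta_1}\times\cdots\times L_{\Delta_r}$ is independent of the ordering of the factors, so that if we write $\a=\sum_{\Delta}m_\Delta(\a)\,\Delta$ (viewing a multisegment as a finite multiset of segments) then $\pi(\a)=\prod_{\Delta}L_\Delta^{\,m_\Delta(\a)}$.

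Next, by \eqref{eqn-decom-standard-irr} the matrix $\bigl(m(\b,\a)\bigr)_{\b,\a\in\O}$ is upper unitriangular for the partial order $\le$; since $\{L_\b:\b\in\O\}$ is a $\Z$-basis of $\aR$, it follows that $\{\pi(\a):\a\in\O\}$ is also a $\Z$-basis. Combining this with the previous paragraph, the set of monomials in the family $(L_\Delta)_{\Delta}$, where $\Delta$ runs over segments with support in $\Z$ (we have fixed $\rho=\mathbf 1$, so these are exactly the relevant segments), coincides with $\{\pi(\a):\a\in\O\}$, a $\Z$-basis of $\aR$. Hence the $L_\Delta$ are algebraically independent over $\Z$ and generate $\aR$, i.e. $\aR$ is the polynomial ring $\Z[L_\Delta:\Delta]$.

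Finally, by the conventions of \cite[\S 2.6]{LNT} recalled just above, the polynomial generators $t_{[ij]}$ of $B$ are indexed bijectively by the same set of segments $[i,j]$. Therefore the assignment $t_{[ij]}\mapsto L_{[ij]}$ carries a free generating set of the polynomial ring $B$ onto a free generating set of the polynomial ring $\aR$, and extends uniquely to an algebra homomorphism $\phi\colon B\to\aR$ by the universal property of $B$; it is surjective because the $L_{[ij]}$ generate $\aR$, and injective because it maps the monomial $\Z$-basis of $B$ bijectively onto the monomial $\Z$-basis $\{\pi(\a)\}$ of $\aR$. So $\phi$ is an isomorphism. I do not expect a genuine obstacle here: the only substantive ingredient is Zelevinsky's theorem that the standard modules form a basis, which is taken as known; the remaining work is the bookkeeping of identifying the two indexing sets of segments and using commutativity of the product to see that each $\pi(\a)$ is a monomial.
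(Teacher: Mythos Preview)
Your argument is correct. Note, however, that the paper does not actually prove this proposition: it is stated with a bare citation to \cite[Corollary 2.18]{Deng23}, and earlier in Section~2 the paper already remarks that $\aR$ is a polynomial algebra by reference to \cite[Proposition 2.17 and Corollary 2.18]{Deng23}. So there is no ``paper's own proof'' to compare against; the result is simply imported.

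That said, what you wrote is the standard argument one would give (and is essentially Zelevinsky's original proof): commutativity of parabolic induction makes each $\pi(\a)$ a monomial in the $L_\Delta$, unitriangularity of \eqref{eqn-decom-standard-irr} shows the $\pi(\a)$ form a $\Z$-basis, hence the $L_\Delta$ are free polynomial generators, and matching them with the $t_{[ij]}$ gives the isomorphism. Nothing is missing.
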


\begin{definition}(\cite{BZ2},  \cite[\S 3.5]{LNT})
 Let $B_q$ be the quantum analogue of $B$ generate by 
 $\{T_{ij}: i<j\}$, where $T_{ij}$ is considered as 
 the $q$-analogue of $t_{ij}$. Also, we write $T_{ii}=1$
 and $T_{ij}=0$ if $i>j$. And we will indexed the non-trivial 
 $T_{ij}$ by $T_{[i,j-1]}$. The generators $T_s$'s satisfies 
 the following relations. Let $s>s'$ be two segments.
 Then 
 \begin{displaymath}
 T_{s'}T_{s}=
 \left\{ \begin{array}{cc}
    q^{- 1/2(\wt(s'), \wt(s))}T_sT_{s'}+(q^{-1/2}-q^{1/2})T_{s\cap s'}T_{s\cup s'},
    &\text{  if } s \text{ and } s' \text{ are linked, }\\
    q^{-1/2(\wt(s'), \wt(s))}T_sT_{s'}, \text{ otherwise }.
   \end{array} \right.
\end{displaymath}
\end{definition}

\begin{prop}\cite[\S 3.5]{LNT} \label{prop: 7.4.23}
There exist an algebra isomorphic morphism 
\[
 \iota: U_q^+\rightarrow B_q, 
\]
given by $\iota(E^*([i,j]))=T_{[i,j]}$. Moreover, for $\a=\sum_{i\leq j}a_{ij}[i,j]$,
we have 
\[
 \iota(E^*(\a))=\overrightarrow{\prod_{i\leq j}}q^{1/2\binom{a_{ij}}{2}}T_{[i,j]}^{a_{ij}},
\]
here the multiplication is taken with respect to the order $<$.
\end{prop}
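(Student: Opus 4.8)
The plan is to realize $B_q$ as an explicit presentation of $U_q^+$ through the dual PBW generators $E^*([i,j])$, exploiting the factorization of $E^*(\a)$ into ordered monomials already at our disposal. First note that $E^*([i])=E([i])=E_i$: by Proposition \ref{prop: 7.4.21} one has $(E([i]),E([i]))=(1-q)^1/h_1(q)=1$, so $E^*([i])$ is the self-dual vector $E([i])$, which is $E_i$ by definition. Hence the family $\{E^*([i,j]):i\le j\}$ generates $U_q^+$ as a $\Q(q^{1/2})$-algebra, since already the $E_i$ do.

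The heart of the argument is to verify that the $E^*([i,j])$ satisfy inside $U_q^+$ exactly the defining relations of $B_q$, i.e.\ for two segments $s>s'$ that are linked,
\[
E^*(s')\,E^*(s)=q^{-\frac12(\wt(s'),\wt(s))}E^*(s)\,E^*(s')+(q^{-1/2}-q^{1/2})\,E^*(s\cap s')\,E^*(s\cup s'),
\]
and $E^*(s')E^*(s)=q^{-\frac12(\wt(s'),\wt(s))}E^*(s)E^*(s')$ when $s$ and $s'$ are not linked. I would derive this from the compatibility of Kashiwara's form with the bialgebra structure, i.e.\ the adjunction $(xy,z)=(x\otimes y,\Delta z)$ for the twisted coproduct $\Delta$ of $U_q^+$: since $\{E(\a)\}$ and $\{E^*(\a)\}$ are dual bases, the coefficient of $E^*(\b)$ in the product $E^*(s')E^*(s)$ equals the coefficient of $E(s')\otimes E(s)$ in $\Delta E(\b)$. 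Plugging in the standard deconcatenation formula for $\Delta E([i,j])$ --- a sum of terms $E([i+1,j])\otimes E([i,m])$ with explicit powers of $q^{1/2}$, together with the twisted multiplicativity of $\Delta$ on two-segment multisegments --- one reads off both the coefficient $q^{-\frac12(\wt(s'),\wt(s))}$ and the correction term $(q^{-1/2}-q^{1/2})E^*(s\cap s')E^*(s\cup s')$, after the bookkeeping of the $q$-powers relating $E^*$ to $E$ through the norms $h_k(q)$ of Proposition \ref{prop: 7.4.21}. (Equivalently, one may embed $U_q^+$ into its quantum shuffle algebra, where $E^*([i,j])$ is the word $(i,i+1,\dots,j)$ and the product is the quantum shuffle; the identity then reduces to an elementary combinatorial check on shuffles of consecutive intervals.)

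Granting these relations, the assignment $T_{[i,j]}\mapsto E^*([i,j])$ respects the presentation of $B_q$, hence extends to a $\Q(q^{1/2})$-algebra homomorphism $\psi\colon B_q\to U_q^+$, which is surjective by the generation statement above. To see $\psi$ is an isomorphism I would argue by weights: declaring $\wt(T_{[i,j]})=\alpha_i+\cdots+\alpha_j$ makes $B_q$ an $\N^{(\Z)}$-graded algebra, and iterating the straightening relations --- which terminates because the correction replaces $\{s,s'\}$ by the strictly smaller multisegment $\{s\cap s',s\cup s'\}$ --- shows that the normally ordered monomials $\overrightarrow{\prod_{i\le j}}T_{[i,j]}^{a_{ij}}$, indexed by multisegments, span each homogeneous piece $(B_q)_\varphi$; thus $\dim(B_q)_\varphi\le\#S(\varphi)$. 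On the other hand, $\psi$ carries this spanning family onto $\{q^{-\frac12\sum\binom{a_{ij}}{2}}E^*(\a)\}$, a basis of $(U_q^+)_\varphi$ of cardinality $\#S(\varphi)$ (the homogeneous components of $U_q^+$ having that dimension). A surjection mapping a spanning family of the correct size onto a basis is an isomorphism, so $\psi$ is one and $\iota:=\psi^{-1}$ is the desired morphism with $\iota(E^*([i,j]))=T_{[i,j]}$.

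The "moreover" part is then immediate: applying the algebra homomorphism $\iota$ to the factorization $E^*(\a)=\overrightarrow{\prod_{i\le j}}q^{1/2\binom{a_{ij}}{2}}E^*([i,j])^{a_{ij}}$ gives $\iota(E^*(\a))=\overrightarrow{\prod_{i\le j}}q^{1/2\binom{a_{ij}}{2}}T_{[i,j]}^{a_{ij}}$, the product being taken in the order $<$ on segments. I expect the main obstacle to be the middle step: establishing the straightening relations for the dual PBW generators with precisely the stated $q$-exponents and signs is a genuine computation inside $U_q^+$ (with the twisted coproduct, or in the shuffle model), and some care is needed to reconcile the various order conventions on segments ($\lhd$, $\unlhd$, $<$) with Zelevinsky's notion of linkedness.
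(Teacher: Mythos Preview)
The paper does not give a proof of this proposition: it is stated with a citation to \cite[\S 3.5]{LNT} and used as a black box. So there is no argument in the paper to compare your proposal against.

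That said, your outline is the standard route and is essentially what one finds in \cite{LNT}: identify the dual PBW generators $E^*([i,j])$, verify that they obey the quadratic straightening relations defining $B_q$, and conclude by a dimension count on weight spaces. Your honesty about the middle step is appropriate --- the computation that the $E^*(s)$ satisfy exactly those relations (with the correct $q$-powers) is the only genuine content, and it is indeed most cleanly done in the quantum shuffle realization of $U_q^+$ (cf.\ \cite{L}), where $E^*([i,j])$ is the word $i(i{+}1)\cdots j$ and the relation becomes a finite shuffle identity. Your surjectivity-plus-dimension argument for the isomorphism is correct, and the ``moreover'' part is, as you say, immediate from the earlier factorization of $E^*(\a)$. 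One small caution: in the dimension step you should invoke the PBW/normal-ordering with respect to the order $\unlhd$ (Definition~\ref{def: 7.2.4}) used to define $E(\a)$, not the partial order $<$ on multisegments; the paper's phrasing ``with respect to the order $<$'' in the statement refers to the total order on segments, and keeping these straight is exactly the bookkeeping you flag at the end.
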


\begin{example}
Let $\a=[1]+[2]$, then 
\[
 \iota(E^*(\a))=T_{[1]}T_{[2]}.
\]

\end{example}

\begin{prop}\cite[Theorem 12]{LNT}
By specializing at $q=1$, the dual canonical basis $\{G^*(\a): \a\in \O\}$ gives rise
to a well defined basis for $B$, denoted by $\{g^*(\a): \a\in \O\}$. Moreover, 
the morphism $\phi$ (cf. Proposition \ref{prop: 7.4.25}) sends $g^*(\a)$ to $L_{\a}$ for all $\a\in \O$. 
\end{prop}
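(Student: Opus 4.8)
The plan is to transport the defining unitriangular expansion of the dual canonical basis through the chain of morphisms
\[
U_q^+\xrightarrow{\ \iota\ }B_q\xrightarrow{\ \mathrm{sp}\ }B\xrightarrow{\ \phi\ }\aR,
\]
where $\mathrm{sp}$ is the specialization $q^{1/2}\mapsto 1$, $T_{[i,j]}\mapsto t_{[i,j]}$ (well defined because at $q^{1/2}=1$ the relations of $B_q$ degenerate to commutativity), and then to match the outcome with Zelevinsky's decomposition of standard representations. The single genuinely deep input is borrowed from \cite{LNT}; everything else is bookkeeping.

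First I would settle well-definedness. The point is that $G^*(\a)$ cannot be specialized at $q=1$ inside $U_q^+$, since its coordinates against the $E(\b)$ involve the factor $(1-q)^{-\deg(\a)}$ of Proposition \ref{prop: 7.4.21}; the remedy is to expand it against $\{E^*(\b)\}$. Inverting the unitriangular expansion $E^*(\a)=\sum_{\b\le\a}P_{\a,\b}(q)G^*(\b)$, whose matrix for the order $\le$ has entries in $\Z[q]$ (the orbital Kazhdan--Lusztig polynomials of Proposition \ref{prop: 7.4.20}), gives $G^*(\a)=\sum_{\b\le\a}Q_{\a,\b}(q)E^*(\b)$ with $Q_{\a,\b}(q)\in\Z[q]$ and $Q_{\a,\a}=1$, since the inverse of a unitriangular matrix over $\Z[q]$ is again unitriangular over $\Z[q]$. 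By Proposition \ref{prop: 7.4.23}, $\iota(E^*(\b))=\overrightarrow{\prod_{i\le j}}q^{1/2\binom{b_{ij}}{2}}T_{[i,j]}^{b_{ij}}$, so $\iota(G^*(\a))$ is a $\Z[q^{1/2},q^{-1/2}]$-combination of the ordered monomials in the $T_{[i,j]}$, on which $\mathrm{sp}$ is defined; hence $g^*(\a):=\mathrm{sp}(\iota(G^*(\a)))\in B$ is well defined. Since $\mathrm{sp}$ sends those ordered monomials to the monomial basis of the polynomial algebra $B$ and $Q_{\a,\a}(1)=1$, the change of basis from $\{g^*(\a)\}$ to that monomial basis is unitriangular for $\le$, so $\{g^*(\a):\a\in\O\}$ is a basis of $B$.

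Next I would prove $\phi(g^*(\a))=L_\a$. Since $\phi$ is an algebra isomorphism with $\phi(t_{[i,j]})=L_{[i,j]}$ (Proposition \ref{prop: 7.4.25}), for any multisegment $\b=\sum_{i\le j}b_{ij}[i,j]$ one has $\phi(\mathrm{sp}(\iota(E^*(\b))))=\prod_{i\le j}L_{[i,j]}^{b_{ij}}=\pi(\b)$. Applying $\phi\circ\mathrm{sp}\circ\iota$ to $E^*(\a)=\sum_{\b\le\a}P_{\a,\b}(q)G^*(\b)$ then gives $\pi(\a)=\sum_{\b\le\a}P_{\a,\b}(1)\,\phi(g^*(\b))$ for every $\a\in\O$. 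Comparing with Zelevinsky's decomposition $\pi(\a)=\sum_{\b\le\a}m(\b,\a)L_\b$ of (\ref{eqn-decom-standard-irr}) and using the identity $m(\b,\a)=P_{\a,\b}(1)$ --- the fact, underlying Proposition \ref{prop: 7.4.20} and established in \cite{LNT}, that Zelevinsky's decomposition numbers are the values at $q=1$ of the orbital Kazhdan--Lusztig polynomials --- I get $\sum_{\b\le\a}P_{\a,\b}(1)\phi(g^*(\b))=\sum_{\b\le\a}P_{\a,\b}(1)L_\b$ for all $\a$; the unitriangular integer matrix $(P_{\a,\b}(1))_{\b\le\a}$ being invertible, this forces $\phi(g^*(\a))=L_\a$.

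The hard part is exactly the input $m(\b,\a)=P_{\a,\b}(1)$, that is, Zelevinsky's conjecture identifying the combinatorial decomposition numbers with the geometric Kazhdan--Lusztig polynomials; its proof (through Lusztig's geometry of orbital varieties, or through the quantum $\mathfrak{sl}_\infty$-module picture together with Ariki's theorem) is substantial, but here it is imported directly from \cite{LNT}. What remains is routine: the degeneration of $B_q$ at $q^{1/2}=1$, the functoriality of $\iota$, $\mathrm{sp}$ and $\phi$, and the observation that inverting a unitriangular polynomial matrix keeps it unitriangular and polynomial.
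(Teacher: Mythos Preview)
The paper does not supply its own proof of this proposition: it is stated with the citation \cite[Theorem 12]{LNT} and no argument follows, consistent with the author's remark at the start of Section~\ref{sec-recall-Lusztig} that no new results are contained there. So there is nothing in the paper to compare your proof against.

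That said, your argument is a correct and standard reconstruction of why the statement holds. You rightly identify that the only substantial input is the equality $m(\b,\a)=P_{\a,\b}(1)$ (Zelevinsky's $p$-adic conjecture, proved via Ginzburg/Chriss--Ginzburg and, in the quantum-group formulation, via Ariki), and that the rest is formal: the unitriangular change of basis from $E^*$ to $G^*$ has polynomial entries, the specialization $q^{1/2}\to 1$ is well defined on $B_q$, and $\phi\circ\mathrm{sp}\circ\iota$ sends $E^*(\b)$ to $\pi(\b)$. This is exactly the logic behind \cite[Theorem 12]{LNT}, so your write-up is a faithful expansion of the cited result rather than an alternative route.
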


\section{Partial BZ operator and Poincar\'e's series}

In this section we will deduce a geometric description for the 
partial BZ operator, using results of last section.

\begin{definition}\cite[\S 2.2]{L}
Kashiwara \cite{K}  introduced some $q$-derivations $E'_i$ in $\End(U_q^{+})$
for all $i\in \Z$ satisfying 
\[
 E_i'(E_j)=\delta_{ij}, ~E_i'(uv)=E_i'(u)v+q^{-1/2(\alpha_i, \wt(u))}uE_i'(v). 
\]
\end{definition}

\begin{example}
Simple calculation shows that  
\[
E_i'(E([j,k]))=\delta_{i, k}(1-q)E([j, k-1]), 
\]
 by taking dual, we get 
 \[
  E_i'(E^*([j,k]))=\delta_{i, k}E^*([j, k-1]), 
 \] 
\end{example}

\begin{prop}
We have 
\[
 (E_i'(u), v)=(u, E_iv),
\]
where $( , )$ is the scalar product introduced in 
Proposition \ref{prop: 7.4.25}. 
\end{prop}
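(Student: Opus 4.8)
The plan is to recognize this identity as a symmetric restatement of the defining adjunction property of Kashiwara's bilinear form. Recall that the scalar product of Proposition \ref{prop: 7.4.21} is, by construction, Kashiwara's form on $U_q^+$; among all homogeneous symmetric $\Q(q^{1/2})$-bilinear forms it is characterized by $(1,1)=1$ together with the property that each $E_i'$ is, up to the scalar $(E_i,E_i)$, the adjoint of \emph{left} multiplication by $E_i$, i.e. $(E_iu,v)=(E_i,E_i)\,(u,E_i'(v))$ for all $u,v\in U_q^+$. Here the operator $E_i'$ is uniquely determined by $E_i'(E_j)=\delta_{ij}$ and the twisted Leibniz rule, so it coincides with Kashiwara's skew-derivation; the twist $q^{-1/2(\alpha_i,\wt(u))}$ sitting on the right-hand factor is exactly the one compatible with left multiplication.

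First I would check that the normalization constant is $1$. By Proposition \ref{prop: 7.4.21} applied to $\a=\{[i]\}$ (so $\deg(\a)=1$ and the only coefficient is $a_{ii}=1$, whence $h_{a_{ii}}(q)=h_1(q)=1-q$), one gets $(E_i,E_i)=\dfrac{1-q}{1-q}=1$. Hence $(E_iu,v)=(u,E_i'(v))$ for all $u,v$.

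Then, using the symmetry of the form twice, for arbitrary $u,v\in U_q^+$ one obtains
\[
 (E_i'(u),v)=(v,E_i'(u))=(E_iv,u)=(u,E_iv),
\]
which is the asserted formula.

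The only point that genuinely needs the literature is the identification of the form of Proposition \ref{prop: 7.4.21} — given there by an explicit diagonal formula on the PBW basis $\{E(\a)\}$ — with the recursively defined Kashiwara form, for which the adjunction is built in; this is precisely the content of \cite{K} (reformulated in \cite[\S 4.3]{LTV}) that Proposition \ref{prop: 7.4.21} already invokes, so no new work is required. Should one prefer a self-contained argument, one can instead reduce by bilinearity to basis vectors, expand $u$ and $v$ in the dual PBW basis, use $E_i'(E^*([j,k]))=\delta_{ik}E^*([j,k-1])$ together with the twisted Leibniz rule to compute $E_i'(E^*(\a))$, and match the outcome against the matrix coefficients of $E_i\cdot(-)$ written via the $B_q$-relations of Proposition \ref{prop: 7.4.23} and the duality $(E^*(\a),E(\b))=\delta_{\a,\b}$; the bookkeeping of those matrix coefficients is the main obstacle there, and is exactly what the adjunction formulation is designed to circumvent.
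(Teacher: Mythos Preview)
The paper does not supply a proof of this proposition; it is stated as a known property of Kashiwara's bilinear form, with the citation to \cite{K} carried implicitly through Proposition~\ref{prop: 7.4.21} (the cross-reference to Proposition~\ref{prop: 7.4.25} in the statement is a typo, as you noticed). Your argument is correct and is exactly the standard justification: the form in Proposition~\ref{prop: 7.4.21} is by declaration Kashiwara's form, whose characterizing property is the adjunction $(E_i u,v)=(u,E_i'(v))$ (your normalization check $(E_i,E_i)=1$ is right and removes any ambiguity about the scalar), and since the explicit diagonal formula on the PBW basis shows the form is symmetric, the desired identity follows by swapping the two entries twice. Nothing more is needed; your closing remark about a direct PBW-basis verification is a fair description of the alternative but is indeed unnecessary once one invokes \cite{K} as the paper does.
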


Note that by identifying 
the algebra $U_q^{+}$ and 
$B_q$ via $\iota$, we get a version of  
$q$-derivations in $\End(B_q)$. 

\begin{definition}\label{def: 7.3.4}
By specializing at $q=1$, 
the $q$ derivation $E_i'$ gives a derivation $e_i'$ of the algebra $B$ by  
 \[
  e_i'(t_{[jk]})=\delta_{ik}t_{[j,k-1]}, 
  ~ e_i'(uv)=e_i'(u)v+ue_i'(v). 
 \] 
\end{definition}

\begin{prop}\label{prop: 7.6.5}
Let 
\[
 D^i: =\sum_{n=0}^{\infty}\frac{1}{n!}{e_i'}^n.
\]
Then the morphism $D^i: B\rightarrow B$ is an 
algebraic morphism. Moreover, if we identify 
the algebras $\mathcal{R}$ and $B$ via $\phi$ (cf. Proposition \ref{prop: 7.4.25}), then 
the morphism $D^i$ coincides with  the partial BZ operator
$\D^i$.  

\end{prop}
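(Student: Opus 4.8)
The plan is to prove the two assertions—first that $D^i\colon B\to B$ is an algebra homomorphism, and second that it agrees with the partial BZ operator $\D^i$ under the isomorphism $\phi$—separately, both by exploiting the fact that $e_i'$ is a derivation. For the first claim, I would argue purely formally: whenever $e$ is a locally nilpotent derivation of a commutative $\Q$-algebra, the operator $\exp(e)=\sum_{n\ge 0}\tfrac1{n!}e^n$ is well defined and is an algebra homomorphism. Local nilpotence of $e_i'$ is immediate from the grading: $e_i'$ strictly decreases the total weight (it lowers $\varphi(i)$ by one, as is clear from $e_i'(t_{[jk]})=\delta_{ik}t_{[j,k-1]}$ and the Leibniz rule), so on each graded piece of $B$ some power of $e_i'$ vanishes and the sum defining $D^i$ is finite. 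That $\exp(e)(uv)=\exp(e)(u)\exp(e)(v)$ then follows from the binomial identity $e^n(uv)=\sum_{m}\binom{n}{m}e^m(u)e^{n-m}(v)$, which is exactly the Leibniz rule iterated; summing over $n$ and reindexing gives the multiplicativity. I would state this as a short lemma or simply cite it as standard.

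For the identification with $\D^i$, the strategy is to check the two homomorphisms agree on algebra generators, since $B$ is a polynomial algebra on the $t_{[jk]}$ (Proposition \ref{prop: 7.4.25}) and $\phi$ identifies $t_{[jk]}$ with $L_{[jk]}$. On a single segment $\Delta=[j,k]$, the definition of $e_i'$ gives $e_i'(t_{[jk]})=\delta_{ik}t_{[j,k-1]}$, hence $e_i'^{\,2}(t_{[jk]})=0$, so $D^i(t_{[jk]})=t_{[jk]}+\delta_{ik}t_{[j,k-1]}$. Translating through $\phi$ and recalling $\Delta^{(i)}=\Delta^-$ when $e(\Delta)=i$ and $\Delta$ otherwise (Definition \ref{def: 2.2.3}), this reads $D^i(L_{[jk]})=L_{[jk]}+L_{[jk]^{(i)}}$ (the second term absent when $e(\Delta)\ne i$), which is precisely the value of the partial BZ operator $\D^i$ on the irreducible $L_\Delta$ attached to a single segment—this is the defining formula for $\D^i$ on segments in \cite[Definition 2.19]{Deng23} (and matches the computation used in the proof of the Lemma proving \eqref{eq: 7.41}). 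Since both $D^i$ and $\D^i$ are algebra homomorphisms $\aR\to\aR$ (the latter by its construction in \cite{Deng23}, or one may instead invoke that $\D^i$ is determined on standard modules $\pi(\a)$, which are products of $L_\Delta$'s, via \eqref{eq: 7.41}), agreement on the generators $L_\Delta$ forces $D^i=\D^i$.

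The main obstacle, and the only point requiring care, is matching conventions: one must verify that the $q\to1$ specialization is compatible on the nose, i.e. that the Kashiwara $q$-derivation $E_i'$ descends under $\iota$ (Proposition \ref{prop: 7.4.23}) and the $q=1$ specialization of $\{G^*(\a)\}$ to $\{g^*(\a)\}$ to the classical derivation $e_i'$ of $B$, with no stray powers of $q^{1/2}$ surviving. Here the key computation is $E_i'(E^*([j,k]))=\delta_{i,k}E^*([j,k-1])$ (recorded in the Example following Definition \ref{def: 7.3.4}), which has no $q$-factor precisely because of the normalization of the dual PBW generators; combined with $\iota(E^*([j,k]))=T_{[j,k]}$ this shows the induced derivation on $B_q$ sends $T_{[j,k]}\mapsto \delta_{ik}T_{[j,k-1]}$, and setting $q=1$ gives $e_i'$ as in Definition \ref{def: 7.3.4}. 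Once this compatibility is in hand, the fact that $\D^i$ on $\aR$ is defined (in \cite{Deng23}) exactly as the $q=1$ specialization of the operator dual to multiplication by $E_i$—equivalently, the operator with the Leibniz property and value $\delta_{ik}L_{[j,k-1]}$ on $L_{[j,k]}$—closes the argument. I would therefore organize the write-up as: (1) local nilpotence and the exponential-of-a-derivation lemma; (2) the generator-by-generator computation; (3) a remark reconciling it with \cite[Definition 2.19]{Deng23} via \eqref{eq: 7.41}.
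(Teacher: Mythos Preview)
Your proposal is correct and follows essentially the same route as the paper: the iterated Leibniz identity $e_i'^n(uv)=\sum_{r+s=n}\binom{n}{r}e_i'^r(u)e_i'^s(v)$ to get multiplicativity of $D^i$, followed by the generator check $D^i(t_{[j,k]})=t_{[j,k]}+\delta_{ik}t_{[j,k-1]}$ versus $\D^i(L_{[j,k]})=L_{[j,k]}+\delta_{ik}L_{[j,k-1]}$. Your discussion of local nilpotence and of the $q\to 1$ compatibility is extra care the paper omits (it works directly with $e_i'$ as defined in Definition~\ref{def: 7.3.4}, already at $q=1$), but it does no harm.
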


\begin{proof}
For $n\in \N$, we have 
\[
 e'^n(uv)=\sum_{r+s=n}\binom{n}{r}e_{i}'^r(u)e_{i}'^s(v), 
\]
therefore 
\[
 D^i(uv)=\sum_{n=0}^{\infty}\frac{1}{n!}\sum_{r+s=n}\binom{n}{r}e_{i}'^r(u)e_{i}'^s(v)=D^i(u)D^i(v). 
\]
Finally, to show that $D^i$ and $\D^i$ coincides, 
it suffices to prove that 
\[
 \phi\circ D^i(t_{[j,k]})=\D^i\circ \phi(t_{[j,k]}),
\]
but we have 
\[
 D^i(t_{[j,k]}=t_{[j,k]}+\delta_{i,k}t_{[j, k-1]},
\]
 and 
 \[
  \D^i(L_{[j,k]})=L_{j,k}+\delta_{i,k}L_{[j, k-1]}.
 \]
Therefore, we have 
\[
 \phi\circ D^i(t_{[j,k]})=\D^i\circ \phi(t_{[j,k]}).
\]  
\end{proof}

\remk Without specializing at $q=1$, 
the operator $D^i$ is not an algebraic morphism.  
To get an algebraic morphism at the level of 
$U_q^+$, one should consider not only 
the summation of the iteration of $e_i'$'s
but all the derivations, which 
gives rise to an embedding into the  quantum shuffle algebras, cf. \cite{L}.

Next we show how to determine $\D^i(L_{\a})$
by the algebra $\K$ of Lusztig.

\begin{lemma}\label{lem: 7.6.6}
Let $n\in \N$, and $ \d\in \O$. Then we have  
\[
 E_i^nG(\b)=\sum_{\d}(E_i^nG(\b), E^*(\d))E(\d)=\sum_{\d}(G(\b), E_i'^nE^*(\d))E(\d),
\]
where $( , )$ is the Kashiwara scalar product. 
Moreover, for each $\b$ such that 
$(G(\b), E_i'^nE^*(\d))\neq 0$, we have 
\[
 \wt(\d)=\wt(\b)+n\alpha_i.
\]

\end{lemma}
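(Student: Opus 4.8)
The plan is to establish the two displayed identities by pure linear algebra in the Kashiwara scalar product, and then to derive the weight constraint from homogeneity. First I would recall that the family $\{E(\d):\d\in\O\}$ is a basis of $U_q^+$ and $\{E^*(\d)\}$ is its dual basis with respect to $(\,,\,)$ (Proposition \ref{prop: 7.4.21} and the Notation following it). Hence for any $x\in U_q^+$ one has the tautological expansion $x=\sum_{\d}(x,E^*(\d))\,E(\d)$, the sum being finite. Applying this to $x=E_i^nG(\b)$ gives the first equality
\[
 E_i^nG(\b)=\sum_{\d}(E_i^nG(\b),E^*(\d))\,E(\d).
\]
For the second equality I would use the adjunction between $E_i$ and the $q$-derivation $E_i'$, namely $(E_i'(u),v)=(u,E_iv)$ stated in the Proposition preceding Definition \ref{def: 7.3.4}; iterating this $n$ times (and being mildly careful that the identity is applied in the correct order, i.e. $(E_i'^{\,n}(u),v)=(u,E_i^n v)$, which follows by an immediate induction on $n$) yields
\[
 (E_i^nG(\b),E^*(\d))=(G(\b),E_i'^{\,n}E^*(\d)),
\]
and substituting this into the first expansion gives the stated formula.

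For the weight statement, I would invoke the $\N^{(\Z)}$-grading of $U_q^+$ (the Lemma following Definition \ref{def: 7.2.4}): $G(\b)$ is homogeneous of weight $\wt(\b)$, since $G(\b)=E(\b)$ modulo $q^{1/2}\mathcal L$ and the canonical basis element attached to a multisegment lies in the homogeneous component of that multisegment's weight; likewise $E(\d)$ and $E^*(\d)$ are homogeneous of weight $\wt(\d)$. Since $E_i$ has weight $\alpha_i$, the element $E_i^nG(\b)$ is homogeneous of weight $\wt(\b)+n\alpha_i$, hence in the expansion above only those $\d$ with $\wt(\d)=\wt(\b)+n\alpha_i$ can contribute; equivalently, $(G(\b),E_i'^{\,n}E^*(\d))\neq 0$ forces $\wt(\d)=\wt(\b)+n\alpha_i$ because the Kashiwara form pairs distinct homogeneous components to zero (again Proposition \ref{prop: 7.4.21}: $(E(\a),E(\b))$ is a nonzero multiple of $\delta_{\a,\b}$, so in particular it vanishes unless $\wt(\a)=\wt(\b)$).

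There is no serious obstacle here; the statement is essentially bookkeeping with the dual basis and the grading. The only point that needs a word of care is the order of iteration of the adjunction formula — one must check that $E_i'$ is adjoint to left multiplication by $E_i$ (not right multiplication), so that the $n$-fold iteration reads $(E_i'^{\,n}(u),v)=(u,E_i^n v)$ rather than with the factors in some other arrangement; this is immediate from the given single-step identity by induction. The remaining content is the homogeneity of the canonical basis, which I would simply cite from the grading lemma together with the characterization $G(\b)\equiv E(\b)\bmod q^{1/2}\mathcal L$.
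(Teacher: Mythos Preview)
Your proposal is correct and follows exactly the approach the paper intends: the paper's own proof is the single sentence ``This is by definition,'' and what you have written is precisely the unpacking of that sentence---dual-basis expansion, iterated adjunction $(E_i'(u),v)=(u,E_iv)$, and the weight constraint from the $\N^{(\Z)}$-grading. There is nothing to add; your care about the order of the adjunction is warranted but, given the symmetry of the Kashiwara form on the PBW basis (Proposition~\ref{prop: 7.4.21}), it causes no trouble.
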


\begin{proof}
This is by definition. 
\end{proof}

\begin{cor}
Let $\b\preceq_k \d$ 
such that $wt(\d)=\wt(\b)+n\alpha_i$. 
Then $L_{\b}$ appears as a factor of $\frac{1}{r!}e_i'^r(\pi(\d))$
if and only if $r=n$. 

\end{cor}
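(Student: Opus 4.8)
The plan is to translate the statement about $\frac{1}{r!}e_i'^r(\pi(\d))$ into the quantum setting via the isomorphisms $\phi$ and $\iota$, and then invoke Lemma \ref{lem: 7.6.6} together with the decomposition $\pi(\d) = \sum_\b \tilde m_{\b,\d} L_\b$ dual to Zelevinsky's classification. First I would recall from Proposition \ref{prop: 7.6.5} that $D^i = \sum_n \frac{1}{n!} e_i'^n$ agrees with the partial BZ operator $\D^i$ under $\phi$; in particular the graded piece $\frac{1}{r!}e_i'^r(\pi(\d))$ is, after passing to $B$ (equivalently $U_q^+$ at $q=1$), the weight-$(\wt(\d) + r\alpha_i)$ component of $\D^i$ applied to the standard basis element corresponding to $\pi(\d)$. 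Under $\iota \circ \phi^{-1}$ the class $\pi(\d)$ corresponds to $E^*(\d)$ (specialized at $q=1$), since standard modules correspond to the dual PBW basis and $L_\b$ corresponds to $g^*(\b)$.

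Next I would make the homogeneity observation precise. Since $e_i'$ raises weight by $\alpha_i$ (equivalently lowers the $[j,k]$-content by replacing an end-$i$ segment's endpoint), the element $\frac{1}{r!}e_i'^r(\pi(\d))$ lives in the homogeneous component of weight $\wt(\d) + r\alpha_i$. By Lemma \ref{lem: 7.6.6}, the coefficient of $L_\b$ in $\frac{1}{r!}e_i'^r(\pi(\d))$, read off via the scalar product, is controlled by $(G(\b), E_i'^r E^*(\d))$ at $q=1$ — up to the normalization turning $E^*$ into the expansion in $G^*$'s — and this pairing can only be nonzero when the weights match, i.e. when $\wt(\b) = \wt(\d) + r\alpha_i$. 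Combined with the hypothesis $\wt(\d) = \wt(\b) + n\alpha_i$, this forces $r = n$: for $r \neq n$ the weight of $\b$ simply does not occur in the relevant homogeneous component, so $L_\b$ cannot appear.

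For the converse direction (that when $r = n$ the representation $L_\b$ genuinely appears), I would use the hypothesis $\b \preceq_k \d$. By Corollary \ref{cor: 7.4.3}, $\b \preceq_k \d$ is equivalent to $\D^k(\pi(\d)) - \pi(\b) \geq 0$ in $\aR$, i.e. $L_\b$ occurs with positive multiplicity in $\D^k(\pi(\d)) = \sum_r \frac{1}{r!} e_k'^r(\pi(\d))$. Since the sum is over $r$ and each summand is homogeneous of a distinct weight, the contribution of $L_\b$ must come entirely from the unique summand whose weight equals $\wt(\b)$, namely $r = n$ (using $\wt(\d) = \wt(\b) + n\alpha_i$ and that here $i = k$). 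Hence $L_\b$ appears in $\frac{1}{n!}e_i'^n(\pi(\d))$ with the same positive multiplicity, and in no other summand.

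The main obstacle I anticipate is bookkeeping the normalization constants: $E^*(\d)$ differs from $E(\d)$ by the factor $\frac{\prod h_{a_{ij}}(q)}{(1-q)^{\deg \d}}$, and one must check that at $q=1$ these constants (and the $\frac{1}{r!}$ factors) do not conspire to make a coefficient vanish — i.e. that positivity of multiplicities, guaranteed abstractly by \cite[Theorem 2.22]{Deng23}, is preserved under the identifications. I would handle this by working directly in $\aR \cong B$ where $\pi(\d)$ and $L_\b$ are honest effective classes and the only input needed is the weight-homogeneity of $e_i'$ plus the positivity already established in Proposition \ref{prop: 7.4.2} and Corollary \ref{cor: 7.4.3}; the quantum picture is then only used to justify the weight constraint, avoiding the delicate specialization entirely.
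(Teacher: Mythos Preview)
Your approach is essentially the paper's --- both rest on the weight-homogeneity of $e_i'$ together with the fact that $\b\preceq_k\d$ forces $L_{\b}$ to occur in $\D^i(\pi(\d))$ --- but you have a sign error that breaks the argument as written. The derivation $e_i'$ \emph{lowers} weight by $\alpha_i$ (from Definition~\ref{def: 7.3.4}, $e_i'(t_{[j,k]})=\delta_{ik}t_{[j,k-1]}$), so $\frac{1}{r!}e_i'^r(\pi(\d))$ has weight $\wt(\d)-r\alpha_i$, not $\wt(\d)+r\alpha_i$. With your stated signs, the equations $\wt(\b)=\wt(\d)+r\alpha_i$ and $\wt(\d)=\wt(\b)+n\alpha_i$ combine to give $r=-n$, not $r=n$.

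Once the sign is corrected, your argument collapses to exactly the paper's three-line proof: $\b\preceq_k\d$ puts $L_{\b}$ inside $\D^i(\pi(\d))=\sum_r\frac{1}{r!}e_i'^r(\pi(\d))$ (Proposition~\ref{prop: 7.4.2} and Proposition~\ref{prop: 7.6.5}); each summand is homogeneous of weight $\wt(\d)-r\alpha_i$ (Lemma~\ref{lem: 7.6.6}); hence $L_{\b}$, of weight $\wt(\d)-n\alpha_i$, lies only in the $r=n$ summand. The detour through the quantum pairing $(G(\b),E_i'^rE^*(\d))$ and the worry about normalization constants at $q=1$ are unnecessary: the entire argument can be run in $\aR\cong B$ using only that $e_i'$ is weight-homogeneous, which is immediate from its definition on the polynomial generators.
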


\begin{proof}
For each $\b\preceq_k \d$, 
the representation $L_{\b}$ is a factor of $\D^i(\pi(\d))$.  
Now by Proposition \ref{prop: 7.6.5}, 
$\D^i=\sum_{r}\frac{1}{r!}e_i'^r$, moreover, 
by Lemma \ref{lem: 7.6.6}, factors of $\frac{1}{r!}e_i'^r(\pi(\d))$
always have weight $\wt(\d)-r\alpha_i$. 
Therefore we are done. 
\end{proof}

\begin{teo}\label{prop: 7.3.8}
Let $\b\preceq_k \a$, then
there exists $\c\in S(\a)$ such that $\c=\b+\l[k]$. Then 
\[
 n_{\b, \a}=\sum_{i}\dim\mathcal{H}^{2i}( IC(\line{O}_{\l[k]})\star IC(\line{O}_{\b}))_{\a}.
\] 
\end{teo}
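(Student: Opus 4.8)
The plan is to translate the combinatorial identity into the Kashiwara scalar product on $U_q^+$ and then interpret the resulting Poincar\'e polynomial geometrically via Lusztig's $\star$-product. First I would use Proposition~\ref{prop: 7.3.5} to produce $\c\in S(\a)$ and $\Gamma\subseteq \c(k)$ with $\b=\c_\Gamma$; applying the elementary operation $\Delta\mapsto\Delta^{(k)}$ removes exactly a copy of $[k]$ from each segment of $\Gamma$ ending at $k$, and after reordering this means $\c=\b+\l[k]$ for $\l=\#\Gamma$ (one must check $\c$ is a genuine multisegment, i.e. that the removed endpoints really assemble into $\l$ copies of the singleton $[k]$, which is where the hypothesis $e(\Delta)=k$ is used). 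This identifies the $\l$ appearing in the statement.

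Next I would compute $n_{\b,\a}$ on the quantum side. By Proposition~\ref{prop: 7.6.5} the partial BZ operator $\D^k$ corresponds under $\phi$ to $D^k=\sum_r \tfrac{1}{r!}e_k'^r$, and by the Corollary following Lemma~\ref{lem: 7.6.6} only the single term with $r=\l$ (the one matching weights, $\wt(\a)=\wt(\b)+\l\alpha_k$) contributes the factor $L_\b$. Writing $\pi(\a)$ in terms of the $G^*$-basis and using that $\phi(g^*(\a))=L_\a$, $\phi(t_{[ij]})=L_{[ij]}$, the coefficient $n_{\b,\a}$ is read off as a coefficient in $\tfrac{1}{\l!}e_k'^\l$ applied to the monomial representing $\pi(\a)$; dually, by Lemma~\ref{lem: 7.6.6} and Proposition~\ref{prop: 7.4.21}, this equals the coefficient of $G^*(\b)$ in $E^*(\l[k])\cdot E^*$-type expansion, i.e.
\[
n_{\b,\a}=\bigl(G(\b),\ E_k'^{\l}E^*(\c)\bigr)\big|_{q=1}=\bigl(E_k^{\l}\,G(\b),\ G(\c)\bigr)\big|_{q=1},
\]
after suitably normalizing; more invariantly $n_{\b,\a}$ is the multiplicity with which $\tilde\gamma_\a$ (equivalently $G(\a)$) occurs in the $\star$-product $\gamma_{\l[k]}\star\gamma_\b$, since $\Theta$ intertwines $\star$ with multiplication in $U^+$ and sends $\gamma_\a\mapsto q^{1/2\dim O_\a}E(\a)$, $\tilde\gamma_\a\mapsto q^{1/2\dim O_\a}G(\a)$. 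Because multiplication by $\gamma_{[k]}$ (hence by $\gamma_{\l[k]}$, up to the divided power) corresponds to the geometric operation $\beta''_*\bigl(\cdots\bigr)$ of Definition~\ref{def: 7.4.12}, the coefficient of $\gamma_\a$ in $IC(\line{O}_{\l[k]})\star IC(\line{O}_\b)$ is the alternating sum of Frobenius eigenvalues on the stalks of that complex at a point of $O_\a$; since $IC(\line{O}_{\l[k]})\star IC(\line{O}_\b)=\beta''_*(\mathcal{P})$ is a pure complex and $O_\a$ is (as one checks) in the support locus where the relevant fibre of $\beta''$ contributes only in even degrees, this alternating sum collapses to $\sum_i\dim\mathcal{H}^{2i}(IC(\line{O}_{\l[k]})\star IC(\line{O}_\b))_\a$.

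Concretely the key steps, in order, are: (i) unwind $\b=\c_\Gamma$ to get $\c=\b+\l[k]$ and fix $\l$; (ii) use Proposition~\ref{prop: 7.6.5} and Lemma~\ref{lem: 7.6.6} to isolate $n_{\b,\a}$ as the $\l$-th divided-power term and rewrite it as a Kashiwara inner product $(E_k^\l G(\b),\,G(\a))|_{q=1}$; (iii) invoke Proposition~\ref{prop: 7.4.20} (the maps $\Gamma,\Theta$ and the formulas $\Theta(\gamma_\a)=q^{1/2\dim O_\a}E(\a)$, $\Theta(\tilde\gamma_\a)=q^{1/2\dim O_\a}G(\a)$) to identify this inner product with the multiplicity of $\tilde\gamma_\a$ in $\gamma_{\l[k]}\star\gamma_\b$, i.e. with $p_{\a,\,?}(q)|_{q=1}$-type data attached to $\beta''_*(\mathcal{P})$; (iv) use properness of $\beta''$ and purity/parity to replace the alternating sum of Frobenius traces by $\sum_i\dim\mathcal H^{2i}(\,\cdot\,)_\a$.

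The main obstacle I expect is step~(iv): justifying the parity vanishing so that the Poincar\'e series evaluated at $q=1$ really equals the naive total dimension of even cohomology. One needs that the stalk cohomology of $IC(\line{O}_{\l[k]})\star IC(\line{O}_\b)=\beta''_*(\mathcal P)$ at a point of $O_\a$ is concentrated in even degrees with Frobenius acting by powers of $q$ — this is the "$q=1$ specialization kills no cancellation" point. For the singleton case $\line{O}_{\l[k]}$ this should follow from the fact that $\mathcal P$ is (a shift of) a constant sheaf on a space stratified by affine-space-like pieces and that $\beta''$ restricted there is, up to the $G_{\varphi_1}\times G_{\varphi_2}$-fibration $\beta'$, modeled on a Grassmannian-type projection with paving by affines; but pinning this down rigorously — and checking that the divided-power normalization $\tfrac1{\l!}$ exactly matches the $IC(\line{O}_{\l[k]})$ (not $IC$ of a non-reduced orbit closure) — is the delicate part. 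A secondary subtlety is bookkeeping the powers of $q$ coming from $d(\varphi)$, $K(\varphi)$ and $\dim O_\a$ in Proposition~\ref{prop: 7.4.20} so that they cancel and leave a genuine polynomial in $q$ with non-negative integer coefficients before setting $q=1$.
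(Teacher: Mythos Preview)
Your overall strategy matches the paper's: translate to $U_q^+$ via Proposition~\ref{prop: 7.6.5} and Lemma~\ref{lem: 7.6.6}, then use $\Gamma,\Theta$ from Proposition~\ref{prop: 7.4.20} to identify the relevant coefficient with a stalk of the $\star$-product, and specialize at $q=1$. The paper does exactly this, computing $\Gamma(\tilde\gamma_{\l[k]}\star\tilde\gamma_\b)$ explicitly and comparing term by term with $\Gamma(\gamma_\d)$.

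However, several of your identifications are garbled and would need fixing before the argument goes through. In step~(i), $\b=\c'_\Gamma$ with $\c'\in S(\a)$ does \emph{not} yield $\c'=\b+\l[k]$: the operation $\Delta\mapsto\Delta^{(k)}=\Delta^-$ truncates a segment, it does not split off a singleton $[k]$. The element $\c$ in the statement is not the $\c'$ of Proposition~\ref{prop: 7.3.5}, and in fact the proof never uses $\c$; only the weight identity $\varphi_\a=\varphi_\b+\l\chi_{[k]}$ (fixing $\l$) is needed. In steps~(ii)--(iii) you conflate dual bases. The correct pairing is
\[
n_{\b,\a}(q)=\Bigl(G(\b),\ \tfrac{1}{[\l]_{q^{1/2}}!}E_k'^{\,\l}E^*(\a)\Bigr),
\]
with $E^*(\a)$ (which corresponds to $\pi(\a)$), not $G(\c)$ or $G(\a)$. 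On the geometric side one expands $\tilde\gamma_{\l[k]}\star\tilde\gamma_\b$ in the $\gamma_\d$-basis, so the relevant quantity is the coefficient of $\gamma_\a$ (i.e.\ the stalk at $O_\a$), not the multiplicity of $\tilde\gamma_\a$. Note that $\gamma_{\l[k]}=\tilde\gamma_{\l[k]}$ because $E_{\varphi_{\l[k]}}$ is a point, but $\gamma_\b\neq\tilde\gamma_\b$ in general, so your ``$\gamma_{\l[k]}\star\gamma_\b$'' is also off.

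On your step~(iv): the paper does not argue parity separately. It simply writes $p_{\d,\b}(q)=\sum_i q^i\dim\mathcal H^{2i}(IC(\line O_{\l[k]})\star IC(\line O_\b))_\d$ as part of the setup, implicitly relying on Lusztig's purity results for the $\star$-product of pure equivariant complexes. So your worry is reasonable, but in the paper it is absorbed into the citation of \cite{Lu} rather than proved anew.
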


\begin{proof}
Note that by Proposition \ref{prop: 7.4.20}, we have  
\begin{align*}
 \Gamma(\tilde{\gamma}_{\l[k]}\star \tilde{\gamma}_{\b})&=
 \Gamma(\tilde{\gamma}_{\l[k]})\Gamma(\tilde{\gamma}_{\b})\\
 &=q^{1/2(d(\varphi_{\l[k]})+d(\varphi_{\b}))}K(\varphi_{\l[k]})K(\varphi_{\b})\Theta(\tilde{\gamma}_{\l[k]})
 \Theta(\tilde{\gamma}_{\b})\\
 &=q^{1/2(d(\varphi_{\l[k]})+d(\varphi_{\b}))}K(\varphi_{\l[k]}+\varphi_{\b})q^{1/2(\dim(O_{\l[k]})+\dim(O_{\b}))}G(\l[k])
 G(\b).
 \end{align*}
Since we have 
\[
d(\varphi_{\l[k]})=\dim(O_{\l[k]})=0, ~G(\l[k])=E(\l[k])=\frac{1}{[\l]_{q^{1/2}}!}E_{k}^{\l},
~\varphi_{\l[k]}+\varphi_{\b}=\varphi_{\a},
\]
so 
\[
 \Gamma(\tilde{\gamma}_{\l[k]}\star \tilde{\gamma}_{\b})=
 \frac{1}{[\l]_{q^{1/2}}!}q^{1/2(d(\varphi_{\b})+\dim(O_{\b}))}K(\varphi_{\a})E_{k}^{\l}G(\b).
\]
And 
\begin{align*}
 \Gamma(\gamma_{\d})&=q^{1/2d(\varphi_{\d})}K(\varphi_{\d})\Theta(\gamma_{\d})\\
 &=q^{1/2(d(\varphi_{\d})+\dim(O_{\d}))}K(\varphi_{\d})E(\d).
\end{align*}
Now write
\[
 \tilde{\gamma}_{\l[k]}\star \tilde{\gamma}_{\b}=\sum_{\b\preceq_k \d, \varphi_{\d}=\varphi_{\a}}p_{\d, \b}(q)\gamma_{\d},
\]
with 
\[
 p_{\d, \b}(q)=\sum_{i}q^{i}\mathcal{H}^{2i}(IC(\line{O}_{\l[k]})\star IC(\line{O}_{\b}))_{\d}.
\]
Applying $\Gamma$ gives 
\begin{multline*}
 \frac{1}{[\l]_{q^{1/2}}!}q^{1/2(d(\varphi_{\b})+\dim(O_{\b}))}K(\varphi_{\a})E_{k}^{\l}G(\b)= \\
 \sum_{\b\preceq_k \d, \varphi_{\d}=\varphi_{\a}}p_{\d, \b}(q)q^{1/2(d(\varphi_{\d})+\dim(O_{\d}))}K(\varphi_{\d})E(\d).
\end{multline*}
Hence 
\[
 E_{k}^{\l}G(\b)=[\l]_{q^{1/2}}!\sum_{\b\preceq_k \d, \varphi_{\d}=\varphi_{\a}}p_{\d, \b}(q)
 q^{1/2(d(\varphi_{\d})+\dim(O_{\d})-d(\varphi_{\b})-\dim(O_{\b}))}E(\d),
\]
now compare with Lemma \ref{lem: 7.6.6},
we get 
\[
 (G(\b), E_i'^nE^*(\d))=[\l]_{q^{1/2}}!p_{\d, \b}(q)
 q^{1/2(d(\varphi_{\d})+\dim(O_{\d})-d(\varphi_{\b})-\dim(O_{\b}))}.
\] 
Finally, 
we write 
\[
 \frac{1}{[\l]_{q^{1/2}}!}E_i'^nE^*(\d)=\sum_{\b}n_{\b, \d}(q)G^{*}(\b),
\]
by applying the scalar product, we get 
\[
 n_{\b, \d}(q)=(G(\b), \frac{1}{[\l]_{q^{1/2}}!}E_i'^nE^*(\d))
 =p_{\d, \b}(q)
 q^{1/2(d(\varphi_{\d})+\dim(O_{\d})-d(\varphi_{\b})-\dim(O_{\b}))}.
\]
Hence, by specializing at $q=1$, we have 
\[
 n_{\b, \d}=p_{\d, \b}(1).
\]
Now take $\d=\a$, we get the formula 
in our theorem.
\end{proof}

\section{A formula for  Lusztig's product}\label{sec-Lusztig-prod}

In this section we find a geometric way to calculate Lusztig's product in special case, which allows us 
to determine the partial BZ operator in the following sections.

\begin{definition}
Let $k\in \Z$.
We say that  $\a$ satisfies the assumption 
$(\A_k)$ if it satisfies the following conditions
\footnote{Since here we only work with the partial BZ operator 
$\D^k$ with $k\in \Z$, 
for every multisegment, we can always use the reduction 
method to increase the length of segments from the left, so 
that at some point we 
arrive at the situation of our assumption $(\A_k)$, therefore we do not lose the 
generality. For a more precise description of such a process, we refer to \cite[\S 6.2]{Deng23}}

\begin{description}
 \item[(1)] We have 
\[
 \max\{b(\Delta): \Delta\in \a\}+1<\min\{e(\Delta): \Delta\in \a\}.
\]
\item[(2)] 
Moreover, we have 
$\varphi_{e(\a)}(k)\neq 0$ and $\varphi_{e(\a)}(k+1)=0$.

\end{description}

\end{definition}

\begin{lemma}\label{lem: 7.6.1}
Let 
$\a$ be a multisegment satisfying 
the assumption $(\A_k)$. 
Then $\a$ is of parabolic type. Moreover, 
The set $S(\varphi_{\a})$ contains a unique maximal
element satisfying the assumption $(\A_k)$, denoted by $\a_{\Id}$. 
\end{lemma}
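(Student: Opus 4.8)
The plan is to unwind the two conditions in $(\A_k)$ and show directly that they force the shape of the segments in $\a$, and then in any $\b\in S(\varphi_\a)$, to be tightly constrained. First I would recall that condition (1) says every segment $\Delta\in\a$ has $b(\Delta)\le M$ and $e(\Delta)\ge M+2$ where $M=\max\{b(\Delta):\Delta\in\a\}$; in particular every segment has length $\ge 3$, and more importantly, any two segments $\Delta,\Delta'\in\a$ \emph{overlap} in the sense that $[b(\Delta),e(\Delta)]\cap[b(\Delta'),e(\Delta')]$ already contains the interval $[M,M+2]$. This is the kind of ``separation of beginnings from ends'' hypothesis that, in Zelevinsky's combinatorics, forces $\a$ to be of parabolic type: no elementary operation can split a pair $\{\Delta,\Delta'\}$ into $\{\Delta\cap\Delta',\Delta\cup\Delta'\}$ in a way that escapes the range $[\,b\text{'s}\,]\sqcup[\,e\text{'s}\,]$, so the whole set $S(\varphi_\a)$ consists of multisegments built from segments with beginning $\le M$ and end $\ge M+2$. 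I would make the ``parabolic type'' claim precise by citing the relevant definition in \cite[\S 2]{Deng23} (the condition that the associated standard module is an irreducible parabolic induction, equivalently a combinatorial condition on the multi-set of segments), and checking it against (1).

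Next, for the second assertion, I would analyze $S(\varphi_\a)$ directly. Since the weight $\varphi_\a$ is fixed, the multi-set of beginnings $b(\b)$ and the multi-set of ends $e(\b)$ can vary over $\b\in S(\varphi_\a)$, but their ``total content'' is pinned down by $\varphi_\a$. Condition (2), $\varphi_{e(\a)}(k)\ne 0$ and $\varphi_{e(\a)}(k+1)=0$, says that $k$ is the largest integer that occurs as an end of a segment of $\a$ (equivalently, the right end of the support of $\varphi_\a$ is $k$, given (1)). I would then argue that among all $\b\in S(\varphi_\a)$, the partial order $\le$ has a maximum element: concretely, the multisegment $\a_{\Id}$ obtained by making the segments ``as long and as nested as possible'', i.e. the unique multisegment in $S(\varphi_\a)$ all of whose segments share the common end $k$ (this is forced by (2): since no end exceeds $k$ and $\varphi$ is concentrated in a single ``block'' by (1), the top of the poset is the totally nested configuration ending at $k$). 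One checks $\a_{\Id}\in S(\varphi_\a)$ by verifying it dominates every other element under $\le$, using the standard fact that the elementary operation $\{\Delta,\Delta'\}\mapsto\{\Delta\cap\Delta',\Delta\cup\Delta'\}$ strictly increases in the Zelevinsky order, and that iterating it from any $\b$ with this weight terminates at the nested-at-$k$ configuration. Finally, since all of $\a$'s ends are $\le k$ and (1) holds, $\a_{\Id}$ itself satisfies $(\A_k)$: condition (1) is preserved because the beginnings only move left and the ends only move right under the operations, and condition (2) holds because $k$ remains the unique maximal end.

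The main obstacle I anticipate is the second assertion, specifically pinning down that $S(\varphi_\a)$ has a \emph{unique} maximal element satisfying $(\A_k)$ rather than merely a maximal one. The subtlety is that ``maximal in $S(\varphi_\a)$'' under $\le$ is automatic (the top element always exists in Zelevinsky's poset for a fixed weight — it is the one with the most ``aligned'' segments), but I must check that this genuine maximum still satisfies $(\A_k)$, and conversely that no \emph{other} element of $S(\varphi_\a)$ satisfying $(\A_k)$ can be incomparable-and-maximal. For this I would rely on the explicit description of the order-maximum in a fixed weight class (segments sorted and right-aligned) and verify the two conditions hold for it verbatim; uniqueness then follows because a poset with a maximum has no other maximal elements. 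I expect the bookkeeping with the multi-sets $b(\a)$, $e(\a)$ and the weight function $\varphi_{e(\a)}$ to be the only genuinely delicate part, and everything else to be a direct unwinding of definitions from \cite[\S 2]{Deng23}.
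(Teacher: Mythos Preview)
Your plan contains a genuine misidentification of $\a_{\Id}$, and it stems from misreading condition~(2). The hypothesis $\varphi_{e(\a)}(k+1)=0$ only says that \emph{no segment of $\a$ ends at $k+1$}; it does \emph{not} say $k$ is the largest end. Segments may well end at $k+2,k+3,\ldots$ (see e.g.\ the Grassmannian case in Section~\ref{sec-geom-grass}, where ends lie at both $k-1$ and $k$). Consequently your candidate for $\a_{\Id}$ --- ``the unique multisegment in $S(\varphi_\a)$ all of whose segments share the common end $k$'' --- is not even an element of $S(\varphi_\a)$ in general, since its weight would differ from $\varphi_\a$.

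Relatedly, your strategy of taking the \emph{global} maximum of $S(\varphi_\a)$ and checking it satisfies $(\A_k)$ fails: that maximum typically does \emph{not} satisfy condition~(1). For instance, with $\a=\{[1,4],[2,5]\}$ and $k=5$ one has $\a\in S(\varphi_\a)$ satisfying $(\A_k)$, yet $\{[1,4],[2,4],[5]\}\in S(\varphi_\a)$ lies strictly above $\a$ and violates~(1) (its segment $[5]$ has $b=e=5$). Your heuristic ``beginnings only move left and ends only move right under the operations'' is false in the upward direction: passing to a larger multisegment in the Zelevinsky order can \emph{create} new short segments whose beginnings exceed the old ends.

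What the paper actually does is the following. Condition~(1) alone forces the multisets $b(\a)$ and $e(\a)$ to be read off directly from $\varphi_\a$ (the increasing part of $\varphi_\a$ records beginnings, the decreasing part records ends, with a plateau in between). Hence \emph{every} $\b\in S(\varphi_\a)$ satisfying $(\A_k)$ has the same sorted beginning list $k_1\le\cdots\le k_r$ and sorted end list $\ell_1\le\cdots\le\ell_r$, and is therefore of the form $\b=\sum_j[k_j,\ell_{w(j)}]$ for some $w\in S_r$; this is exactly the meaning of ``parabolic type''. Among these, the paper sets $\a_{\Id}=\sum_j[k_j,\ell_j]$ (identity matching of sorted beginnings to sorted ends) and invokes \cite[Proposition~3.28]{Deng24b} to get $\b\le\a_{\Id}$ for every such $\b$. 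Uniqueness is then immediate since $\a_{\Id}$ depends only on $b(\a),e(\a)$, hence only on $\varphi_\a$. You should replace your ``nested at $k$'' picture with this identity-permutation description.
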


\begin{proof}
Let $b(\a)=\{k_1\leq \cdots\leq k_r\}$,
$e(\a)=\{\ell_1\leq \cdots \leq \ell_r\}$. 

Then by \cite[Proposition 3.32]{Deng24b}, 
there exists an element 
$w\in S_r^{J_1(\a), J_2(\a)}$, such that 
\[
 \a=\sum_{j}[k_j, \ell_{w(j)}].
\]
Let 
\[
 \a_{\Id}=\sum_{j}[k_j, \ell_j].
\]
By \cite[Proposition 3.28]{Deng24b} (The proposition is stated without proof but a proof can be given exactly as Proposition 3.18 in loc.cit..),  
$\a\leq \a_{\Id}$. 
Finally, $\a_{\Id}$ depends only 
on $b(\a)$ and $e(\a)$, not on $\a$, 
which shows that $\a_{\Id}$ is the maximal element
in $S(\varphi_{\a})$ satisfying the assumption $(\A_k)$. 
\end{proof}

Recall that in \cite[Definition 5.4]{Deng23} we introduced a subset $\tilde{S}(\a)_k$ of $S(\a)$ and further introduced a subset $S(\a)_k$
of $\tilde{S}(\a)_k$  in \cite[Definition 5.9]{Deng23}.

\begin{lemma}\label{lem: 7.6.2}
Suppose that $\a$ is a multisegment satisfying the hypothesis $(\A_k)$, then 
 
 \begin{description}
  \item[(1)] $\tilde{S}(\a)_k=S(\a)$,

\item[(2)] we have 
 \[
  X_{\a}^k=Y_{\a}=\coprod_{\c\in S(\a)}O_{\c}, 
 \]
  \end{description}
 where the variety $X_{\a}^{k}$ is defined in \cite[Definition 5.20]{Deng23}, which is a suitable sub-variety of $E_{\varphi}$ and
 $Y_{\a}$ is defined to be the union of all orbits indexed by elements of  $S(\a)$.
\end{lemma}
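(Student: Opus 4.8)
The plan is to bootstrap everything from a single combinatorial observation about the poset $S(\a)$: both clauses of the hypothesis $(\A_k)$ are inherited by every $\c\in S(\a)$. If $\a$ satisfies clause (1), the interval $[\max b(\a),\min e(\a)]$ contains at least two integers and lies inside every segment of $\a$, so any two segments of $\a$ overlap. Consequently the elementary operation $\{\Delta,\Delta'\}\mapsto\{\Delta\cap\Delta',\Delta\cup\Delta'\}$ generating the order $\leq$ (cf.\ \cite[Definition 2.12]{Deng23}) never produces an empty intersection; applied to a linked pair $\{[a,b],[c,d]\}$ with $a<c\leq b<d$ it returns $\{[a,d],[c,b]\}$, which leaves the multisets $b(\cdot)$ and $e(\cdot)$ unchanged and keeps both new segments containing $[\max b(\a),\min e(\a)]$. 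Running this along a maximal chain from $\a$ down to an arbitrary $\c\in S(\a)$, one gets $b(\c)=b(\a)$ and $e(\c)=e(\a)$ as multisets, and hence that $\c$ itself satisfies $(\A_k)$.

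Part (1) is then almost formal. By \cite[Definition 5.4]{Deng23} one has $\tilde S(\a)_k\subseteq S(\a)$, and membership of $\c$ in $\tilde S(\a)_k$ is governed only by the positions of the beginnings and ends of $\c$ relative to $k$ and $k+1$; this condition is non-vacuous and is in particular satisfied as soon as $\varphi_{e(\c)}(k)\neq 0$ and $\varphi_{e(\c)}(k+1)=0$, which $(\A_k)$ guarantees. Since every $\c\in S(\a)$ shares the multisets $b(\c)=b(\a)$, $e(\c)=e(\a)$ by the first paragraph, the condition holds for all of them, so $\tilde S(\a)_k=S(\a)$.

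For part (2), I would first record that $Y_\a=\coprod_{\c\in S(\a)}O_\c=\line{O}_{\a}$: this is the geometric incarnation of the order $\leq$ as the degeneration order of $G_{\varphi}$-orbits on $E_{\varphi}$ (cf.\ \cite[Proposition 3.3]{Deng23}; recall also that by Lemma \ref{lem: 7.6.1} the multisegment $\a$ is of parabolic type, so $\line{O}_{\a}$ is contained in $\line{O}_{\a_{\Id}}$). It then remains to identify $X_{\a}^{k}$ with this orbit closure. The subvariety $X_{\a}^{k}\subseteq E_{\varphi}$ of \cite[Definition 5.20]{Deng23} is $G_{\varphi}$-stable, hence a union $\coprod_{\c\in I}O_{\c}$; the inclusion $I\subseteq S(\a)$ is immediate from the construction, every point of $X_{\a}^{k}$ being a degeneration of the configuration attached to $\a$. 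For the reverse inclusion $S(\a)\subseteq I$ I would invoke part (1): given $\c\in S(\a)=\tilde S(\a)_k$, the fact (first paragraph) that $\c$ again satisfies $(\A_k)$ is exactly what makes every $T\in O_{\c}$ satisfy the incidence conditions cutting out $X_{\a}^{k}$ --- clause (1) makes the subspace configuration required near level $k$ available, since no segment of $\c$ is too short, and clause (2) ($k+1\notin e(\c)$) makes the removal of a box at level $k$ unobstructed. The two inclusions together give $X_{\a}^{k}=Y_{\a}$.

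The step I expect to be the main obstacle is this last verification: one has to check that the incidence/rank conditions of \cite[Definition 5.20]{Deng23} are met at every point of the orbit closure $\line{O}_{\a}$, not just on the open orbit $O_{\a}$. This is precisely where both clauses of $(\A_k)$, together with their propagation through $S(\a)$ established in the first paragraph, are indispensable: clause (1) rules out the ``short segment'' degenerations that would otherwise contribute orbits of $\line{O}_{\a}$ lying outside $X_{\a}^{k}$, while clause (2) rigidifies the behaviour of the degenerations near level $k$ so that the relevant step of the flag is forced. The main external inputs are Lemma \ref{lem: 7.6.1} and the standard identification $\line{O}_{\a}=\coprod_{\c\leq\a}O_{\c}$.
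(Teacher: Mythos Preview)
Your core combinatorial step---that under clause (1) of $(\A_k)$ every segment of $\a$ contains a common interval of length $\geq 2$, so the elementary operation on a linked pair never produces an empty intersection and hence preserves the multisets $b(\cdot)$ and $e(\cdot)$---is exactly what the paper does. From there the paper concludes (1) in one line: since $e(\c)=e(\a)$ as multisets, one has $\varphi_{e(\c)}(k)=\varphi_{e(\a)}(k)$, and this equality is the defining condition for $\c\in\tilde S(\a)_k$ from \cite[Definition 5.4]{Deng23}. Your stated criterion, ``$\varphi_{e(\c)}(k)\neq 0$ and $\varphi_{e(\c)}(k+1)=0$'', is not that definition; your argument survives only because you have in fact established the stronger statement $\varphi_{e(\c)}=\varphi_{e(\a)}$, but you should quote the correct membership condition.

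You substantially over-work part (2). In the paper this is a single clause: ``(2) follows from (1).'' The point is that in \cite{Deng23} the variety $X_\a^k$ is, by its very construction, the union of the orbits $O_\c$ with $\c\in\tilde S(\a)_k$ (compare also the one-line argument in the proof of Lemma \ref{lem: 7.7.2}, where $X_{\d}^{k+1}=Y_{\d}$ is deduced solely from $\d_{\min}\in\tilde S(\d)_{k+1}$). Once (1) gives $\tilde S(\a)_k=S(\a)$, the identity $X_\a^k=\coprod_{\c\in S(\a)}O_\c=Y_\a$ is automatic. There is no orbit-by-orbit verification of incidence or rank conditions to perform, and no ``main obstacle'' of the kind you anticipate in your last paragraph; that concern stems from not having the definition of $X_\a^k$ in hand.
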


\begin{proof}
Note that by assumption  
\[
 \max\{b(\Delta): \Delta\in \a\}<\min\{e(\Delta): \Delta\in \a\}.
\]
This ensures that for any $\c\in S(\a)$, 
we have $\varphi_{e(\c)}(k)=\varphi_{e(\a)}(k)$,
hence by definition $\c\in \tilde{S}(\a)_{k}$. 
This proves (1), and (2) follows from (1).
\end{proof}

\begin{lemma}\label{lem: 7.6.3}
Let $\a$ be a multisegment 
satisfying the assumption $(\A_k)$
 and $\a=\a_{\Id}$. 
 Let $\ell\in \N$ such that $\ell\leq \varphi_{e(\a)}(k)$
 and   $\varphi\in \mathcal{S}$ such that 
 \[
  \varphi+\ell \chi_{[k]}=\varphi_{\a}.
 \]
Then for $\b\in S(\varphi)$, we have 
$\b\preceq_k \a$ if and only if $\b^{(k)}\leq \a^{(k)}$ (cf. Definition \ref{def: 2.2.3})
and $\varphi_{e(\b)}(k-1)=\ell+\varphi_{e(\a)}(k-1)$.
 
\end{lemma}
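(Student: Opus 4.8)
The plan is to characterize exactly which $\b \in S(\varphi)$ satisfy $\b \preceq_k \a$ by combining the description of $\preceq_k$ from Proposition \ref{prop: 7.3.5} with the assumption $\a = \a_{\Id}$ and the parabolic structure from Lemma \ref{lem: 7.6.1}. First I would establish the easy direction: if $\b \preceq_k \a$, then by Proposition \ref{prop: 7.3.5} there is $\c \in S(\a)$ and $\Gamma \subseteq \c(k)$ with $\b = \c_\Gamma$. By Lemma \ref{lem: 7.6.2}, under $(\A_k)$ we have $\varphi_{e(\c)}(k) = \varphi_{e(\a)}(k)$ for all $\c \in S(\a)$, so $\c(k)$ has size $\varphi_{e(\a)}(k)$; the constraint $\varphi_{e(\b)}(k-1) = \ell + \varphi_{e(\a)}(k-1)$ then forces $|\Gamma| = \ell$ (each segment moved into $\Gamma$ converts an endpoint $k$ into an endpoint $k-1$), which pins down the weight $\varphi_\b = \varphi$. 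For the segment inequality, note that passing from $\c$ to $\c_\Gamma$ only shortens segments by one at their right end when that end equals $k$, so $(\c_\Gamma)^{(k)} = \c^{(k)}$ up to reindexing — more precisely $\b^{(k)}$ and $\c^{(k)}$ have the same underlying multiset of segments once we truncate all endpoints equal to $k$; combined with $\c \leq \a$ and the monotonicity of the truncation operation $(-)^{(k)}$ with respect to $\leq$ (which I would verify on elementary operations), this gives $\b^{(k)} \leq \c^{(k)} \leq \a^{(k)}$.

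For the converse — the substantive direction — I would start from $\b \in S(\varphi)$ with $\b^{(k)} \leq \a^{(k)}$ and $\varphi_{e(\b)}(k-1) = \ell + \varphi_{e(\a)}(k-1)$, and I must produce $\c \in S(\a)$ with $\Gamma \subseteq \c(k)$ such that $\b = \c_\Gamma$. The natural candidate: let $\Gamma_\b \subseteq \b$ be the set of segments of $\b$ whose right endpoint is $k-1$ but which, in $\b^{(k)}$ versus $\a^{(k)}$, "want to be lengthened back to $k$" — concretely, take $\c$ to be $\b$ with $\ell$ suitably chosen segments ending at $k-1$ replaced by their extensions ending at $k$, i.e. reverse the $(-)^{(k)}$ operation on those segments. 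The count $\varphi_{e(\b)}(k-1) = \ell + \varphi_{e(\a)}(k-1)$ guarantees there are exactly enough such segments available. Then by construction $\c_\Gamma = \b$ for $\Gamma = \{\Delta^{(k)} : \Delta \in \text{(the chosen segments)}\}$, and it remains to check $\c \in S(\a)$, i.e. $\c \leq \a$. Since $\c^{(k)} = \b^{(k)} \leq \a^{(k)}$ and $\c$ is obtained from $\c^{(k)}$ by lengthening $\varphi_{e(\a)}(k)$ segments from endpoint $k-1$ to endpoint $k$ in exactly the pattern that $\a$ is obtained from $\a^{(k)}$ (here I use $\a = \a_{\Id}$, which makes the pairing of beginnings with ends canonical via Lemma \ref{lem: 7.6.1}), the inequality $\c^{(k)} \leq \a^{(k)}$ lifts to $\c \leq \a$. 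This lifting step is where $\a = \a_{\Id}$ is essential: for a general maximal-type $\a$ one would not be able to match the lengthening patterns.

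The main obstacle I anticipate is precisely this last lifting: showing that $\c^{(k)} \leq \a^{(k)}$ together with the endpoint-multiplicity conditions implies $\c \leq \a$. The issue is that $\leq$ is generated by elementary (intersection–union) operations on pairs of linked segments, and truncating right endpoints at $k$ can either preserve or destroy linkedness, so a chain witnessing $\c^{(k)} \leq \a^{(k)}$ need not lift termwise to a chain witnessing $\c \leq \a$. I would handle this by working with the characterization of $\leq$ in terms of the partial-order criterion of \cite[7.1]{Z2} (the rank/multiplicity inequalities $f_{\b}(\cdot) \geq f_{\a}(\cdot)$ on the relevant counting functions), which behave transparently under the truncation: under $(\A_k)$, since all beginnings lie well below all ends (condition (1)), the segments ending at $k$ versus $k+1$ are "separated" from the rest, and the counting functions for $\c$ decompose as those for $\c^{(k)}$ plus a contribution depending only on the weight data near $k$, which is fixed by our hypotheses. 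Verifying this decomposition of the Zelevinsky order inequalities is the technical heart, and I would isolate it as a preliminary lemma about how the $\leq$-criterion interacts with $(-)^{(k)}$ under assumption $(\A_k)$.
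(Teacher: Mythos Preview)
Your forward direction is essentially the paper's: $\b=\c_\Gamma$ with $\c\in S(\a)$ gives $\b^{(k)}=\c^{(k)}$, and under $(\A_k)$ one has $\c^{(k)}\le\a^{(k)}$ (the paper cites Lemma~\ref{lem: 7.6.2} rather than checking monotonicity of $(-)^{(k)}$ by hand, but the content is the same); the endpoint count is immediate.

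For the converse you take a genuinely different route from the paper, and your route is harder than necessary. You invoke the characterization of Proposition~\ref{prop: 7.3.5} and try to \emph{construct} $\c\in S(\a)$ with $\b=\c_\Gamma$, then lift $\c^{(k)}\le\a^{(k)}$ to $\c\le\a$ via rank inequalities. The paper instead goes back to the \emph{definition} of $\preceq_k$: it chooses $\Gamma\subseteq\a(k)$ of size $\ell$ so that $\a':=\a_\Gamma$ has weight $\varphi$, observes that $\a'$ is again an identity-type multisegment in the sense of Lemma~\ref{lem: 7.6.1}, and then appeals to \cite[Proposition~3.28]{Deng24b} (the identity-type multisegment is the maximum among all multisegments with the same begin- and end-multisets) to conclude $\b\le\a'$, hence $\b\preceq_k\a$. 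The weight and endpoint computations in the paper serve exactly to verify that $b(\b)=b(\a')$ and $e(\b)=e(\a')$ as multisets (in particular, that $\b$ contains no segment $[k]$), which is what is needed to apply that maximality.

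The concrete gap in your plan is the lifting step. You never specify which $\ell$ segments of $\b$ ending at $k-1$ to extend, and the claim that ``$\c^{(k)}\le\a^{(k)}$ lifts to $\c\le\a$'' depends on this choice: for the wrong choice it is simply false. Your fallback to the Zelevinsky rank criterion is plausible, but to make it work you would in effect have to re-prove the maximality statement the paper already has available. The paper's approach sidesteps your ``main obstacle'' entirely by working with $\a_\Gamma$ (which is canonical once $|\Gamma|=\ell$ and $\a=\a_{\Id}$) rather than with a lift of $\b$.
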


\begin{proof} 
Let 
$\b\in S(\varphi)$ such that 
$\b\preceq_k \a$, then 
by Proposition \ref{prop: 7.3.5}
$\b=\c_{\Gamma} $ for some 
$\c\in S(\a)$ and $\Gamma\subseteq \c(k)$.
Therefore 
\[
 \b^{(k)}=\c^{(k)}\leq \a^{(k)}
\]
by Lemma \ref{lem: 7.6.2}.  By definition of 
$\c_{\Gamma}$, 
\[
 \varphi_{e(\b)}(k-1)=\ell+\varphi_{e(\c)}(k-1).
\]
Now applying the fact that 
$\a$ satisfies the assumption $(\A_k)$, we deduce that 
\[
 \varphi_{e(\c)}(k-1)=\varphi_{e(\a)}(k-1).
\]
Conversely, let $\b\in S(\varphi)$ be a multisegment  such that 
$\b^{(k)}\leq \a^{(k)}$ and 
$\varphi_{e(\b)}(k-1)=\ell+\varphi_{e(\a)}(k-1)$.

We deduce from $\b^{(k)}\leq \a^{(k)}$ that 
\[
 \b\leq \a^{(k)}+\varphi_{e(\b)}(k)[k],
\]
from which we obtain
\[
 \varphi_{\b}=\varphi_{\a^{(k)}}+\varphi_{e(\b)}(k)\chi_{[k]}.
\]
By assumption
\[
 \varphi_{\b}+\ell\chi_{[k]}=\varphi_{\a}.
\]
Combining with the formula
\[
 \varphi_{\a}=\varphi_{\a^{(k)}}+\varphi_{e(\a)}(k)\chi_{[k]},
\]
we have 
\[
 \varphi_{e(\a)}(k)=\varphi_{e(b)}(k)+\ell.
\]
Now that for any 
$\Delta\in \a$, if $e(\Delta)=k$, 
then $b(\a)\leq k-1$. Therefore we have 
\[
 \varphi_{e(\a^{(k)})}(k-1)=\varphi_{e(\a)}(k-1)+\varphi_{e(\a)}(k).
\]
Applying the formula
$\varphi_{e(\b)}(k-1)=\ell+\varphi_{e(\a)}(k-1), ~\varphi_{e(\b^{(k)})}(k-1)=\varphi_{e(\a^{(k)})}(k-1)$, we get 
\[
  \varphi_{e(\b^{(k)})}(k-1)=\varphi_{e(\b)}(k-1)+\varphi_{e(\b)}(k).
\]
Such a formula implies that for $\Delta\in \b$, 
if $e(\Delta)=k$, then 
$b(\Delta)\leq k-1$. 

Let $b(\a)=\{k_1\leq \cdots\leq k_r\}$,
$e(\a)=\{\ell_1\leq \cdots \leq \ell_r\}$. 
The assumption that $\a=\a_{\Id}$ implies that 
\[
 \a=\sum_i [k_i, \ell_i]
\]
Suppose that 
\[
 \a(k)=\{[k_i,\ell_i]: i_0\leq i\leq i_1\}.
\]
Take 
$\Gamma=\{[k_i, \ell_i]: i_0+\ell\leq i\leq i_1\}$ and 
\[
 \a'=\a_{\Gamma}.
\]
Then $\a'\preceq_k \a$.  
Note that $\a'$ is a multisegment of 
parabolic type which 
corresponds to the identify 
in some symmetric group, 
cf. \cite[Notation 3.25]{Deng24b}. 
Finally, \cite[Proposition 3.28]{Deng24b}
implies that $\b\in S(\a')$.

\end{proof}

Recall that the variety $X_{\a}^k$ in Lemma \ref{lem: 7.6.2} admits a fibration $\alpha$ over 
the Grassmanian $Gr(\ell_{\a,k}, V_{\varphi})$ (cf. \cite[Proposition 5.28]{Deng23}) with
\[
\ell_{\a, k}=\sharp\{\Delta\in \a: e(\Delta)=k\}, (\text{ cf. \cite[Notation 5.12]{Deng23}}).
\]
For fixed $W\in Gr(\ell_{\a,k}, V_{\varphi})$, 
let $(X_{\a}^{k})_{W}$ be the fiber of $\alpha$ over $W$ (cf. \cite[Notation 5.30]{Deng23}).
By 
\cite[Proposition 5.35]{Deng23}, 
we have an open immersion
\[
 \tau_{W}:  (X_{\a}^{k})_{W}\rightarrow 
 (Z^{k, \a})_{W}\times \Hom(V_{\varphi, k-1}, W).
\]
Here $Z^{k, \a}$ is a sub-variety of a certain fiber bundles $\tilde{Z}^k$ over the Grassmanian $Gr(\ell_{\a,k}, V_{\varphi})$ (cf. \cite[Proposition 5.24]{Deng23}) and $(Z^{k, \a})_W$ is the fiber over $W$.
\begin{lemma}\label{lem: 7.7.2}
Assume that $\a$ is 
a multisegment satisfying $(\A_k)$.
Let $r\leq \varphi_{e(\a)}(k)$ and 
$\d=\a+r[k+1]$. Then 
we have 
$X_{\d}^{k+1}=Y_{\d}$ and for a 
fixed subspace $W$ of $V_{\varphi_{\d}, k+1}$
of dimension $r$,
the open immersion 
\[
 \tau_W: (X_{\d}^{k+1})_{W}\rightarrow (Z^{k+1, \d})_W\times \Hom(V_{\varphi_{\d}, k}, W)
\]
is an isomorphism.
   
\end{lemma}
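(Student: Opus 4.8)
\textbf{Proof proposal for Lemma \ref{lem: 7.7.2}.}

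The plan is to reduce this statement to the already-established Lemma \ref{lem: 7.6.2} and Lemma \ref{lem: 7.6.3}, applied with the shifted index $k+1$ in place of $k$, and then to analyze the target of the open immersion $\tau_W$ concretely. First I would verify that the hypothesis $(\A_{k+1})$ holds for $\d=\a+r[k+1]$: since $\a$ satisfies $(\A_k)$, condition (1) of $(\A_{k+1})$ (the gap $\max\{b(\Delta)\}+1<\min\{e(\Delta)\}$) is unaffected by adjoining copies of the singleton segment $[k+1]$ provided $r\le\varphi_{e(\a)}(k)$, because the new ends $e([k+1])=k+1$ still lie above the maximal beginning plus one; and condition (2) requires $\varphi_{e(\d)}(k+1)\ne 0$, which holds since $r\ge 1$ (or, if $r=0$, the statement is Lemma \ref{lem: 7.6.2} itself), together with $\varphi_{e(\d)}(k+2)=0$, which follows from $\varphi_{e(\a)}(k+1)=0$ and the fact that the added segments have end $k+1$, not $k+2$. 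Once $(\A_{k+1})$ is in force, Lemma \ref{lem: 7.6.2} applied to $\d$ and the index $k+1$ gives immediately $X_{\d}^{k+1}=Y_{\d}=\coprod_{\c\in S(\d)}O_{\c}$, which is the first assertion.

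For the second assertion I would work fiberwise over a fixed $W\subseteq V_{\varphi_{\d},k+1}$ of dimension $r=\ell_{\d,k+1}$. The open immersion $\tau_W\colon (X_{\d}^{k+1})_W\hookrightarrow (Z^{k+1,\d})_W\times\Hom(V_{\varphi_{\d},k},W)$ is supplied by \cite[Proposition 5.35]{Deng23}; to upgrade it to an isomorphism it suffices to check that it is surjective, i.e.\ that every pair in the product is the image of a point of $(X_{\d}^{k+1})_W$. The key point is that under hypothesis $(\A_k)$ for $\a$ there is no obstruction coming from the index $k+2$: since $\varphi_{e(\a)}(k+1)=0$ the space $V_{\varphi_{\d},k+2}$ interacts with the data only through the ambient $T$ restricted to $\a$, and the extra singletons $[k+1]$ impose no compatibility with a nonzero $(k+2)$-component. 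Concretely, the conditions cutting out $(X_{\d}^{k+1})_W$ inside $(Z^{k+1,\d})_W\times\Hom(V_{\varphi_{\d},k},W)$ — which in general are the requirement that the resulting graded nilpotent class lies in an orbit indexed by $S(\d)$ — become vacuous here, exactly as in the proof of Lemma \ref{lem: 7.6.2}: for any $\c\in S(\d)$ one has $\varphi_{e(\c)}(k+1)=\varphi_{e(\d)}(k+1)$, so every point of the product already corresponds to a class in $Y_{\d}=X_{\d}^{k+1}$.

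The main obstacle I anticipate is purely bookkeeping: one must chase through the definitions of $Z^{k+1,\d}$, the fiber bundle $\tilde Z^{k+1}$ over $Gr(\ell_{\d,k+1},V_{\varphi_{\d}})$, and the immersion $\tau_W$ from \cite[Definition 5.20, Proposition 5.24, Proposition 5.35]{Deng23} to see precisely which defining equations of $(X_{\d}^{k+1})_W$ are being trivialized by the hypothesis, and to confirm that the map $\tau_W$ is not merely an open immersion onto its image but that the image is all of the product. I would handle this by noting that $\tau_W$ is an open immersion between irreducible varieties of the same dimension — $(X_{\d}^{k+1})_W$ is irreducible of dimension equal to $\dim (Z^{k+1,\d})_W+\dim\Hom(V_{\varphi_{\d},k},W)$ by the fibration structure recalled before Lemma \ref{lem: 7.7.2} together with $X_{\d}^{k+1}=Y_{\d}$ — so that, the target being irreducible as well, an open immersion that is dominant is automatically an isomorphism; dominance in turn follows from the dimension count. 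This circumvents the need to exhibit an explicit inverse and isolates the only real input, namely the dimension equality, which is exactly what Lemma \ref{lem: 7.6.2} and the cited propositions of \cite{Deng23} provide.
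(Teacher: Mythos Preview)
Your proposal has two genuine gaps.

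\textbf{First gap (paragraph 1).} The multisegment $\d=\a+r[k+1]$ does \emph{not} satisfy $(\A_{k+1})$. Adjoining the singleton $[k+1]$ adds a segment whose \emph{beginning} is $k+1$, so $\max\{b(\Delta):\Delta\in\d\}\geq k+1$, while $\min\{e(\Delta):\Delta\in\d\}\leq k$ (there are segments of $\a$ ending at $k$). Hence condition (1) of $(\A_{k+1})$ fails, and Lemma \ref{lem: 7.6.2} cannot be invoked for $\d$ at the index $k+1$. Your sentence ``the new ends $e([k+1])=k+1$ still lie above the maximal beginning plus one'' checks the wrong inequality: it is the new \emph{beginning} that breaks the gap condition. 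The paper does not pass through $(\A_{k+1})$ at all; it argues directly that $\d_{\min}\in\tilde S(\d)_{k+1}$, which is a weaker input sufficient for $X_{\d}^{k+1}=Y_{\d}$. Your paragraph~2 heuristic (that $\varphi_{e(\c)}(k+1)=r$ for all $\c\in S(\d)$) is in fact correct and can be made into a proof, but it requires an argument different from Lemma \ref{lem: 7.6.2}'s: one must observe that no segment of any $\c\in S(\d)$ begins at $k+2$ (beginnings are preserved under elementary operations), so no segment ending at $k+1$ can be absorbed.

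\textbf{Second gap (paragraph 3).} A dominant open immersion between irreducible varieties of the same dimension is \emph{not} automatically an isomorphism: consider $\mathbb A^{1}\setminus\{0\}\hookrightarrow\mathbb A^{1}$. Dominance gives a dense image, not a full one, so your dimension count does not establish surjectivity of $\tau_W$. The paper proves surjectivity constructively: given $(T^{(k+1)},T_0)$ in the target, one chooses a splitting $V_{\varphi_{\d},k+1}=W\oplus V_{\varphi_{\d},k+1}/W$ and builds $T'$ by hand so that $\tau_W(T')=(T^{(k+1)},T_0)$, checking that the resulting $T'$ lands in $Y_{\d}$. The sketch in your paragraph~2 (``every point of the product already corresponds to a class in $Y_{\d}$'') is the right idea, but it needs to be made precise by actually producing the preimage, not by a dimension argument.
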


\begin{proof}
Note that our assumption on $\a$ 
ensures that $X_{\d}^{k+1}=Y_{\d}$ since 
we have $\d_{\min}\in \tilde{S}(\d)_{k+1}$ (cf. \cite[Notation 5.11]{Deng23}). 
It suffices to show that $\tau_W$ is surjective.
Let $(T^{(k)}, T_0)\in (Z^{k+1, \d})_W\times \Hom(V_{\varphi_{\d}, k}, W)$,
by fixing a splitting $V_{\varphi_{\d}, k+1}=W\oplus V_{\varphi_{\d}, k+1}/W$, we define 
 \[
  T'|_{V_{\varphi_{\d}, k}}=T_{0}\oplus T^{(k+1)}|_{V_{\varphi_{\d}, k}},
 \]
\[
 T'|_{V_{\varphi_{\d}, k+1}}=T^{(k+1)}|_{V_{\varphi_{\d}, k+1}/W}\circ p_{W},
\]
\[
 T'|_{V_{\varphi_{\d}, i}}=T^{(k+1)}, \text{ for } i\neq k, k+1,
\]
where $p_w: V_{\varphi_{\d}}\rightarrow  V_{\varphi_{\d}, k}/W$ is the 
canonical projection.
Then we have $T'\in Y_{\d}$ hence $T'\in (X_{\d}^{k+1})_{W}$. 
Now since by construction we have $\tau_W(T')=(T^{(k)}, T_0)$, we are done.
\end{proof}

\begin{definition}\label{def: 7.4.6}
Assume that $\a$ is 
a multisegment satisfying $(\A_k)$
and $\d=\a+r[k+1]$ for some $r\leq \varphi_{e(\a)}(k)$.
Let $\mathfrak{X}_{\d}$ be the open sub-variety of 
$X_{\d}^{k+1}$ consisting of those orbits 
$O_{\c}$ with $\c\in S(\d)$, such that  
 $\varphi_{e(\c)}(k)+r=\varphi_{e(\a)}(k)$. 
\end{definition}

\begin{definition}
Let $V$ be a vector space and $\ell_1<\ell_2<\dim(V)$ be two integers.
We define 
\[
 Gr(\ell_1, \ell_2, V)=\{(U_1, U_2): U_1\subseteq U_2\subseteq V, \dim(U_1)=\ell_1, \dim(U_2)= \ell_2\}.
\]

\end{definition}

\begin{definition}\label{def-open-subset-Ea}
Let $\ell$ be an integer and 
$\a$ be a multisegment.
We let 
\[
E''_{\a}=\{(T', W'): T'\in Y_{\a}, W'\in Gr(\ell, \ker(T'|_{V_{\varphi_{\a}, k}}))\} .
\]
Note that we have a canonical morphism 
\[
 \alpha': E''_{\a}\rightarrow Gr(\ell, \varphi_{e(\a)}(k), V_{\varphi_{\a}, k})
\]
sending $(T',W')$ to $(W', \ker(T'|_{V_{\varphi_{\a}, k}}))$.

\end{definition}

\begin{prop}\label{prop: 7.7.6}
The morphism $\alpha'$ is a fibration.   
\end{prop}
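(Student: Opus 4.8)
The claim is that the map $\alpha'\colon E''_{\a}\to Gr(\ell,\varphi_{e(\a)}(k),V_{\varphi_{\a},k})$ sending $(T',W')$ to $(W',\ker(T'|_{V_{\varphi_{\a},k}}))$ is a fibration. The natural strategy is to produce a local trivialization by exhibiting, for each point $(W_1,W_2)$ in the two-step Grassmannian, an explicit neighborhood $U$ and an isomorphism over $U$ between $\alpha'^{-1}(U)$ and $U\times F$ for a fixed fiber $F$. First I would unwind the definition: a point of $E''_{\a}$ is a pair $(T',W')$ with $T'\in Y_{\a}=\coprod_{\c\in S(\a)}O_{\c}$ (using Lemma \ref{lem: 7.6.2}) and $W'\subseteq\ker(T'|_{V_{\varphi_{\a},k}})$ of dimension $\ell$; the map records the flag $(W',\ker(T'|_{V_{\varphi_{\a},k}}))$. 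So the fiber over a fixed flag $(W_1\subseteq W_2)$ consists of those $T'\in Y_{\a}$ whose degree-$k$ part has kernel exactly $W_2$ (together with the now-redundant datum $W'=W_1$).

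**Key steps.** (1) Factor $\alpha'$ through the forgetful map $E''_{\a}\to Gr(\varphi_{e(\a)}(k),V_{\varphi_{\a},k})$, $(T',W')\mapsto\ker(T'|_{V_{\varphi_{\a},k}})$, composed with the tautological $Gr(\ell,W_2)$-bundle structure over it; since the projection $Gr(\ell,\ell_2,V)\to Gr(\ell_2,V)$ is itself a Grassmannian bundle, it suffices to show the map $p\colon E''_{\a}\to Gr(\varphi_{e(\a)}(k),V_{\varphi_{\a},k})$, $(T',W')\mapsto\ker(T'|_{V_{\varphi_{\a},k}})$, is a fibration — the $W'$ datum just adds a relative Grassmannian $Gr(\ell,\text{taut})$ and can be carried along. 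Actually it is cleaner to directly trivialize over $(W_1\subseteq W_2)$. (2) Fix a base flag $(W_1^0\subseteq W_2^0)$ and choose a complementary decomposition $V_{\varphi_{\a},k}=W_2^0\oplus C$; over the standard affine chart $U$ of the two-step Grassmannian (flags $(W_1,W_2)$ with $W_2\oplus C=V_{\varphi_{\a},k}$, $W_1$ a graph over $W_1^0$ inside $W_2$), every nearby flag is the image of the base flag under a unique unipotent $g=g(W_1,W_2)\in\GL(V_{\varphi_{\a},k})$ depending algebraically on $(W_1,W_2)$, extended by the identity on the other graded pieces $V_{\varphi_{\a},i}$, $i\ne k$. (3) The map $(T',W')\mapsto (g^{-1}T'g,\, g^{-1}W')$ then carries $\alpha'^{-1}(U)$ isomorphically onto $U\times p^{-1}(W_1^0\subseteq W_2^0)$; one must check that conjugation by such $g$ preserves $Y_{\a}$ (it does, since $g$ lies in $G_{\varphi_{\a}}$ — it is block-graded — and $Y_{\a}$ is a union of $G_{\varphi_{\a}}$-orbits), preserves the condition "kernel of degree-$k$ part has dimension $\varphi_{e(\a)}(k)$," and is algebraic with algebraic inverse. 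This gives the local triviality, hence $\alpha'$ is a fibration with fiber $F=p^{-1}(W_1^0\subseteq W_2^0)$, a union of $\Stab_{G_{\varphi_{\a}}}(W_2^0)$-orbits.

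**Main obstacle.** The routine parts are the chart combinatorics and verifying $g\in G_{\varphi_{\a}}$. The one genuine point to be careful about is that $\alpha'$ is well-defined and that the fiber is what I claim — i.e. that for $T'\in Y_{\a}$, the dimension of $\ker(T'|_{V_{\varphi_{\a},k}})$ is constant equal to $\varphi_{e(\a)}(k)$ across the whole of $Y_{\a}$, so that the target Grassmannian $Gr(\ell,\varphi_{e(\a)}(k),V_{\varphi_{\a},k})$ is the correct one and $\alpha'$ is surjective with equidimensional fibers. This is where the hypothesis $(\A_k)$ on $\a$ enters (as in Lemma \ref{lem: 7.6.2} and Lemma \ref{lem: 7.7.2}): for any $\c\in S(\a)$, the multiplicity $\varphi_{e(\c)}(k)$ equals $\varphi_{e(\a)}(k)$, and the degree-$k$ component $T'|_{V_{\varphi_{\a},k}}\colon V_{\varphi_{\a},k}\to V_{\varphi_{\a},k+1}$ of any $T'\in O_{\c}$ has rank $\varphi_{\a}(k)-\varphi_{e(\c)}(k)$, hence constant kernel dimension. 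I would state this as the first lemma-step of the proof, deduce surjectivity and well-definedness of $\alpha'$, and then run the local-trivialization argument above.
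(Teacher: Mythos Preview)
Your proposal is correct and follows essentially the same approach as the paper: both arguments use the $\GL(V_{\varphi_{\a},k})$-equivariance of $\alpha'$, pick a local section of $\GL(V_{\varphi_{\a},k})\to \GL(V_{\varphi_{\a},k})/P_{(W_1,W_2)}$ over a neighborhood of a given flag, and trivialize via $(T',W')\mapsto (g^{-1}\cdot T',\,g^{-1}W')$. The paper simply invokes the local triviality of $\GL/P$ to obtain the section, while you construct it explicitly via a unipotent graph-parametrization; your preliminary check that $\dim\ker(T'|_{V_{\varphi_{\a},k}})$ is constant on $Y_{\a}$ under $(\A_k)$ is a detail the paper leaves implicit.
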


\begin{proof}
The morphism $\alpha'$ is equivariant under the action of $GL(V_{\varphi_{\a}, k})$.
The same proof as in \cite[Proposition 5.28]{Deng23} shows that 
the morphism $\alpha'$ is actually a $P_{(U_1, U_2)}$ bundle, where
$P_{(U_1, U_2)}$  is a subgroup of $GL(V_{\varphi_{\a}, k})$ which fixes the given 
element $(U_1, U_2)$.  
Now we take a Zariski neighborhood $\mathfrak{U}$ of 
$(U_1, U_2)$ over which we have the trivialization
\[
 \gamma: \alpha'^{-1}(\mathfrak{U})\simeq \alpha'^{-1}((U_1, U_2))\times \mathfrak{U}, 
\]
such an isomorphism comes from a section 
\begin{equation}\label{eqn-section-to-group}
 s: \mathfrak{U} \rightarrow GL(V_{\varphi_{\a}, k}),
 ~s((U_1, U_2))=Id,
\end{equation}
given by $\gamma((T, W'))=[(g^{-1}T, g^{-1}W'), \alpha'((T, W'))]$, where $g=s(\alpha'((T, W')))$.
We remark that the existence of the section $s$
is guaranteed by local triviality of  $GL(V_{\varphi_{\a}, k})\rightarrow GL(V_{\varphi_{\a}, k})/P_{(U_1, U_2)}$,
cf.  \cite{S}, $\S$ 4.  
\end{proof}

Note that we use $(\mathfrak{X}_{\d})_W$  to denote the intersection of $\mathfrak{X}_{\d}$ with the fiber above $W$ along the morphism $\alpha$ discussed before Lemma \ref{lem: 7.7.2}.
\begin{prop}\label{prop: 7.7.7}
Assume that $\a$ is 
a multisegment satisfying $(\A_k)$
and $\d=\a+r[k+1]$ for some $r\leq \varphi_{e(\a)}(k)$.
Let $\ell\in \N$ such that $r+\ell=\varphi_{e(\a)}(k)$
and $W$ a subspace of 
$V_{\varphi_{\d}, k+1}$ 
such that $\dim(W)=r$.
We have a canonical projection
 \begin{equation}\label{eqn-proj-under-condition-A}
  p: (\mathfrak{X}_{\d})_W\rightarrow E''_{\a}
 \end{equation}
where for $T\in (\mathfrak{X}_{\d})_W$ with 
$\tau_W(T)=(T_1, T_0)\in (Z^{k+1, \d})_W\times \Hom(V_{\varphi_{\d}, k}, W)$, we define
$p(T)=(T_1, \ker(T_0|_{W_1}))$,  
where $W_1=\ker(T_1|_{V_{\varphi_{\d}, k}})$
(Note that here we identify $(Z^{k+1, \d})_W$ with $Y_{\a}$, see the remark after \cite[Proposition 5.31]{Deng23} ). 
Moreover, 
let $U_1\subseteq U_2\subseteq V_{\varphi_{\d}, k}$ be subspaces
such that $\dim(U_1)=\ell,~ \dim(U_2)=\varphi_{e(\a)}(k)$, then 
$p$ is a fibration 
with fiber 
$$\{T\in \Hom(V_{\varphi_{\d}, k}, W): \ker(T|_{U_2})=U_1 \}.$$
\end{prop}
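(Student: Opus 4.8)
The plan is to analyze the map $p$ fiberwise by unwinding the identification $\tau_W$ of Lemma \ref{lem: 7.7.2}. First I would record that, under the hypothesis $(\A_k)$ on $\a$, Lemma \ref{lem: 7.7.2} gives that $\tau_W\colon (X_{\d}^{k+1})_W\to (Z^{k+1,\d})_W\times \Hom(V_{\varphi_{\d},k},W)$ is an \emph{isomorphism}, and that $(Z^{k+1,\d})_W$ is canonically identified with $Y_{\a}$. So a point $T\in (X_{\d}^{k+1})_W$ is the same datum as a pair $(T_1,T_0)$ with $T_1\in Y_{\a}$ and $T_0\in \Hom(V_{\varphi_{\d},k},W)$. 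Then I would translate the condition "$T\in (\mathfrak{X}_{\d})_W$" — i.e. $T$ lies in the orbit $O_{\c}$ with $\varphi_{e(\c)}(k)+r=\varphi_{e(\a)}(k)$ — into a rank/kernel condition on the pair $(T_1,T_0)$. Concretely, with $W_1=\ker(T_1|_{V_{\varphi_{\d},k}})$ of dimension $\varphi_{e(\a)}(k)$ (this is where $T_1\in Y_{\a}$ enters, via $\a=\a_{\Id}$-type bookkeeping as in Lemma \ref{lem: 7.6.3}), the requirement that the extra $r$ segments ending at $k+1$ "use up" exactly $r$ of those kernel directions should be equivalent to $T_0|_{W_1}$ having rank exactly $r$, equivalently $\ker(T_0|_{W_1})$ has dimension $\ell=\varphi_{e(\a)}(k)-r$. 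This last translation is the part I expect to be the main obstacle: one must carefully chase the combinatorics of which orbit $O_{\c}$ a given $(T_1,T_0)$ falls into, in terms of $b(\c),e(\c)$, and confirm the clean statement "$\varphi_{e(\c)}(k)=\dim\ker(T_0|_{W_1})$"; this is bookkeeping of the same flavor as \cite[Definition 5.9, Definition 5.20]{Deng23} but needs to be done explicitly.

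Granting that translation, the map $p$ is visibly $T=(T_1,T_0)\mapsto (T_1,\ker(T_0|_{W_1}))$, and its target lands in $E''_{\a}=\{(T',W')\colon T'\in Y_{\a},\ W'\in Gr(\ell,\ker(T'|_{V_{\varphi_{\a},k}}))\}$: indeed $T_1\in Y_{\a}$ and $\ker(T_0|_{W_1})\subseteq W_1=\ker(T_1|_{V_{\varphi_{\d},k}})$ has dimension $\ell$. So $p$ is well-defined. To see it is a fibration with the claimed fiber, I would first handle the fiber set-theoretically: fixing $(T',W')\in E''_{\a}$, the fiber $p^{-1}(T',W')$ is $\{T_1=T'\}\times\{T_0\in\Hom(V_{\varphi_{\d},k},W)\colon \ker(T_0|_{W_1})=W'\}$ where $W_1=\ker(T'|_{V_{\varphi_{\a},k}})$; setting $U_1=W'$ and $U_2=W_1$ (which have the stated dimensions $\ell$ and $\varphi_{e(\a)}(k)$), this is exactly $\{T\in\Hom(V_{\varphi_{\d},k},W)\colon \ker(T|_{U_2})=U_1\}$, an affine variety of fixed dimension independent of $(T',W')$ by a rank count.

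For local triviality I would mimic the proof of Proposition \ref{prop: 7.7.6}: $p$ is equivariant for the action of a suitable parabolic-type subgroup $P\subseteq GL(V_{\varphi_{\d},k})$ preserving the flag $U_1\subseteq U_2$ (equivalently, one can use the group $GL(V_{\varphi_{\d},k})\times GL(W)$ acting on everything), and $P$ acts transitively on pairs $(W',W_1)$ with the prescribed dimensions appearing as $(\ker(T_0|_{W_1}),\ker(T'|_{V_{\varphi_{\a},k}}))$; then a local section of $GL(V_{\varphi_{\d},k})\to GL(V_{\varphi_{\d},k})/P$ (existence guaranteed by \cite{S}, \S 4, exactly as in the cited proposition) produces a Zariski-local trivialization $p^{-1}(\mathfrak{U})\simeq p^{-1}(\text{pt})\times\mathfrak{U}$ by translating. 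The homogeneity of the fiber $\{T\colon \ker(T|_{U_2})=U_1\}$ under $GL(U_2/U_1)\times\cdots$ ensures the fibers glue correctly. This completes the argument; the only genuinely delicate point remains the orbit-combinatorics identification in the first paragraph, everything after it being a transcription of the techniques of \cite[\S 5]{Deng23} and Proposition \ref{prop: 7.7.6}.
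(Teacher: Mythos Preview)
Your approach matches the paper's. The step you flag as the main obstacle is in fact immediate: under $\tau_W$ one has $\ker(T|_{V_{\varphi_{\d},k}})=\ker(T_0)\cap W_1=\ker(T_0|_{W_1})$, so the defining condition $\varphi_{e(\c)}(k)=\ell$ of $\mathfrak{X}_{\d}$ reads off directly as $\dim\ker(T_0|_{W_1})=\ell$ (and $\dim W_1=\varphi_{e(\a)}(k)$ holds for every $T_1\in Y_{\a}$ by Lemma~\ref{lem: 7.6.2}, no $\a=\a_{\Id}$ assumption needed); the paper dispatches this in two lines and then builds the local trivialization exactly as you propose, pulling back the section $s$ of Proposition~\ref{prop: 7.7.6} along $\alpha'$ and setting $\varrho(T)=[(T_1,W'),\,g^{-1}T_0]$ with $g=s(W',W_1)$ over $\alpha'^{-1}(\mathfrak{U})$.
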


\begin{proof}
We show that $p$ is well defined.
Since by definition of $(\mathfrak{X}_{\d})_W$,
\[
 \dim(W)+\dim(\ker(T_0|_{\ker(T_1|_{V_{\varphi_{\d}, k}})}))=\dim(\ker(T_1|_{V_{\varphi_{\d}, k}})),
\]
hence in order  to see that 
\[
 \ell =\dim(\ker(T_0|_{\ker(T_1|_{V_{\varphi_{\d}, k}})})),
\]
it suffices to show that 
\[
 \varphi_{e(\a)}(k)=\dim(\ker(T_1|_{V_{\varphi_{\d}, k}})).
\]
This follows from the fact that $\a=\d^{(k+1)}$. 
Finally, we show that $p$
is a fibration. 
Note that by definition, the fiber of $p$
is isomorphic to $$\{T\in \Hom(V_{\varphi_{\d}, k}, W): \ker(T|_{U_2})=U_1 \}.$$

So it suffices to show that it is locally trivial.
To show this, we consider the open subset $\mathfrak{U}$ in $E''_{\a}$ as constructed 
in the proof of Proposition \ref{prop: 7.7.6}.
Let $s$ be the section in (\ref{eqn-section-to-group}). We construct a trivialization 
for $p$:
\[
 \varrho: p^{-1}(\alpha'^{-1}(\mathfrak{U}))\rightarrow \alpha'^{-1}(\mathfrak{U})\times 
 \{T\in \Hom(V_{\varphi_{\d}, k}, W): \ker(T|_{U_2})=U_1 \}
\]
with $\varrho(T)=[(T_1, W'), g^{-1}(T_0)]$,
where $g=s((W', W_1))$, $W_1=\ker(T_1|_{V_{\varphi_{\d}, k}})$.
Note that given 
$$[(T_1, W'), T_0]\in \alpha'^{-1}(\mathfrak{U})\times 
 \{T\in \Hom(V_{\varphi_{\d}, k}, W): \ker(T|_{U_2})=U_1 \},$$ 
then $(W', W_1)\in \mathfrak{U}$ with $W_1=\ker(T_1|_{V_{\varphi_{\d}, k}})$.
It follows that we can take the section along $s$ in (\ref{eqn-section-to-group}) to obtain $g=s((W', W_1))$.
Let $T_0'=gT_0$. Then $T=\tau_W^{-1}((T_1, T_0'))\in  p^{-1}(\alpha'^{-1}(\mathfrak{U}))$.

\end{proof}

\begin{definition}
Let $\ell+\dim(W)=\varphi_{e(\a)}(k)$.
We define $ \mathfrak{Y}_{\a}$ to be the set of pairs $(T, U)$ satisfying
 \begin{description}
  \item[(1)] $U\in Gr(\ell, V_{\varphi_{\a},k})$,
  $T\in \End(V_{\varphi_{\a}}/U) \text{ of degree } 1$;
  \item[(2)] $ T\in O_{\b}$ for some $\b\preceq_k \a$ (cf. Definition \ref{def-new-partial-order}).
 \end{description}
 And we have a canonical 
 projection 
 \[
  \sigma: E''_{\a}\rightarrow  \mathfrak{Y}_{\a}
 \]
for $(T, U)\in  E''_{\a}$, we associate
\[
 \sigma((T, U))=(T', U)
\]
where 
 $T'\in \End(V_{\varphi_{\a}}/U)$
is the quotient of $T$. 
Also, we have a morphism 
\begin{equation}\label{def-eqn-sigma-p}
 \sigma': \mathfrak{Y}_{\a}\rightarrow Gr(\ell, \varphi_{e(\a)}(k), V_{\varphi_{\a}, k}),
\end{equation}
by 
\[
 \sigma'((T, U))=(U, \pi^{-1}(\ker(T|_{V_{\varphi_{\a},k}/U}))).
\]
where $\pi: V_{\varphi_{\a}, k}\rightarrow V_{\varphi_{\a},k}/U$
is the canonical projection.
\end{definition}

\begin{lemma}
 We have for $(T, U)\in \mathfrak{Y}_{\a}$,
\begin{description}
\item [(1)] $\gamma_k(T)\in Z^{k,\a}$ (cf. \cite[Definition 5.25]{Deng23}),
\item[(2)] $T|_{V_{\varphi_{\a}, k-1}} \text{ is surjective }$,
\item[(3)] $\dim(\ker(T|_{V_{\varphi_{\a},k}/U}))=\dim(W)$,
 \end{description}
where the morphism $\gamma_k: X_{k}^{\a}\rightarrow Z^{k,\a}$ is defined in \cite[After Definition 5.26]{Deng23}.
\end{lemma}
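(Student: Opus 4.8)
The three assertions are local statements about a point $(T,U)\in\mathfrak{Y}_\a$, so the plan is to trace through the definitions of $\mathfrak{Y}_\a$, $\mathfrak{X}_\d$ and the fibration structures introduced in Propositions \ref{prop: 7.7.6} and \ref{prop: 7.7.7}, and pull back the known facts about points of $X_\d^{k+1}$ to points of $\mathfrak{Y}_\a$. Concretely, since $(T,U)$ lies in $\mathfrak{Y}_\a$ exactly when $T\in O_\b$ for some $\b\preceq_k\a$, I would first invoke Lemma \ref{lem: 7.6.3} to characterize such $\b$ by the conditions $\b^{(k)}\le\a^{(k)}$ together with the numerical identity on $\varphi_{e(\b)}(k-1)$; the inequality $\b^{(k)}\le\a^{(k)}$ is precisely what one needs to know that $T$, viewed on $V_{\varphi_\a}/U$ after removing the part at $k$, lands in the closure described by $Z^{k,\a}$.

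\textbf{Step 1 (assertion (1)).} I would show $\gamma_k(T)\in Z^{k,\a}$ by unwinding \cite[Definition 5.25]{Deng23}: $Z^{k,\a}$ is cut out by the condition that the relevant graded pieces of the operator, after passing to the quotient by the kernel at level $k$, define an orbit in $S(\a^{(k)})$ (or its closure). Using $\b\preceq_k\a$ and Proposition \ref{prop: 7.3.5}, write $\b=\c_\Gamma$ with $\c\in S(\a)$; then $\b^{(k)}=\c^{(k)}$ and $\c^{(k)}\le\a^{(k)}$ by Lemma \ref{lem: 7.6.2}(2), so $\gamma_k(T)$ lands in $\line{O}_{\a^{(k)}}=Z^{k,\a}$ by the very definition of $\gamma_k$ as the passage from $X_k^\a$ to $Z^{k,\a}$ recalled after \cite[Definition 5.26]{Deng23}.

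\textbf{Step 2 (assertions (2) and (3)).} For (2), surjectivity of $T|_{V_{\varphi_\a,k-1}}$: because $\a$ satisfies $(\A_k)$, the inequality $\max\{b(\Delta):\Delta\in\a\}+1<\min\{e(\Delta):\Delta\in\a\}$ forces $\varphi_{e(\a)}(k-1)=0$ is \emph{not} what we want, so instead one uses that for any $\Delta\in\b$ with $e(\Delta)=k$ we have $b(\Delta)\le k-1$ (shown in the proof of Lemma \ref{lem: 7.6.3}), together with the fact that on the quotient $V_{\varphi_\a}/U$ the operator still corresponds to a multisegment $\b'\preceq_k\a$ all of whose segments ending at $k$ reach down to $k-1$; this exactly says the map into $V_{\varphi_\a,k}/U$ from $V_{\varphi_\a,k-1}$ hits everything in the image of the $k$-th graded piece, i.e. $T|_{V_{\varphi_\a,k-1}}$ is surjective in the appropriate graded sense. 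For (3), the identity $\dim(\ker(T|_{V_{\varphi_\a,k}/U}))=\dim(W)$ is a dimension count: $\dim(\ker(T|_{V_{\varphi_\a,k}}))=\varphi_{e(\a)}(k)$ holds because $\a=\d^{(k+1)}$ (this was used verbatim in the proof of Proposition \ref{prop: 7.7.7}), $\dim U=\ell$, and $\ell+\dim(W)=\varphi_{e(\a)}(k)$ by hypothesis, so passing to the quotient by $U\subseteq\ker(T|_{V_{\varphi_\a,k}})$ removes exactly $\ell$ dimensions from the kernel, leaving $\dim(W)$.

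\textbf{Main obstacle.} The delicate point is assertion (2): one must be careful that ``surjective'' is meant on the relevant graded component and that the descent to $V_{\varphi_\a}/U$ does not create new segments ending at $k$ that fail to extend to $k-1$. The safe route is to prove it for the maximal element $\a=\a_{\Id}$ first (where $\a=\sum_i[k_i,\ell_i]$ is completely explicit and the claim is immediate from $k_i\le k-1$ whenever $\ell_i=k$), and then argue that the property $T|_{V_{\varphi_\a,k-1}}$ surjective is closed under the specialization $\b\le\a$, hence holds on the whole of $\mathfrak{Y}_\a$; alternatively, deduce it directly from the characterization $\varphi_{e(\b^{(k)})}(k-1)=\varphi_{e(\b)}(k-1)+\varphi_{e(\b)}(k)$ established inside the proof of Lemma \ref{lem: 7.6.3}, which is precisely the rank condition encoding surjectivity.
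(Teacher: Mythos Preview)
Your argument for (1) is correct and matches the paper's: write $\b=\c_\Gamma$ with $\c\in S(\a)$ via Proposition~\ref{prop: 7.3.5}, then $\b^{(k)}=\c^{(k)}\le\a^{(k)}$ by Lemma~\ref{lem: 7.6.2}. For (2) you arrive at the right conclusion, though somewhat circuitously; the paper's route is shorter: since $\a$ satisfies $(\A_k)$, so does every $\c\in S(\a)$, hence no segment of $\c$ (and therefore of $\b=\c_\Gamma$) begins at $k$, which is exactly the surjectivity of $T|_{V_{\varphi_\a,k-1}}$.

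There is a genuine gap in your argument for (3). By the definition of $\mathfrak{Y}_\a$, the operator $T$ lives in $\End(V_{\varphi_\a}/U)$, not in $\End(V_{\varphi_\a})$; so the expression ``$\ker(T|_{V_{\varphi_\a,k}})$'' does not typecheck, and the sentence ``passing to the quotient by $U\subseteq\ker(T|_{V_{\varphi_\a,k}})$'' has no meaning for this $T$. The identity you cite from the proof of Proposition~\ref{prop: 7.7.7} concerns the operator $T_1\in Y_\a\subseteq E_{\varphi_\a}$, which is a different object. One might hope to repair this by lifting $(T,U)$ along $\sigma:E''_\a\to\mathfrak{Y}_\a$ to some $(T',U)$ with $T'\in Y_\a$, but the surjectivity of $\sigma$ is only proved \emph{after} this lemma (indeed that proof uses the lemma), so this would be circular.

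The paper's fix is direct and avoids any lifting: since $T\in O_\b$, one has $\dim\ker(T|_{V_{\varphi_\a,k}/U})=\varphi_{e(\b)}(k)$. Writing $\b=\c_\Gamma$ with $\c\in S(\a)$ and $|\Gamma|=\ell$ (forced by $\varphi_\b=\varphi_\a-\ell\chi_{[k]}$), and using that $(\A_k)$ gives $\varphi_{e(\c)}(k)=\varphi_{e(\a)}(k)$, one computes $\varphi_{e(\b)}(k)=\varphi_{e(\c)}(k)-|\Gamma|=\varphi_{e(\a)}(k)-\ell=\dim(W)$.
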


\begin{proof}
(1)To show $\gamma_k(T)\in Z^{k, \a}$, it suffices to show that 
 for $\b\preceq_k \a$ implies that $\b^{(k)}\leq \a^{(k)}$. 
 Note that by Proposition \ref{prop: 7.3.5},  
 there exists $\c\in S(\a)$ and $\Gamma\subseteq \c(k)$,
such that 
 \[
  \b=\c_{\Gamma}.
 \]
Now by Lemma \ref{lem: 7.6.2}, 
we have $\c\in \tilde{S}(\a)_k$, which
implies that 
\[
 \b^{(k)}=\c^{(k)}\leq \a^{(k)}.
\]
(2)By definition,
for any $(T, U)\in \mathfrak{Y}_{\a}$ we have $T\in O_{\b}$
for some $\b\preceq_k \a$. 
By the fact that   
$\a$ satisfies the assumption $(\A_k)$, 
any $\c\in S(\a)$
also satisfies $(\A_k)$, hence $\b=\c_{\Gamma}$
cannot contain a segment which starts at 
$k$, therefore $T|_{V_{\varphi_{\a}, k-1}}$ is surjective. \\
(3) 
Note that by the definition of $\mathfrak{Y}$, 
for  $(T, U)\in \mathfrak{Y}_{\a}$, we have 
$T\in O_{\b}$ for some $\b\preceq_k \a$. 
Now it follows 
\[
 \ker(T|_{V_{\varphi_{\a}, k}/U })=\varphi_{e(\a)}(k)-\ell=\dim(W).
\]

\end{proof}

\begin{prop}
Let $\a$ be a 
multisegment satisfying 
the assumption $(\A_k)$.
Then the morphism $\sigma'$ is a fibration. 
Moreover, if we assume that $\a=\a_{\Id}$
(cf. Lemma \ref{lem: 7.6.1}),
 then the morphism $\sigma$ is also a
fibration.  
\end{prop}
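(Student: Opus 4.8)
The plan is to deduce both statements from the $G_{\varphi_{\a}}$-equivariance technique already used in the proof of Proposition \ref{prop: 7.7.6}, reducing the harder assertion about $\sigma$ to a computation of fibres after passing to the fibres of $\alpha'$ and $\sigma'$ over a point of the partial flag variety.

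For $\sigma'$: the group $G_{\varphi_{\a}}$ acts on $\mathfrak{Y}_{\a}$ by $(g_i)_i\cdot(T,U)=(\bar gT\bar g^{-1},g_kU)$, where $\bar g\colon V_{\varphi_{\a}}/U\xrightarrow{\sim}V_{\varphi_{\a}}/g_kU$ is the induced isomorphism, and on $Gr(\ell,\varphi_{e(\a)}(k),V_{\varphi_{\a},k})$ through its factor $\GL(V_{\varphi_{\a},k})$; the map $\sigma'$ is equivariant. Since the target is homogeneous under $\GL(V_{\varphi_{\a},k})$ and $\sigma'$ is surjective (its image is $\GL(V_{\varphi_{\a},k})$-stable and $\mathfrak{Y}_{\a}\neq\emptyset$, because $\a_\Gamma\preceq_k\a$ for every $\Gamma\subseteq\a(k)$), the standard argument applies: choosing a Zariski open $\mathfrak U\ni x_0=(U_1,U_2)$ together with a section $s\colon\mathfrak U\to G_{\varphi_{\a}}$ of the orbit map with $s(x_0)=\Id$ — which exists by local triviality of $G_{\varphi_{\a}}\to G_{\varphi_{\a}}/P_{x_0}$, cf. \cite[\S4]{S} — one trivialises $\sigma'$ over $\mathfrak U$ by $(T,U)\mapsto\bigl(s(\sigma'(T,U))^{-1}\cdot(T,U),\,\sigma'(T,U)\bigr)$. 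This shows $\sigma'$ is a fibration.

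For $\sigma$, assume $\a=\a_{\Id}$. First, $\sigma'\circ\sigma=\alpha'$: if $(T,U)\in E''_{\a}$ then $T(U)=0$, so in $V_{\varphi_{\a}}/U$ the degree $k+1$ component is still $V_{\varphi_{\a},k+1}$ and the quotient $T'$ of $T$ satisfies $\ker(T'|_{V_{\varphi_{\a},k}/U})=\ker(T|_{V_{\varphi_{\a},k}})/U$, whence $\sigma'(\sigma(T,U))=(U,\ker(T|_{V_{\varphi_{\a},k}}))=\alpha'(T,U)$. Trivialising $\alpha'$ (Proposition \ref{prop: 7.7.6}) and $\sigma'$ over the same $\mathfrak U$ with the same section $s$, and using the equivariance of $\sigma$, one checks that over $\mathfrak U$ the map $\sigma$ becomes $\sigma_0\times\Id_{\mathfrak U}$, where $\sigma_0\colon\alpha'^{-1}(x_0)\to\sigma'^{-1}(x_0)$ is the restriction of $\sigma$ (this is well defined since $\sigma'\circ\sigma=\alpha'$). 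Since the opens $\sigma'^{-1}(\mathfrak U)$ cover $\mathfrak{Y}_{\a}$, it suffices to prove that $\sigma_0$ is a fibration.

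Finally we analyse $\sigma_0$. For $(T',U_1)\in\sigma'^{-1}(x_0)$, an element of $\sigma_0^{-1}((T',U_1))$ is a degree $1$ endomorphism $T$ of $V_{\varphi_{\a}}$ with $T(U_1)=0$, with $T\in Y_{\a}$, and with quotient $T'$ by $U_1$; here $T_k=T'_k\circ\pi$ and $T_i=T'_i$ for $i\neq k-1,k$ are forced, so $T$ is determined by the choice of $T_{k-1}$, a lift of $T'_{k-1}$ along $\pi$ — equivalently, a point of a torsor under $\Hom(V_{\varphi_{\a},k-1},U_1)$, subject only to the constraint $T\in Y_{\a}$. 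This is where the hypothesis $\a=\a_{\Id}$ is essential: by Lemma \ref{lem: 7.6.3} the condition $\b\preceq_k\a$ reduces to the transparent requirements $\b^{(k)}\le\a^{(k)}$ and $\varphi_{e(\b)}(k-1)=\ell+\varphi_{e(\a)}(k-1)$, and by Lemma \ref{lem: 7.6.2} the union $Y_{\a}$ equals the Grassmannian-type variety $X_{\a}^{k}$ studied in \cite{Deng23} (cf. also Lemma \ref{lem: 7.7.2} and Theorem \ref{prop: 7.3.8}); combining these, one identifies $\sigma_0^{-1}((T',U_1))$ with a locally closed subvariety of $\Hom(V_{\varphi_{\a},k-1},U_1)$ cut out by rank and incidence conditions that are uniform in $(T',U_1)$, whereupon local triviality of $\sigma_0$ follows exactly as in the proof of Proposition \ref{prop: 7.7.7} — fix a splitting of $V_{\varphi_{\a},k}$ and use a section of $\GL(V_{\varphi_{\a},k})\to\GL(V_{\varphi_{\a},k})/P$ over a suitable open to exhibit the trivialisation. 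I expect this last step — pinning down exactly which lifts $T_{k-1}$ keep $T$ inside $Y_{\a}$, and checking the local triviality of the resulting family — to be the main obstacle; the hypothesis $\a=\a_{\Id}$ is precisely what renders $Y_{\a}$, and with it this family, combinatorially tractable.
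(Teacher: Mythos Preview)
Your approach is essentially the same as the paper's: the treatment of $\sigma'$ via $\GL(V_{\varphi_{\a},k})$-equivariance and a local section matches exactly, and the reduction of $\sigma$ to the fibre map $\sigma_0\colon\alpha'^{-1}(x_0)\to\sigma'^{-1}(x_0)$ via the identity $\alpha'=\sigma'\circ\sigma$ is precisely what the paper does.

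The one place where the paper is more explicit is the analysis of $\sigma_0$. Rather than invoking a further equivariance argument ``as in Proposition~\ref{prop: 7.7.7}'', the paper uses the open immersions $\tau_{U_2}$ and $\tau_{U_2/U_1}$ of \cite[Proposition~5.35]{Deng23} to embed source and target into $Y_{\a^{(k)}}\times\Hom(V_{\varphi_\a,k-1},U_2)$ and $Y_{\a^{(k)}}\times\Hom(V_{\varphi_\a,k-1},U_2/U_1)$ respectively, so that $\sigma_0$ becomes the restriction of the obvious projection on the second factor. After fixing a splitting $U_2\simeq U_2/U_1\oplus U_1$, the fibre over $(T_0,q_0)$ is the set of $q_1\in\Hom(V_{\varphi_\a,k-1},U_1)$ such that $(T_0,q_0\oplus q_1)$ lies in the image of $\tau_{U_2}$; Lemma~\ref{lem: 7.6.3} (this is where $\a=\a_{\Id}$ enters, exactly as you say) translates this into the single rank condition
\[
\dim\bigl(\ker(q_1|_{W_1})\bigr)=\varphi_{e(\a)}(k-1),\qquad W_1=\ker\bigl(q_0|_{\ker(T_0|_{V_{\varphi_\a,k-1}})}\bigr),
\]
and the paper then observes that $\dim W_1$ and $\dim\ker(T_0|_{V_{\varphi_\a,k-1}})$ are constant, so the fibres are mutually isomorphic. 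Your sketch ``rank and incidence conditions uniform in $(T',U_1)$'' is correct in spirit, but the paper's concrete identification via $\tau_{U_2}$ is what actually pins down the condition and makes the uniformity visible; your deferral to the method of Proposition~\ref{prop: 7.7.7} would need this same computation to go through.
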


\begin{proof}
We first show that 
$\sigma'$ is locally trivial.
We observe that the group 
$GL(V_{\varphi_{\a}, k})$ acts both
on the source and target of $\sigma'$ 
in such a way that $\sigma'$ is $GL(V_{\varphi_{\a}, k})$-equivariant.
As in the proof of  
Proposition \ref{prop: 7.7.6}, 
let $\mathfrak{U}\subseteq Gr(\ell, \varphi_{e(\a)}(k), V_{\varphi_{\a}, k})$
be a neighborhood of a given element $(U_1, U_2)$
 such that we have a section 
\[
 s: \mathfrak{U}\rightarrow GL(V_{\varphi_{\a}, k}),~ s((U_1, U_2))=Id.
\]
 Note that in this case we have a natural trivialization of 
 $\sigma'$ by
 \[
  \sigma': \beta'^{-1}(\mathfrak{U})\simeq \mathfrak{U}\times \beta'^{-1}((U_1, U_2))
 \]
by $\sigma'((T, U))=[(U,  \pi^{-1}(\ker(T|_{V_{\varphi_{\a}, k}}))), g^{-1}((T, U))]$
with $g=s((U,  \pi^{-1}(\ker(T|_{V_{\varphi_{\a}, k}}))))$. 
Finally, we show that $\sigma$ 
 is surjective and locally trivial. 

 We observe that 
$\alpha'=\sigma'\sigma$ and $\sigma$ preserves fibers. 
 Now we fix a neighborhood $\mathfrak{U}$ as above and 
 get a commutative diagram
 \begin{displaymath}
  \xymatrix{
  \alpha'^{-1}(\mathfrak{U})\ar[d]\ar[r]^{\hspace{-1cm}\gamma }& \mathfrak{U}\times \alpha'^{-1}((U_1, U_2))\ar[d]^{\delta}\\
  \sigma'^{-1}(\mathfrak{U})\ar[r]^{\hspace{-1cm}\gamma'}& \mathfrak{U}\times \sigma'^{-1}((U_1, U_2))
  }
 \end{displaymath}
where $\delta([x, T])=[x, \sigma(T)]$ for any $x\in \mathfrak{U}$
and $T\in \alpha'^{-1}((U_1, U_2))$. Therefore to show that 
$\sigma $ is locally trivial , it suffices to show that it is locally trivial  
when restricted to the fiber $\alpha'^{-1}((U_1, U_2))$.
Note that we have 
\[
 \alpha'^{-1}((U_1, U_2))\simeq\{T\in Y_{\a}: \ker(T|_{V_{\varphi_{\a}, k}})=U_2\}\simeq (X_{\a}^{k})_{U_2}
 \hookrightarrow Y_{\a^{(k)}}\times \Hom(V_{\varphi_{\a}, k-1}, U_2)
\]
and 
\begin{align*}
\sigma'^{-1}((U_1, U_2))&\simeq \{T: T\in \End(V_{\varphi_{\a}}/U_1) \text{ of degree }1, 
\ker(T|_{V_{\varphi_{\a}, k}/U_1})=U_{2}/U_1, \\
   & T\in O_{\b}, \text{ for some } \b\preceq_k \a\} \hookrightarrow Y_{\a^{(k)}}\times \Hom(V_{\varphi_{\a}, k-1}, U_2/U_1).
\end{align*}
Note that the canonical morphism  
\[
 \Hom(V_{\varphi_{\a}, k-1}, U_2)\rightarrow \Hom(V_{\varphi_{\a}, k-1}, U_2/U_1)
\]
is a fibration. Hence to show that 
\[
 \alpha'^{-1}((U_1, U_2))\rightarrow \sigma'^{-1}((U_1, U_2))
\]
is a fibration, it suffices to show that $\sigma|_{ \alpha'^{-1}((U_1, U_2))}$ is surjective with isomorphic fibers everywhere . 
Let $(T, U_1)\in \sigma'^{-1}((U_1, U_2))$ with 
$$\tau_{U_2/U_1}(T)=(T_0, q_0)\in Y_{\a^{(k)}}\times \Hom(V_{\varphi_{\a}, k-1}, U_2/U_1),$$ 
where $\tau_{U_2/U_1}$ is the morphism defined in \cite[After Definition 5.26]{Deng23}(see also the remark before \cite[Definition 5.26]{Deng23}).
We fix a splitting $U_2\simeq U_2/U_1\oplus U_1$. Now to give $(T', U_1)\in \sigma^{-1}((T, U_1))$
amounts to give $q_1\in \Hom(V_{\varphi_{\a}, k-1}, U_1)$ such that 
\[
 \tau_{U_2}(T')=(T_0, q_0\oplus q_1).
\]
Note that by Lemma \ref{lem: 7.6.3},
the condition $\a=\a_{\Id}$ implies that 
$T'$ lies in $(X_{\a}^{k})_{U_2}$ if and only if 
$q_1$ satisfies 
\begin{align*}
 \dim(\ker(q_0\oplus q_1|_{\ker(T_0|_{V_{\varphi_{\a}, k-1}})}))=\varphi_{e(\a)}(k-1),
\end{align*}
which is an open condition. Therefore $\sigma$
is surjective. By definition of $\mathfrak{Y}_{\a}$,
\[
 \dim(\ker(q_0|_{\ker(T_0|_{V_{\varphi_{\a}, k-1}})}))=\varphi_{e(\a)}(k-1)+\ell,
\]
therefore if we denote $W_1=\ker(q_0|_{\ker(T_0|_{V_{\varphi_{\a}, k-1}})})$, 
then $q_1$ satisfies that 
\[
 \dim(\ker(q_1|_{\ker(T_0|_{V_{\varphi_{\a}, k-1}})})\cap W_1)=\varphi_{e(\a)}(k-1).
\]
Such a condition is independent of the 
pair $(T_0, q_0)$ since we always have 
$\dim(\ker(T_0|_{V_{\varphi_{\a}, k-1}}))=\varphi_{e(\a)}(k-1)+\varphi_{e(\a)}(k)$
and $\dim(W_1)=\varphi_{e(\a)}(k-1)+\ell$.
\end{proof}

We return to the morphism $p$ and $\sigma$.

\begin{lemma}\label{lem: 7.7.10}
Note that an element of $G_{\varphi_d}$ stabilizes $(\mathfrak{X}_{\d})_W$
if and only if it stabilizes $W$. Let $G_{\varphi_{\d}, W}$ be the 
stabilizer of $W$, then for $\c\leq \d$, and $T\in O_{\c}\cap (\mathfrak{X}_{\d})_W$, we have 
\[
 O_{\c}\cap  (\mathfrak{X}_{\d})_W=G_{\varphi_{\d}, W}T.
\]
\end{lemma}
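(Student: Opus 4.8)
The plan is to unwind the definitions so that the claimed equality becomes an orbit statement for an explicit group action. First I would observe that $G_{\varphi_{\d}}$ acts on $X_{\d}^{k+1} = Y_{\d}$ preserving the orbit stratification $Y_{\d} = \coprod_{\c \in S(\d)} O_{\c}$, and that the fibration $\alpha$ over $Gr(r, V_{\varphi_{\d}, k+1})$ discussed before Lemma \ref{lem: 7.7.2} is $G_{\varphi_{\d}}$-equivariant; hence $g \in G_{\varphi_{\d}}$ preserves the fiber $(\mathfrak{X}_{\d})_W$ precisely when $g$ preserves both the open subvariety $\mathfrak{X}_{\d}$ (automatic, since $G_{\varphi_{\d}}$ permutes the orbits $O_{\c}$ and $\mathfrak{X}_{\d}$ is a union of such orbits defined by the numerical condition $\varphi_{e(\c)}(k) + r = \varphi_{e(\a)}(k)$, which is $G_{\varphi_{\d}}$-invariant) and the point $W$ of the Grassmannian, i.e.\ when $g \in G_{\varphi_{\d}, W}$. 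This justifies the first sentence of the statement.

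**The orbit equality.** For the main assertion, the inclusion $G_{\varphi_{\d}, W} T \subseteq O_{\c} \cap (\mathfrak{X}_{\d})_W$ is immediate: $G_{\varphi_{\d}, W}$ preserves $O_{\c}$ (it sits inside $G_{\varphi_{\d}}$) and preserves $(\mathfrak{X}_{\d})_W$ by the previous paragraph, and $T$ lies in both. The content is the reverse inclusion: any $T' \in O_{\c} \cap (\mathfrak{X}_{\d})_W$ is of the form $gT$ for some $g \in G_{\varphi_{\d}}$ stabilizing $W$. Since $T, T' \in O_{\c}$ we already know $T' = gT$ for \emph{some} $g \in G_{\varphi_{\d}}$; the task is to modify $g$ so that it also fixes $W$. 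Here I would use the structure of $\tau_W$: by Lemma \ref{lem: 7.7.2} (applied with $\d = \a + r[k+1]$, using that $\a$ satisfies $(\A_k)$), the map
\[
 \tau_W : (X_{\d}^{k+1})_W \longrightarrow (Z^{k+1,\d})_W \times \Hom(V_{\varphi_{\d}, k}, W)
\]
is an isomorphism, and it is equivariant for the action of $G_{\varphi_{\d}, W}$. So it suffices to prove the corresponding orbit statement on the target: two points of $(Z^{k+1,\d})_W \times \Hom(V_{\varphi_{\d},k},W)$ lying in the image of a single $O_{\c}$-fiber differ by an element of $G_{\varphi_{\d},W}$. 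Identifying $(Z^{k+1,\d})_W$ with $Y_{\a}$ (the remark after \cite[Proposition 5.31]{Deng23}), the $G_{\varphi_{\d},W}$-action on the target factors through its action on $Y_{\a}$ via the quotient $V_{\varphi_{\d}}/W \cong V_{\varphi_{\a}}$ together with the $\Hom$-twisting; one then checks that the $G_{\varphi_{\d},W}$-orbit through $\tau_W(T)$ surjects onto $O_{\c} \cap (\mathfrak{X}_{\d})_W$ by lifting the equality $\tau_W(T') = (\text{conjugate of }\tau_W(T))$ back through the isomorphism $\tau_W$.

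**Main obstacle.** The delicate point is not the abstract orbit bookkeeping but verifying that the $G_{\varphi_{\d}}$-orbit type is \emph{not} refined by intersecting with $(\mathfrak{X}_{\d})_W$ — i.e.\ that passing from the ambient orbit $O_{\c}$ to its slice over a fixed $W$ does not break it into several $G_{\varphi_{\d}, W}$-orbits. This is exactly what the isomorphism (not merely open immersion) in Lemma \ref{lem: 7.7.2} buys us: over the locus $\mathfrak{X}_{\d}$, the fiber $(\mathfrak{X}_{\d})_W$ is genuinely a product-type piece on which $G_{\varphi_{\d},W}$ acts transitively on each orbit-slice because the $\Hom(V_{\varphi_{\d},k},W)$-factor is acted on freely enough and the $Y_{\a}$-factor carries a full $G_{\varphi_{\a}}$-action lifting to $G_{\varphi_{\d},W}$. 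I would therefore spend the bulk of the argument making the equivariance of $\tau_W$ precise and identifying the induced homomorphism $G_{\varphi_{\d},W} \to G_{\varphi_{\a}}$, after which the orbit equality follows formally; the condition $\a = \a_{\Id}$ is not needed here (unlike in the preceding proposition), only $(\A_k)$, which guarantees $X_{\d}^{k+1} = Y_{\d}$ and hence that $\tau_W$ is defined on the whole fiber.
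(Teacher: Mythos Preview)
Your strategy differs from the paper's, and the route you take through $\tau_W$ leaves the central step unresolved. The paper does not use $\tau_W$ here at all. Instead it invokes the associated-bundle presentation of the fibration $\alpha$ (citing \cite[Proposition 2.9]{Deng24b}):
\[
GL_{\varphi_{\d}(k+1)} \times_{P_W} \alpha^{-1}(W) \xrightarrow{\ \delta\ } X_{\d}^{k+1},
\]
coming from the fact that $GL_{\varphi_{\d}(k+1)}$ acts transitively on the base Grassmannian with isotropy $P_W$ at $W$, while the remaining factors of $G_{\varphi_{\d}} = \prod_i GL(V_{\varphi_{\d},i})$ act purely fiberwise. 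For any $G$-equivariant bundle over $G/P$ this gives a bijection between $G$-orbits on the total space and $P$-orbits on the fiber over the base point; applied here one gets a bijection between $G_{\varphi_{\d}}$-orbits on $X_{\d}^{k+1}$ and $G_{\varphi_{\d},W}$-orbits on $\alpha^{-1}(W)$. Restricting to the $G_{\varphi_{\d}}$-stable open subset $\mathfrak{X}_{\d}$ then finishes the proof in one line, with no need to describe $O_{\c}\cap(\mathfrak{X}_{\d})_W$ in coordinates.

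By contrast, your plan transfers the problem to the product $(Z^{k+1,\d})_W \times \Hom(V_{\varphi_{\d},k}, W)$ via $\tau_W$, but then the claim that $G_{\varphi_{\d},W}$ ``acts transitively on each orbit-slice because the $\Hom$-factor is acted on freely enough'' is precisely the content of the lemma and is not justified. The action is coupled: the factor $GL(V_{\varphi_{\d},k})$ acts simultaneously on both components of the product, so transitivity on a slice $\tau_W(O_{\c}\cap(\mathfrak{X}_{\d})_W)$ is \emph{not} a product statement, and knowing the surjection $G_{\varphi_{\d},W}\to G_{\varphi_{\a}}$ does not make it ``follow formally''. To complete your approach you would have to identify, for fixed $T_1$, the set $\{T_0 : (T_1,T_0)\in \tau_W(O_{\c}\cap(\mathfrak{X}_{\d})_W)\}$ and prove that the stabilizer of $T_1$ in $G_{\varphi_{\d},W}$ acts transitively on it---work that the associated-bundle argument bypasses entirely.
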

\begin{proof}
 Recall that by \cite[Proposition 2.9]{Deng24b}, we have 
 \begin{displaymath}
  \xymatrix{
 X_{\d}^{ k+1}\ar[d] & GL_{\varphi_{\d}(k+1)}\times_{P_{W}}\alpha^{-1}(W)\ar[l]_{\hspace{-1.5cm}\delta}\ar[dl]\\
  Gr(\ell_{k+1}, V_{\varphi_{\d}})&
  }
\end{displaymath}
 where $\ell_{k+1}=\varphi_{e(\d)}(k+1)$. Note that we have 
 \[
  G_{\varphi_{\d}, W}=\cdots\times G_{\varphi_{\d},k}\times P_{W}\times G_{\varphi_{\d},k+2}\times \cdots ,
 \]
where $G_{\varphi_{\d}, i}=GL(V_{\varphi_{\d}, i})$. 
From this diagram we observe that  there is a one to one 
correspondance between the $G_{\varphi_{\d}}$ orbits 
on $X_{\d}^{k+1}$ and $G_{\varphi_{\d}, W}$ orbits on $\alpha^{-1}(W)$.  
Finally, since $\mathfrak{X}_{\d}$ is an open subvariety 
consisting of $G_{\varphi_{\d}}$ orbits, we are done.
\end{proof}

\begin{definition}
The canonical projection 
\[
 \pi: V_{\varphi_{\d}}\rightarrow V_{\varphi_{\d}}/W
\]
induces a projection 
\[
 \pi_{*}: G_{\varphi_{\d}, W}\rightarrow G_{\varphi_{\a}}, 
\]
where we identify $V_{\varphi_{\d}}/W$ with $V_{\varphi_{\a}}$.
\end{definition}

\begin{prop}
Assume that $\a$ is 
a multisegment satisfying $(\A_k)$
and $\d=\a+r[k+1]$ for some $r\leq \varphi_{e(\a)}(k)$.
The morphism $p$ is equivariant under the action of 
$G_{\varphi_{\d}, W}$ and $G_{\varphi_{\a}}$ via $\pi_{*}$, i.e, 
\[
 p(gx)=\pi_{*}(g)p(x).
\]
Moreover, it induces a one to one correspondance between orbits.
\end{prop}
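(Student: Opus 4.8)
The plan is to verify the equivariance identity $p(gx)=\pi_*(g)p(x)$ directly from the explicit formula for $p$ given in Proposition \ref{prop: 7.7.7}, and then to deduce the orbit correspondence from this together with Lemma \ref{lem: 7.7.10} and the fibration structure already established. First I would recall that for $T\in(\mathfrak{X}_{\d})_W$ with $\tau_W(T)=(T_1,T_0)$, we have $p(T)=(T_1,\ker(T_0|_{W_1}))$ where $W_1=\ker(T_1|_{V_{\varphi_{\d},k}})$, and that $(Z^{k+1,\d})_W$ is identified with $Y_{\a}$ via the quotient by $W$. So the key computation is to track how the two components of $\tau_W(T)$ transform under $g\in G_{\varphi_{\d},W}$. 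Since $\beta''/\tau_W$ are built from linear-algebra data, the action of $g$ on $(X_{\d}^{k+1})_W$ corresponds under $\tau_W$ to the natural action of $g$ on $(Z^{k+1,\d})_W\times\Hom(V_{\varphi_{\d},k},W)$: on the first factor $g$ acts through its image $\pi_*(g)\in G_{\varphi_{\a}}$ (because that factor is $V_{\varphi_{\d}}/W$-data), and on the second factor $g$ acts by $T_0\mapsto g|_W\circ T_0\circ g|_{V_{\varphi_{\d},k}}^{-1}$. One then checks that $\ker\big((g|_W T_0 g^{-1})|_{g W_1}\big)=g\cdot\ker(T_0|_{W_1})$, and that $g W_1=\ker((\pi_*(g)T_1\pi_*(g)^{-1})|_{V_{\varphi_{\d},k}})$, so that $p(gT)=(\pi_*(g)T_1\pi_*(g)^{-1},\,g\cdot\ker(T_0|_{W_1}))$. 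Finally one identifies this with $\pi_*(g)\cdot(T_1,\ker(T_0|_{W_1}))$: here the ambiguity is that the second coordinate of $p(T)$ is a subspace of $V_{\varphi_{\d},k}$, on which $\pi_*(g)$ acts via $g|_{V_{\varphi_{\d},k}}$ (note $W\subseteq V_{\varphi_{\d},k+1}$, so $\pi_*$ does not touch the $k$-th component), which matches $g\cdot\ker(T_0|_{W_1})$. This gives the equivariance.

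For the orbit correspondence, I would argue as follows. The map $\pi_*:G_{\varphi_{\d},W}\to G_{\varphi_{\a}}$ is surjective (it is the projection of a parabolic-type group onto a Levi-type quotient in the $(k{+}1)$-st factor, identity on all others), so equivariance already shows that $p$ sends $G_{\varphi_{\d},W}$-orbits onto $G_{\varphi_{\a}}$-orbits. Surjectivity of the induced map on orbit sets is then immediate since $p$ is surjective (being a fibration, by Proposition \ref{prop: 7.7.7}). For injectivity, suppose $p(T)$ and $p(T')$ lie in the same $G_{\varphi_{\a}}$-orbit; after applying a suitable $g\in G_{\varphi_{\d},W}$ to $T$ we may assume $p(T)=p(T')$, i.e. $\tau_W(T)=(T_1,T_0)$ and $\tau_W(T')=(T_1,T_0')$ with $\ker(T_0|_{W_1})=\ker(T_0'|_{W_1})=:U_1$ and the same $W_1,U_2$. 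By the description of the fiber of $p$ in Proposition \ref{prop: 7.7.7}, both $T_0,T_0'\in\{S\in\Hom(V_{\varphi_{\d},k},W):\ker(S|_{U_2})=U_1\}$, and the subgroup of $G_{\varphi_{\d},W}$ lying over $\Id\in G_{\varphi_{\a}}$ (namely $\cdots\times G_{\varphi_{\d},k}\times U_W\times\cdots$ with $U_W$ the unipotent radical of $P_W$, plus $\ker$ of the restriction) acts transitively on this fiber modulo the stabilizer of $(T_1,W_1,U_2)$; hence $T$ and $T'$ are in the same $G_{\varphi_{\d},W}$-orbit. Combined with Lemma \ref{lem: 7.7.10}, which identifies $G_{\varphi_{\d},W}$-orbits on $(\mathfrak{X}_{\d})_W$ with the orbits $O_{\c}\cap(\mathfrak{X}_{\d})_W$, this yields the bijection.

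The main obstacle I anticipate is the bookkeeping in the injectivity step: one must be careful that the part of $G_{\varphi_{\d},W}$ acting trivially on the $Y_{\a}$-factor really does act transitively on the relevant $\Hom$-fiber, which amounts to the statement that $\{S:\ker(S|_{U_2})=U_1\}$ is a single orbit under $\{(g_k,g_W):g_W\in P_W\text{ acting on the target }W\}\times\{\text{changes of }T_0\text{ by }g_k\in GL(V_{\varphi_{\d},k})\text{ fixing }U_1,U_2\}$ — essentially a transitivity statement for $\GL$ acting on Hom-spaces with prescribed kernel, which is elementary but needs the compatibility of the $U_1,U_2$ flag with the splitting used to define $\tau_W$. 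A secondary technical point is making the phrase "$g$ acts on $\tau_W$-coordinates via $\pi_*(g)$ on the first factor" precise; this should follow formally from the way $\tau_W$ is constructed in \cite[Proposition 5.35]{Deng23} together with the identification of $(Z^{k+1,\d})_W$ with $Y_{\a}$, but it requires unwinding that identification carefully rather than citing it as a black box.
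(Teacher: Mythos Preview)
Your overall strategy coincides with the paper's: check equivariance by unwinding the definition of $p$ and $\tau_W$, and prove the orbit bijection by showing that each fibre $p^{-1}(p(T))$ is a single $G_{\varphi_{\d},W}$-orbit. The equivariance paragraph is fine.

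There is, however, a genuine gap in your injectivity step. Your description of the kernel of $\pi_*$ is wrong: since $W\subseteq V_{\varphi_{\d},k+1}$, the projection $\pi_*$ is the identity on every factor $GL(V_{\varphi_{\d},i})$ with $i\neq k+1$; in particular $\ker(\pi_*)$ does \emph{not} contain $G_{\varphi_{\d},k}$. The kernel sits entirely in the $(k{+}1)$-st factor and equals $GL(W)\ltimes \Hom(V_{\varphi_{\d},k+1}/W,\,W)$ (the block-upper-triangular matrices $\begin{pmatrix} g_1 & g_{12}\\ 0 & \Id\end{pmatrix}\in P_W$), not just the unipotent radical $U_W$. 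Consequently your proposed transitivity statement --- that a group containing $g_k\in GL(V_{\varphi_{\d},k})$ acts transitively on $\{S:\ker(S|_{U_2})=U_1\}$ --- is not the right one: any nontrivial $g_k$ would move $T_1$ as well, so you cannot use it after having normalised $T_1'=T_1$.

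What the paper actually does (and what you are missing) is to use \emph{both} pieces $g_1$ and $g_{12}$ of the kernel. Under $\ker(\pi_*)$ the second component transforms as $T_0\mapsto g_1T_0+g_{12}\circ(T_1|_{V_{\varphi_{\d},k}})$. One first chooses $g_1\in GL(W)$ so that $g_1T_0$ and $q$ agree on $U_1=\ker(T_1|_{V_{\varphi_{\d},k}})$ (possible because $T_0|_{U_1}$ and $q|_{U_1}$ are both surjections onto $W$ with the same kernel $U_0$), and then defines $g_{12}(v_1)=q(v)-g_1T_0(v)$ for any $v$ with $T_1(v)=v_1$. The well-definedness check is immediate, but the very \emph{existence} of such a $v$ requires that $T_1|_{V_{\varphi_{\d},k}}:V_{\varphi_{\d},k}\to V_{\varphi_{\d},k+1}/W$ be surjective. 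This is exactly where the hypothesis $(\A_k)$ enters --- it forces every $\c\in S(\a)$ to have no segment beginning at $k+1$ --- and you never invoke it. Without that surjectivity the fibre $\{S:\ker(S|_{U_2})=U_1\}$ is \emph{not} a single $\ker(\pi_*)$-orbit, so this is not a bookkeeping issue but the heart of the argument.
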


\begin{proof}
Note that for 
$T\in (\mathfrak{X}_{\d})_W$, such that 
$\tau_W(T)=(T_1, T_0)\in (Z^{k+1, \d})_W\times \Hom(V_{\varphi_{\d}, k}, W)$, 
let
\[
U_1=\ker(T_1|_{V_{\varphi_{\d}, k}}),~ U_0=\ker(T_0|_{U_1})   
\]
we have 
\[
 p(T)=(T_1,U_0).
\]
Now it follows from the definition that 
we have 
\[
 p(gT)=\pi_{*}(g)p(T).
\]
Hence $p$ sends orbits to orbits. 
It remains to show that the pre-image of an orbit
is an orbit instead of a union of orbits.

We proved in Proposition \ref{prop: 7.7.7} that 
\[
 p^{-1}p(T)= \{(T_1, q): q\in \Hom(V_{\varphi_{\d}, k}, W), \ker(q|_{U_1})=U_0\},
\]
 note that here we identify elements of $(\mathfrak{X}_{\d})_{W}$ with 
 its image under $\tau_W$.
 Let $(T_1, q)\in p^{-1}p(T)$. Then 
 we want to find $g\in G_{\varphi_{\d}, W}$ such that $g(T_1, T_0)=(T_1, q)$. 
 Note that 
 by fixing a splitting $V_{\varphi_{\d},k+1}=W\oplus V_{\varphi_{\d},k+1}/W$,
 we can choose $g\in G_{\varphi_{\d}}$ such that 
 $g_{i}=Id\in GL(V_{\varphi_{\d}, i})$ for all $i\neq k+1$, and 
 \begin{displaymath}
  g_{k+1}=\begin{pmatrix}
     g_1 & g_{12}\\
     0& Id_{V_{\varphi_{\d},k+1}/W}
    \end{pmatrix}\in P_W,
 \end{displaymath}
where $g_1\in GL(W)$,  
and $g_{12}\in \Hom(V_{\varphi_{\d}, k+1}/W, W)$. 
By hypothesis,  the restrictions of $q$ 
and $T_0$ to 
$U_1$ are surjective with kernel $U_0$. So we 
can choose $g_1\in GL(W)$, such that 
\[
 g_1T_0(v)=q(v), \text{ for all } v\in U_1.
\]
Finally for $v_1\in V_{\varphi_{\d},k+1}/W$,  our assumption 
on $\a$ implies that 
$T_1|V_{\varphi_{\d}, k}$ is surjective. Hence there exists $v\in V_{\varphi_{\d}, k}$
such that $T_1(v)=v_1$.
Then we define  
 \[
  g_{12}(v_1)=q(v)-g_1T_0(v).
 \]
We check that this is well defined, i.e, for another $v'\in V_{\varphi_{\d}, k}$ such 
that $T_1(v')=v_1$, we have 
 \[
  q(v)-g_1T_0(v)=q(v')-g_1T_0(v'),
 \]
this is the same as to say that 
 \[
  q(v-v')=g_1T_0(v-v').
 \]
We observe that $T_1(v-v')=0$, hence $v-v'\in U_1$, now $ q(v-v')=g_1T_0(v-v')$
follows from our definition of $g_1$. Under such a choice, we have 
 \[
  g((T_1, T_0))=(T_0, q).
 \]
Hence we are done.
\end{proof}

\begin{prop}\label{prop: 7.6.17}
The morphism  $\sigma$ is equivariant under 
the action of $G_{\varphi_{\a}}$. 
Assume that $\a$ is a multisegment 
which satisfies the assumption $(\A_k)$. 
Let $\varphi\in \mathcal{S}$ such that 
\[
 \varphi+\l\chi_{[k]}=\varphi_{\a},
\]
where $\chi$ is the characteristic function at $k$.
Then there exists a one to one correspondance 
between the $G_{\varphi_{\a}}$-orbits on $\mathfrak{Y}_{\a}$ and 
the set 
\[
 S:=\{\b\in S(\varphi): \b\preceq_k \a\}.
\]
Moreover, for each orbit $\mathfrak{Y}_{\a}(\b)$
indexed by $\b$ on  $\mathfrak{Y}_{\a}$, $\sigma^{-1}(\mathfrak{Y}_{\a}(\b))$
is irreducible hence 
contains a unique orbit in $E''_{\a}$ as (Zariski) open subset.

\end{prop}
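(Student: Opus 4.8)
The plan is to analyze the morphism $\sigma\colon E''_{\a}\to\mathfrak{Y}_{\a}$ fiber by fiber, using the already-established fibration structures and the equivariance statements. First I would set up the $G_{\varphi_{\a}}$-equivariance of $\sigma$: since $\sigma$ sends $(T,U)$ to $(\bar T,U)$ with $\bar T$ the induced endomorphism on $V_{\varphi_{\a}}/U$, and $G_{\varphi_{\a}}$ acts on both sides by conjugation compatibly with passing to the quotient, equivariance is immediate from the formulas. Consequently $\sigma$ carries $G_{\varphi_{\a}}$-orbits to $G_{\varphi_{\a}}$-orbits, so once we identify the orbits on $\mathfrak{Y}_{\a}$ the correspondence on orbits will follow.

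Next I would identify the $G_{\varphi_{\a}}$-orbits on $\mathfrak{Y}_{\a}$ with the set $S=\{\b\in S(\varphi):\b\preceq_k\a\}$. By condition (2) in the definition of $\mathfrak{Y}_{\a}$, a point $(T,U)$ has $T\in O_{\b}$ for some $\b\preceq_k\a$, and $U\in Gr(\ell,V_{\varphi_{\a},k})$ with $\ell$ fixed by $\l+\dim W=\varphi_{e(\a)}(k)$; since $\dim U=\ell$ is fixed, $GL(V_{\varphi_{\a},k})\subseteq G_{\varphi_{\a}}$ acts transitively on the choices of $U$, so the orbit of $(T,U)$ is determined by the orbit $O_{\b}$ of the quotient endomorphism, i.e.\ by $\b\in S(\varphi)$. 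Conversely each $\b\in S$ with $\varphi_{\b}=\varphi$ gives a nonempty orbit: pick any $U$ of dimension $\ell$ and any lift of a point of $O_{\b}$ to a degree-one endomorphism of $V_{\varphi_{\a}}$ (possible because $\varphi_{\b}+\l\chi_{[k]}=\varphi_{\a}$). One must check that such a lift can be taken inside $Y_{\a}$ — here I would invoke Lemma~\ref{lem: 7.6.2}(2), which says $Y_{\a}=X_{\a}^k$ and $S(\varphi)$-type multisegments arise as exactly the orbits occurring, together with the criterion of Lemma~\ref{lem: 7.6.3} characterizing $\b\preceq_k\a$ via $\b^{(k)}\le\a^{(k)}$ and a rank condition, which is exactly what makes the lift land in the right variety. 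This gives the bijection between $G_{\varphi_{\a}}$-orbits on $\mathfrak{Y}_{\a}$ and $S$.

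Finally I would prove the irreducibility of $\sigma^{-1}(\mathfrak{Y}_{\a}(\b))$. Factor $\alpha'=\sigma'\circ\sigma$ as in the preceding proposition, with $\sigma'$ and (under the hypothesis $\a=\a_{\Id}$, though here we only need $\a$ satisfying $(\A_k)$ — so I would actually argue directly) the fibration structure. The point $\mathfrak{Y}_{\a}(\b)$ is a single $G_{\varphi_{\a}}$-orbit, hence irreducible and smooth; since $\sigma$ restricted to $\sigma^{-1}(\mathfrak{Y}_{\a}(\b))\to\mathfrak{Y}_{\a}(\b)$ is a fibration with fiber an affine space of the form $\{q\in\Hom(V_{\varphi_{\a},k-1},U_1):\ker(q|_{?})=?\}$ cut out by an open condition inside a vector space (this is exactly the fiber description appearing in the proof of the previous proposition), the total space $\sigma^{-1}(\mathfrak{Y}_{\a}(\b))$ is irreducible as a fibration with irreducible base and irreducible fibers. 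An irreducible variety which is a union of $G_{\varphi_{\a}}$-orbits (the orbits in $E''_{\a}$ mapping into $\mathfrak{Y}_{\a}(\b)$) contains a unique open dense orbit, which is the asserted unique orbit in $E''_{\a}$ sitting inside as a Zariski open subset.

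\medskip
\noindent\emph{Main obstacle.} The delicate point is the surjectivity of $\sigma$ onto the prescribed orbits together with the openness of the rank condition defining the fibers — i.e.\ showing that the lift of a point of $O_{\b}$ can always be chosen in $Y_{\a}$ (equivalently $X_{\a}^k$). This is where the hypothesis $(\A_k)$, via Lemmas~\ref{lem: 7.6.2} and \ref{lem: 7.6.3}, is essential: it forces no segment of any $\c\in S(\a)$ to begin at $k$, so that the relevant restriction maps are surjective and the kernel-dimension conditions are open rather than closed, which is precisely what yields irreducibility of the fiber rather than a reducible one.
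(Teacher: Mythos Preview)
Your equivariance and orbit-classification arguments are essentially the paper's own: reduce to the fiber over a fixed $U$ via transitivity of $GL(V_{\varphi_{\a},k})$, and identify that fiber with an open piece of $E_{\varphi}$ whose $G_{\varphi}$-orbits are exactly the $\b\in S$. One caveat: you invoke Lemma~\ref{lem: 7.6.3} to produce a lift into $Y_{\a}$, but that lemma carries the extra hypothesis $\a=\a_{\Id}$, which you do not have here. The lift in fact comes for free from Proposition~\ref{prop: 7.3.5}: $\b\preceq_k\a$ means $\b=\c_{\Gamma}$ for some $\c\in S(\a)$, and any $T\in O_{\c}\subseteq Y_{\a}$ furnishes the required point of $E''_{\a}$.

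The genuine gap is in your irreducibility argument. You want to say that $\sigma$ restricted over $\mathfrak{Y}_{\a}(\b)$ is a fibration whose fiber is an open subset of a vector space, and you cite ``the fiber description appearing in the proof of the previous proposition.'' But look again at that proposition: the fibration property of $\sigma$, and in particular the identification of its fiber with an \emph{open} rank locus $\{q_1:\dim\ker(q_1|_{\,\cdot\,})=\varphi_{e(\a)}(k-1)\}$, is proved only under the additional hypothesis $\a=\a_{\Id}$; indeed the proof there explicitly applies Lemma~\ref{lem: 7.6.3} (which requires $\a=\a_{\Id}$) to convert the condition ``$T'\in Y_{\a}$'' into a single open kernel-dimension equality. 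For a general $\a$ satisfying $(\A_k)$ the constraint $T'\in Y_{\a}=\overline{O_{\a}}$ is a genuine closure condition, not an open one, and your description of the fiber as ``cut out by an open condition inside a vector space'' is not available. Your parenthetical ``so I would actually argue directly'' does not supply the missing argument.

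The paper handles irreducibility by a different, combinatorial route: it defers to Lemma~\ref{lem: 7.6.18}, which sets up an order-preserving bijection between the $G_{\varphi_{\a}}$-orbits in $\sigma^{-1}(\mathfrak{Y}_{\a}(\b))$ and the finite poset $Q(\a,\b)=\{\c\in S(\a):\b=\c_{\Gamma}\text{ for some }\Gamma\}$, and then proves by an induction on $\ell$ that $Q(\a,\b)$ has a unique minimal element. That unique minimal $\c$ corresponds to the unique open orbit $E''_{\a}(\c^{\sharp})$, and its closure is all of $\sigma^{-1}(\mathfrak{Y}_{\a}(\b))$, giving irreducibility for arbitrary $\a$ satisfying $(\A_k)$. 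Your geometric fibration idea would be an attractive alternative, but as written it only establishes the result in the special case $\a=\a_{\Id}$.
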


\begin{proof}
 The fact that 
 $\sigma$ is equivariant under the 
 action of $G_{\varphi_{\a}}$ follows 
 directly from the definition.
 To show that the $G_{\varphi_{\a}}$-orbits on
 $\mathfrak{Y}_{\a}$  
 is indexed by $S$, consider the morphism 
 \[
  p': \mathfrak{Y}_{\a}\rightarrow Gr(\ell, V_{\varphi_{\a}, k}), ~(T, U)\mapsto U
 \]
By \cite[Proposition 2.9]{Deng24b}, we have 
 the following diagram
 \begin{displaymath}
  \xymatrix{
\mathfrak{Y}_{\a} \ar[d]^{p'}& GL_{\varphi_{\a}(k)}\times_{P_{U}} p'^{-1}(U)\ar[l]_{\hspace{-1.5cm}\delta}\ar[dl]\\
  Gr(\ell, V_{\varphi_{\a}, k})&
  }
\end{displaymath}
 such that $p'$ is a $GL_{\varphi_{\a}, k}$ bundle. 
 Moreover, the same proof as in Lemma \ref{lem: 7.7.10} shows 
 that the orbits on $\mathfrak{Y}_{\a}$ are 
 in one to one correspondance with that of 
 the fibers 
 \begin{align*}
  p'^{-1}(U)\simeq \{T\in \End(V_{\varphi_{\a}}/U): &T \text{ is of degree }1, 
  T\in O_{\b} \text{ for some } \b\preceq_k \a
  \},
 \end{align*}
under the action of stabilizer $G_{\varphi_{\a}, U}$ of $U$. 
 Let $\varphi\in \mathcal{S}$ be the 
 such that $\varphi+\ell \chi_{[k]}=\varphi_{\a}$. 
 Then by identifying $V_{\varphi}$ with 
 $V_{\varphi_{\a}}/U$, we can view 
 $p'^{-1}(U)$ as an open subvariety of $E_{\varphi}$. 
 Note that we are identifying orbits with orbits by the canonical projection 
 \[
  G_{\varphi_{\a}, U}\rightarrow G_{\varphi}. 
 \]
Now it follows that 
the fibers are parametrized by the set $S$. 
Finally, let $\b\in S$. We have to show that 
$\sigma^{-1}(\mathfrak{Y}_{\a}(\b))$ is irreducible, which
is a consequence of the following lemma.

\end{proof}

\begin{lemma}\label{lem: 7.6.18}
Let $\a, \b$ be the  multisegments as above. Then 
there exists a 
bijection between the set  
\[
 Q(\a, \b)=\{\c\in S(\a): \b=\c_{\Gamma} \text{ for some } \Gamma\subseteq \c(k)\},
\]
and the orbits in $\sigma^{-1}(\mathfrak{Y}_{\a}(\b))$
which respects the poset structure  given by 
\[
 \c\mapsto E''_{\a}(\c^{\sharp}),
\]
where for $\b=\c_{\Gamma}$,
\[
  \c^{\sharp}=(\c\setminus \c(k))\cup \Gamma\cup \{\Delta^+: \Delta\in \c(k)\setminus \Gamma\},
\]
and $E''_{\a}(\c^{\sharp})$ is the orbit indexed by $\c^{\sharp}$ on $E''_{\a}$.
Moreover, the set $Q(\a, \b)$
contains a unique minimal element.
\end{lemma}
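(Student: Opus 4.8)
The plan is to push the whole statement down to the orbit geometry of $X_{\d}^{k+1}$, where the orbits are already classified, and then to follow the two maps $p$ and $\sigma$ orbit by orbit. Throughout, for a segment $\Delta$ write $M(\Delta)$ for the corresponding indecomposable, so that a representative of $O_{\c}$ is $\bigoplus_{\Delta\in\c}M(\Delta)$. First I would index the $G_{\varphi_{\a}}$-orbits on $E''_{\a}$. By Proposition \ref{prop: 7.7.7} together with the proposition above on the equivariance of $p$, the map $p\colon(\mathfrak{X}_{\d})_W\to E''_{\a}$ is surjective with irreducible fibres and induces a bijection between $G_{\varphi_{\d},W}$-orbits on $(\mathfrak{X}_{\d})_W$ and $G_{\varphi_{\a}}$-orbits on $E''_{\a}$; by Lemma \ref{lem: 7.7.10} and the diagram borrowed from \cite[Proposition 2.9]{Deng24b}, the former correspond to the $G_{\varphi_{\d}}$-orbits on $\mathfrak{X}_{\d}$, which by Definition \ref{def: 7.4.6} are exactly the $O_{\c'}$ with $\c'\in S(\d)$ and $\varphi_{e(\c')}(k)+r=\varphi_{e(\a)}(k)$. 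Denote by $E''_{\a}(\c')$ the orbit so obtained; it remains to match the indexing set $\{\c'\}$ with $Q(\a,\b)$ and to control how $\sigma$ and orbit closures behave.

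Next, a combinatorial dictionary. Since $\a$ satisfies $(\A_k)$, every segment of every $\c\in S(\a)$ begins at an integer $\le k-2$, and because beginnings and ends of segments only shrink under elementary operations one has $\varphi_{e(\c)}(k)=\varphi_{e(\a)}(k)$ and $\varphi_{e(\c)}(k+1)=0$ (cf.\ Lemma \ref{lem: 7.6.2}). Using this I claim that
\[
(\c,\Gamma)\longmapsto \c^{\sharp}:=(\c\setminus\c(k))\cup\Gamma\cup\{\Delta^{+}:\Delta\in\c(k)\setminus\Gamma\}
\]
is a bijection from the pairs with $\c\in S(\a)$, $\Gamma\subseteq\c(k)$, $|\Gamma|=\ell$ onto the $\c'$ above: the inverse reads off $\Gamma$ as the segments of $\c'$ ending at $k$ and recovers $\c(k)\setminus\Gamma$ by pulling the $r$ segments of $\c'$ ending at $k+1$ back to end at $k$; the equivalence $\c\le\a\Leftrightarrow\c^{\sharp}\le\d$ holds because $\d=\a+r[k+1]$ and, under $(\A_k)$, merging and un-merging the single points $[k+1]$ with the relevant $k$-ending segments are elementary operations, reversible and compatible with $\le$. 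Finally, when $\b=\c_{\Gamma}$ one has $\b(k)=\c(k)\setminus\Gamma$, so $\Gamma=\c(k)\setminus\b(k)$ is forced by $(\c,\b)$; hence $\c\mapsto\c^{\sharp}$ is a well-defined injection on $Q(\a,\b)$.

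Then I would trace $\sigma$ on a normal form. For $\c'=\c^{\sharp}$ with $(\c,\Gamma)$ as above, evaluate on $\bigoplus_{\Delta\in\c^{\sharp}}M(\Delta)$ with $W$ the sum of the top lines of its $r$ segments ending at $k+1$. Since $\tau_W$ is an isomorphism (Lemma \ref{lem: 7.7.2}) and under the identification $(Z^{k+1,\d})_W\simeq Y_{\a}$ (remark after \cite[Proposition 5.31]{Deng23}), $\tau_W$ sends this point to $(T_1,T_0)$ with $T_1$ isomorphic to $\bigoplus_{\Delta\in\c}M(\Delta)$ — removing those top lines turns each $[j,k+1]$ back into $[j,k]$ — and with $T_0$ vanishing, on $W_1=\ker(T_1|_{V_{\varphi_{\d},k}})$, precisely on the top lines of the segments in $\Gamma$. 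Hence $p$ maps $O_{\c^{\sharp}}\cap(\mathfrak{X}_{\d})_W$ onto the orbit of $\bigl(\bigoplus_{\Delta\in\c}M(\Delta),\ \bigoplus_{\Delta\in\Gamma}(M(\Delta))_{k}\bigr)$, which is $E''_{\a}(\c^{\sharp})$, and $\sigma$ sends $E''_{\a}(\c^{\sharp})$ to the orbit of the quotient of $\bigoplus_{\Delta\in\c}M(\Delta)$ by $\bigoplus_{\Delta\in\Gamma}(M(\Delta))_{k}$, which is $\bigoplus_{\Delta\in\c_{\Gamma}}M(\Delta)$, i.e.\ to $\mathfrak{Y}_{\a}(\c_{\Gamma})$. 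As every orbit of $E''_{\a}$ is an $E''_{\a}(\c^{\sharp})$ and $\sigma$ is equivariant (Proposition \ref{prop: 7.6.17}), $\sigma^{-1}(\mathfrak{Y}_{\a}(\b))=\coprod_{\c\in Q(\a,\b)}E''_{\a}(\c^{\sharp})$, which is the asserted bijection. For the poset compatibility I would check that $\c\mapsto\c^{\sharp}$ is order-preserving: parametrising $Q(\a,\b)$ by the size-$\ell$ sub-multisets $\Gamma^{-}$ of the segments of $\b$ ending at $k-1$ via $\c=(\b\setminus\Gamma^{-})\cup\{\Delta^{+}:\Delta\in\Gamma^{-}\}$, an elementary transposition of $\Gamma^{-}$ becomes a single elementary operation both on the associated $\c$ and on $\c^{\sharp}$, so the orders induced on $Q(\a,\b)$ from $S(\a)$ and from $S(\d)$ both coincide with the dominance order on the $\Gamma^{-}$'s. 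Finally, since the closure order of orbits on $\mathfrak{X}_{\d}$ (hence, via $p$, on $E''_{\a}$) is opposite to $\le$, the orbit $E''_{\a}(\c_{\min}^{\sharp})$ attached to the minimal $\c_{\min}\in Q(\a,\b)$ (below) is dense in $\sigma^{-1}(\mathfrak{Y}_{\a}(\b))$, which recovers the irreducibility used in Proposition \ref{prop: 7.6.17}.

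For the last claim, $Q(\a,\b)\neq\emptyset$ by Proposition \ref{prop: 7.3.5} since $\b\preceq_k\a$. With the parametrisation by $\Gamma^{-}$, an elementary transposition replacing $[j_2,k-1]\in\Gamma^{-}$ by $[j_1,k-1]$ with $j_1<j_2$ turns the linked pair $\{[j_1,k-1],[j_2,k]\}$ in the associated $\c$ into $\{[j_2,k-1],[j_1,k]\}$, i.e.\ is one elementary operation lowering $\c$; hence the $\Gamma^{-}$ consisting of the $\ell$ segments of $\b$ ending at $k-1$ with smallest beginnings yields $\c_{\min}$ with $\c_{\min}\le\c$ for every $\c\in Q(\a,\b)$, and as $S(\a)$ is downward closed we get $\c_{\min}\in S(\a)$ with $(\c_{\min})_{\Gamma_{\min}}=\b$, so $\c_{\min}\in Q(\a,\b)$ is its unique minimal element. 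The main obstacle is the normal-form analysis: proving, compatibly with the definitions of $\tau_W$ and $\gamma_k$ in \cite{Deng23}, that $p$ carries the $\c^{\sharp}$-orbit of $\mathfrak{X}_{\d}$ exactly to the orbit of $\bigl(\bigoplus_{\Delta\in\c}M(\Delta),\bigoplus_{\Delta\in\Gamma}(M(\Delta))_k\bigr)$ and that $\sigma$ then shortens $\c$ to $\c_{\Gamma}$, together with the order statement (agreement of $\le$ on the $\c$'s, $\le$ on the $\c^{\sharp}$'s, and dominance on the $\Gamma^{-}$'s) — the geometric reason, namely how the marked subspace $W'$ sits inside the depth filtration $\ker(T'|_{V_{\varphi_{\a},k}})\cap\Im((T')^{j})$ of $\ker(T'|_{V_{\varphi_{\a},k}})$, is transparent, but matching it to the bookkeeping of \cite{Deng23} is the real work.
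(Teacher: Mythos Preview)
Your proof is correct, and for the bijection it follows the same route as the paper: both push the question through the composition $(\mathfrak{X}_{\d})_W\xrightarrow{p}E''_{\a}\xrightarrow{\sigma}\mathfrak{Y}_{\a}$ and identify $\sigma\circ p((O_{\c'})_W)=\mathfrak{Y}_{\a}(\c'^{(k,k+1)})$, with the inverse given by $\c\mapsto\c^{\sharp}$. You are in fact more explicit than the paper, both in checking this on explicit orbit representatives and in arguing the poset compatibility, which the paper states but does not justify.

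Where you genuinely diverge is the existence of a unique minimal element. The paper proceeds by induction on $\ell$: for $\ell=1$ it lists the candidates $\c_i=(\b\setminus\Delta_i)\cup\Delta_i^+$ and takes the last one; for $\ell>1$ it passes through an intermediate weight $\varphi=\varphi_{\b}+\chi_{[k]}$, applies the inductive hypothesis once to find a minimal $\c_1$ at that level, again to $Q(\a,\c_1)$ to produce $\c_2$, and then shows every $\e\in Q(\a,\b)$ dominates $\c_2$ by unwinding through Proposition~\ref{prop: 7.3.5}. Your argument is direct: you parametrise $Q(\a,\b)$ by size-$\ell$ sub-multisets $\Gamma^{-}\subseteq\b(k-1)$, observe that replacing a segment of $\Gamma^{-}$ by one of smaller beginning is a single elementary operation on the associated $\c$ (here $(\A_k)$ guarantees the relevant pair $\{[j_1,k-1],[j_2,k]\}$ is linked), and conclude that the $\Gamma^{-}$ with the $\ell$ smallest beginnings yields the minimum, which lies in $S(\a)$ by downward closure. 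This is shorter and makes the lattice structure of $Q(\a,\b)$ transparent; the paper's nested induction is more opaque but has the virtue of not needing to name $\c_{\min}$ explicitly, which matters when one later wants only its existence rather than a formula for it.
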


\begin{proof}
We constructed in (\ref{eqn-proj-under-condition-A}) of Proposition 
\ref{prop: 7.7.7} a morphism $p$. Consider the composition
\[
 (\mathfrak{X}_{\d})_W \xrightarrow{p} E_{\a}''\xrightarrow{\sigma} \mathfrak{Y}_{\a},
\]
which sends $(O_{\c})_W$ to $\mathfrak{Y}(\b)$, where 
$\b=\c^{(k, k+1)}$ for $\c \in S(\d)$. Hence we have 
\[
 \b=(\c^{(k+1)})_{\Gamma}
\]
for $\Gamma=\{\Delta\in \c: e(\Delta)=k\}$. Note that 
$\c\in S(\d)$ implies that  
 $\c^{(k+1)}\leq \a=\d^{(k+1)}$.
 Conversely, for $\c\in Q(\a, \b)$, 
 such that 
 \[
  \b=\c_{\Gamma},
 \]
there is a unique element 
\[
 \c'=\c^{\sharp}
\]
in $S(\d)$ such that $O_{\c'}\subseteq \mathfrak{X}_{\d}$ and $\c=\c'^{(k+1)}$.
Therefore we conclude that 
there is a bijection between 
the $G_{\varphi_{\a}}$-orbits in $\sigma^{-1}(\mathfrak{Y}_{\a}(\b))$
and $Q(\a, \b)$.  

Finally, 
for 
\[
 \varphi_{\a}=\varphi_{\b}+\ell\chi_{[k]},
\]
we show
by induction on $\ell$
that the set 
$Q(\a, \b)$
contains a unique minimal element.

For case 
$\ell=1$, let 
\[
 \b(k): =\{\Delta\in \b: e(\Delta)=k\}=\{\Delta_1\preceq \cdots \preceq \Delta_h\}.
\]
and $\c_i=(\b\setminus \Delta_i)\cup \Delta_i^+$.
Then 
\[
 Q(\a, \b)\subseteq \{\c_i: i=1, \cdots, h\},
\]
and $\c_h$ is minimal in the latter, which 
implies that $\c_h\in Q(\a, \b)$ and is minimal. 
In general,
let 
\[
 \varphi=\varphi_{\b}+\chi_{[k]}.
\]
Note that there exists 
$\c'\in S(\varphi) $
satisfying the assumption $(\A_k)$
and $\Gamma'\subseteq \c'(k)$ 
such that 
\[
 \b=\c'_{\Gamma'}. 
\]
In fact, by assumption
\[
 \b=\c_{\Gamma}
\]
for some $\c\in S(\a)$ and $\Gamma\subseteq \c(k)$. 
Let 
\[
 \Gamma\supseteq \Gamma_1,
\]
such that $\ell=\sharp \Gamma=\sharp \Gamma'+1$ and
\[
 \c'=\c_{\Gamma_1},   
\]
then we have 
\[
 \b=\c'_{\Gamma\setminus \Gamma_1}.
\]
Now we apply our induction to 
the case 
 \[
  Q_1: =\{\c\in S(\varphi): 
  \c \text{ satisfies the assumption } (\A_{k}),
  \b=\c_{\Gamma} \text{ for some } \Gamma\subseteq \c(k)\},
 \]
from which we know that there exists a unique minimal element $\c_1$ in $ Q_1$. 
Now by assumption
\[
 \b_1\leq \c'\preceq_k \a,
\]
and by induction the set 
$Q(\a, \b_1)$ contains a unique element $\b_2$.
We claim that $\b_2$ is minimal in $Q(\a, \b)$. 
In fact, let $\e\in Q(\a, \b)$, then 
\[
 \b=\e_{\Gamma'}
\]
for some $\Gamma'\subseteq \e(k)$. 
Again let 
\[
 \Gamma_1'\subseteq \Gamma',~ \e'=\e_{\Gamma_1'}
\]
such that $\ell=\sharp \Gamma'=\sharp \Gamma_1'+1$.
Now we obtain 
\[
 \e'\in Q_1, ~ \b=\e'_{\Gamma'\setminus \Gamma_1'}.
\]
By minimality of $\c_1$, we know that 
\[
 \c_1\leq \e'.
\]
Note that this implies $\c_1\preceq \e'$, and 
by transitivity of poset relation, we get 
$\c_1\preceq_k \e$. Now we apply 
Proposition \ref{prop: 7.3.5} to get 
\[
 \c_1=\f_{\Gamma''}, 
\]
for some $\f\in S(\e)$ and $\Gamma''\subseteq \f(k)$.
Again we deduce from induction that 
\[
 \f\geq \c_2.
\]
Hence $\c_2\leq \e$. 
\end{proof}

Now we return to the calculation
of product of perverse sheaves, cf. 
Corollary \ref{cor: 7.4.15}.

\begin{teo}\label{cor: 7.4.19}
Let $\a$ be a multisegment 
satisfying the assumption $(\A_k)$
and $\b\preceq_k \a$ such that  
\[
 \varphi_{\a}=\varphi_{\b}+\ell\chi_{[k]}, \quad \ell\in \N.
\]
Let $\c$ the minimal element 
in $Q(\a, \b)$ and $E''_{\a}(\c)$ be the 
$G_{\varphi_{\a}}$ orbit indexed by $\c$
in $E_{\a}''$ (cf. Definition \ref{def-open-subset-Ea}).  
Then we have 
\[
 IC(\line{O}_\b)\star IC(\line{O}_{\l [k]})=\beta''_{*}(IC(\line{E_{\a}''(\c^{\sharp})})). 
\]
\end{teo}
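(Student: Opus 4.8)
The plan is to reduce, via the definition of the product $\star$, to the identification of the simple perverse sheaf $\mathcal{P}$ attached to the pair $(\b,\l[k])$ by Corollary \ref{cor: 7.4.15}, and then to show that $\mathcal{P}$ is the intersection cohomology complex of the closure in $E''$ of the orbit $E''_{\a}(\c^{\sharp})$; since $\beta''$ is proper, applying $\beta''_{*}$ gives the statement. Here $\varphi_{1}=\varphi_{\b}$ and $\varphi_{2}=\l\chi_{[k]}$, so that $\varphi_{1}+\varphi_{2}=\varphi_{\a}$. As $\l\chi_{[k]}$ is supported at the single vertex $k$, the space $E_{\varphi_{2}}$ is a point and $IC(\line{O}_{\l[k]})=\line{\Q}_{\l}$, and the space $E''$ of the diagram (\ref{di: 7.3}) becomes
\[
 E''=\{(T,W):T\in E_{\varphi_{\a}},\ W\subseteq \ker(T|_{V_{\varphi_{\a},k}}),\ \dim W=\l\},\qquad \beta''(T,W)=T .
\]
By definition $IC(\line{O}_{\b})\star IC(\line{O}_{\l[k]})=\beta''_{*}(\mathcal{P})$, so it suffices to prove $\mathcal{P}=IC(\line{E''_{\a}(\c^{\sharp})})$.

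First I would compute $\mathcal{P}$ from its defining relation $\beta'^{*}(\mathcal{P})=\beta^{*}(IC(\line{O}_{\b})\otimes\line{\Q}_{\l})$. The coefficients of $IC(\line{O}_{\b})$ are constant; $\beta$ is a locally trivial fibration with smooth connected fibres and $\beta'$ is a $(G_{\varphi_{\b}}\times G_{\varphi_{2}})$-torsor; and, because $O_{\b}$ is a single $G_{\varphi_{\b}}$-orbit, one has $\beta^{-1}(O_{\b})=(\beta')^{-1}(q^{-1}(O_{\b}))$, where
\[
 q^{-1}(O_{\b}):=\{(T,W)\in E'':\ T\text{ induces an element of }O_{\b}\text{ on }V_{\varphi_{\a}}/W\} .
\]
This locus is irreducible, being an affine bundle over a fibre bundle with fibre $O_{\b}$ over $Gr(\l,V_{\varphi_{\a},k})$, so pulling back along $\beta$ and descending along the torsor $\beta'$ identifies $\mathcal{P}$, up to shift, with the intersection cohomology complex with constant coefficients of the closure of $q^{-1}(O_{\b})$ in $E''$.

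Next I would restrict to the open subvariety $E''_{\a}$ of $E''$, which is $(\beta'')^{-1}(Y_{\a})$, hence open because $Y_{\a}=\bigsqcup_{\c\le\a}O_{\c}$ is open in $E_{\varphi_{\a}}$; indeed its complement $\bigsqcup_{\c\not\le\a}O_{\c}$ is closed by transitivity of $\le$ together with $\line{O_{\c}}=\bigsqcup_{\c'\ge\c}O_{\c'}$. By construction $q^{-1}(O_{\b})\cap E''_{\a}=\sigma^{-1}(\mathfrak{Y}_{\a}(\b))$, where $\sigma\colon E''_{\a}\to\mathfrak{Y}_{\a}$ is the projection of Proposition \ref{prop: 7.6.17} and $\mathfrak{Y}_{\a}(\b)$ is the orbit indexed by $\b$. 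By Proposition \ref{prop: 7.6.17} and Lemma \ref{lem: 7.6.18}, $\sigma^{-1}(\mathfrak{Y}_{\a}(\b))$ is irreducible and its unique dense open $G_{\varphi_{\a}}$-orbit is $E''_{\a}(\c^{\sharp})$, with $\c$ the minimal element of $Q(\a,\b)$ and $\c^{\sharp}$ as in Lemma \ref{lem: 7.6.18}. Hence $\mathcal{P}|_{E''_{\a}}$ is, up to shift, the $IC$ complex of the closure of $E''_{\a}(\c^{\sharp})$ in $E''_{\a}$. Since $\mathcal{P}$ is a simple perverse sheaf whose support $\overline{q^{-1}(O_{\b})}$ meets the open set $E''_{\a}$, it is the intermediate extension of $\mathcal{P}|_{E''_{\a}}$; and because $E''_{\a}(\c^{\sharp})$ is dense in its closure both in $E''_{\a}$ and in $E''$, this forces $\mathcal{P}=IC(\line{E''_{\a}(\c^{\sharp})})$. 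Applying $\beta''_{*}$ then yields the theorem.

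The main obstacle is the geometric input of Proposition \ref{prop: 7.6.17} and Lemma \ref{lem: 7.6.18}: identifying the dense open orbit of the correspondence locus $\sigma^{-1}(\mathfrak{Y}_{\a}(\b))$ with $E''_{\a}(\c^{\sharp})$, which rests on the existence and uniqueness of a minimal element of $Q(\a,\b)$ and on the irreducibility of $\sigma^{-1}(\mathfrak{Y}_{\a}(\b))$; these in turn depend on the fibration properties of $\alpha'$, $p$ and $\sigma$ established in Propositions \ref{prop: 7.7.6} and \ref{prop: 7.7.7}. The remaining points — tracking the shifts, checking that every local system occurring is trivial, and using that $\beta''$ is proper — are routine.
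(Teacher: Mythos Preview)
Your proposal is correct and follows essentially the same route as the paper: identify the simple perverse sheaf $\mathcal{P}$ on $E''$ by analysing the locus $\beta^{-1}(O_{\b})$ (your $q^{-1}(O_{\b})$), restrict to the open subset $E''_{\a}=\beta''^{-1}(Y_{\a})$, and invoke Proposition~\ref{prop: 7.6.17} together with Lemma~\ref{lem: 7.6.18} to recognise the dense open orbit as $E''_{\a}(\c^{\sharp})$. Your explicit intermediate-extension argument (that a simple perverse sheaf is determined by its restriction to any open subset meeting its support) is a welcome addition, as the paper passes over this point rather quickly when concluding $\beta'^{*}(IC(\line{E''_{\a}(\c^{\sharp})}))=\beta^{*}(IC(\line{O_{\b}})\otimes IC(E_{\varphi_{\ell[k]}}))$ from the identification on $\beta'^{-1}(E''_{\a})$.
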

 
\begin{proof}

First of all, by
definition 
\[
 E''=\{(T, U): T\in E_{\varphi_{\a}}, ~T(U)=0, ~ \dim(U)=\ell \},
\]
therefore we have 
\[
 E''_{\a}\subseteq E''.
\]
Furthermore, the variety $ E''_{\a}$ is open in $E''$. 
In fact, consider the canonical
morphism 
\[
 \beta'': E''\rightarrow E_{\varphi_{\a}},
\]
then $E''_{\a}=\beta''^{-1}(Y_{\a})$. 
Since $Y_{\a}$ is open in $E_{\varphi_{\a}}$,
 $ E''_{\a}$ is open in $E''$.
Now we have two morphisms 
\begin{align*}
\sigma\beta': & \beta'^{-1}(E''_{\a})\rightarrow \mathfrak{Y}_{\a},\\
\beta: & E'\rightarrow E_{\varphi_{\b}}\times E_{\varphi_{\ell[k]}}\simeq E_{\varphi_{\b}}.
\end{align*}

We claim that $\beta^{-1}(O_{\b})\cap  \beta'^{-1}(E''_{\a})=\beta'^{-1}\sigma^{-1}(\mathfrak{Y}_{\a}(\b))$,
where $\mathfrak{Y}_{\a}(\b)$ is the orbit in $\mathfrak{Y}_{\a}(\b)$ under the 
action of $G_{\varphi_{\a}}$. 

By definition of $\beta$,
\begin{align*}
 &\beta^{-1}(O_{\b})\cap \beta'^{-1}(E''_{\a})&=\{(T, W, \mu, \mu')| \mu: W\simeq V_{\varphi_{\ell[k]}}, ~\mu': V_{\varphi_{\a}}/W\simeq V_{\varphi_{\b}},\\
& &T\in O_{\f} \text{ for some } \f\in S(\a), \b\preceq_k \f\}.
\end{align*}
Now by definition of $\sigma$ and $\beta'$, 
we know that 
$\beta^{-1}(O_{\b})\cap  \beta'^{-1}(E''_{\a})=\beta'^{-1}\sigma^{-1}(\mathfrak{Y}_{\a}(\b))$.
Now by Proposition \ref{prop: 7.6.17},
$\sigma^{-1}(\mathfrak{Y}_{\a}(\b))$ 
contains $E_{\a}''(\c^{\sharp})$ as the unique open 
orbit, where $\c$ is the minimal
element in $Q(\a, \b)$.
Therefore we conclude that 
\[
  \beta'^{*}(IC(\line{E''_{\a}(\c^{\sharp})}))=\beta^*(IC(\line{O_{\b}})\otimes IC(E_{\varphi_{\ell[k]}})).
\]
Now by definition
\[
  IC(\line{O}_\b)\star IC(\line{O}_{\l [k]})=\beta''_{*}(IC(\line{E_{\a}''(\c^{\sharp})})).
\]

\end{proof}

\section{Multisegments of Grassmanian Type}\label{sec-geom-grass}

In order to precisely describe the previous corollary
concerning Lusztig's product in the Grassmanian case 
in the next section, 
we  generalize the construction
in \cite[\S 5.3]{Deng23} to get more general results concerning the 
the set $S(\a)$ for general multisegment $\a$.

Let $V$ a $\C$ vector space of dimension $r+\l$  and $Gr_r(V)$ be the variety of 
$r$-dimensional subspaces of $V$. 

\begin{definition}
By a partition of $ \l$, we mean a sequence $\lambda=(\ell_1, \cdots, \ell_r)$ for some $r$, where $\ell_i\in \N$
, $0\leq \ell_1\leq \cdots \ell_r\leq \ell$.
And for $\mu=(\mu_1,\cdots, \mu_s)$ be another partition, we say $\mu\leq \lambda$
if and only if $\mu_i\leq \lambda_i$ for all $i=1, \cdots, $. Let $\mathcal{P}(\ell, r)$
be the set of partitions of $\ell$ into $r$ parts.

\end{definition}

\begin{definition}
We fix a complete flag 
\[
 0=V^0\subset V^1\subset \cdots \subset V^{r+\ell}=V.
\]
This flag provides us a stratification of the variety $Gr_r(V)$ by Schubert varieties, labeling  by partitions 
, denoted by $\line{X}_{\lambda}$, 
\[
\line{X}_{\lambda}=\{U\in Gr_r(V): \dim(U\cap V^{\ell_i+i})\geq i, \text{ for all } i=1, \cdots, r \}.
\]
\end{definition}

\begin{lemma}(cf. \cite{Z5})
We have 
\[
 \mu\leq \lambda \Longleftrightarrow \line{X}_{\mu}\subseteq \line{X}_{\lambda}.
\]
And the Schubert cell 
\[
 X_{\lambda}=\line{X}_{\lambda}-\sum_{\mu<\lambda}\line{X}_{\mu}
\]
is open in $\line{X}_{\lambda}$.
\end{lemma}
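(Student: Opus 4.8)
The plan is to prove the two assertions separately, reducing everything to a single combinatorial computation with coordinate subspaces adapted to the reference flag $0=V^0\subset\cdots\subset V^{r+\ell}=V$.

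First I would fix a basis $e_1,\dots,e_{r+\ell}$ of $V$ with $V^j=\langle e_1,\dots,e_j\rangle$ for all $j$, and for a partition $\nu=(\nu_1,\dots,\nu_r)$ introduce the coordinate subspace $U_\nu=\langle e_{\nu_1+1},e_{\nu_2+2},\dots,e_{\nu_r+r}\rangle$. The indices $\nu_a+a$ are strictly increasing (as $\nu$ is non-decreasing), so $U_\nu\in Gr_r(V)$. The crucial point is the equality $\dim(U_\nu\cap V^{\ell_i+i})=\#\{a:\nu_a+a\le \ell_i+i\}$, valid because both $U_\nu$ and $V^{\ell_i+i}$ are spanned by subsets of the basis, together with the combinatorial claim that this number is $\ge i$ for every $i$ if and only if $\nu_i\le\ell_i$ for every $i$. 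For the ``if'' direction, when $a\le i$ one has $\nu_a+a\le\nu_i+i\le\ell_i+i$, so all of $a=1,\dots,i$ are counted; for the ``only if'' direction, if $\nu_i\ge\ell_i+1$ for some $i$ then $\nu_a+a\ge\nu_i+i>\ell_i+i$ for all $a\ge i$, leaving at most $i-1$ counted indices. Hence $U_\nu\in\line X_\lambda$ if and only if $\nu\le\lambda$.

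The equivalence $\mu\le\lambda\Leftrightarrow\line X_\mu\subseteq\line X_\lambda$ now follows. For ``$\Rightarrow$'', if $\mu_i\le\ell_i$ then $V^{\mu_i+i}\subseteq V^{\ell_i+i}$, so each defining inequality $\dim(U\cap V^{\mu_i+i})\ge i$ of $\line X_\mu$ forces the corresponding inequality for $\line X_\lambda$; thus $\line X_\mu\subseteq\line X_\lambda$. For ``$\Leftarrow$'', apply the previous paragraph with $\nu=\mu$: since $U_\mu\in\line X_\mu\subseteq\line X_\lambda$, we get $\mu\le\lambda$. In particular $\line X_\mu\subsetneq\line X_\lambda$ whenever $\mu<\lambda$, because $\line X_\lambda\subseteq\line X_\mu$ would force $\lambda\le\mu$; so the subset $\sum_{\mu<\lambda}\line X_\mu$ removed from $\line X_\lambda$ is a proper part.

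For the openness of the Schubert cell, I would note that each $\line X_\nu$ is Zariski closed in $Gr_r(V)$: on every affine chart the condition $\dim(U\cap V^{\ell_i+i})\ge i$ says that the rank of the composite $U\hookrightarrow V\twoheadrightarrow V/V^{\ell_i+i}$ is at most $r-i$, hence is cut out by the vanishing of appropriate minors, and $\line X_\nu$ is the intersection of finitely many such loci. Since $\mathcal P(\ell,r)$ is finite, $\bigcup_{\mu<\lambda}\line X_\mu$ is a finite union of closed sets, hence closed in $Gr_r(V)$; therefore $X_\lambda=\line X_\lambda\cap\bigl(Gr_r(V)\setminus\bigcup_{\mu<\lambda}\line X_\mu\bigr)$ is open in $\line X_\lambda$ (and nonempty, since $U_\lambda\in X_\lambda$ by the first step). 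The only point demanding genuine care is the combinatorial equivalence in the second paragraph, i.e.\ getting the quantifiers over $a$ and $i$ right; beyond that the argument is entirely formal, so I anticipate no serious obstacle.
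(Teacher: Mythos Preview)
Your argument is correct. The paper does not actually prove this lemma: it is stated with a reference to \cite{Z5} and left without proof, as a standard fact about Schubert varieties in Grassmannians. What you have written is the classical direct verification, and each step checks out---the combinatorial equivalence for the coordinate subspace $U_\nu$, the monotonicity of the flag conditions giving $\mu\le\lambda\Rightarrow\line X_\mu\subseteq\line X_\lambda$, the converse via the test point $U_\mu$, and the closedness of $\line X_\nu$ as an intersection of determinantal loci. Nothing is missing; you have supplied a self-contained proof where the paper simply cites the literature.
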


\begin{definition}
Let $\Omega^{r, \ell}$ be the set 
\begin{align*}
 \Omega^{r, \l}=\{(a_1, \cdots, a_m; b_0, \cdots, b_{m-1}): &\sum_i a_i=r, \sum_j, b_j=\l,\\ 
 &\text{ for } 0<i<m, a_i>0, b_i>0\}.
\end{align*}
\end{definition}

\begin{lemma}(cf. \cite{Z5})\label{lem: 7.2.4}
There exists a bijection
\[
 \Omega^{r, \l}\rightarrow \mathcal{P}(\ell, r),
\]
which sends $(a_1, \cdots, a_m; b_0, \cdots, b_{m-1})$ to a partition of $\ell$
given by $~b_0, ~b_0+b_1, \cdots,~, b_0+\cdots+b_{m-1}$, and that the elements $b_0+\cdots+b_{i-1}$ figures in $\lambda$
with multiplicity $a_i$. 

\end{lemma}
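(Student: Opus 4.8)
The plan is to produce an explicit two-sided inverse to the map in the statement, which I will call $\Phi\colon\Omega^{r,\l}\to\mathcal{P}(\l,r)$; thus $\Phi(a_1,\dots,a_m;b_0,\dots,b_{m-1})$ is the weakly increasing sequence in which the value $v_i:=b_0+b_1+\cdots+b_{i-1}$ appears exactly $a_i$ times, for $i=1,\dots,m$. First I would check that $\Phi$ is well defined. Since $b_i>0$ for $0<i<m$, one has $0\le v_1<v_2<\cdots<v_m$; since $\sum_{j=0}^{m-1}b_j=\l$ one gets $v_m=\l$, so every value is $\le\l$; and since $\sum_{i=1}^{m}a_i=r$ the resulting sequence has exactly $r$ entries. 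Hence $\Phi(a;b)\in\mathcal{P}(\l,r)$.

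Next I would build the inverse $\Psi$. Given $\lambda=(\ell_1\le\cdots\le\ell_r)\in\mathcal{P}(\l,r)$, let $v_1<v_2<\cdots<v_m$ be the distinct elements of the set $\{\ell_1,\dots,\ell_r\}\cup\{\l\}$, so that $v_m=\l$ always and $m=1$ precisely when every part equals $\l$ (this also covers the case $r=0$). Put $a_i=\#\{j:\ell_j=v_i\}$, $b_0=v_1$ and $b_i=v_{i+1}-v_i$ for $1\le i\le m-1$, and set $\Psi(\lambda)=(a_1,\dots,a_m;b_0,\dots,b_{m-1})$. Then $\sum_i a_i=r$ and $\sum_j b_j=v_m=\l$; moreover for $0<i<m$ the value $v_i$ is strictly less than $\l$, hence is an honest part of $\lambda$, so $a_i\ge 1$, while $b_i=v_{i+1}-v_i>0$ because the $v_i$ increase strictly. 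Thus $\Psi(\lambda)\in\Omega^{r,\l}$.

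I would then unwind the definitions to check that $\Phi$ and $\Psi$ are mutually inverse. For $\Phi\circ\Psi$: the partition $\Phi(\Psi(\lambda))$ is the one in which each $v_i$ occurs $a_i$ times, and $a_i$ was defined to be exactly the multiplicity of $v_i$ in $\lambda$ (with $a_m=0$ precisely when $\l$ is not a part), so $\Phi(\Psi(\lambda))=\lambda$. For $\Psi\circ\Phi$: in $\Phi(a;b)$ the set of values actually occurring is $\{v_i:a_i>0\}$, and adjoining $\l=v_m$ gives back $\{v_1,\dots,v_m\}$ since $a_i>0$ for every $i$ with $0<i<m$; hence $\Psi$ recovers $m$, the $v_i$, and therefore the $b_i$ and $a_i$. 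The only delicate point — and the mild obstacle in the argument — is the bookkeeping at the two ends: $b_0$ and $a_m$ are allowed to be $0$ while all the other entries are forced positive, and the device of always appending the endpoint $\l$ when forming the $v_i$ is exactly what reconciles this asymmetry. (If one prefers a more structural argument, both sides biject with the family of $r$-element subsets $S\subseteq\{1,\dots,r+\l\}$, via $\lambda\mapsto\{\ell_i+i:1\le i\le r\}$ on the one hand and, on the other, by reading off the lengths of the successive maximal runs of $S^{c}$ and $S$ inside $\{1,\dots,r+\l\}$, beginning with a possibly empty run in $S^{c}$ and ending with a possibly empty run in $S$; a short check shows these two bijections are intertwined by $\Phi$.)
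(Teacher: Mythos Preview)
Your proof is correct. The paper itself does not supply a proof of this lemma; it is simply cited from Zelevinsky's work (the reference \cite{Z5}) and stated without argument. Your explicit construction of the inverse $\Psi$ and the verification that $\Phi$ and $\Psi$ are mutually inverse fills in exactly the details the paper omits. The bookkeeping you flag at the endpoints --- that $b_0$ and $a_m$ are the only entries permitted to vanish, and that adjoining $\l$ to the set of parts when forming the $v_i$ handles this --- is indeed the one subtlety, and you treat it properly. The parenthetical alternative via $r$-element subsets of $\{1,\dots,r+\l\}$ is also in the spirit of what the paper does later with the map $\varsigma_2$, so it connects nicely to the surrounding material.
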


\begin{notation}
 From now on, we will also write 
\[
 \lambda=(a_1, \cdots, a_m; b_0, \cdots, b_{m-1}),
\]
with notations as in the previous lemma.
\end{notation}

%

We introduce the formula in \cite{Z5} to calculate the 
Kazhdan Lusztig polynomials for Grassmannians. 

\begin{definition}
Let $\lambda=(a_1, \cdots, a_{m}; b_0, \cdots, b_{m-1})$ be a partition.
Following \cite{Z5}, we represent a partition as a broken line in the 
plane $(x, y)$, i.e, the graph of the piecewise-linear function $y=\lambda(x)$
which equals $|x|$ for large $|x|$, has everywhere slope $\pm 1$, and whose 
ascending and decreasing segments are precisely $ b_0, \cdots, b_{m-1}$ 
and $a_1, \cdots, a_m$, respectively. 
Moreover, we call the local maximum and minimum of the graph $y=\lambda(x)$ 
the peaks and depressions of $\lambda$.

\end{definition}

\begin{figure}[!ht]
\centering
\includegraphics{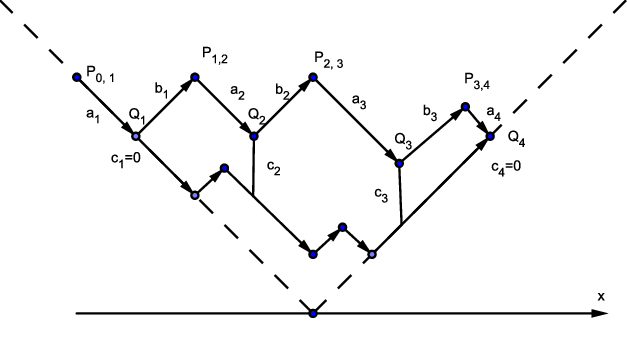}
\caption{\label{fig-multisegment15} }
\end{figure}

%
%
%

\begin{lemma}(cf. \cite{Z5})
For $\lambda, \mu\in \Omega^{r, \l}$, then  
\[
 \lambda\geq \mu \Longleftrightarrow \lambda(x)\geq \mu(x), \text{ for all }x.
\]
\end{lemma}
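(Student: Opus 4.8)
The statement to be proven is the combinatorial characterization of the Bruhat-type order on partitions in $\Omega^{r,\ell}$ via the pointwise comparison of their broken-line graphs: $\lambda \geq \mu$ if and only if $\lambda(x) \geq \mu(x)$ for all $x$. The plan is to reduce this to the already-available characterization in terms of Schubert varieties, namely $\mu \leq \lambda \Longleftrightarrow \overline{X}_\mu \subseteq \overline{X}_\lambda$, combined with the explicit description $\overline{X}_\lambda = \{U \in Gr_r(V): \dim(U \cap V^{\ell_i + i}) \geq i \text{ for all } i\}$. So the real content is to translate the inequalities $\dim(U \cap V^{\mu_i + i}) \geq i$ versus $\dim(U \cap V^{\lambda_i + i}) \geq i$ into a statement about the functions $\lambda(x)$ and $\mu(x)$.

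First I would unwind the bijection of Lemma \ref{lem: 7.2.4} and the broken-line description to express $\lambda(x)$ explicitly in terms of the partition data. The key observation is that the values $\ell_i + i$ (for $i = 1, \dots, r$) are precisely the positions where the graph $y = \lambda(x)$ has a descending segment, read off against the auxiliary "staircase" coordinate; equivalently, the multiset $\{\ell_i + i\}$ together with the complementary multiset (positions of ascending segments) encodes the graph completely. Concretely, at a point $x$ the value $\lambda(x)$ can be computed as $|x| + 2 \cdot (\text{number of boxes of } \lambda \text{ lying to one side of the vertical line at } x)$, or more usefully, $\frac{1}{2}(\lambda(x) - |x|)$ counts, for $x$ in the relevant range, the quantity $\max\{ i - (\text{something involving } \ell_i) \}$. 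I would make this precise by the standard dictionary: for a subspace $U$ in the generic stratum $X_\lambda$, $\dim(U \cap V^j) = \#\{i : \ell_i + i \leq j\}$, and this jump function is exactly (a shifted, rescaled version of) the broken-line function. Thus $\mu \leq \lambda$ as partitions — i.e. $\mu_i \leq \lambda_i$ for all $i$ — translates into $\ell_i^\mu + i \leq \ell_i^\lambda + i$ for all $i$, which translates into the dimension-jump function of $\mu$ dominating that of $\lambda$ at every level, which is exactly $\mu(x) \leq \lambda(x)$ for all $x$ once we account for the direction of the inequality.

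The one subtlety — and the step I expect to be the main obstacle — is bookkeeping the direction of the inequalities correctly and checking that the pointwise comparison of piecewise-linear functions with slopes $\pm 1$ is genuinely equivalent to (not merely implied by) the componentwise comparison of the partitions, rather than some weaker or stronger interleaving condition. This is where one must use that both functions have the same asymptotics $\lambda(x) = \mu(x) = |x|$ for large $|x|$ and everywhere slope $\pm 1$: under these constraints, $\lambda(x) \geq \mu(x)$ everywhere forces, at each peak/depression location, the corresponding inequality on partial sums $b_0 + \cdots + b_{j-1}$, which is precisely $\mu_i \leq \lambda_i$ componentwise. I would verify the forward direction ($\lambda \geq \mu \Rightarrow \lambda(x) \geq \mu(x)$) by a direct induction on the covering relations of the partition order (adding one box), observing that adding a box to $\mu$ only raises the graph; and the reverse direction by reading off, from $\lambda(x) \geq \mu(x)$ evaluated at the finitely many integer points between consecutive slope changes, the chain of inequalities $\sum_{j < t} b_j^\mu \leq \sum_{j < t} b_j^\lambda$ that forces $\mu \leq \lambda$. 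Since the excerpt attributes these facts to \cite{Z5}, I would expect the argument to be essentially a careful recollection of Zelevinsky's picture rather than anything requiring new ideas, so the write-up should consist of setting up the dictionary carefully and then citing or briefly re-deriving the monotonicity in both directions.
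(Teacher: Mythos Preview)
The paper does not give its own proof of this lemma; it is stated with a bare citation ``(cf.\ \cite{Z5})'' and no proof environment follows. So there is nothing to compare against, and your task reduces to supplying what Zelevinsky's argument would be.

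Your proposal is correct in substance, but the route through Schubert varieties is an unnecessary detour. The partial order $\mu \leq \lambda$ is \emph{defined} componentwise ($\mu_i \leq \lambda_i$ for all $i$), and the broken-line function $\lambda(x)$ is built so that at each integer abscissa its height records exactly how many of the $r$ ``down-steps'' have already occurred versus how many of the $\ell$ ``up-steps''; equivalently, the height of $\lambda(x)$ above the baseline $|x|$ at the $i$-th descending segment is governed by $\lambda_i$. Thus $\mu_i \leq \lambda_i$ for all $i$ is directly equivalent to $\mu(x) \leq \lambda(x)$ at all the finitely many slope-change points, and since both functions are piecewise linear with slopes $\pm 1$ and agree at infinity this extends to all $x$. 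Your final paragraph essentially says this, so the argument is there --- just lead with it and drop the Schubert-variety translation, which only re-derives the componentwise order you already have by definition.
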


From now on until the end of this section, let 
\[
 J=\{\sigma_i: i=1, \cdots, r-1\}\cup \{\sigma_i: i=r+1\cdots, r+\ell-1\},
\]
and fix
\[
 \a: =\a_{\Id}^{J, \emptyset}=\{\Delta_1, \cdots, \Delta_r, \cdots, \Delta_{r+\l}\}
\]
to be a multisegment of parabolic type $(J, \emptyset)$ (cf. \cite[Definition 3.26]{Deng24b}) such that
\[
e(\Delta_i)=k-1, \text{ for } i=1, \cdots, r,
\]
and 
\[
 e(\Delta_i)=k, \text{ for } i=r+1, \cdots, r+\l. 
\]

\begin{definition} 
Then to each partition $\lambda\in \Omega^{r_1, \l_1}$ such that 
$r_1\geq r$ and $r_1+\l_1=r+\l$, we associate 
\begin{align*}
 \a_{\lambda}&=\sum_{i=1}^{b_0}[b(\Delta_{i}), k]+\sum_{i=b_0+1}^{b_0+a_1}[b(\Delta_{i}), k-1]
 +\cdots\\
 &+\sum_{i=b_0+a_1\cdots+b_{j-1}+1}^{b_0+a_1\cdots+b_{j-1}+a_{j}}[b(\Delta_{i}), k-1]+
 \sum_{i=b_0+\cdots +b_{j-1}+a_{j}+1}^{b_0+a_1\cdots+b_j}[b(\Delta_{i}), k]+\cdots. 
\end{align*} 
 
\end{definition}

\begin{definition}
Let $r, n\in \N$ such that $r\leq n$. Let 
\[
 R_r(n)=\{(x_1, \cdots, x_r): 1\leq x_1<\cdots< x_r\leq n \}.
\]

\begin{description}
 \item[(1)] 
 Let $x=(x_1, \cdots, x_{r_1})\in R_{r_1}(n)$ and $x'=(x_1', \cdots, x_{r_2}')\in R_{r_2}(n)$
such that $r_1\geq r_2$. We say
$x\supseteq x'$ if 
$\{x_1, \cdots, x_{r_1}\}\supseteq \{x_1', \cdots, x_{r_2}'\}$.
\item[(2)]Let $x=(x_1, \cdots, x_{r})\in R_{r}(n)$ and $x'=(x_1', \cdots, x_{r}')\in R_{r}(n)$.
We say $x\geq x'$ if $x_i\geq x_i'$ for all $i=1, \cdots, r$.
\item[(3)] We define $x\succeq y$, if $x\geq y'\supseteq y$ for some $y'$.
\end{description}
\end{definition}

\remk The set $R_r(n)$ is a poset with respect to the relation $\geq$, while the set $\cup_{r\leq n}R_r(n)$ is a poset with respect to the relation $\supseteq$.

\begin{prop}
For $J=\{\sigma_i: i=1, \cdots, r-1\}\cup \{\sigma_i: i=r+1\cdots, r+\l-1\}$,
we have an isomorphism of posets
$$\varsigma_1: S_{r+\l}^{J, \emptyset}\rightarrow R_r(r+\l),$$
by associating the element
$w$ with $x_w:=(w^{-1}(1), \cdots, w^{-1}(r))$.  
\end{prop}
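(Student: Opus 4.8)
The plan is to identify $S_{r+\l}^{J,\emptyset}$ with the set of minimal-length representatives of the left cosets $W_J\backslash S_{r+\l}$, where $W_J=\langle\sigma_1,\dots,\sigma_{r-1}\rangle\cdot\langle\sigma_{r+1},\dots,\sigma_{r+\l-1}\rangle\simeq S_r\times S_\l$. Using the standard criterion $\ell(\sigma_i w)>\ell(w)\iff w^{-1}(i)<w^{-1}(i+1)$ for an adjacent transposition $\sigma_i$, a permutation $w$ is such a representative if and only if
\[
w^{-1}(1)<\dots<w^{-1}(r)\quad\text{and}\quad w^{-1}(r+1)<\dots<w^{-1}(r+\l).
\]
In particular $x_w=(w^{-1}(1),\dots,w^{-1}(r))$ is strictly increasing, so $\varsigma_1$ does take values in $R_r(r+\l)$. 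I would then check it is a bijection by exhibiting the inverse: to $x=(x_1<\dots<x_r)\in R_r(r+\l)$ one attaches the unique $w$ determined by $w^{-1}(i)=x_i$ for $1\le i\le r$ together with $w^{-1}|_{\{r+1,\dots,r+\l\}}$ equal to the increasing enumeration of $\{1,\dots,r+\l\}\setminus\{x_1,\dots,x_r\}$; by the displayed characterization this $w$ lies in $S_{r+\l}^{J,\emptyset}$, and the two assignments are manifestly mutually inverse.

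It remains to prove that $\varsigma_1$ is order preserving in both directions, i.e. that $w\le w'$ in the Bruhat order if and only if $x_w\le x_{w'}$ componentwise (equivalently, $x_{w'}\ge x_w$ in the notation of the definition of $R_r(r+\l)$). For the forward implication I would invoke the tableau criterion for the Bruhat order on $S_{r+\l}$, together with its invariance under $w\mapsto w^{-1}$: $w\le w'$ holds iff for every $i$ the increasingly sorted rearrangement of $\{w^{-1}(1),\dots,w^{-1}(i)\}$ is componentwise $\le$ the sorted rearrangement of $\{w'^{-1}(1),\dots,w'^{-1}(i)\}$. Evaluating at $i=r$, minimality of $w$ and $w'$ makes these sorted rearrangements literally equal to $x_w$ and $x_{w'}$, which gives $x_w\le x_{w'}$ immediately.

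For the converse I would again use the same criterion: the hypothesis $x_w\le x_{w'}$ already yields the inequality for $i\le r$ (the sorted $i$-th initial segment of $w^{-1}$ is just the first $i$ coordinates of $x_w$), and for $i>r$ it follows from the classical fact that the componentwise order on subsets of $\{1,\dots,r+\l\}$ is reversed by complementation, applied to the common top $(r+\l-i)$-element parts of the complements of $x_w$ and $x_{w'}$; chaining these observations gives $w\le w'$. In fact the entire poset statement is nothing but the classical combinatorial description of the Bruhat order on the quotient $S_{r+\l}/(S_r\times S_\l)$, equivalently, via Lemma \ref{lem: 7.2.4} and the recollections preceding it, the inclusion order on Schubert cells of $Gr_r(V)$ (cf. \cite{Z5}), so one may alternatively simply cite it. The only genuine subtlety, and the point I would be most careful about, is keeping straight the left-versus-right coset convention together with the passage through $w\mapsto w^{-1}$, so that the strictly increasing blocks of $w^{-1}$ — rather than of $w$ — are the ones matched with the coordinates of the tuple in $R_r(r+\l)$; everything else is bookkeeping.
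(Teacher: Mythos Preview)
Your proof is correct and follows essentially the same approach as the paper: identify $S_{r+\l}^{J,\emptyset}$ via the condition $w^{-1}(1)<\cdots<w^{-1}(r)$ and $w^{-1}(r+1)<\cdots<w^{-1}(r+\l)$, deduce bijectivity, and then invoke the standard combinatorial description of the Bruhat order on Grassmannian permutations. The only difference is that the paper dispatches the order-preservation step by a direct citation (to \cite[Proposition 2.4.8]{BF}), whereas you unpack this via the tableau criterion and the complementation argument; your version is more self-contained but not a different route.
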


\begin{proof}
Note that 
by definition (cf. \cite[Definition3.20]{Deng24b}), 
\[
S_{r+\l}^{J, \emptyset}=\{w\in S_{r+\l}: w^{-1}(1)<\cdots< w^{-1}(r) \text{ and } w^{-1}(r+1)<\cdots <w^{-1}(r+\l)\}.
\]
Therefore,
$\varsigma$ is a bijection. This preserves the partial order by \cite[Proposition 2.4.8]{BF}.
\end{proof}

\begin{definition}
For $\lambda\in \Omega^{r, \l}$ and $\lambda'\in \Omega^{r_1, \l_1}$ 
such that $r+\l=r_1+\l_1$.
We define 
$\lambda\supseteq \lambda'$ if and only if $x_{\lambda}\supseteq x_{\lambda'}$,
and $\lambda\succeq \lambda'$ if and only if $x_{\lambda}\succeq x_{\lambda'}$.

\end{definition}

\begin{definition}

Let $\lambda=(a_1, \cdots, a_m; b_0, \cdots, b_{m-1})$, 
consider the set 
$$\{b(\Delta): \Delta\in \a_{\lambda}, e(\Delta)=k-1\}=\{x_1<\cdots<x_r\},$$
here we have $r$ segments ending in $k-1$ since $\sum_ia_i=r$,
we associate $\lambda$ with the element 
\[
 x_{\lambda}: =(x_1,\cdots, x_r).
\]
This allows us to get a morphism $\varsigma_2: \Omega^{r, \l}\rightarrow R_r(r+\l)$
sending $\lambda$ to $x_{\lambda}$.
\end{definition}

\begin{lemma}
 The map $\varsigma_2$ is an isomorphism of posets. 
\end{lemma}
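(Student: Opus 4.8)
The plan is to show that $\varsigma_2\colon \Omega^{r,\l}\to R_r(r+\l)$ is an isomorphism of posets by identifying it, up to the already-established isomorphism $\varsigma_1\colon S_{r+\l}^{J,\emptyset}\to R_r(r+\l)$, with the combinatorial bijection $\Omega^{r,\l}\cong \mathcal{P}(\l,r)\cong S_{r+\l}^{J,\emptyset}$ coming from Lemma \ref{lem: 7.2.4} together with the standard identification of partitions inside an $r\times\l$ box with minimal-length coset representatives for the parabolic quotient. Concretely, I would first unwind the definition of $\a_\lambda$: for $\lambda=(a_1,\dots,a_m;b_0,\dots,b_{m-1})$, reading the segments of $\a_\lambda$ in order, the beginnings $b(\Delta_i)$ with $e(\Delta)=k-1$ occur exactly at the positions in $\{1,\dots,r+\l\}$ that fall in the ``descending'' blocks of sizes $a_1,\dots,a_m$, interleaved with the ``ascending'' blocks of sizes $b_0,\dots,b_{m-1}$. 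Thus $x_\lambda=(x_1<\dots<x_r)$ records precisely the positions of the $a$-blocks, which is the same data as the lattice path (broken line) associated to $\lambda$ in the sense of the figure.

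Next I would make the comparison with $\varsigma_1$ explicit. Under $\varsigma_1$, a parabolic representative $w\in S_{r+\l}^{J,\emptyset}$ corresponds to $x_w=(w^{-1}(1)<\dots<w^{-1}(r))$, i.e.\ the set of positions receiving the ``first $r$'' values. On the other hand, the partition $\lambda$ attached to $w$ via the classical dictionary (code of $w$, or equivalently the Young diagram cut out by the lattice path) has the property that its ascending/descending segments of the boundary path are governed exactly by which of the positions $1,\dots,r+\l$ lie in $x_w$. Matching the two descriptions gives $x_\lambda=x_w$ where $w=\varsigma_1^{-1}(x_\lambda)$ — in other words $\varsigma_2=\varsigma_1\circ(\text{the partition}\leftrightarrow\text{coset bijection})$, and in particular $\varsigma_2$ is a bijection because each of its factors is (the partition–coset bijection being Lemma \ref{lem: 7.2.4} composed with the poset isomorphism $\mathcal{P}(\l,r)\cong S_{r+\l}^{J,\emptyset}$, cf.\ \cite[Proposition 2.4.8]{BF} as used in the proof of the previous proposition).

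Finally, for the order-preserving statement I would argue directly that $\lambda\le\mu$ in $\mathcal{P}(\l,r)$ (componentwise, equivalently $\line{X}_\lambda\subseteq\line{X}_\mu$, equivalently $\lambda(x)\le\mu(x)$ for all $x$ by the recalled lemma) is equivalent to $x_\lambda\le x_\mu$ coordinatewise in $R_r(r+\l)$: moving a box of the Young diagram outward corresponds to moving one of the marked positions $x_i$ to the right, keeping the others fixed, which is exactly the covering relation for $\le$ on $R_r(r+\l)$. Since $\varsigma_1$ is already known to be a poset isomorphism and $\varsigma_2$ agrees with it through the partition dictionary, $\varsigma_2$ is a poset isomorphism. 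The only real point requiring care — and the step I expect to be the main obstacle — is the bookkeeping in the first paragraph: verifying that the positions of the length-$(k-1)$ segments in $\a_\lambda$, read in the prescribed order, really do reproduce the $a$-block positions of the broken-line description of $\lambda$, so that $\varsigma_2$ coincides with the classical bijection rather than some twist of it (e.g.\ a transpose or a complement). Once that identification is pinned down, everything else is inherited from results already established in the excerpt.
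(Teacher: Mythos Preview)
Your proposal is correct, and the strategy is sound, but the paper takes a more direct route. Rather than factoring $\varsigma_2$ through $\varsigma_1$ and the partition--coset dictionary, the paper simply observes that if $\lambda\in\Omega^{r,\ell}$ corresponds (via Lemma~\ref{lem: 7.2.4}) to the partition $(\ell_1\leq\cdots\leq\ell_r)$, then a short calculation gives the explicit formula
\[
\varsigma_2(\lambda)=(\ell_1+1,\ell_2+2,\dots,\ell_r+r).
\]
This makes bijectivity immediate (the inverse is $x_i\mapsto x_i-i$), and for the order statement the paper then cites \cite[Page~6, Paragraph~3]{Br} to conclude that $\mu\geq\lambda\Leftrightarrow\varsigma_2(\mu)\geq\varsigma_2(\lambda)$.

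Your approach has the merit of recycling the already-proven poset isomorphism $\varsigma_1$ and avoiding an external citation, at the cost of the bookkeeping step you flagged (verifying $\varsigma_2=\varsigma_1\circ(\text{partition}\leftrightarrow\text{coset})$ rather than a twist). The paper's approach trades that verification for the clean closed formula above, which settles both bijectivity and order-compatibility in one stroke; in fact the identity $\varsigma_2(\lambda)=(\ell_1+1,\dots,\ell_r+r)$ \emph{is} the resolution of the bookkeeping concern you raised, and is worth writing down explicitly in any case.
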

 
\begin{proof}
To see that $\varsigma_2$ is a bijection, we only need to construct an inverse.
Given $x=(x_1, \cdots, x_r)\in R_r(r+\l)$, we have 
$y=(y_1, \cdots, y_{\l})\in R_{\l}(r+\l)$  such that $\{1, \cdots, r+\l\}=\{x_1, \cdots, x_r, y_1, \cdots, y_{\l}\}$.
We can associate a multisegment to $x$:
\[
 \a_x=\sum_{j=1}^r[b(\Delta_{x_{j}}), k-1]+\sum_{j=1}^{\l}[b(\Delta_{y_j}), k].
\]
Note that this allows us to construct a 
partition $\lambda(x)\in \Omega^{r, \l}$ by counting the segments ending in 
$k-1$ and $k$ respectively. 

A simple calculation shows that 
if we write $\lambda\in \Omega^{r, \ell}$ is identified via Lemma \ref{lem: 7.2.4} with a partition $(\l_1, \cdots, \l_r)$ such that $0\leq \l_1\leq \cdots\leq \l_r$,
then 
\[
 \varsigma_2(\lambda)=(\l_1+1, \cdots, \l_r+r).
\]
It follows from \cite[Page 6, Paragraph 3]{Br} that 
\[
 \mu\geq \lambda\Leftrightarrow \varsigma_2(\mu)\geq \varsigma_2(\lambda).
\]
\end{proof}

\begin{prop}\label{prop: 7.3.3}
For $\lambda\in \Omega^{r, \l}$,
we have $\a_{\lambda}\in S(\a)$, moreover, all the elements in $S(\a)$ are of this form. 
Moreover, we have $S(\a_{\lambda})=\{a_{\mu}: \mu\geq \lambda\}$.
\end{prop}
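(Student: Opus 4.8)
The plan is to transport the whole statement through the poset isomorphisms $\varsigma_1$ and $\varsigma_2$ and reduce it to the combinatorics of $R_r(r+\l)$, where the relevant facts are either elementary or already recorded (e.g. the Bruhat-order description of $S_{r+\l}^{J,\emptyset}$ and the identification with partitions via Lemma \ref{lem: 7.2.4}). First I would prove the set-theoretic claim that $S(\a)=\{\a_\lambda:\lambda\in\Omega^{r,\l}\}$. Since $\a=\a_{\Id}^{J,\emptyset}$ is the maximal parabolic-type multisegment with the prescribed beginnings and ends, by \cite[Proposition 3.32]{Deng24b} every $\c\le\a$ with $\varphi_\c=\varphi_\a$ is of the form $\sum_j[b(\Delta_{w^{-1}(j)}),\,e_j]$ for some $w\in S_{r+\l}^{J,\emptyset}$; matching $w$ with $x_w\in R_r(r+\l)$ via $\varsigma_1$ and then with $\lambda$ via $\varsigma_2^{-1}$ exhibits $\c$ as $\a_\lambda$. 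Conversely every $\a_\lambda$ is obtained from $\a$ by a sequence of elementary operations (moving ends from $k$ down to $k-1$), so $\a_\lambda\le\a$; this direction is the construction already implicit in the Definition preceding the Proposition. One must also check $\varphi_{\a_\lambda}=\varphi_{\a}$, which is immediate from the defining sum.

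Next I would prove the order statement $\a_\mu\le\a_\lambda\iff\mu\ge\lambda$ (note the reversal: larger partition $=$ more ends pulled down $=$ smaller multisegment). The key is that, by the previous lemma, $\varsigma_2$ is an isomorphism of posets $(\Omega^{r,\l},\le)\simeq(R_r(r+\l),\le)$, and by the Proposition on $\varsigma_1$ together with \cite[Proposition 2.4.8]{BF} the Bruhat order on $S_{r+\l}^{J,\emptyset}$ matches the order $\ge$ on $R_r(r+\l)$. Finally, the order $\le$ on multisegments of fixed weight $\varphi_\a$ of parabolic type $(J,\emptyset)$ agrees with the (opposite) Bruhat order on the indexing Weyl-group cosets — this is exactly the content of \cite[Proposition 3.28]{Deng24b}. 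Chaining these three identifications gives $\a_\mu\le\a_\lambda\iff x_\mu\ge x_\lambda\iff\mu\ge\lambda$, and in particular $S(\a_\lambda)=\{\b:\b\le\a_\lambda\}=\{\a_\mu:\mu\ge\lambda\}$.

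For the last clause $S(\a_\lambda)=\{\a_\mu:\mu\ge\lambda\}$ I would argue as follows: $\a_\lambda$ is again a multisegment of parabolic type $(J_1,\emptyset)$ for a (possibly larger) set $J_1\supseteq$ the reflections fixing the end-pattern of $\a_\lambda$, and it equals the $\Id$-element $\a_{\Id}^{J_1,\emptyset}$ for its own beginnings and ends by the uniqueness in Lemma \ref{lem: 7.6.1}. Hence the first two parts of the Proposition, applied to $\a_\lambda$ in place of $\a$, already say $S(\a_\lambda)=\{(\a_\lambda)_\nu\}$; one checks $(\a_\lambda)_\nu=\a_\mu$ with $\mu\ge\lambda$ by composing the two parametrizations, i.e. $\varsigma_2$ for $\a_\lambda$ sits inside $\varsigma_2$ for $\a$ as the principal up-set above $\lambda$.

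The main obstacle I anticipate is bookkeeping rather than conceptual: making the identifications $\varsigma_1$, $\varsigma_2$ and the parabolic-order comparison \cite[Proposition 3.28]{Deng24b} fit together with consistent conventions (in particular tracking that the Zelevinsky order $\le$ on multisegments corresponds to $\ge$ on partitions, and that $\varsigma_2(\lambda)=(\l_1+1,\dots,\l_r+r)$ is order-preserving as stated in the preceding lemma). A secondary subtlety is verifying that when $\a_\lambda$ has $r_1>r$ ends equal to $k-1$, it genuinely satisfies assumption $(\A_k)$ (or at least the parabolic-type hypothesis needed to invoke \cite[Proposition 3.28]{Deng24b}), so that the inductive/structural argument for $S(\a_\lambda)$ is legitimate; this follows from condition (1) of $(\A_k)$, which is inherited by every $\c\in S(\a)$.
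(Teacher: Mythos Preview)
Your core approach (paragraphs 1--2) is essentially the paper's: identify $S(\a)$ with $S_{r+\l}^{J,\emptyset}$ via $w\mapsto\Phi_{J,\emptyset}(w)=\a_{\varsigma_2^{-1}(x_w)}$, then use that $\varsigma_2^{-1}\circ\varsigma_1$ is a poset isomorphism together with the Bruhat-order identification (the paper cites \cite[Proposition 3.18]{Deng24b}; you cite 3.28) to conclude $S(\a_\lambda)=\{\a_\mu:\mu\ge\lambda\}$. The paper does this in one stroke rather than splitting into a set-theoretic part and an order part, but the content is the same.

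Your third paragraph, however, contains a genuine error. You claim $\a_\lambda$ equals $\a_{\Id}^{J_1,\emptyset}$ for some $J_1$, invoking Lemma \ref{lem: 7.6.1}. This is false for nontrivial $\lambda$: in $\a_\lambda$ the beginnings $b(\Delta_1)<\cdots<b(\Delta_{r+\l})$ are strictly increasing, but the ends follow the pattern of $\lambda$ and alternate between $k$ and $k-1$; they are not weakly increasing, so $\a_\lambda$ is not the $\Id$-element for any parabolic type. (Lemma \ref{lem: 7.6.1} says $\a_{\Id}$ is the unique \emph{maximal} element of its kind in $S(\varphi_\a)$, not that every parabolic-type multisegment is an $\a_{\Id}$.) Fortunately this paragraph is redundant: your second paragraph already deduces $S(\a_\lambda)=\{\a_\mu:\mu\ge\lambda\}$ directly from the order isomorphism, exactly as the paper does, so you should simply delete it. A minor imprecision in paragraph 1: ``moving ends from $k$ down to $k-1$'' misdescribes the passage $\a\to\a_\lambda$; what actually happens is a permutation of which beginnings are paired with end $k$ versus end $k-1$ (preserving $\varphi_\a$), realized by elementary operations on linked pairs, not truncation.
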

 
\begin{proof}
Let $w\in S^{J, \emptyset}$, by definition, we have 
\[
 w^{-1}(1)<\cdots<w^{-1}(r), ~w^{-1}(r+1)<\cdots <w^{-1}(r+\l).
\]
By definition, we have 
\begin{align*}
\Phi_{J, \emptyset}(w)=&\sum_{j}[b(\Delta_j), e(\Delta_{w(j)})]\\
=&\sum_{j}[b(\Delta_{w^{-1}(j)}), e(\Delta_{j})]\\
=&\sum_{j=1}^{r}[b(\Delta_{w^{-1}(j)}), k-1]+\sum_{j=r+1}^{r+\l}[b(\Delta_{w^{-1}(j)}), k]\\
=\a_{\varsigma_2^{-1}(x_w)}
\end{align*}

Now that $\varsigma_2^{-1}\circ\varsigma_1$ preserves
the partial order, we have 
\[
 S(\a_{\lambda})=\{\a_{\mu}: \mu\geq \lambda\}
\]
by \cite[Proposition 3.18]{Deng24b}.

\end{proof}

\begin{example}

For example, for $r=1, \ell=3$, with 
$J=\{\sigma_2, \sigma_3\}$ and 
\[
 \a=\a_{\Id}^{J,\emptyset}=[1, 4]+[2, 5]+[3, 5]+[4, 5].
\]

Let $\lambda=(a_1, a_2; b_0, b_1)=(1, 0; 2, 1)$, then $\a_{\lambda}=[1, 5]+[2, 5]+[3, 4]+[4, 5]$.
This corresponds to the element $\varsigma_1^{-1}\circ \varsigma_2(\lambda)=\sigma_1\sigma_2$
in $S_4^{J, \emptyset}$.

\begin{figure}[!ht]
\centering
\includegraphics{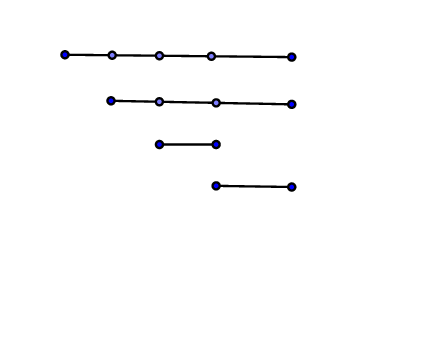}
\caption{\label{fig-multisegment14} }
\end{figure}

\end{example}

\begin{prop}
Let $\lambda, \mu\in \Omega^{r, \l}$ such that $\lambda<\mu$.
We have 
\[
 P_{\a_{\lambda}, \a_{\mu}}(q)=P_{\lambda, \mu}(q).
\] 
\end{prop}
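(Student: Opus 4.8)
The goal is to identify the Kazhdan–Lusztig polynomial $P_{\a_\lambda,\a_\mu}(q)$ attached to the multisegment pair $(\a_\lambda,\a_\mu)$ with the classical Kazhdan–Lusztig polynomial $P_{\lambda,\mu}(q)$ of the Grassmannian. The plan is to exploit the geometric description of $P_{\b,\a}(q)$ in terms of stalks of intersection cohomology of orbital varieties $\line{O}_\a$ (recalled via Proposition~\ref{prop: 7.4.20} and the definition of the $p_{\b,\a}(q)$), and to compare the relevant piece of the orbital variety $\line{O}_{\a_\mu}$ with the Schubert variety $\line{X}_\mu$ inside the Grassmannian $Gr_r(V)$.

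First I would recall from Proposition~\ref{prop: 7.3.3} that $S(\a_\lambda)=\{\a_\mu:\mu\geq\lambda\}$, so the poset of orbits appearing in $\line{O}_{\a_\mu}$ is, via the isomorphisms $\varsigma_1,\varsigma_2$, exactly the poset $\{\lambda:\lambda\le\mu\}$ of Schubert cells in $\line{X}_\mu$. Next I would invoke the fact that $\a=\a^{J,\emptyset}_{\Id}$ is of parabolic type, so by the parabolic description (in the style of \cite{Deng24b}, and parallel to \cite[\S 5.3]{Deng23}) the orbital variety $\line{O}_{\a_\mu}$, or rather the smooth fibration over $E_{\varphi_\a}$ it sits inside, is iso\-morphic up to a smooth factor to the Schubert variety $\line{X}_\mu$ in $Gr_r(V)$: the space of subspaces $U$ with the incidence conditions imposed by $\mu$ is precisely the Schubert stratification introduced before Lemma~\ref{lem: 7.2.4}, and the $\a_\lambda$-orbit corresponds to the Schubert cell $X_\lambda$. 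Since the KL polynomial is a local invariant of the singularity of $\line{X}_\mu$ along $X_\lambda$, and taking the product with a smooth (affine-space) factor does not change stalks of $IC$, we get
\[
 p_{\a_\lambda,\a_\mu}(q)=P_{\lambda,\mu}(q),
\]
where $p_{\a_\lambda,\a_\mu}(q)$ is the polynomial attached to $IC(\line{O}_{\a_\mu})$ at a point of $O_{\a_\lambda}$; by Proposition~\ref{prop: 7.4.20} this $p_{\a_\lambda,\a_\mu}(q)$ is (up to the normalization already built into the statement $G(\a)=\sum_{\b\ge\a}P_{\b,\a}(q)E(\b)$) exactly $P_{\a_\lambda,\a_\mu}(q)$.

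The main obstacle is making the geometric comparison precise: one must verify that the fibration $\alpha$ over $Gr_r(V)$ (as in \cite[Proposition 5.28]{Deng23}, generalized here) really does identify the closure relations and the transverse slices of $\line{O}_{\a_\mu}$ with those of $\line{X}_\mu$, so that no extra cohomology is introduced by the fibers. Concretely, I would argue as in \cite[\S 5.3]{Deng23}: the parabolic assumption forces the nilpotent operator $T$ to be determined, up to a contractible (affine) choice, by the single subspace $\ker(T|_{V_{\varphi,k-1}})$ together with its position relative to the reference flag $0=V^0\subset\cdots\subset V^{r+\l}=V$, and the resulting stratification is the Schubert stratification. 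Then smoothness of $\beta$ and the principal-bundle property of $\beta'$ (from the Proposition after Definition~\ref{def: 7.4.12}) let me transport the $IC$ sheaves without shift of stalks, and the equality of KL polynomials follows from the corresponding statement for Grassmannians (the $P_{\lambda,\mu}(q)$ of \cite{Z5}, whose combinatorics via $\lambda(x)\ge\mu(x)$ we have already recalled). The remaining bookkeeping — matching degree shifts $\dim O_{\a_\lambda}$ versus $\dim X_\lambda$ on both sides — is routine once the geometric identification is in place.
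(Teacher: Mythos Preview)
Your approach is correct in spirit but differs from the paper's. You propose to compare the singularities of the orbit closures $\line{O}_{\a_\mu}\subset E_{\varphi_\a}$ directly with those of Schubert varieties $\line{X}_\mu\subset Gr_r(V)$, by constructing a smooth fibration (in the style of the map $\alpha$ of \cite[\S 5]{Deng23}) and transporting $IC$ stalks across it. This is essentially Zelevinsky's original geometric argument and it works, but it requires you to carry out the fiber analysis you flag as the ``main obstacle'': checking that in this parabolic situation the extra data beyond the kernel subspace really is affine, so that the stalks of $IC(\line{O}_{\a_\mu})$ along $O_{\a_\lambda}$ coincide with those of $IC(\line{X}_\mu)$ along $X_\lambda$.

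The paper instead routes the comparison through parabolic Kazhdan--Lusztig polynomials on the flag variety. Using the isomorphisms $\varsigma_1,\varsigma_2$ one has $\a_\lambda=\Phi_{J,\emptyset}(w)$ and $\a_\mu=\Phi_{J,\emptyset}(v)$ for $w,v\in S_{r+\ell}^{J,\emptyset}$; then the identification of $P_J\backslash GL_{r+\ell}$ with $Gr_r(\C^{r+\ell})$ matches the $B$-orbit $P_J\backslash wB$ with the Schubert cell $X_\lambda$, giving $P_{\lambda,\mu}(q)=P^{J,\emptyset}_{w,v}(q)$, and the equality $P^{J,\emptyset}_{w,v}(q)=P_{\a_\lambda,\a_\mu}(q)$ is the already-established translation between parabolic KL polynomials and orbital-variety KL polynomials (cf.\ \cite[\S 4]{Deng23}). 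This two-step reduction avoids redoing the smooth-fibration argument in the Grassmannian case; your direct approach, by contrast, is self-contained on the nilpotent-orbit side but reproves a special case of that translation.
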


\begin{proof}
 We can also prove this proposition in the following way. 
Let $w, v\in S_{r+\l}^{J, \emptyset}$, 
such that 
\[
 \lambda=\varsigma_2^{-1}\varsigma_1(w), ~\mu=\varsigma_2^{-1}\varsigma_1(v).
\]
Let $P_{J}$ be the parabolic
subgroup of $GL_n$, then by fixing an element in $V_0\in Gr_r(\C^{r+\l})$,  
we can identify $P_J\backslash GL_n$ with $Gr_r(\C^{r+\l})$. 
Moreover, the $B$-orbits $P_J\backslash wB$ corresponds to the varieties
$X_{\lambda}$, see \cite{Br} for a precise description.
Hence we have 
\[
 P_{\lambda, \mu}(q)=P^{J, \emptyset}_{w, v}(q)=P_{a_{\lambda}, \a_{\mu}}(q).
\]
\end{proof}

\begin{definition}
Let $\lambda\in \Omega^{r, \l}$. 
\begin{description}
 \item[(1)]
We define  
\[
 \Gamma(\lambda)=\{\mu\in \Omega^{r_1, \l_1}: r_1+\l_1=r+\l, r_1\geq r,~
\mu\succeq \lambda\}.
\]
and 
\[
 \Gamma^{\mu}(\lambda)=\{\mu': \mu\geq \mu', \mu'\succeq \lambda\},
\]
\[
  \Gamma_1^{\mu}(\lambda)=\{\mu': \mu\geq \mu', \mu'\supseteq \lambda\}.
\]

\item[(2)] For $\mu\in \Gamma(\lambda)$, we define 
\[
 S^{\mu}(\lambda)=\{\lambda'\in \Omega^{r, \l}: \lambda'\geq \lambda, \mu\succeq\lambda'\},
\]
and let 
\[
 S_1^{\mu}(\lambda)=\{\lambda'\in \Omega^{r, \l}: \lambda'\geq \lambda, \mu\supseteq \lambda'\}.
\]
\end{description}

\end{definition}

\begin{prop}\label{prop: 7.3.9}
Let $\lambda\in \Omega^{r, \l}$ and $\mu\in \Omega^{r_1, \l_1}$ with $r_1\geq r$
and $r_1+\l_1=r+\l$.  
Then $\pi(\a_{\mu})$ appears as a summand of $\D^k(\pi(\a_{\lambda}))$
if and only if $\mu\in \Gamma(\lambda)$.  
\end{prop}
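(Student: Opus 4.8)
The plan is to reduce the statement to the combinatorial description of $\preceq_k$ developed in Section 2 together with the explicit parametrization of $S(\a)$ by partitions from Proposition \ref{prop: 7.3.3}. By the lemma giving \eqref{eq: 7.41}, $\pi(\a_\mu)$ appears as a summand of $\D^k(\pi(\a_\lambda))$ precisely when $\a_\mu = (\a_\lambda)_\Gamma$ for some $\Gamma \subseteq \a_\lambda(k)$ (including $\Gamma = \emptyset$, which gives $\mu = \lambda$). So the task is to show: $\a_\mu = (\a_\lambda)_\Gamma$ for some such $\Gamma$ if and only if $\mu \in \Gamma(\lambda)$, i.e. $r_1 \geq r$, $r_1 + \l_1 = r + \l$, and $\mu \succeq \lambda$.

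First I would unwind what the operation $\a \mapsto \a_\Gamma$ does to $\a_\lambda$ in terms of the partition data. The segments of $\a_\lambda$ ending at $k$ are exactly $\a_\lambda(k)$ (by the assumption $(\A_k)$-type setup on $\a$, the only possible ends are $k-1$ and $k$, and no segment can begin at $k$), and for $\Delta$ with $e(\Delta) = k$ one has $\Delta^{(k)} = \Delta^-$, which has end $k-1$. Thus passing from $\a_\lambda$ to $(\a_\lambda)_\Gamma$ replaces $|\Gamma|$ of the segments ending at $k$ by segments ending at $k-1$ (with the same beginning), so the resulting multisegment is again of the form $\a_{\mu}$ for a partition $\mu \in \Omega^{r_1, \l_1}$ with $r_1 = r + |\Gamma|$ and $\l_1 = \l - |\Gamma|$; in particular $r_1 \geq r$ and $r_1 + \l_1 = r + \l$. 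Next, using the description of $\varsigma_2$ and the inclusion order $\supseteq$ on $\cup_{r} R_r(r+\l)$, I would check that for a fixed choice of $\Gamma$ of size $j$ the set of beginnings $\{b(\Delta): \Delta \in (\a_\lambda)_\Gamma, e(\Delta) = k-1\}$ is $\{b(\Delta): \Delta \in \a_\lambda, e(\Delta) = k-1\}$ together with $j$ of the beginnings coming from $\a_\lambda(k)$; translated through $\varsigma_2$ this says precisely $x_{\mu} \supseteq x_{\lambda}$, i.e. $\mu \supseteq \lambda$. Conversely every $\mu' \in \Omega^{r_1,\l_1}$ with $r_1 + \l_1 = r+\l$, $r_1 \geq r$ and $\mu' \supseteq \lambda$ arises from a unique $\Gamma$ this way.

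Finally I would combine this with the order-theoretic content. By the above, the multisegments of the form $(\a_\lambda)_\Gamma$, $\Gamma \subseteq \a_\lambda(k)$, are exactly the $\a_{\mu'}$ with $\mu' \supseteq \lambda$. By Proposition \ref{prop: 7.3.3}, $S(\a_{\mu'}) = \{\a_\nu : \nu \geq \mu'\}$, and by the definition of $\preceq_k$ (Definition \ref{def-new-partial-order}) together with Proposition \ref{prop: 7.4.2}, $\pi(\a_\mu)$ is a summand of $\D^k(\pi(\a_\lambda))$ iff $\a_\mu \leq (\a_\lambda)_\Gamma$ for some $\Gamma$, i.e. iff $\a_\mu \in S(\a_{\mu'})$ for some $\mu' \supseteq \lambda$, i.e. iff $\mu \geq \mu' \supseteq \lambda$ for some $\mu'$ — which is exactly the definition $\mu \succeq \lambda$. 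Together with $r_1 \geq r$ and $r_1 + \l_1 = r + \l$ this is the statement $\mu \in \Gamma(\lambda)$, completing the proof.

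The main obstacle I anticipate is the bookkeeping in the second paragraph: verifying carefully that the operation $\a \mapsto \a_\Gamma$ on $\a_\lambda$ really does land among the $\a_{\mu'}$ and that the correspondence $\Gamma \leftrightarrow \mu'$ matches the relation $\supseteq$ under $\varsigma_2$ exactly as normalized (the shift $\varsigma_2(\lambda) = (\l_1 + 1, \dots, \l_r + r)$ must be tracked), since a sign or off-by-one error there would break the equivalence. One also needs the assumption on $\a$ (that it is $\a_{\Id}^{J,\emptyset}$ with all ends equal to $k-1$ or $k$) to guarantee that no segment begins at $k$, so that $\Delta^{(k)} = \Delta^-$ stays a legitimate segment and the counting of beginnings behaves as claimed; I would state this reduction explicitly rather than leave it implicit.
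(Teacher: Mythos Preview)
Your overall strategy matches the paper's: identify the terms $(\a_\lambda)_\Gamma$ in \eqref{eq: 7.41} with the multisegments $\a_{\mu'}$ for $\mu'\supseteq\lambda$, and then pass to the relation $\succeq$ via the order $\leq$ on $S(\a_{\mu'})$. The paper does exactly this, using Lemma~\ref{lem: 3.0.8} in place of your appeal to the definition of $\preceq_k$.

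There is, however, an internal inconsistency you should fix. In your first paragraph you assert that ``$\pi(\a_\mu)$ appears as a summand of $\D^k(\pi(\a_\lambda))$ precisely when $\a_\mu=(\a_\lambda)_\Gamma$ for some $\Gamma$'' and set yourself the task of showing this equality is equivalent to $\mu\in\Gamma(\lambda)$. That task is false as stated: equality $\a_\mu=(\a_\lambda)_\Gamma$ corresponds to $\mu\supseteq\lambda$, whereas $\Gamma(\lambda)$ is defined by the weaker condition $\mu\succeq\lambda$. Your third paragraph silently changes the meaning of ``summand'' to $\a_\mu\leq(\a_\lambda)_\Gamma$ for some $\Gamma$ (equivalently $\D^k(\pi(\a_\lambda))-\pi(\a_\mu)\geq 0$ in $\mathcal R$), which is the interpretation actually intended in the proposition and the one the paper uses via Lemma~\ref{lem: 3.0.8}. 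You should state this interpretation up front and delete the incorrect reduction in paragraph one. Relatedly, the reference to Proposition~\ref{prop: 7.4.2} in your third paragraph is slightly off: that proposition concerns the irreducibles $L_\b$, not $\pi(\b)$; the clean reference for ``$\pi(\a_\mu)$ is a summand iff $\a_\mu\preceq_k\a_\lambda$'' is Corollary~\ref{cor: 7.4.3} (or argue directly with Lemma~\ref{lem: 3.0.8} as the paper does).
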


\begin{proof}
Let $x_{\lambda}=(x_1^{\lambda}, \cdots, x_r^{\lambda})=\varsigma_2(\lambda)$
and $y_{\lambda}=(y_1^{\lambda}, \cdots, y_{\l}^{\lambda})\in R_r(r+\l)$ such that 
\[
 \{1, \cdots, r+\l\}=\{x_1^{\lambda}, \cdots, x_r^{\lambda}, y_1^{\lambda}, \cdots, y_{\l}^{\lambda}\}.
\]
By Proposition \ref{prop: 7.3.3}, we have 
\[
 \a_{\lambda}=\sum_{j=1}^r[b(\Delta_{x_{j}^{\lambda}}), k-1]+\sum_{j=1}^{\l}[b(\Delta_{y_j^{\lambda}}), k].
\]
Therefore
\[
 \D^k(\pi(\a_{\lambda}))=\pi(\a_{\lambda})+\sum_{y\supseteq x_{\lambda}}\pi(\a_{\varsigma_2^{-1}(y)}).
\]
By Lemma \ref{lem: 3.0.8},
$\pi(\mu)$ is a summand of 
$\D^k(\pi(\a_{\lambda}))$ if and only if 
$\mu\geq \varsigma_2^{-1}(y)$ for some $y\supseteq x_{\lambda}$, i.e,
$\mu\succeq \lambda$.
\end{proof}

\begin{cor}
We have $\mu\succeq \lambda$ if and 
only if $\a_{\mu}\preceq_k \a_{\lambda}$. 
 
\end{cor}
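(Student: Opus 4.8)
The plan is to read the statement off from Proposition~\ref{prop: 7.3.9} together with the standard-module characterization of $\preceq_k$. By Corollary~\ref{cor: 7.4.3}, for any two multisegments one has $\b\preceq_k\a$ if and only if $\D^k(\pi(\a))-\pi(\b)\ge 0$ in $\aR$. Applying this with $\a=\a_\lambda$ and $\b=\a_\mu$ reduces the corollary to the equivalence
\[
\D^k(\pi(\a_\lambda))-\pi(\a_\mu)\ge 0 \text{ in } \aR \iff \mu\succeq\lambda .
\]
But this is exactly the content of Proposition~\ref{prop: 7.3.9}: the phrase ``$\pi(\a_\mu)$ appears as a summand of $\D^k(\pi(\a_\lambda))$'' there means precisely the displayed $\aR$-positivity, and $\Gamma(\lambda)=\{\mu:\mu\succeq\lambda\}$ under the running hypotheses $r_1\ge r$, $r_1+\l_1=r+\l$. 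Indeed, writing $\D^k(\pi(\a_\lambda))$ as the explicit sum of standard modules produced in the proof of that proposition, the ``if'' direction follows from Lemma~\ref{lem: 3.0.8}, while the ``only if'' direction follows from the fact that the coefficient of an irreducible $L_{\a_\mu}$ in a standard module $\pi(\c)$ is nonzero only when $\a_\mu\le\c$, so that if $\mu\not\succeq\lambda$ the coefficient of $L_{\a_\mu}$ in $\D^k(\pi(\a_\lambda))-\pi(\a_\mu)$ is $-1$. Thus nothing further is needed once these two statements are lined up. (If $\mu$ fails the hypotheses $r_1\ge r$, $r_1+\l_1=r+\l$, then both $\mu\succeq\lambda$ and $\a_\mu\preceq_k\a_\lambda$ fail for weight/cardinality reasons, so we may assume them.)

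For completeness I would also indicate a self-contained combinatorial route that avoids Proposition~\ref{prop: 7.3.9}. By Proposition~\ref{prop: 7.3.5}, $\a_\mu\preceq_k\a_\lambda$ means $\a_\mu=\c_\Gamma$ for some $\c\in S(\a_\lambda)$ and $\Gamma\subseteq\c(k)$. By Proposition~\ref{prop: 7.3.3} we may write $\c=\a_\nu$ with $\nu\in\Omega^{r,\l}$ and $\nu\ge\lambda$, equivalently $x_\nu\ge x_\lambda$; and passing from $\a_\nu$ to $(\a_\nu)_\Gamma$ simply adjoins to the increasing sequence $x_\nu$ of left endpoints of the segments ending in $k-1$ the left endpoints of the segments of $\Gamma$, so that as $\Gamma$ varies $x_{\a_\mu}$ runs over exactly the supersets of $x_\nu$. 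Hence $\a_\mu\preceq_k\a_\lambda$ is equivalent to the existence of a sequence $z$ with $x_\mu\supseteq z$ and $z\ge x_\lambda$ componentwise, whereas by definition $\mu\succeq\lambda$ means $x_\mu\ge y\supseteq x_\lambda$ for some $y$. So the corollary is reduced to the elementary fact that ``$\supseteq$'' and componentwise ``$\ge$'' commute as relations on strictly increasing integer sequences, proved by an interleaving argument: from $x_\mu\supseteq z\ge x_\lambda$ obtain $y$ by re-sorting $x_\lambda$ together with the complement $x_\mu\setminus z$, and conversely from $x_\mu\ge y\supseteq x_\lambda$ read off $z$ as the entries of $x_\mu$ in the positions occupied by $x_\lambda$ inside $y$.

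I would present the first argument as the actual proof. I do not anticipate a genuine obstacle: the only delicate point is matching the informal word ``summand'' in Proposition~\ref{prop: 7.3.9} with the precise $\aR$-positivity condition of Corollary~\ref{cor: 7.4.3}, and this is already settled inside the proof of that proposition; the combinatorial lemma underlying the alternative route is entirely routine.
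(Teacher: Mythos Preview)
Your first argument is exactly the paper's proof: apply Corollary~\ref{cor: 7.4.3} to translate $\a_{\mu}\preceq_k\a_{\lambda}$ into the positivity condition $\D^k(\pi(\a_{\lambda}))-\pi(\a_{\mu})\geq 0$, and then invoke Proposition~\ref{prop: 7.3.9}. Your additional combinatorial route via Proposition~\ref{prop: 7.3.5} and the commutation of $\supseteq$ with componentwise $\geq$ is a correct alternative not in the paper, but is unnecessary here.
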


\begin{proof}
By Corollary \ref{cor: 7.4.3}, 
 $\a_{\mu}\preceq_k \a_{\lambda}$ if and only if $\D^k(\pi(\a_{\lambda}))-\pi(\a_{\mu})\geq 0$
 in $\mathcal{R}$, which is equivalent to say that $\mu\preceq \lambda$ by 
 the previous proposition. 
\end{proof}

\begin{prop}
Let $\lambda\in \Omega^{r, \ell}$ 
and $\mu\in \Omega^{r_1, \ell_1}$. Then we have  
$\a_{\mu}=(\a_{\lambda})_{\Gamma}$
 for some $\Gamma\subseteq \a_{\lambda}(k)$. 
if and only if we have $\mu\supseteq \lambda$. 
\end{prop}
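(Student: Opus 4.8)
The plan is to recognize that the operation $\a_\lambda \mapsto (\a_\lambda)_\Gamma$, for $\Gamma \subseteq \a_\lambda(k)$, is precisely the combinatorial shadow of the relation $\supseteq$ on the indexing sets $R_{r}(r+\ell)$ under the isomorphism $\varsigma_2$. First I would unwind the definition of $(\a_\lambda)_\Gamma$ from Definition \ref{def-new-partial-order}: since $\a_\lambda$ is a multisegment of Grassmanian type with all ends equal to $k-1$ or $k$, the set $\a_\lambda(k) = \{\Delta \in \a_\lambda : e(\Delta) = k\}$ consists exactly of the $\ell_1$ segments $[b(\Delta_{y_j^\lambda}), k]$ appearing in the decomposition from Proposition \ref{prop: 7.3.3}, where $x_\lambda = (x_1^\lambda, \dots, x_{r}^\lambda) = \varsigma_2(\lambda)$ and $y_\lambda = (y_1^\lambda, \dots, y_{\ell_1}^\lambda)$ is the complementary sequence in $\{1, \dots, r+\ell\}$. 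Applying the operation $\Delta \mapsto \Delta^{(k)} = \Delta^-$ to a subset $\Gamma$ of these segments replaces $[b(\Delta_{y_j}), k]$ by $[b(\Delta_{y_j}), k-1]$ for $y_j$ ranging over the indices picked out by $\Gamma$; it therefore moves those indices from the "$y$-part" to the "$x$-part". In other words, $(\a_\lambda)_\Gamma = \a_x$ in the notation of the proof of the lemma computing $\varsigma_2^{-1}$, where $x \in R_{r+|\Gamma|}(r+\ell)$ is the sequence obtained by inserting the chosen $y_j$'s into $x_\lambda$, so that $x \supseteq x_\lambda$ as elements of $\bigcup_s R_s(r+\ell)$.

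Next I would run this correspondence in both directions. For the forward implication, given $\a_\mu = (\a_\lambda)_\Gamma$, the computation above shows $\varsigma_2(\mu) = x \supseteq x_\lambda = \varsigma_2(\lambda)$, and since $\varsigma_2$ (and its natural extension to $\bigcup_s R_s(r+\ell)$) intertwines $\supseteq$ on partitions with $\supseteq$ on $R$-sequences by definition, we get $\mu \supseteq \lambda$. For the converse, if $\mu \supseteq \lambda$ then by definition $x_\mu \supseteq x_\lambda$, meaning $\{x_1^\mu, \dots\} \supseteq \{x_1^\lambda, \dots\}$; the extra indices in $x_\mu$ are elements of $\{y_1^\lambda, \dots, y_{\ell_1}^\lambda\}$, hence correspond to segments in $\a_\lambda(k)$. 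Taking $\Gamma$ to be exactly the subset of $\a_\lambda(k)$ consisting of those segments, one checks directly from the definition of $(\a_\lambda)_\Gamma$ that $(\a_\lambda)_\Gamma = \a_\mu$. Both directions are thus reduced to the bookkeeping identity relating $\a_\lambda(k)$-subsets and sequence-inclusions.

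The main obstacle, such as it is, is purely notational: one must be careful that $\a_\lambda(k)$ really does consist of all and only the segments ending at $k$ (which uses that $\a_\lambda$ is of parabolic type with ends in $\{k-1,k\}$ and that no segment of $\a_\lambda$ is a single point $[k]$ unless it came from the fixed reference multisegment $\a = \a_{\Id}^{J,\emptyset}$), and that the operation $\Delta \mapsto \Delta^-$ on these segments does not create collisions or leave the Grassmanian-type stratum—both of which follow from the explicit shape of $\a_\lambda$ in the Definition preceding Proposition \ref{prop: 7.3.3} and the bijection $\varsigma_2$. I would close by remarking that this is the "$\supseteq$-refinement" of Proposition \ref{prop: 7.3.9}, where $\succeq = \geq \circ \supseteq$ governs which $\pi(\a_\mu)$ appear in $\D^k(\pi(\a_\lambda))$: the present statement isolates the $\supseteq$-part, i.e.\ those $\a_\mu$ that arise on the nose as $(\a_\lambda)_\Gamma$ before any further application of the elementary order $\geq$.
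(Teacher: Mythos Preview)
Your proof is correct and follows essentially the same route as the paper: both arguments write $\a_\lambda$ explicitly via $\varsigma_2$ as $\sum_j [b(\Delta_{x_j^\lambda}), k-1] + \sum_j [b(\Delta_{y_j^\lambda}), k]$, identify $\a_\lambda(k)$ with the $y$-indexed segments, and observe that applying $(\cdot)_\Gamma$ moves a chosen subset of $y$-indices into the $x$-part, which is exactly the containment $x_\mu \supseteq x_\lambda$. (One small slip: in your first paragraph the cardinality of $\a_\lambda(k)$ is $\ell$, not $\ell_1$.)
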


\begin{proof}
Let $x_{\lambda}=(x_1^{\lambda}, \cdots, x_r^{\lambda})=\varsigma_2(\lambda)$
and $y_{\lambda}=(y_1^{\lambda}, \cdots, y_{\l}^{\lambda})\in R_r(r+\l)$ such that 
\[
 \{1, \cdots, r+\l\}=\{x_1^{\lambda}, \cdots, x_r^{\lambda}, y_1^{\lambda}, \cdots, y_{\l}^{\lambda}\}.
\]
By Proposition \ref{prop: 7.3.3}, we have 
\[
 \a_{\lambda}=\sum_{j=1}^r[b(\Delta_{x_{j}^{\lambda}}), k-1]+\sum_{j=1}^{\l}[b(\Delta_{y_j^{\lambda}}), k].
\] 
Hence
\[
 \a_{\lambda}(k)=\sum_{j=1}^{\l}[b(\Delta_{y_j^{\lambda}}), k].
\]
Let 
$\Gamma=\sum_{m=1}^{t}[b(\Delta_{y_{j_m}^{\lambda}}), k]$. 
If $\a_{\mu}=(\a_{\lambda})_{\Gamma}$, then 
\[
 \a_{\mu}=\sum_{j=1}^r[b(\Delta_{x_{j}^{\lambda}}), k-1]+\sum_{m=1}^{t}[b(\Delta_{y_{j_m}^{\lambda}}), k-1]
 +\sum_{ j\notin \{j_1, \cdots, j_t\}}[b(\Delta_{y_j^{\lambda}}), k].
\]
Therefore 
\[
 x_{\mu}\supseteq x_{\lambda}
\]
as a set.
The converse is also true. 
\end{proof}

\section{Grassmanian case}\label{section-Grass}

As before, let 
\[
 J=\{\sigma_i: i=1, \cdots, r-1\}\cup \{\sigma_i: i=r+1\cdots, r+\ell-1\},
\]
and
\[
 \a: =\a_{\Id}^{J, \emptyset}=\{\Delta_1, \cdots, \Delta_r, \cdots, \Delta_{r+\l}\}
\]
be a multisegment of parabolic type $(J, \emptyset)$  (cf. \cite[Definition 3.26]{Deng24b}) such that
\[
e(\Delta_i)=k-1, \text{ for } i=1, \cdots, r,
\]
and 
\[
 e(\Delta_i)=k, \text{ for } i=r+1, \cdots, r+\l. 
\]
Moreover, for $\lambda\in \mathcal{P}(\ell, r)$, let 
$x_{\lambda}=(x_1^{\lambda}, \cdots, x_r^{\lambda})=\varsigma_2(\lambda)\in R_r(r+\l)$
and $y_{\lambda}=(y_1^{\lambda}, \cdots, y_{\l}^{\lambda})\in R_\l(r+\l)$ such that 
\[
 \{1, \cdots, r+\l\}=\{x_1^{\lambda}, \cdots, x_r^{\lambda}, y_1^{\lambda}, \cdots, y_{\l}^{\lambda}\}.
\]
It follows from Proposition \ref{prop: 7.3.3} that we have
\[
 \a_{\lambda}=\sum_{j=1}^r[b(\Delta_{x_{j}^{\lambda}}), k-1]+\sum_{j=1}^{\l}[b(\Delta_{y_j^{\lambda}}), k].
\]    
Let $0<r_0\leq \ell $ and $r_1=r+r_0, ~ \ell_1=\ell-r_0$.

\begin{prop}\label{prop: 7.6.1}
Let $\mu\in \mathcal{P}(\ell_1, r_1)$. Then 
there exists $\mu^{\flat}\in \mathcal{P}(\ell, r)$, such that 
\[
 \{\b\in S(\a): \a_{\mu}\preceq_k \b\}=\{\a_{\lambda}: \lambda\in \mathcal{P}(\ell, r),~ \lambda\leq \mu^{\flat}\}.
\]
More explicitly, if 
$x_{\mu}=(x_1^{\mu}, \cdots, x_{r_1}^{\mu})=\varsigma_2(\mu)$, then 
\[
 x_{\mu^{\flat}}=\varsigma_2(\mu^{\flat})=(x_{r_0+1}^{\mu}, \cdots, x_{r_1}^{\mu}).
\]

\end{prop}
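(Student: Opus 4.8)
The plan is to translate the whole statement into the combinatorics of the posets $\mathcal{P}(\ell,r)$ and $\mathcal{P}(\ell_1,r_1)$ through the isomorphisms $\varsigma_2$, to write down $\mu^\flat$ by the formula in the statement, and then to check the set equality by producing an explicit ``witness'' partition.

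The first step is to reduce to a purely combinatorial assertion. Combining Proposition~\ref{prop: 7.3.3} (the elements of $S(\a)$ are exactly the $\a_\lambda$, $\lambda\in\mathcal{P}(\ell,r)$) with the corollary to Proposition~\ref{prop: 7.3.9} (equivalently with Corollary~\ref{cor: 7.4.3}), namely $\a_\mu\preceq_k\a_\lambda\Leftrightarrow\mu\succeq\lambda$, one gets
\[
\{\b\in S(\a):\a_\mu\preceq_k\b\}=\{\a_\lambda:\lambda\in\mathcal{P}(\ell,r),\ \mu\succeq\lambda\}.
\]
Writing $x_\mu=(x_1^\mu,\dots,x_{r_1}^\mu)=\varsigma_2(\mu)\in R_{r_1}(r+\ell)$, the truncation $(x_{r_0+1}^\mu,\dots,x_{r_1}^\mu)$ is strictly increasing of length $r_1-r_0=r$ with entries in $\{1,\dots,r+\ell\}$, hence lies in $R_r(r+\ell)$; since $\varsigma_2\colon\Omega^{r,\ell}\to R_r(r+\ell)$ is a bijection, this defines $\mu^\flat\in\mathcal{P}(\ell,r)$ with $\varsigma_2(\mu^\flat)=(x_{r_0+1}^\mu,\dots,x_{r_1}^\mu)$. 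As $\varsigma_2$ is an isomorphism of posets, it then remains only to show, for $\lambda\in\mathcal{P}(\ell,r)$ with $\varsigma_2(\lambda)=(x_1^\lambda,\dots,x_r^\lambda)$, the equivalence
\[
\mu\succeq\lambda\ \Longleftrightarrow\ x_i^\lambda\le x_{r_0+i}^\mu\quad\text{for }1\le i\le r.
\]

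For the forward direction I would unwind $\succeq$: there is $\nu\in\mathcal{P}(\ell_1,r_1)$ with $\varsigma_2(\nu)=(z_1<\cdots<z_{r_1})$ satisfying $z_i\le x_i^\mu$ for all $i$ and $\{x_1^\lambda,\dots,x_r^\lambda\}\subseteq\{z_1,\dots,z_{r_1}\}$. Writing $x_i^\lambda=z_{j_i}$ with $j_1<\cdots<j_r\le r_1=r+r_0$ forces $j_i\le r_0+i$, whence $x_i^\lambda=z_{j_i}\le x_{j_i}^\mu\le x_{r_0+i}^\mu$. For the converse I would exhibit a concrete $\nu$: let $\varsigma_2(\nu)$ be the set $\{x_1^\lambda,\dots,x_r^\lambda\}$ enlarged by the $r_0$ smallest elements of $\{1,\dots,r+\ell\}\setminus\{x_1^\lambda,\dots,x_r^\lambda\}$ (possible since $\ell\ge r_0$). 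This set has size $r_1$ inside $\{1,\dots,r_1+\ell_1\}$, so it is $\varsigma_2$ of a unique $\nu\in\mathcal{P}(\ell_1,r_1)$, and $\nu\supseteq\lambda$ by construction; one is left to check $\nu\le\mu$, i.e.\ $\varsigma_2(\nu)\le x_\mu$ componentwise. By the standard domination criterion this is equivalent to $|\varsigma_2(\nu)\cap\{1,\dots,t\}|\ge|\{i:x_i^\mu\le t\}|$ for every $t$; a direct count gives $|\varsigma_2(\nu)\cap\{1,\dots,t\}|=\min\bigl(t,\ s(t)+r_0\bigr)$ with $s(t)=|\{i:x_i^\lambda\le t\}|$, and since $|\{i:x_i^\mu\le t\}|\le t$ always holds, everything reduces to the single inequality $s(t)\ge|\{i:x_i^\mu\le t\}|-r_0$, which follows from $x_i^\lambda\le x_{r_0+i}^\mu$ and monotonicity of $x_\mu$ exactly as in the forward direction. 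Together with the first step this yields the proposition.

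The main obstacle is the converse implication, i.e.\ verifying $\nu\le\mu$ for the explicit $\nu$. Its two ingredients are (i) rewriting the componentwise order on $R_{r_1}(r+\ell)$ as the ``partial counts dominate'' condition, and (ii) the counting inequality $|\{i:x_i^\lambda\le t\}|\ge|\{i:x_i^\mu\le t\}|-r_0$; once these are set up the remaining computation of $|\varsigma_2(\nu)\cap\{1,\dots,t\}|$ is routine, and the reduction step together with the forward direction are just bookkeeping with the definitions of $\preceq_k$, $\succeq$, $\supseteq$ and the isomorphisms $\varsigma_2$.
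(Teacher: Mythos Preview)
Your proof is correct but takes a genuinely different route from the paper. The paper's proof invokes Lemma~\ref{lem: 7.6.18} to conclude that the set $Q(\a,\a_\mu)=\{\c\in S(\a):\a_\mu=\c_\Gamma\}$ has a unique minimal element; since $\{\b\in S(\a):\a_\mu\preceq_k\b\}$ is upward closed in $S(\a)$ and has the same minimal elements as $Q(\a,\a_\mu)$, the set equality follows at once, and the explicit formula for $\a_{\mu^\flat}$ is then read off from the description of that minimal element. In contrast, you bypass Lemma~\ref{lem: 7.6.18} entirely: after translating via $\varsigma_2$ and the corollary to Proposition~\ref{prop: 7.3.9}, you prove the elementary equivalence $\mu\succeq\lambda\Leftrightarrow x_i^\lambda\le x_{r_0+i}^\mu$ by a direct pigeonhole argument (forward) and by explicitly constructing a witness $\nu$ and verifying $\nu\le\mu$ via the partial-count reformulation of the componentwise order (converse). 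Your approach is more self-contained and elementary for the Grassmannian case, and in fact gives an independent verification that the explicit $\mu^\flat$ really is the minimum (a point the paper leaves implicit); the paper's approach has the advantage of reusing the general machinery of Lemma~\ref{lem: 7.6.18}, which is needed anyway for Theorem~\ref{cor: 7.4.19}.
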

     
\begin{proof}
By Lemma \ref{lem: 7.6.18},  the set  
\[
  \{\b\in S(\a): \a_{\mu}\preceq_k \b\}
\]
contains a unique minimal element 
$\a_{\mu^{\flat}}\in S(\a)$ for some $\mu^{\flat}\in \mathcal{P}(\ell, r)$. 
Therefore we have 
\[
 \{\b\in S(\a): \a_{\mu}\preceq_k \b\}=\{\a_{\lambda}: \lambda\in \mathcal{P}(\ell, r),~ \lambda\leq \mu^{\flat}\}.
\]
Note that if we write 
\[
 \a_{\mu}=\sum_{j=1}^{r_1}[b(\Delta_{x_j^{\mu}}), k-1]+\sum_{j=1}^{\l_1}[b(\Delta_{y_j^{\mu}}), k],
\]
then 
\[
 \a_{\mu^{\flat}}=\sum_{j=1}^{r_0}[b(\Delta_{x_j^{\mu}}), k]+\sum_{j=r_0+1}^{r_1}[b(\Delta_{x_j^{\mu}}), k-1]+\sum_{j=1}^{\l_1}[b(\Delta_{y_j^{\mu}}), k]
\]
is the minimal element in $S(\a)$ satisfying
\[
 \a_{\mu}=(\a_{\mu^{\flat}})_{\Gamma}, \quad ( cf. \text{ Definition \ref{def-new-partial-order}})
\]
for some $\Gamma\subseteq \a_{\mu^{\flat}}(k)$. 
\end{proof}

\begin{definition} 

Let 
\[
 J_1=\{\sigma_i: i=1, \cdots, r-1\}\cup \{\sigma_i: i=r+1, \cdots, r_1-1\}\cup \{\sigma_i: r_1+1, \cdots, r+\ell-1\},
\]
and 
\[
 \a_1=: \a_{\Id}^{J_1, \emptyset}=\{\Delta_1, \cdots, \Delta_{r_1}, \Delta_{r_1+1}^{+}, \cdots, \Delta_{r+\l}^+\},
\]
where $\a=\{\Delta_1, \cdots, \Delta_{r+\l}\}$ with $\Delta_1\unlhd \Delta_2\unlhd \cdots \unlhd \Delta_r$ (cf. Definition \ref{def: 7.2.4}). 
\end{definition}

\begin{lemma}
Let $\d=\a+\l_1[k+1]$, then  
\begin{itemize}
 \item we have $\a=\a_1^{(k+1)}$;
 \item and $\mathfrak{X}_{\d}=\coprod_{w\in S_{r+\l}^{J_1, \emptyset}}O_{\a_{w}}$, where 
 $\a_w=\a_{w}^{J_1, \emptyset}\in S(\a_1)$ is the element associated to $w$ by Lemma \cite[Lemma 3.27]{Deng24b}.
\end{itemize} 
\end{lemma}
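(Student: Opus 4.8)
The plan is to separate the two bullets: the first is a formality, and the second is reduced to an order-theoretic identity inside $S(\d)$.

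For the first bullet I would just unwind Definition \ref{def: 2.2.3}: among the segments of $\a_1=\{\Delta_1,\dots,\Delta_{r_1},\Delta_{r_1+1}^{+},\dots,\Delta_{r+\l}^{+}\}$ precisely $\Delta_{r_1+1}^{+},\dots,\Delta_{r+\l}^{+}$ have endpoint $k+1$ (all the rest end at $k-1$ or $k$), and $(-)^{(k+1)}$ replaces each such $\Delta_i^{+}$ by $(\Delta_i^{+})^{-}=\Delta_i$ while fixing every other segment; hence $\a_1^{(k+1)}=\{\Delta_1,\dots,\Delta_{r+\l}\}=\a$.

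For the second bullet I would first rewrite both sides. Since $\a$ satisfies $(\A_k)$ and $\l_1\le\varphi_{e(\a)}(k)=\l$, Lemma \ref{lem: 7.7.2} gives $X_{\d}^{k+1}=Y_{\d}=\coprod_{\c\in S(\d)}O_{\c}$, so by Definition \ref{def: 7.4.6} the variety $\mathfrak{X}_{\d}$ is the union of the orbits $O_{\c}$ with $\c\in S(\d)$ and $\varphi_{e(\c)}(k)=\varphi_{e(\a)}(k)-\l_1=r_0$; and by Lemma \ref{lem: 7.6.1} together with \cite[Proposition 3.18, Lemma 3.27]{Deng24b} (the analogue for $J_1$ of Proposition \ref{prop: 7.3.3}) one has $S(\a_1)=\{\a_w^{J_1,\emptyset}:w\in S_{r+\l}^{J_1,\emptyset}\}$. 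Thus the statement is equivalent to the equality of multisegment sets
\[
\{\c\in S(\d):\varphi_{e(\c)}(k)=r_0\}=S(\a_1).
\]

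To prove this I would proceed as follows. First, since $\a$ satisfies $(\A_k)$ the only adjacency of segments that can ever occur in an element of $S(\d)$ is a segment ending at $k$ next to an unabsorbed singleton $[k+1]$; tracking endpoint multisets, these absorptions are the only elementary operations that change $\varphi_{e(\cdot)}(k)$, so the value $r_0$ is reached exactly once all $\l_1$ singletons have been absorbed (after which $\varphi_{e(\cdot)}(k)$ can no longer change), and in particular $\{\c\in S(\d):\varphi_{e(\c)}(k)=r_0\}$ is an order ideal of $S(\d)$. Second, $\a_1\in S(\d)$: merging, for $i=1,\dots,\l_1$, the singleton $[k+1]$ with $\Delta_{r_1+i}$ is an elementary operation yielding $\Delta_{r_1+i}^{+}$, so $\a_1$ is reached from $\d$ by $\l_1$ such operations and $\varphi_{e(\a_1)}(k)=r_0$; since $S(\a_1)$ is the principal ideal generated by $\a_1$, this already gives $S(\a_1)\subseteq\{\c\in S(\d):\varphi_{e(\c)}(k)=r_0\}$. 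Third — the reverse inclusion — I would show that $\a_1$ is the unique maximal element of the left-hand ideal: the truncation $\c\mapsto\c^{(k+1)}$ is order preserving on $S(\d)$ and, because $\d^{(k+1)}=\a$ and by the material recalled in Lemmas \ref{lem: 7.6.1}--\ref{lem: 7.7.2}, carries the left-hand side into $S(\a)=\{\a_\lambda\}$, so a maximal element must sit in the fibre over the top element $\a$, i.e.\ be obtained from $\a$ by lengthening exactly $\l_1$ of its $k$-ending segments $\Delta_{r+1},\dots,\Delta_{r+\l}$ to end at $k+1$; among those the $\le$-largest is the one lengthening the $\l_1$ segments that are smallest for $\unlhd$, namely $\a_1$, by the structure of the poset $S$ in parabolic type (\cite[Proposition 3.18]{Deng24b}). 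This last identification is exactly the phenomenon isolated in Lemma \ref{lem: 7.6.18} and Proposition \ref{prop: 7.6.1} and is the main obstacle; once it is in hand, the remaining verifications are routine bookkeeping with endpoint multisets under $(\A_k)$ and with the poset isomorphisms $\varsigma_1,\varsigma_2$.
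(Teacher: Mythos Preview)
Your approach is essentially the paper's: reduce the second bullet to the set-theoretic identity
\[
\{\c\in S(\d):\varphi_{e(\c)}(k)=r_0\}=S(\a_1),
\]
and then invoke the parabolic structure. The paper's own proof is extremely terse (three sentences: ``by definition'' for the first bullet, then the assertion that the orbit condition forces $\c=\a_w^{J_1,\emptyset}$), so your write-up is really a fleshing-out of what the paper leaves implicit.

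One comment on your reverse inclusion. Your ``First'' paragraph already does almost all the work: once you observe that under $(\A_k)$ the only endpoint-changing elementary operation in $S(\d)$ is absorbing a singleton $[k+1]$ into a segment ending at $k$, the condition $\varphi_{e(\c)}(k)=r_0=\ell-\l_1$ forces \emph{all} $\l_1$ singletons to be absorbed, so every $\c$ in the left-hand set (not just the maximal ones) has $b(\c)=b(\a)=b(\a_1)$ and $e(\c)=e(\a_1)$. At that point $\c\le\a_1$ follows immediately from the parabolic description (\cite[Proposition 3.28]{Deng24b}, as in Lemma \ref{lem: 7.6.1}), and you are done. Your ``Third'' step, routing through maximal elements and the truncation $\c\mapsto\c^{(k+1)}$, is therefore more elaborate than needed; moreover the assertion that a maximal element of the left-hand side must sit in the fibre over $\a$ is true but not argued, and would itself require a small lifting argument. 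I would recommend replacing that detour by the direct observation above.
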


\begin{proof}
Note that by definition we have  
 \[
  \a=\a_1^{(k+1)}.
 \]
By definition
$\mathfrak{X}_{\d}$ consists of the orbits $O_{\c}$ with $\c\in S(\d)$ 
such that $\varphi_{e(\c)}(k)+\l_1=\varphi_{e(\a)}(k)$, and 
the latter condition implies that there exists $w\in S_{r+\l}^{J_1, \emptyset}$
such that $\c=\a_{w}^{J_1, \emptyset}$. 
\end{proof}

\begin{prop}\label{prop: 7.6.4}
Let  $\d=\a+\l_1[k+1]$ and $W\subseteq V_{\varphi_{\d}, k+1}$ such 
that $\dim(W)=\l_1$ (which implies that $W=V_{\varphi_{\d}, k+1}$). 
Then the composition of morphisms
\[
 \mathfrak{X}_{\d}=(\mathfrak{X}_{\d})_W \xrightarrow{p} E_{\a}''\xrightarrow{\beta''} E_{\varphi_{\a}},
\]
sends $O_{\a_w}\cap (\mathfrak{X}_{\d})_W$ to $O_{\a_w^{(k+1)}}$. 

\end{prop}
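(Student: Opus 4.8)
The plan is to unwind $\beta''\circ p$ and reduce the statement to the orbit behaviour of the morphism $\gamma_{k+1}$ of \cite[\S 5.3]{Deng23}. Since $\dim W=\l_1=\dim V_{\varphi_{\d},k+1}$, in fact $W=V_{\varphi_{\d},k+1}$, so the base of the fibration $\alpha$ discussed before Lemma \ref{lem: 7.7.2} is a single point, $(\mathfrak{X}_{\d})_W=\mathfrak{X}_{\d}$, and $G_{\varphi_{\d},W}=G_{\varphi_{\d}}$; by the lemma preceding the statement $\mathfrak{X}_{\d}=\coprod_{w}O_{\a_w}$ with $\a_w\in S(\d)$, so $O_{\a_w}\cap(\mathfrak{X}_{\d})_W=O_{\a_w}$ is a single $G_{\varphi_{\d}}$-orbit. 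Now, for $T\in\mathfrak{X}_{\d}$ with $\tau_W(T)=(T_1,T_0)\in(Z^{k+1,\d})_W\times\Hom(V_{\varphi_{\d},k},W)$, Proposition \ref{prop: 7.7.7} gives $p(T)=(T_1,\ker(T_0|_{W_1}))$, and $\beta''$ sends on $E''_{\a}$ a pair $(T',U')$ to $T'$ (cf. the proof of Theorem \ref{cor: 7.4.19}), so $\beta''(p(T))=T_1$, where $(Z^{k+1,\d})_W$ is identified with $Y_{\a}$ as in the remark after \cite[Proposition 5.31]{Deng23}. Thus $\beta''\circ p$ is the first projection of the open immersion $\tau_W$ of Lemma \ref{lem: 7.7.2} (here an isomorphism) followed by $Y_{\a}\hookrightarrow E_{\varphi_{\a}}$; it is $\pi_{*}$-equivariant, so it maps $O_{\a_w}$ onto a single $G_{\varphi_{\a}}$-orbit in $Y_{\a}$, say $O_{\c}$ with $\c\in S(\a)$.

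It remains to identify $\c$ with $\a_w^{(k+1)}$. By the construction of $\tau_W$ in \cite[\S 5]{Deng23} (cf. \cite[Proposition 5.35]{Deng23} and the definition of $\gamma_{k+1}\colon X_{\d}^{k+1}\to Z^{k+1,\d}$ after \cite[Definition 5.26]{Deng23}), the first projection $\mathrm{pr}_1\circ\tau_W$ is the restriction of $\gamma_{k+1}$ to the fibre over $W$; and the description of $Z^{k+1,\d}$ together with \cite[Proposition 5.31]{Deng23} shows that, under the identification $(Z^{k+1,\d})_W\simeq Y_{\a}$ (corresponding to $V_{\varphi_{\d}}/W\simeq V_{\varphi_{\a}}$, with $\varphi_{\d}=\varphi_{\a}+\l_1\chi_{[k+1]}$), $\gamma_{k+1}$ carries the orbit $O_{\b}$, for every $\b\in S(\d)$, onto $O_{\b^{(k+1)}}$ (cf. Definition \ref{def: 2.2.3}); here $\varphi_{\b^{(k+1)}}=\varphi_{\a}$, so $\b^{(k+1)}\in S(\a)$. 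Applying this with $\b=\a_w$ gives $\c=\a_w^{(k+1)}$, which is the assertion.

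The step that needs genuine work is the orbit-matching $\gamma_{k+1}(O_{\b})=O_{\b^{(k+1)}}$. If one does not import it from \cite{Deng23}, one must compute, for $T_1\in O_{\b^{(k+1)}}\cap Y_{\a}$ and arbitrary $T_0\in\Hom(V_{\varphi_{\d},k},W)$, the multisegment of $T':=\tau_W^{-1}(T_1,T_0)$ via the explicit formula in the proof of Lemma \ref{lem: 7.7.2}: away from the indices $k$ and $k+1$ its multisegment agrees with that of $T_1$, while at those indices the defining condition $\varphi_{e(\b)}(k)+\l_1=\varphi_{e(\a)}(k)$ of $\mathfrak{X}_{\d}$ (equivalently $\a=\d^{(k+1)}$) forces $\dim\ker(T_1|_{V_{\varphi_{\d},k}})=\varphi_{e(\a)}(k)$, after which a direct bookkeeping identifies the multisegment of $T'$ with $\b$. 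I expect this bookkeeping to be the main obstacle.
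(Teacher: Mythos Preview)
Your proof is correct and is essentially a careful unwinding of the definitions, which is exactly what the paper does---only the paper's proof is the single line ``This is by definition.'' Your elaboration (identifying $\beta''\circ p$ with $\mathrm{pr}_1\circ\tau_W$ and then with the restriction of $\gamma_{k+1}$, which carries $O_{\b}$ to $O_{\b^{(k+1)}}$) is precisely the content that the paper leaves implicit, so the two approaches coincide.
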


\begin{proof}
This is by definition. 
\end{proof}

\begin{prop}
Let $\mu\in \mathcal{P}(\l_1, r_1)$ and 
 $x_{\mu}=\varsigma_2(\mu)=(x_1^{\mu}, \cdots, x_{r_1}^{\mu})$, 
 $y_{\mu}=(y_1^{\mu}, \cdots, y_{\l_1}^{\mu})$ such that 
\[
 \{1, \cdots, r_1+\l_1\}=\{x_1^{\mu}, \cdots, x_{r_1}^{\mu}, y_1^{\mu}, \cdots, y_{\l_1}^{\mu}\}.
\]
 Then 
\[
 (\a_{\mu^{\flat}})^{\sharp}=\sum_{j=1}^{r_0}[b(\Delta_{x_j^{\mu}}), k]+
 \sum_{j=r_0+1}^{r_1}[b(\Delta_{x_j^{\mu}}), k-1]+\sum_{j=1}^{\l_1}[b(\Delta_{y_j^{\mu}}), k+1],
\]
 for definition of $(\a_{\mu^{\flat}})^{\sharp}$, cf. Lemma \ref{lem: 7.6.18}.
\end{prop}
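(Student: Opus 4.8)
The plan is to deduce this from the explicit form of $\a_{\mu^\flat}$ together with the subset $\Gamma$ already isolated in the proof of Proposition~\ref{prop: 7.6.1}, and then to unwind the definition of $(\cdot)^\sharp$ from Lemma~\ref{lem: 7.6.18}. Recall from that proof that
\[
 \a_{\mu^{\flat}}=\sum_{j=1}^{r_0}[b(\Delta_{x_j^{\mu}}), k]+\sum_{j=r_0+1}^{r_1}[b(\Delta_{x_j^{\mu}}), k-1]+\sum_{j=1}^{\l_1}[b(\Delta_{y_j^{\mu}}), k]
\]
is the minimal element of $\{\b\in S(\a):\a_\mu\preceq_k\b\}$; by Proposition~\ref{prop: 7.3.5} this is precisely the minimal element of $Q(\a,\a_\mu)$, so in the notation of Lemma~\ref{lem: 7.6.18} and Theorem~\ref{cor: 7.4.19} we are computing $\c^\sharp$ for $\c=\a_{\mu^\flat}$ and the (unique) $\Gamma$ with $\a_\mu=\c_\Gamma$.

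First I would pin down that subset $\Gamma\subseteq\a_{\mu^\flat}(k)$. From the display above, $\a_{\mu^\flat}(k)=\{[b(\Delta_{x_j^\mu}),k]:1\le j\le r_0\}\cup\{[b(\Delta_{y_j^\mu}),k]:1\le j\le\l_1\}$ and $\a_{\mu^\flat}\setminus\a_{\mu^\flat}(k)=\{[b(\Delta_{x_j^\mu}),k-1]:r_0+1\le j\le r_1\}$, whereas $\a_\mu=\sum_{j=1}^{r_1}[b(\Delta_{x_j^\mu}),k-1]+\sum_{j=1}^{\l_1}[b(\Delta_{y_j^\mu}),k]$ by Proposition~\ref{prop: 7.3.3}. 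Matching the segments ending at $k-1$ on the two sides, and using that the beginnings $b(\Delta_i)$ are pairwise distinct (a consequence of $\a$ being of parabolic type $(J,\emptyset)$ under the hypothesis $(\A_k)$), forces
\[
 \Gamma=\{[b(\Delta_{x_j^\mu}),k]:1\le j\le r_0\}\subseteq\a_{\mu^\flat}(k),
\]
and one checks directly that $(\a_{\mu^\flat})_\Gamma=\a_\mu$ for this choice.

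Finally I would substitute $\c=\a_{\mu^\flat}$ and this $\Gamma$ into $\c^\sharp=(\c\setminus\c(k))\cup\Gamma\cup\{\Delta^+:\Delta\in\c(k)\setminus\Gamma\}$. Here $\c\setminus\c(k)=\{[b(\Delta_{x_j^\mu}),k-1]:r_0+1\le j\le r_1\}$, the set $\Gamma$ contributes the segments $[b(\Delta_{x_j^\mu}),k]$ for $1\le j\le r_0$, and $\c(k)\setminus\Gamma=\{[b(\Delta_{y_j^\mu}),k]:1\le j\le\l_1\}$ contributes the successors $[b(\Delta_{y_j^\mu}),k+1]$. Summing these three contributions yields exactly
\[
 (\a_{\mu^\flat})^\sharp=\sum_{j=1}^{r_0}[b(\Delta_{x_j^\mu}),k]+\sum_{j=r_0+1}^{r_1}[b(\Delta_{x_j^\mu}),k-1]+\sum_{j=1}^{\l_1}[b(\Delta_{y_j^\mu}),k+1],
\]
which is the assertion. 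The computation is entirely routine; the only point requiring attention is the identification of $\Gamma$, i.e. verifying that the segments of $\a_{\mu^\flat}(k)$ shortened in passing to $\a_\mu$ are exactly those with beginnings $b(\Delta_{x_1^\mu}),\dots,b(\Delta_{x_{r_0}^\mu})$ and not some other subset of the same size — this is precisely where the distinctness of the beginnings, equivalently the minimality of $\a_{\mu^\flat}$ in $Q(\a,\a_\mu)$, is used.
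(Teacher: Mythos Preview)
Your proposal is correct and follows essentially the same approach as the paper: recall the explicit form of $\a_{\mu^\flat}$ from Proposition~\ref{prop: 7.6.1}, identify $\Gamma=\{[b(\Delta_{x_j^\mu}),k]:1\le j\le r_0\}$, and then unwind the definition of $(\cdot)^\sharp$ from Lemma~\ref{lem: 7.6.18}. The paper's proof is terser---it simply asserts what $\Gamma$ is without the matching argument---whereas you spell out why this $\Gamma$ is forced; but the logical skeleton is identical.
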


\begin{proof}
Note that by Proposition \ref{prop: 7.6.1}, 
\[
 \a_{\mu^{\flat}}=\sum_{j=1}^{r_0}[b(\Delta_{x_j^{\mu}}), k]+\sum_{j=r_0+1}^{r_1}[b(\Delta_{x_j^{\mu}}), k-1]+\sum_{j=1}^{\l_1}[b(\Delta_{y_j^{\mu}}), k]
\] 
 and 
 \[
  \a_{\mu}=(\a_{\mu^{\flat}})_{\Gamma}
 \]
for $\Gamma=\sum_{j=1}^{r_0}[b(\Delta_{x_j^{\mu}}), k]$. 
Now by construction in Lemma \ref{lem: 7.6.18}, 
\[
 (\a_{\mu^{\flat}})^{\sharp}=\sum_{j=1}^{r_0}[b(\Delta_{x_j^{\mu}}), k]+
 \sum_{j=r_0+1}^{r_1}[b(\Delta_{x_j^{\mu}}), k-1]+\sum_{j=1}^{\l_1}[b(\Delta_{y_j^{\mu}}), k+1].
\] 
 \end{proof}

\begin{prop}\label{prop-combinatorial-formula-BZ}
We have  
 \[
  n(\a_{\mu}, \a_{\mu^{\flat}})=\sharp \{\c\in S(\a_1): \c^{(k+1)}=\a_{\mu^{\flat}}, \c\geq (\a_{\mu^{\flat}})^{\sharp}\}.
 \] 
\end{prop}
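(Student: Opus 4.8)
The plan is to combine Theorem~\ref{cor: 7.4.19} with Proposition~\ref{prop: 7.3.8} and then interpret the push-forward $\beta''_*$ combinatorially using the geometry of $\mathfrak{X}_{\d}$ established in the preceding lemmas. First I would apply Theorem~\ref{cor: 7.4.19} in the Grassmannian situation at hand: with $\a$ replaced by $\a_{\mu^{\flat}}$ (which satisfies $(\A_k)$ since it is of parabolic type with the required end conditions), and $\b=\a_{\mu}$, the theorem gives
\[
 IC(\line{O}_{\a_{\mu}})\star IC(\line{O}_{\l_1[k]})=\beta''_{*}\bigl(IC(\line{E''_{\a_{\mu^{\flat}}}((\a_{\mu^{\flat}})^{\sharp})})\bigr).
\]
Then by Theorem~\ref{prop: 7.3.8}, applied with $\b=\a_{\mu}$ and $\a=\a_{\mu^{\flat}}$ (noting $\varphi_{\a_{\mu^{\flat}}}=\varphi_{\a_{\mu}}+\l_1\chi_{[k]}$ and $\c=\a_{\mu}+\l_1[k]$), the coefficient $n(\a_{\mu},\a_{\mu^{\flat}})$ equals $\sum_i\dim\mathcal{H}^{2i}(IC(\line{O}_{\l_1[k]})\star IC(\line{O}_{\a_{\mu}}))_{\a_{\mu^{\flat}}}$, i.e.\ the value at $q=1$ of the Poincar\'e polynomial of the stalk of $\beta''_*IC(\line{E''_{\a_{\mu^{\flat}}}((\a_{\mu^{\flat}})^{\sharp})})$ at a point of $O_{\a_{\mu^{\flat}}}$.

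Next I would compute this stalk geometrically. Using the description of $E''_{\a}$ as $\beta''^{-1}(Y_{\a})$ and the projection $p\colon(\mathfrak{X}_{\d})_W\to E''_{\a_{\mu^{\flat}}}$ of Proposition~\ref{prop: 7.7.7}, together with Proposition~\ref{prop: 7.6.4} which identifies the composite $(\mathfrak{X}_{\d})_W\xrightarrow{p}E''_{\a_{\mu^{\flat}}}\xrightarrow{\beta''}E_{\varphi_{\a_{\mu^{\flat}}}}$ as sending $O_{\a_w}\cap(\mathfrak{X}_{\d})_W$ to $O_{\a_w^{(k+1)}}$, I would argue that the closure $\line{E''_{\a_{\mu^{\flat}}}((\a_{\mu^{\flat}})^{\sharp})}$ pulls back along $p$ (which is a fibration, hence smooth) to the closure of the orbit indexed by $(\a_{\mu^{\flat}})^{\sharp}$ inside $\mathfrak{X}_{\d}$. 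Since $p$ is a fibration with smooth (affine-space) fibres and $W=V_{\varphi_{\d},k+1}$ here, the intersection cohomology sheaf is unaffected up to shift, and the fibre of $\beta''\circ p$ over a point of $O_{\a_{\mu^{\flat}}}$ is precisely the set of orbits $O_{\a_w}$ in $\mathfrak{X}_{\d}$ with $\a_w^{(k+1)}=\a_{\mu^{\flat}}$ meeting the closure of $(\a_{\mu^{\flat}})^{\sharp}$.

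The key point is then a parity/purity statement: because $\a_1$ is of parabolic type, $\mathfrak{X}_{\d}$ (being $\coprod_{w\in S_{r+\l}^{J_1,\emptyset}}O_{\a_w}$ by the lemma preceding Proposition~\ref{prop: 7.6.4}) is paved by the orbits $O_{\a_w}$ which correspond to Schubert cells in a partial flag variety $GL_n/P$, each of which is an affine space, and the closure relations match the Bruhat order on $S_{r+\l}^{J_1,\emptyset}$. Hence the stalk cohomology of $IC(\line{E''_{\a_{\mu^{\flat}}}((\a_{\mu^{\flat}})^{\sharp})})$ is concentrated in even degrees and, after taking $\beta''_*$, the total dimension of the even stalk cohomology at a point of $O_{\a_{\mu^{\flat}}}$ counts exactly the cells $O_{\a_w}$ of $\mathfrak{X}_{\d}$ lying in the preimage, i.e.\ those $\c=\a_w\in S(\a_1)$ with $\c^{(k+1)}=\a_{\mu^{\flat}}$ and $\c\geq(\a_{\mu^{\flat}})^{\sharp}$. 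This yields
\[
 n(\a_{\mu},\a_{\mu^{\flat}})=\sharp\{\c\in S(\a_1): \c^{(k+1)}=\a_{\mu^{\flat}},\ \c\geq(\a_{\mu^{\flat}})^{\sharp}\}.
\]

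The main obstacle I anticipate is justifying that no higher (or odd) cohomology and no non-trivial local systems intervene when taking $\beta''_*$, so that the Poincar\'e polynomial at $q=1$ is literally a count of orbits rather than a weighted sum with nontrivial coefficients. This rests on the fact that in the Grassmannian/parabolic situation the relevant $IC$ complex restricted to each stratum has stalks whose Poincar\'e polynomials are the Kazhdan--Lusztig polynomials $P_{\lambda,\mu}(q)$ for Grassmannians, which are honest polynomials with non-negative coefficients, but the extra subtlety is that the fibre of $\beta''$ is not a point here, so I would need the decomposition theorem together with the affine-paving of the fibres of $\beta''\circ p$ to conclude that the alternating sum collapses to an actual cardinality at $q=1$. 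I would handle this by checking directly that $\beta''$ restricted to the relevant closure is semismall (indeed small) with affine-space fibres over each stratum, which forces the push-forward to again be an $IC$-type complex with even, torsion-free stalks, reducing the count to the promised set $\{\c\in S(\a_1): \c^{(k+1)}=\a_{\mu^{\flat}},\ \c\geq(\a_{\mu^{\flat}})^{\sharp}\}$.
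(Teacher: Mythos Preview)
Your overall strategy matches the paper's: combine Theorem~\ref{prop: 7.3.8} and Theorem~\ref{cor: 7.4.19}, then use the composite $h=\beta''\circ p\colon\mathfrak{X}_{\d}\to E_{\varphi_{\a}}$ and Proposition~\ref{prop: 7.6.4} to identify the orbits lying over $O_{\a_{\mu^{\flat}}}$ with the set $\{\c\in S(\a_1):\c^{(k+1)}=\a_{\mu^{\flat}},\ \c\geq(\a_{\mu^{\flat}})^{\sharp}\}$, and finally reduce the Poincar\'e polynomial at $q=1$ to a count of these orbits.

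Two small corrections. First, the number of copies of $[k]$ is $r_0$, not $\ell_1$: one has $\varphi_{\a_{\mu^{\flat}}}=\varphi_{\a_{\mu}}+r_0\chi_{[k]}$, so the relevant product is $IC(\line{O}_{\a_{\mu}})\star IC(\line{O}_{r_0[k]})$. Second, Theorem~\ref{cor: 7.4.19} is applied with the ambient $\a=\a_{\Id}^{J,\emptyset}$ (this is why $S(\a_1)$ appears in the statement), not with $\a_{\mu^{\flat}}$; since $E''_{\a_{\mu^{\flat}}}$ is open in $E''_{\a}$ your version gives the same answer, but the paper's choice makes the indexing by $S(\a_1)$ transparent.

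The place where you diverge from the paper is your proposed justification of the counting step via semismallness of $\beta''$. This is unnecessary, and the paper's argument is much cleaner: one observes that $\beta''$ restricted to $\beta''^{-1}(O_{\a_{\mu^{\flat}}})$ is \emph{smooth} (it is a Grassmannian bundle, the fibre over $T$ being $Gr(r_0,\ker(T|_{V_{\varphi_{\a},k}}))$). By proper base change the stalk of $\beta''_*IC(\line{E''_{\a}((\a_{\mu^{\flat}})^{\sharp})})$ at a point of $O_{\a_{\mu^{\flat}}}$ is the cohomology of the restriction of the $IC$ sheaf to that Grassmannian fibre, and by smoothness this restriction is again an $IC$ complex on the fibre. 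The intersection of $\line{E''_{\a}((\a_{\mu^{\flat}})^{\sharp})}$ with the fibre is (open in) a Schubert variety, stratified by Schubert cells which are exactly the intersections of the orbits $E''_{\a}(\c^{\sharp})$ with the fibre. Since Schubert varieties have even cohomology with total dimension equal to the number of cells, the value at $q=1$ is precisely the cardinality claimed. There is no need to invoke semismallness or the decomposition theorem here.
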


\begin{proof}
Consider the composed morphism 
\[
 h: \mathfrak{X}_{\d}=(\mathfrak{X}_{\d})_W \xrightarrow{p} E_{\a}''\xrightarrow{\beta''} E_{\varphi_{\a}},
\]
then the orbits contained in $h^{-1}(O_{\a_{\mu^{\flat}}})$ is indexed by the set  
\[
 \{\c\in S(\a_1): \c^{(k+1)}=\a_{\mu^{\flat}}, \c\geq (\a_{\mu^{\flat}})^{\sharp}\}
\]
Note that 
by  Theorem \ref{cor: 7.4.19} and  Theorem \ref{prop: 7.3.8}, 
the number  
 \[
  n(\a_{\mu}, \a_{\mu^{\flat}})=\sum_i \dim \mathcal{H}^{2i}(\beta''_*(IC(\line{E''_{\a}((\a_{\mu^{\flat}})^{\sharp}))}))_{x}
 \]
for some $x\in O_{\a_{\mu^{\flat}}}$. Finally, note that the morphism 
$\beta''$ is smooth when restricted to the variety $\beta''^{-1}(O_{\a_{\mu^{\flat}}})$.
Moreover, the fibers are open in some Schubert variety, therefore, we are reduced to the counting of orbits.
\end{proof}

\section{Parabolic Case}\label{sec-Para-case}

In this section,  we deduce a formula for calculating the coefficient $n(\b, \a)$ in the Parabolic cases. 
Most results in this section are stated without proof since the proof are just  direct generalizations of the 
Grassmanian case.

Let 
\[
J\subseteq S
\] 
be a subset of generators and 
$$
\a=\a_{\Id}^{J, \emptyset}
$$
be some multisegment of parabolic type $(J, \emptyset)$ associated to the identity  satisfying that $\ell_{\a, k}\neq 0, \ell_{\a,k+1}=0$ (cf. \cite[Notation 5.12]{Deng23}). Recall that 
\[
\ell_{\a, k}=\sharp\{\Delta\in \a: e(\a)=k\}.
\]
In particular, we have 
\[
\ell_{\a, k}=\ell_{e(\a), k}.
\]


\begin{definition}\label{def-parabolic-multi}
Let $\a(k)=\{\Delta_1, \cdots, \Delta_{\l_k}\}$ with $\Delta_1\unlhd \cdots \unlhd \Delta_{\l_k}$ and $r_0\in \N$ with $1\leq r_0\leq \l_k$. Then let 
\begin{align*}
 \a_1&=(\a\setminus \a(k))\cup \{\Delta\in \a(k):  \Delta\unlhd \Delta_{\l_k-r_0}\}\cup \{\Delta^+\in \a(k): \Delta\unrhd \Delta_{\l_k-r_0+1}\},\\
 \a_2&=(\a\setminus \a(k))\cup \{\Delta^-\in \a(k):  \Delta\unlhd \Delta_{r_0}\}\cup \{\Delta\in \a(k): \Delta\unrhd \Delta_{r_0+1}\}
\end{align*}
and $J_i(r_0, k)(i=1, 2)$ be a subset of $S$ such that 
 $\a_i$ is a multisegment of parabolic type $(J_i(r_0, k), \emptyset)$. Moreover, let 
\[
\a_{\Id}^{J_i(r_0, k), \emptyset}=\a_i,   \text{ for } i=1,2.
\]
\end{definition}

\begin{lemma}
Let $\l_1=\l_k-r_0$ and $\d=\a+\l_1[k+1]$, then  
\begin{itemize}
 \item we have $\a=\a_1^{(k+1)}$;
 \item and $\mathfrak{X}_{\d}=\coprod_{w\in S_{n}^{J_1(r_0, k), \emptyset}}O_{\a_{w}}$, where 
 $\a_w=\a_{w}^{J_1(r_0, k), \emptyset}\in S(\a_1)$ is the element associated to $w$ by \cite[Lemma 3.27]{Deng24b}.
\end{itemize} 
\end{lemma}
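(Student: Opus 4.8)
The plan is to mirror, almost verbatim, the argument already given in the Grassmannian case for the analogous statement (the lemma preceding Proposition~\ref{prop: 7.6.4} and Proposition~\ref{prop: 7.6.1}), since the parabolic setting differs only in that the single ``jump'' from $k-1$ to $k$ is replaced by a chain of jumps governed by the subset $J\subseteq S$. First I would record that, by construction of $\a_1$ in Definition~\ref{def-parabolic-multi}, every segment of $\a$ ending at $k$ has been replaced by a segment ending at $k+1$ (namely $\Delta\mapsto\Delta^+$ for the $r_0$ largest segments in $\a(k)$; but since here $\l_1=\l_k-r_0$ and $\d=\a+\l_1[k+1]$, one checks the bookkeeping so that applying $(\cdot)^{(k+1)}$ removes exactly the added hooks). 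Concretely, $\a_1^{(k+1)}$ deletes the final box from every segment of $\a_1$ ending at $k+1$; by the explicit description $\a_1=(\a\setminus\a(k))\cup\{\Delta:\Delta\unlhd\Delta_{\l_k-r_0}\}\cup\{\Delta^+:\Delta\unrhd\Delta_{\l_k-r_0+1}\}$ this returns precisely $\a$. This is the first bullet.

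For the second bullet, I would invoke Definition~\ref{def: 7.4.6}: $\mathfrak{X}_{\d}$ is the open subvariety of $X_{\d}^{k+1}$ whose orbits $O_{\c}$, $\c\in S(\d)$, satisfy $\varphi_{e(\c)}(k)+\l_1=\varphi_{e(\a)}(k)$. The point is that this numerical condition forces $\c$ to have exactly $\l_1$ segments ending at $k+1$ and hence, together with $\c\le\d$ and the $(\A_k)$-type hypothesis on $\a$ (so that no segment of any $\c$ begins at $k$ or $k+1$, by the argument in Lemma~\ref{lem: 7.6.2} and the proof of Proposition~\ref{prop: 7.7.7}), that $\c$ is of parabolic type $(J_1(r_0,k),\emptyset)$. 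By \cite[Proposition~3.18]{Deng24b} and \cite[Lemma~3.27]{Deng24b}, the elements of $S(\a_1)$ of this parabolic type are exactly $\{\a_w^{J_1(r_0,k),\emptyset}:w\in S_n^{J_1(r_0,k),\emptyset}\}$, and conversely each such $\a_w$ satisfies $\varphi_{e(\a_w)}(k)+\l_1=\varphi_{e(\a)}(k)$ because $w$ permutes the ends without creating new ends at $k+1$ beyond the prescribed $\l_1$. Hence $\mathfrak{X}_{\d}=\coprod_{w\in S_n^{J_1(r_0,k),\emptyset}}O_{\a_w}$ as a stratification into $G_{\varphi_{\d}}$-orbits.

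The step I expect to be the main obstacle is the careful verification that the parabolic type of an element $\c\in S(\d)$ lying in $\mathfrak{X}_{\d}$ is forced to be exactly $(J_1(r_0,k),\emptyset)$ and not merely ``contained in'' it: one must check that the condition $\varphi_{e(\c)}(k)+\l_1=\varphi_{e(\a)}(k)$, combined with $\c\le\d$, pins down the full multiset $e(\c)$ (and, via the $(\A_k)$-hypothesis, $b(\c)$ as well), so that $\c=\a_w$ for a genuine $w\in S_n^{J_1(r_0,k),\emptyset}$. This is the place where the reduction hypothesis $\ell_{\a,k}\neq0,\ \ell_{\a,k+1}=0$ and the ``increase lengths from the left'' normalization are used, exactly as in the Grassmannian computation; once that is in hand, the remaining assertions are formal consequences of the cited results of \cite{Deng24b}, and the proof can be concluded with ``The proof is identical to the Grassmannian case,'' as the section preamble promises.
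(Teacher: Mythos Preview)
Your proposal is correct and follows exactly the same approach as the paper. The paper does not give an explicit proof of this lemma in the parabolic section (it is one of the results ``stated without proof since the proof are just direct generalizations of the Grassmanian case''), and the Grassmannian prototype is proved precisely as you outline: the first bullet is immediate from the construction of $\a_1$, and the second follows because the numerical constraint $\varphi_{e(\c)}(k)+\l_1=\varphi_{e(\a)}(k)$ in Definition~\ref{def: 7.4.6}, together with $\c\in S(\d)$, forces $\c=\a_w^{J_1(r_0,k),\emptyset}$ for some $w\in S_n^{J_1(r_0,k),\emptyset}$. Your flagged concern about the parabolic type being \emph{exactly} $(J_1(r_0,k),\emptyset)$ is resolved just as you suggest: the constraint pins down the multiset $e(\c)$, and the $(\A_k)$-type hypothesis pins down $b(\c)=b(\a)$, so $\c$ lies in the image of $\Phi_{J_1(r_0,k),\emptyset}$.
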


\begin{prop}\label{prop: 7.7.1}
Let $w\in S_{n}^{J_2(r_0, k), \emptyset}$. Then 
there exists $w^{\flat}\in S_{n}^{J, \emptyset}$, such that 
\[
 \{\b\in S(\a): \a_{w}\preceq_k \b\}=\{\a_{v}: v\in S_{n}^{J, \emptyset},~ v\leq w^{\flat}\}.
\]
More explicitly, if 
$\a_{w}(k-1)=\{\Delta_1, \cdots, \Delta_{\l_{k-1}}\}$ with $\Delta_1\unlhd \cdots \unlhd \Delta_{\l_{k-1}}$, then 
\[
 \a_{w^{\flat}}=(\a_{w}\setminus \a_{w}(k-1))\cup \{\Delta^+\in \a_{w}(k-1):  \Delta\unlhd \Delta_{r_0}\}\cup \{\Delta\in \a_{w}(k-1): \Delta\unrhd \Delta_{r_0+1}\}.
\]
\end{prop}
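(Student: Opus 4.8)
The plan is to imitate the argument of Proposition \ref{prop: 7.6.1}, which is its Grassmannian analogue, replacing the combinatorial bookkeeping with partitions by the bookkeeping with the cosets $S_n^{J, \emptyset}$. First I would invoke Lemma \ref{lem: 7.6.18}: since $\a$ satisfies the assumption $(\A_k)$ (this follows from $\ell_{\a,k}\neq 0$, $\ell_{\a,k+1}=0$ together with the parabolic-type hypothesis, exactly as in Lemma \ref{lem: 7.6.1}), the set $Q(\a, \a_w) = \{\c \in S(\a): \a_w = \c_\Gamma \text{ for some } \Gamma \subseteq \c(k)\}$ contains a unique minimal element. Call this minimal element $\a_{w^\flat}$; I must then check that $\a_{w^\flat}$ is again of parabolic type $(J, \emptyset)$, i.e. that it equals $\a_v^{J,\emptyset}$ for a (necessarily unique) $v \in S_n^{J,\emptyset}$, and set $w^\flat := v$. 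Granting this, Proposition \ref{prop: 7.3.5} gives $\b \preceq_k \a$ iff $\b \le \a_\Gamma$ for some $\Gamma$, and combining with Lemma \ref{lem: 7.6.18} and Proposition \ref{prop: 7.3.3} (applied in the parabolic setting via \cite[Proposition 3.18]{Deng24b}) yields
\[
\{\b \in S(\a): \a_w \preceq_k \b\} = \{\c \in S(\a): \a_w \in Q(\a, \c)\} = S(\a_{w^\flat}) = \{\a_v: v \in S_n^{J,\emptyset},\ v \le w^\flat\},
\]
the last equality being the order-preserving bijection $\varsigma_1$-type statement for $S_n^{J,\emptyset}$ together with the fact that $S(\a_v) = \{\a_{v'}: v' \ge v\}$.

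For the explicit formula I would argue directly on the level of segments. Write $\a_w(k-1) = \{\Delta_1 \unlhd \cdots \unlhd \Delta_{\ell_{k-1}}\}$. Passing from $\a_w$ to any $\c$ with $\a_w = \c_\Gamma$ means that $\c$ is obtained from $\a_w$ by choosing $r_0$ of the segments ending in $k-1$ (namely those indexed by $\Gamma$, matched with the $r_0 = |\a(k)| - \ell_1$ segments of $\a(k)$ that were shortened) and replacing each such $\Delta$ by $\Delta^+$ (a segment ending in $k$). Among all such $\c \in S(\a)$, the minimal one in the poset order is obtained by choosing the $r_0$ segments whose beginnings are largest, i.e. $\Delta \unrhd \Delta_{\ell_{k-1}-r_0+1}$ — wait, here one must be careful about which order ($\unlhd$ or the original $\le$) controls minimality, but this is precisely the computation already done in the proof of Lemma \ref{lem: 7.6.18} for the case $\ell = 1$ and then inductively; the upshot matches the asserted
\[
\a_{w^\flat} = (\a_w \setminus \a_w(k-1)) \cup \{\Delta^+ : \Delta \in \a_w(k-1),\ \Delta \unlhd \Delta_{r_0}\} \cup \{\Delta : \Delta \in \a_w(k-1),\ \Delta \unrhd \Delta_{r_0+1}\}.
\]
I would double-check the indexing convention against Definition \ref{def-parabolic-multi}, where $\a_2$ is built by the symmetric recipe, to make sure the roles of $r_0$ and $\ell_{k-1}-r_0$ are consistent.

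The main obstacle I anticipate is verifying that the unique minimal element $\a_{w^\flat}$ of $Q(\a, \a_w)$ is genuinely of parabolic type $(J, \emptyset)$ and not merely some element of $S(\a)$; this is what makes the clean indexing by $S_n^{J,\emptyset}$ work, and it is the parabolic analogue of the (implicitly used) fact that in Proposition \ref{prop: 7.6.1} the minimal element is some $\a_{\mu^\flat}$ with $\mu^\flat$ a genuine partition. I expect this to follow from the construction of $\c^\sharp$ in Lemma \ref{lem: 7.6.18} combined with \cite[Notation 3.25]{Deng24b} and the observation, used repeatedly above, that shortening/lengthening segments ending at $k$ or $k-1$ in a multisegment of parabolic type associated to the identity produces again a multisegment of parabolic type associated to the identity (for a possibly different parabolic). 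Once this structural point is in place, the remaining steps are the routine translations between multisegments, Weyl group cosets, and the posets already established in the excerpt, and I would keep them brief.
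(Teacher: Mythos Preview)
Your proposal is correct and follows exactly the approach the paper intends: the paper states this proposition without proof, remarking that results in this section ``are just direct generalizations of the Grassmannian case'' (i.e.\ of Proposition~\ref{prop: 7.6.1}), and your outline carries out precisely that generalization via Lemma~\ref{lem: 7.6.18}.

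Two small remarks. First, the ``main obstacle'' you flag is not an obstacle at all: since $\a=\a_{\Id}^{J,\emptyset}$, the bijection $\Phi_{J,\emptyset}$ of \cite[Lemma~3.27]{Deng24b} identifies $S(\a)$ with $S_n^{J,\emptyset}$, so every element of $S(\a)$, in particular the minimal element of $Q(\a,\a_w)$, is automatically of the form $\a_v$ for a unique $v\in S_n^{J,\emptyset}$. Second, your displayed chain of equalities has a slip: ``$\a_w\in Q(\a,\c)$'' is not well-formed for the definition of $Q$; what you want is $\{\b\in S(\a):\exists\,\c\in Q(\a,\a_w)\text{ with }\c\le \b\}$, which then collapses to $\{\b\in S(\a):\a_{w^\flat}\le \b\}$ by minimality of $\a_{w^\flat}$ in $Q(\a,\a_w)$, and the order-reversal between multisegments and Weyl cosets (as in Proposition~\ref{prop: 7.3.3}) turns this into $\{\a_v:v\le w^\flat\}$.
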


\begin{prop}
Let $w\in S_{n}^{J_2(r_0, k), \emptyset}$.
 Then 
\[
 (\a_{w^{\flat}})^{\sharp}=(\a_{w^{\flat}}\setminus \a_{w}(k))\cup \{\Delta^+: \Delta\in \a_{w}(k)\}
\]
 for definition of $(\a_{w^{\flat}})^{\sharp}$, cf. Lemma \ref{lem: 7.6.18}.
\end{prop}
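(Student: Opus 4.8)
The plan is to deduce this directly from the explicit description of $\a_{w^\flat}$ in Proposition \ref{prop: 7.7.1} together with the formula for $\c^\sharp$ from Lemma \ref{lem: 7.6.18}, in exact parallel with the Grassmannian computation that precedes Proposition \ref{prop-combinatorial-formula-BZ}. First I would record that, by Proposition \ref{prop: 7.7.1}, $\a_{w^\flat}$ is the unique minimal element of
\[
Q(\a,\a_w)=\{\c\in S(\a):\a_w=\c_\Gamma\ \text{for some}\ \Gamma\subseteq\c(k)\},
\]
so that the symbol $(\a_{w^\flat})^\sharp$ of Lemma \ref{lem: 7.6.18} is meaningful. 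Writing $\a_w(k-1)=\{\Delta_1\unlhd\cdots\unlhd\Delta_{\l_{k-1}}\}$ and using the explicit formula
\[
\a_{w^\flat}=(\a_w\setminus\a_w(k-1))\cup\{\Delta^+:\Delta\in\a_w(k-1),\ \Delta\unlhd\Delta_{r_0}\}\cup\{\Delta\in\a_w(k-1):\Delta\unrhd\Delta_{r_0+1}\},
\]
I would read off, as multisets, that $\a_{w^\flat}(k)=\a_w(k)\sqcup\Gamma_0$ with $\Gamma_0:=\{\Delta^+:\Delta\in\a_w(k-1),\ \Delta\unlhd\Delta_{r_0}\}$, and that the part of $\a_{w^\flat}$ not ending at $k$ equals $\bigl((\a_w\setminus\a_w(k-1))\setminus\a_w(k)\bigr)\cup\{\Delta\in\a_w(k-1):\Delta\unrhd\Delta_{r_0+1}\}$.

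Next I would verify that $\a_w=(\a_{w^\flat})_{\Gamma_0}$, so that $\Gamma_0$ is precisely the subset attached to the pair $(\a_{w^\flat},\a_w)$ in Lemma \ref{lem: 7.6.18}. Unwinding Definition \ref{def-new-partial-order}, applying the operation indexed by $\Gamma_0$ to $\a_{w^\flat}$ keeps the segments of $\a_{w^\flat}$ not ending at $k$, keeps $\a_{w^\flat}(k)\setminus\Gamma_0=\a_w(k)$, and replaces each $\Delta^+\in\Gamma_0$ by $(\Delta^+)^-=\Delta$; reassembling gives $(\a_{w^\flat})_{\Gamma_0}=(\a_w\setminus\a_w(k-1))\cup\a_w(k-1)=\a_w$, as required. (Uniqueness of $\Gamma_0$ with this property is automatic, since $\Gamma_0$ is determined by the end-multiplicities at $k-1$ and $k$ of $\a_w$ and $\a_{w^\flat}$.)

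Finally I would substitute $\c=\a_{w^\flat}$ and $\Gamma=\Gamma_0$ into
\[
\c^\sharp=(\c\setminus\c(k))\cup\Gamma\cup\{\Delta^+:\Delta\in\c(k)\setminus\Gamma\}.
\]
Since $\a_{w^\flat}(k)\setminus\Gamma_0=\a_w(k)$, the last term is $\{\Delta^+:\Delta\in\a_w(k)\}$; and since $\a_{w^\flat}=(\a_{w^\flat}\setminus\a_{w^\flat}(k))\sqcup\Gamma_0\sqcup\a_w(k)$ as multisets, the first two terms combine to $(\a_{w^\flat}\setminus\a_{w^\flat}(k))\cup\Gamma_0=\a_{w^\flat}\setminus\a_w(k)$. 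This gives exactly the asserted identity $(\a_{w^\flat})^\sharp=(\a_{w^\flat}\setminus\a_w(k))\cup\{\Delta^+:\Delta\in\a_w(k)\}$.

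The argument involves no new idea beyond the Grassmannian case, where $\Gamma_0$ plays the role of the subset $\sum_{j=1}^{r_0}[b(\Delta_{x_j^\mu}),k]$ of $\a_{\mu^\flat}(k)$; the only point that needs care is the multiset bookkeeping in the two reassembly steps, i.e. making sure that the splittings of $\a_{w^\flat}$ into its $k$-ending and non-$k$-ending parts, and of $\a_w(k-1)$ into its first $r_0$ segments and the rest with respect to $\unlhd$, are genuine disjoint unions, so that the removal and addition of copies of segments is unambiguous. I expect this to be the only mildly delicate step, and it is entirely routine.
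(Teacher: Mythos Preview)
Your proof is correct and follows exactly the approach the paper intends: the paper omits the proof of this proposition, declaring at the start of \S\ref{sec-Para-case} that results there are direct generalizations of the Grassmannian case, and your argument is precisely the parabolic analogue of the Grassmannian computation preceding Proposition~\ref{prop-combinatorial-formula-BZ}, using the explicit description of $\a_{w^\flat}$ from Proposition~\ref{prop: 7.7.1} and the formula for $\c^\sharp$ in Lemma~\ref{lem: 7.6.18}. The multiset bookkeeping you flag is indeed the only thing to check, and you handle it correctly.
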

\begin{definition}
Let $t_w\in S_{n}^{J_1(\ell _k-r_0, k), \emptyset}$ be the element such that 
\[
\a_{t_w}=(\a_{w^{\flat}})^{\sharp}.
\]
\end{definition}

\begin{prop}\label{prop: 7.7.8}
Let $P_J$ and $P_{J_1(\l_{k}-r_0, k)}$ be the parabolic subgroups 
corresponding to $J$, $J_1(\l_{k}-r_0, k)$ respectively. Consider the 
natural morphism 
\[
 \pi: P_{J_1(\l_{k}-r_0, k)}\backslash GL_{n}\rightarrow P_{J}\backslash GL_{n}.
\]
Then 
\[
 n(\a_{w}, \a_{v})=\sum_i \dim \mathcal{H}^{2i}(\pi_*(IC(\line{P_{J_1(\l_{k}-r_0, k)}t_{w}B})))_x
\]
for some $x\in P_JvB$.

\end{prop}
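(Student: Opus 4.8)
The plan is to run the identical argument as in Proposition~\ref{prop-combinatorial-formula-BZ}, simply replacing the Grassmannian-type data by the parabolic-type data, since the paper has already set up all the analogues. First I would invoke the lemma preceding Proposition~\ref{prop: 7.7.1} (the parabolic analogue of the lemma before Proposition~\ref{prop: 7.6.1}) to identify $\a=\a_1^{(k+1)}$ and to describe $\mathfrak{X}_{\d}$, for $\d=\a+\l_1[k+1]$ with $\l_1=\l_k-r_0$, as the disjoint union $\coprod_{w\in S_n^{J_1(r_0,k),\emptyset}}O_{\a_w}$. Then, exactly as in Proposition~\ref{prop: 7.6.4}, the composition
\[
 h\colon \mathfrak{X}_{\d}=(\mathfrak{X}_{\d})_W \xrightarrow{p} E_{\a}''\xrightarrow{\beta''} E_{\varphi_{\a}}
\]
sends $O_{\a_w}\cap(\mathfrak{X}_{\d})_W$ to $O_{\a_w^{(k+1)}}$, so that the orbits in $h^{-1}(O_{\a_v})$ (for $v=w^{\flat}$, in the notation of Proposition~\ref{prop: 7.7.1}) are indexed by $\{\c\in S(\a_1):\c^{(k+1)}=\a_v,\ \c\geq(\a_{w^{\flat}})^{\sharp}\}$, i.e. by those $w'\in S_n^{J_1(\l_k-r_0,k),\emptyset}$ with the corresponding orbit containment.

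Next I would feed this into Theorem~\ref{cor: 7.4.19} together with Theorem~\ref{prop: 7.3.8}, which give
\[
 n(\a_w,\a_v)=\sum_i \dim\mathcal{H}^{2i}\bigl(\beta''_*(IC(\line{E_{\a}''((\a_{w^{\flat}})^{\sharp})}))\bigr)_x,\qquad x\in O_{\a_v}.
\]
At this point the key new input over the Grassmannian case is Proposition~\ref{prop: 7.7.8}'s own hypothesis identification: under assumption $(\A_k)$ with $\a=\a_{\Id}^{J,\emptyset}$, the morphism $\beta''$ restricted to $\beta''^{-1}(O_{\a_v})$ is identified with (an open piece over) the natural projection $\pi\colon P_{J_1(\l_k-r_0,k)}\backslash GL_n\to P_J\backslash GL_n$ between partial flag varieties, with the perverse sheaf $IC(\line{E_{\a}''((\a_{w^{\flat}})^{\sharp})})$ corresponding to $IC(\line{P_{J_1(\l_k-r_0,k)}t_wB})$ under this identification (using the definition of $t_w$ via $\a_{t_w}=(\a_{w^{\flat}})^{\sharp}$). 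Granting this, smooth base change along $O_{\a_v}\hookrightarrow E_{\varphi_{\a}}$ and the compatibility of $IC$ with the $G_{\varphi_{\a}}$-action reduce the stalk computation at $x\in O_{\a_v}$ to the stalk of $\pi_*(IC(\line{P_{J_1(\l_k-r_0,k)}t_wB}))$ at the corresponding point $x\in P_JvB$, which yields the displayed formula.

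The step I expect to be the main obstacle is making precise the identification of $\beta''\colon E_{\a}''\to E_{\varphi_{\a}}$ over $O_{\a_v}$ with the flag-variety projection $\pi$ and the matching of the two $IC$ sheaves. In the Grassmannian case this is where Proposition~\ref{prop: 7.7.8}'s preparatory statements (and the analogue of the final paragraph of the proof of Proposition~\ref{prop-combinatorial-formula-BZ}, namely that $\beta''$ is smooth over $\beta''^{-1}(O_{\a_v})$ with fibers open in a Schubert variety) did the work; here one needs the parabolic-type versions, which the section explicitly says are ``direct generalizations'' — so I would state them as such, citing \cite[Proposition 2.9]{Deng24b} and \cite{Br} for the identification of $P_J$-orbits on flag varieties with Schubert cells, and \cite[Lemma 3.27]{Deng24b} for the parametrization $w\mapsto \a_w$. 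Once that dictionary is in place, the proof is a transcription, and I would simply write: \emph{The proof is identical to that of Proposition~\ref{prop-combinatorial-formula-BZ}, replacing the data of Grassmannian type by the parabolic data of Definition~\ref{def-parabolic-multi} and using Proposition~\ref{prop: 7.7.1}, \cite[Proposition 2.9]{Deng24b} and \cite{Br}.}
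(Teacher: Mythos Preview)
Your proposal is correct and follows essentially the approach the paper intends: the paper's own proof simply writes down the composed morphism $h$ and then says the result ``can be deduced in a similar fashion as in the symmetric cases \cite[Corollary 4.15]{Deng23}'' with details omitted, whereas you spell out those details by transcribing the Grassmannian argument of Proposition~\ref{prop-combinatorial-formula-BZ}. The one cosmetic difference is that the paper points to \cite[Corollary 4.15]{Deng23} rather than to Proposition~\ref{prop-combinatorial-formula-BZ} as the template for the identification of $\beta''$ with the flag-variety projection and the matching of $IC$ sheaves; your choice to cite the Grassmannian case in the present paper together with \cite[Proposition 2.9]{Deng24b} and \cite{Br} is equally valid and arguably more self-contained.
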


\begin{proof}
Consider the composed morphism 
\[
 h: \mathfrak{X}_{\d}=(\mathfrak{X}_{\d})_W \xrightarrow{p} E_{\a}''\xrightarrow{\beta''} E_{\varphi_{\a}}.
\]
This proposition can be deduced in a similar fashion as in the symmetric cases \cite[Corollary 4.15]{Deng23}.
 We omit the details.
\end{proof}

\section{Calculation of Partial BZ operator}\label{Explict-calculation-irred}
In this section, we restrict ourselves to the case of multisegment of parabolic type.

\begin{definition}
Let $J_1\subseteq J_2\subseteq S$ be two subsets of generators of $S_n$. Let 
$v\in S^{J_1, \emptyset}_n, w\in S_n^{J_2, \emptyset}$, we define $\theta_{J_2}^{J_1}(w, v)$ to be the 
multiplicities of $IC(\line{P_{J_2}wB})$ in $\pi_*(IC(\line{P_{J_1}vB}))$, where   
\[
\pi: P_{J1}\backslash GL_n \rightarrow P_{J_2}\backslash GL_n
\]
be the canonical projection. 
\end{definition}

\remk
By \cite[Proposition 2.20]{Deng24b}, in case when $J_1=\emptyset, J_2=\{s_i\}$ we have 
$\theta_{J_2}^{J_1}(w, v)=\mu(s_iw, v)$ if $\ell(v)\leq \ell(s_iv)$, where $\mu(x, y)$ is the 
coefficient of degree $(\ell(y)- \ell(x)-1)/2$ in $P_{x, y}(q)$.

%
%
%

\begin{prop}
Let $J\subseteq S$ be a subset of generators in $S_n$. 
Let $k\in \Z$ and $\a$ be a multisegment satisfies all the assumptions we made before Definition \ref{def-parabolic-multi}.
Then for any $w\in S_n^{J, \emptyset}$, we have 
\[
\D^{k}(L_{\Phi(w)})=\sum_{r_0=0}^{\ell_{\a, k}}\sum_{v\in S_n^{J_2(r_0, k), \emptyset}}\theta_{J}^{J_{1}(\l_{\a, k}-r_0, k)}(w, t_v)L_{\Phi(v)}.
\]
\end{prop}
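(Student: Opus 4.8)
The plan is to combine the two main results of the paper — Theorem~\ref{prop: 7.3.8}, which expresses $n_{\b,\a}$ as a Poincar\'e series of a Lusztig product $IC(\line{O}_{\l[k]})\star IC(\line{O}_\b)$, and Theorem~\ref{cor: 7.4.19} (together with its parabolic incarnation Proposition~\ref{prop: 7.7.8}), which computes that Lusztig product as a pushforward $\beta''_*(IC(\line{E''_\a(\c^\sharp)}))$ — and then run the reduction process of \cite[\S 6.2]{Deng23} to pass from an arbitrary parabolic multisegment to one satisfying the assumption $(\A_k)$. Concretely, I would first apply Zelevinsky's classification to write $L_{\Phi(w)} = \sum_{v'} \tilde m_{v',w}\,\pi(\a_{v'})$, so that by linearity $\D^k(L_{\Phi(w)}) = \sum_{v'}\tilde m_{v',w}\,\D^k(\pi(\a_{v'})) = \sum_{v'}\tilde m_{v',w}\sum_{\b} n_{\b,\a_{v'}} L_\b$. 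The task is then to identify, after regrouping, the total coefficient of each $L_{\Phi(v)}$ with the stated sum $\sum_{r_0}\sum_{v\in S_n^{J_2(r_0,k),\emptyset}}\theta_{J}^{J_1(\l_{\a,k}-r_0,k)}(w,t_v)$.

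\textbf{Step 1: the Grothendieck-group identity.} By the two theorems cited, for a multisegment $\a$ of parabolic type $(J,\emptyset)$ satisfying $(\A_k)$ and $\b\preceq_k\a$ with $\varphi_\a=\varphi_\b+\ell\chi_{[k]}$, Theorem~\ref{prop: 7.3.8} gives $n_{\b,\a}=\sum_i\dim\mathcal{H}^{2i}(IC(\line{O}_{\l[k]})\star IC(\line{O}_\b))_\a$, and Proposition~\ref{prop: 7.7.8} rewrites the right side as $\sum_i\dim\mathcal{H}^{2i}(\pi_*(IC(\line{P_{J_1(\l_k-r_0,k)}t_wB})))_x$ for $x\in P_JvB$, which is exactly $\theta_J^{J_1(\l_{\a,k}-r_0,k)}(w,t_v)$ by the definition of $\theta$. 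Here $r_0$ is determined by $\b=\a_w$ via $\varphi_{e(\b)}(k)+r_0=\varphi_{e(\a)}(k)$, i.e.\ $\ell=\l_{\a,k}-r_0$ in the notation before Definition~\ref{def-parabolic-multi}, and the element $v$ with $\a_v=\b$ runs over $S_n^{J_2(r_0,k),\emptyset}$ by Proposition~\ref{prop: 7.7.1}. So for $\a$ under assumption $(\A_k)$ the formula is immediate: $\D^k(\pi(\a))=\sum_{r_0=0}^{\l_{\a,k}}\sum_{v\in S_n^{J_2(r_0,k),\emptyset}}\theta_J^{J_1(\l_{\a,k}-r_0,k)}(w,t_v)\,L_{\Phi(v)}$ where $\a=\a_{\Id}^{J,\emptyset}$, then use $L_{\Phi(w)}=\sum\tilde m\,\pi$ and observe that the $\theta$-coefficients already package the full expansion (because the PBW/dual-canonical-basis change of coordinates of \S\ref{sec-recall-Lusztig} is exactly the standard-to-irreducible change of basis).

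\textbf{Step 2: reduction to $(\A_k)$.} For a general parabolic multisegment $\a$ one invokes Proposition~\ref{prop-red-to-para} and the procedure of \cite[\S 6.2]{Deng23}: there is a multisegment $\hat\a$ obtained by lengthening all segments on the left (which does not change the combinatorics of the operator $\D^k$ on the relevant quotient, only shifts the left endpoints) so that $\hat\a$ satisfies $(\A_k)$, together with a weight-preserving, poset-compatible bijection between the relevant pieces of $S(\a)$ and $S(\hat\a)$ under which both the multiplicities $n_{\b,\a}$ and the parabolic Kazhdan--Lusztig data $\theta$ are preserved. Applying Step~1 to $\hat\a$ and transporting back along this bijection yields the claimed formula for $\a$.

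\textbf{The main obstacle} I anticipate is bookkeeping, not a conceptual gap: namely verifying that the double sum over $r_0$ and over $v\in S_n^{J_2(r_0,k),\emptyset}$, with summand $\theta_J^{J_1(\l_{\a,k}-r_0,k)}(w,t_v)$, is precisely the regrouping of $\sum_{v'}\tilde m_{v',w}\,n_{\a_v,\a_{v'}}$ — i.e.\ that the passage from standard modules $\pi(\a)$ to irreducibles $L_\a$ on both sides of $\D^k$ is consistent. Since $\D^k$ is an algebra homomorphism (Proposition~\ref{prop: 7.6.5}) and, under $\Theta$, carries $\tilde\gamma_\a\mapsto q^{1/2\dim O_\a}G(\a)$ with $G(\a)\mapsto L_\a$ at $q=1$ (Proposition~\ref{prop: 7.4.20} and \cite[Theorem 12]{LNT}), the identity $\D^k(L_\a)=\sum n_{\b,\a}^{\mathrm{irr}}L_\b$ with $n^{\mathrm{irr}}$ given by the Lusztig product of the \emph{intersection-cohomology} sheaves already follows at the level of $\K$; the only thing to check is that $\c=\b+\l[k]\in S(\a)$ forces $\a_v\preceq_k\a_w$ exactly when the geometric $\theta$ is non-zero, which is Corollary~\ref{cor: 7.4.3} combined with Lemma~\ref{lem: 7.6.18}. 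I would therefore write the proof as: (i) reduce to $(\A_k)$; (ii) cite Theorem~\ref{prop: 7.3.8} and Proposition~\ref{prop: 7.7.8} to evaluate each $n_{\a_v,\a_w}$; (iii) sum over the decomposition $L_{\Phi(w)}=\sum\tilde m\,\pi(\a_{v'})$ and re-index by $(r_0,v)$, invoking the last Proposition of \S\ref{sec-recall-Lusztig} to land in $B\cong\aR$.
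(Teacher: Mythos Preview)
There is a genuine gap in your Step~1. You claim that Proposition~\ref{prop: 7.7.8} gives
\[
n(\Phi(v),\Phi(w))=\sum_i\dim\mathcal{H}^{2i}\bigl(\pi_*(IC(\line{P_{J_1}t_vB}))\bigr)_x \quad\text{for }x\in P_JwB,
\]
and then assert that this \emph{equals} $\theta_J^{J_1}(w,t_v)$ ``by the definition of $\theta$''. It does not. The quantity $\theta_J^{J_1}(w,t_v)$ is the multiplicity of the summand $IC(\line{P_JwB})$ in the decomposition of $\pi_*(IC(\line{P_{J_1}t_vB}))$, whereas the displayed Poincar\'e series counts the total fibre cohomology at a point of $P_JwB$. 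By the decomposition theorem and proper base change these differ exactly by a convolution with parabolic Kazhdan--Lusztig polynomials:
\[
n(\Phi(v),\Phi(w))=\sum_{u\in S_n^{J,\emptyset}}\theta_J^{J_1}(u,t_v)\,P^{J,\emptyset}_{w,u}(1),
\]
which is the specialization at $q=1$ of the identity the paper records as (\ref{eq: 7.6}). So $n$ and $\theta$ are \emph{not} the same; they agree only after inverting the KL change-of-basis, and that inversion is precisely what the proof must supply.

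The paper does this by induction on the Bruhat order: assuming the coefficient of $L_{\Phi(v)}$ in $\D^k(L_{\Phi(u)})$ is $\theta_J^{J_1}(u,t_v)$ for every $u>w$, one applies $\D^k$ to $\pi(\Phi(w))=\sum_u P^{J,\emptyset}_{w,u}(1)L_{\Phi(u)}$, compares the coefficient of $L_{\Phi(v)}$ on both sides, and uses (\ref{eq: 7.6}) at $q=1$ to solve for the unknown coefficient at $w$. Your proposal to invert via $L_{\Phi(w)}=\sum_{v'}\tilde m_{v',w}\pi(\a_{v'})$ is in principle equivalent, but as written you never carry out the inversion: you simply identify $n$ with $\theta$ and declare the bookkeeping ``not a conceptual gap''. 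The conceptual content is exactly this identity (\ref{eq: 7.6}), obtained by localizing the decomposition theorem. Finally, your Step~2 is superfluous here: the hypothesis of the proposition already places $\a$ in the parabolic setup of \S\ref{sec-Para-case}; the reduction of Proposition~\ref{prop-red-to-para} is used \emph{afterwards} to treat general multisegments, not in the proof of this statement.
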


\begin{proof}
Note that by Proposition \ref{prop: 7.4.2},
\[
\D^k(\pi(\Phi(w)))=\sum_{\b\preceq_k \Phi(w)}n(\b, \a)L_{\b}.
\]
Note that by Proposition  \ref{prop: 7.3.5}, 
$\b\preceq_k \Phi(w)$ implies that 
\[
\b=\Phi(v), 
\]
for some $v\in J_2(\l_{\a, k}-r_0, k)$. 
Moreover, according to Proposition  \ref{prop: 7.7.8}
\[
n(\Phi(v), \Phi(w))=\sum_i \dim \mathcal{H}^{2i}(\pi_*(IC(\line{P_{J_1(\l_{\a, k}-r_0, k)}t_{v}B})))_x
\]
for some $x\in P_JwB$.
In fact, by the decomposition theorem, we have 
\addtocounter{theo}{1}
\begin{equation}\label{eq: 7.42}
\pi_*(IC(\line{P_{J_1(\l_{\a, k}-r_0, k)}t_{v}B})=\bigoplus_{u\in S_n^J} \oplus_{i}IC(\line{P_JuB})^{h_i(u, t_v)}[d_u^i]
\end{equation}
therefore 
\[
\theta_{J}^{J_1(\l_{\a, k}-r_0, k)}(u, t_v)=\sum_{i}h_i(u, t_v).
\]
Furthermore, we denote 
\[
\theta_{J}^{J_1(\l_{\a, k}-r_0, k)}(u, t_v)(q)=\sum_{i}h_i(u, t_v)q^{-d_u^i/2}.
\]
By localizing at a point of $P_JwB$ and applying proper base change, we get 
\addtocounter{theo}{1}
\begin{equation}\label{eq: 7.6}
\sum_{S_{J_1(\l_{\a, k}-r_0, k)}\backslash S_{J}\ni \rho}q^{\ell(\rho)}P^{J_1(\l_{\a, k}-r_0, k), \emptyset}_{\rho w, t_v}(q)=\sum_{u} \theta_{J}^{J_1(\l_{\a, k}-r_0, k)}(u, t_v)(q) P^{J, \emptyset}_{w, u}(q).
\end{equation}
Here $\ell(\rho)$ denotes the length function on symmetric group.
Now we return to the formula 
\addtocounter{theo}{1}
\begin{equation}\label{eq: 7.5}
\pi(\Phi(w))=\sum_{u} P^{J, \emptyset}_{w, u}(1)L_{\Phi(u)}.
\end{equation}
By induction, we can assume that for $u>w$, we have that $L_{\Phi(v)}$ appears in $\D^k(L_{\Phi(u)})$ with multiplicity $\theta_{J}^{J_1(\l_{\a, k}-r_0, k)}(u, t_v)$. Then 
by applying the derivation $\D^k$ to  (\ref{eq: 7.5}), 
on the right hand side we get the multiplicity of $L_{\Phi(v)}$ given by 
\[
x+\sum_{u>w}  \theta_{J}^{J_1(\l_{\a, k}-r_0, k)}(u, t_v) P^{J, \emptyset}_{w, u}(1),
\]
where $x$ denotes the multiplicity of $L_{\Phi(v)}$ in the operator $\D^k(L_{\Phi(w)})$.  On the right hand side, 
applying \cite[Corollary 5.38]{Deng23},
we get
\[
\sum_{S_{J_1(\l_{\a, k}-r_0, k)}\backslash S_{J}\ni \rho}P^{J_1(\l_{\a, k}-r_0, k), \emptyset}_{\rho w, t_v}(1).
\]
Now compare with  (\ref{eq: 7.6}) to get $x=\theta_{J}^{J_1(\l_{\a, k}-r_0, k)}(w, t_v)$.
 \end{proof}

From now on we consider the operator $\D^k(L_{\c})$ for a general multisegment $\c$ such that 
$\ell_{\c, k}>0$.   

\begin{prop}\label{prop-red-to-para}
There exists a multisegment $\c'$ which is of parabolic type $(J_1(\c), \emptyset)$(cf. \cite[Definition 3.26]{Deng24b}).
and  a sequence of integers $k_1, \dots, k_r, k_{r+1}, \dots, k_{r+\l}$ such that $L_{\c}$ is the minimal degree term with multiplicity one in 
\[
{^{k_1}\D}\cdots {^{k_r}\D}\D^{k_{r+1}}\cdots \D^{k_{r+\l}}(L_{\c'}),
\]
and 
\begin{align*}
&\ell_{\c', i}=\ell_{\c, i}, \quad \text{ if } i\leq k,  \\
&\ell_{\c, k+1}=0,\\
&k_i>k+1, \quad \text{if } i>r.
\end{align*}
\end{prop}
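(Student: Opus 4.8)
The plan is to realize $\c$ as the bottom (minimal–weight) term of a composite of elementary partial BZ operators applied to a multisegment $\c'$ of parabolic type obtained from $\c$ by \emph{stretching} its segments, in the spirit of the reduction of \cite[\S 6.2]{Deng23}. First I would record that, by the footnote after the definition of $(\A_k)$, we may assume $\ell_{\c,k+1}=0$. Then, writing $\c=\{\Delta_1,\dots,\Delta_s\}$ with $\Delta_i=[b_i,e_i]$, I would attach to each $i$ a stretched segment $\widetilde\Delta_i=[\widetilde b_i,\widetilde e_i]$ with $\widetilde b_i\le b_i$, $\widetilde e_i\ge e_i$, subject to: (a)~$\widetilde e_i=e_i$ whenever $e_i\le k+1$; and (b)~the beginnings are pushed down, and the ends $\widetilde e_i$ with $e_i\ge k+2$ pushed up, far enough and spread out enough that $\c':=\{\widetilde\Delta_1,\dots,\widetilde\Delta_s\}$ satisfies condition $(1)$ of $(\A_k)$ and is in suitably ``general position'' on each side. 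By Lemma \ref{lem: 7.6.1}, $\c'$ is then of parabolic type, say of type $(J_1(\c),\emptyset)$ after replacing it by $(\c')_{\Id}$ (which has the same multisets of beginnings and ends, hence yields the same operators). I would let $k_{r+1},\dots,k_{r+\l}$ enumerate in \emph{decreasing} order the distinct values in $\bigcup_i(e_i,\widetilde e_i]$, and $k_1,\dots,k_r$ enumerate in \emph{increasing} order the distinct values in $\bigcup_i[\widetilde b_i,b_i)$, with $r,\l\ge 0$ (and $\l=0$ permitted); the decreasing order makes $\D^{k_{r+1}}\cdots\D^{k_{r+\l}}$ peel the stretched right ends back one layer at a time, and ${^{k_1}\D}\cdots{^{k_r}\D}$ likewise undoes the left stretches.

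The numerical assertions are immediate from this construction: a value $k_i$ with $i>r$ lies in some $(e_j,\widetilde e_j]$ with $e_j\ge k+2$, so $k_i\ge k+3>k+1$; and for $i\le k$ one has $\ell_{\c',i}=\ell_{\c,i}$ because no segment ending at $i$ is stretched on the right (condition (a)) and no right stretch produces an end $\le k+1$ (here $\ell_{\c,k+1}=0$ is used).

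The substance is the leading–term claim: setting $\mathcal E:={^{k_1}\D}\cdots{^{k_r}\D}\,\D^{k_{r+1}}\cdots\D^{k_{r+\l}}$, one must show $L_\c$ occurs in $\mathcal E(L_{\c'})$ with coefficient $1$ in weight $\wt(\c)$, and that nothing of smaller weight occurs. Since each factor of $\mathcal E$ is an algebra endomorphism (Proposition \ref{prop: 7.6.5}) preserving the weight filtration and strictly lowering weight on the bottom term, it suffices to prove the one–step statement and iterate: if $\c_1$ differs from $\c_2$ by replacing a single segment $[a,c]$ of $\c_2$ by $[a,c+1]$, with $c+1$ in general position for $\c_1$ — informally, no other segment of $\c_1$ (equivalently of any $\b\le\c_1$, since linking preserves the multiset of ends) ends at $c$ or $c+1$, nor begins at $c+2$ — then the operation $\b\mapsto\b^{(c+1)}$ of Definition \ref{def: 2.2.3} restricts to a weight–dropping, order–preserving bijection $S(\c_1)\to S(\c_2)$ compatible with the multiplicities $m(\cdot,\cdot)$ of (\ref{eqn-decom-standard-irr}); consequently $\D^{c+1}(\pi(\c_1))$ has $\pi(\c_2)$ as its unique bottom term and, passing to the $L$–basis, $\D^{c+1}(L_{\c_1})=L_{\c_2}+(\text{higher weight})$ with bottom coefficient $1$. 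The mirror statement holds for the left operator and a left stretch. Composing over the successive elementary stretches (kept in general position throughout by spreading out the stretched values) would give $\mathcal E(L_{\c'})=L_\c+(\text{higher weight})$; non–negativity of all coefficients (\cite[Theorem 2.22]{Deng23}) then rules out cancellations that could obscure the bottom term or create a competitor. Equivalently, one may compute the coefficient of $L_\c$ in $\mathcal E(L_{\c'})$ at $q=1$ as a Kashiwara scalar product $\bigl(g^*(\c'),\ \exp(E_{k_{r+\l}})\cdots\exp(E_{k_{r+1}})\cdot(\text{left analogues for }k_r,\dots,k_1)\cdot g(\c)\bigr)$, using the adjunction $(e_i'(u),v)=(u,E_iv)$, the general position of the stretches and the standard–module shape of $g(\c')$ forcing this value to be $1$ with all lower strata vanishing.

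The step I expect to be the main obstacle is this last one: pinning down the general–position conditions on the stretched segments precisely enough — and checking they can be realized simultaneously, compatibly with the chosen ordering of the $k_i$ — so that each elementary un–stretching is a genuine order– and multiplicity–preserving bijection on the pertinent interval of $S(\,\cdot\,)$, whence exactly one copy of $L_\c$ survives at the bottom and nothing drops below weight $\wt(\c)$. This is exactly the bookkeeping effected in \cite[\S 6.2]{Deng23}, and identifying the resulting type $(J_1(\c),\emptyset)$ and the indices $k_i$ with the combinatorial data of \cite{Deng24b} is what links the statement to Proposition \ref{prop: 7.7.8} and to the explicit formula for $\D^k(L_{\Phi(w)})$.
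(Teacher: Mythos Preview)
Your strategy is the same as the paper's: stretch $\c$ on both sides to a parabolic $\c'$, then invert via a composite of one-sided BZ operators, the minimal-degree claim coming from iterating Proposition~\ref{teo: 3.0.6} (and its left analogue). The paper's proof differs only in the concrete recipe for the stretch. On the left it proceeds incrementally: locate $i_0=\min\{i:\ell_{b(\c),i}>1\}$, push every segment with beginning $<i_0$ and one chosen segment beginning at $i_0$ leftward by one, record the corresponding ${^{j}\D}$'s, and repeat. On the right it simply replaces every $\Delta$ with $e(\Delta)>k$ by $\Delta^+$. This is more parsimonious than your ``spread to general position'' stretch and sidesteps the bookkeeping you flag as the main obstacle: at each step the relevant hypothesis $H_{k_i}$ (resp.\ its left version) is automatic by construction, so Proposition~\ref{teo: 3.0.6} applies directly and your one-step lemma need not be re-derived via a bijection $S(\c_1)\to S(\c_2)$ or via the Kashiwara pairing.

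One caution on your sketch: do \emph{not} replace your stretched $\c'$ by $(\c')_{\Id}$. By Lemma~\ref{lem: 7.6.1} the stretched multisegment is already of parabolic type once it satisfies $(\A_k)$; passing to the unique maximal element $(\c')_{\Id}$ changes the multisegment (same multisets of beginnings and ends, but a different element of $S(\varphi_{\c'})$), and then the minimal-degree term of $\mathcal E(L_{(\c')_{\Id}})$ would be the corresponding un-stretch of $(\c')_{\Id}$, not $L_\c$. Also, the line $\ell_{\c,k+1}=0$ in the statement is a standing hypothesis on $\c$ (recorded there for later use), not something you arrange by the right-stretch; the right-stretch is what makes the corresponding condition hold for $\c'$.
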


\begin{proof}
Let $i_0=\min \{i:  \ell_{b(\c), i}>1\}$ and $\Delta_0=\max_{\prec}\{\Delta\in \c:   b(\Delta)=i_0\}$.
Then replace all segments $\Delta\in \c$ with $b(\c)<i_0$ by ${^{+}\Delta}$ 
and $\Delta_0$ by ${^+\Delta}$ to get a new multisegment $\c_1$. Let $\{i\in b(\c): i<i_0\}=\{j_1<\cdots<j_r\}$, then we have 
$L_{\c}$ is the minimal degree terms in 
\[
{^{j_1-1}\D}\cdots {^{j_r-1}\D}(L_{\c_1}),
\]
Repeat this procedure to get $\c_0$ together with a sequence of integers $k_1, \cdots, k_r$ such that 
$L_{\c}$ is the minimal degree term with multiplicity one in 
\[
{^{k_1}\D}\cdots {^{k_r}\D}(L_{\c_0}).
\]
Suppose that $\ell_{\c_0, k+1}>0$. Now replace all segments $\Delta$ in 
$\c_0$ with $e(\Delta)>k$ by $\Delta^+$ to obtain $\c'$, we are done.
\end{proof}

\begin{definition}
We define
\[
\Gamma^i(\a, k)=\{\b\in \Gamma(\a, k): \deg(\b)+i=\deg(\a)\},
\]
where $\Gamma(\a, k)$ is defined in Definition \ref{def-gamma-a-k}.
\end{definition}

\begin{definition}
Let $\a$ be a multisegment and $k, k_1\in \Z$. Then we define 
\begin{align*}
\Gamma^i(\a, k)_{k_1}&=\{\b\in \Gamma^i(\a, k):  \b\in S(\b)_{k_1}, \b^{(k_1)}\in \Gamma^i(\a^{(k_1)}, k)\}, \\
\Gamma(\a, k)_{k_1}&=\cup_i \Gamma^i(\a, k)_{k_1}.
\end{align*}
More generally for a sequence of integers $k_1, \cdots, k_r$, we define 
\[
\Gamma(\a, k)_{k_1, \cdots, k_r}=\{\b\preceq_k \a:  \b^{(k_1, \cdots, k_{i-1})}\in \Gamma(\a^{(k_1, \cdots, k_{i-1})}, k)_{k_{i}} \text{ for } 1\leq i\leq r\}.
\]
Similarly, we can define 
${_{k_1}\Gamma(\a, k)}$ and 
${_{k_1, \cdots, k_r}\Gamma(\a, k)}$.
\end{definition}

\remk 
We can also define $_{k_{r+1}, \cdots, k_{r+\l}}(\Gamma(\a, k)_{k_1, \cdots, k_r})$, cf. \cite[Definition 6.6]{Deng23}.

\begin{lemma}\label{lem: 7.8.6}
Let $k_1\neq k-1$, then the map
\begin{align*}
\psi_{k_1}: &\Gamma(\a, k)_{k_1}\rightarrow \Gamma(\a^{(k_1)}, k)\\
& \b\mapsto \b^{(k_1)}
\end{align*}
is bijective.
\end{lemma}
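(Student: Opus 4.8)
The plan is to show that $\psi_{k_1}$ is a well-defined bijection by exhibiting an explicit inverse, reducing everything to the interplay between the operation $\b \mapsto \b^{(k_1)}$ and the partial order $\preceq_k$ under the hypothesis $k_1 \neq k-1$. First I would check that $\psi_{k_1}$ is well-defined: if $\b \in \Gamma(\a,k)_{k_1}$, then by definition $\b \in S(\b)_{k_1}$ and $\b^{(k_1)} \in \Gamma(\a^{(k_1)}, k)$, so the target is correct; the only genuine content here is that $\b^{(k_1)} \preceq_k \a^{(k_1)}$, which is built into the definition of $\Gamma(\a,k)_{k_1}$ via $\Gamma^i$. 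The key point that makes $k_1 \neq k-1$ necessary is that the operation $(\cdot)^{(k_1)}$ removes occurrences of $k_1$ from the ends of segments, and this must not interfere with the set of segments ending at $k$ (which governs $\preceq_k$) nor create segments ending at $k$; since $k_1 \neq k-1$, shortening a segment ending at $k_1$ produces a segment ending at $k_1 - 1 \neq k$, and no segment ending at $k$ is touched, so $\a(k)$ and $\b(k)$ are preserved by the operation.

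Next I would construct the inverse map. Given $\d \in \Gamma(\a^{(k_1)}, k)$, I want to produce a unique $\b \in \Gamma(\a,k)_{k_1}$ with $\b^{(k_1)} = \d$. The natural candidate is to use the reduction/lifting machinery: since $\a$ is recovered from $\a^{(k_1)}$ by lengthening the appropriate segments at $k_1$, and $\b$ should be recovered from $\d$ in the parallel way, the condition $\b \in S(\b)_{k_1}$ (cf. \cite[Definition 5.9]{Deng23}) is precisely what pins down which segments of $\d$ get lengthened. Concretely, I would invoke the description of $S(\a)_{k_1}$ and the fact (used repeatedly in Section \ref{sec-Para-case} and in \cite[\S 6.2]{Deng23}) that the map $\c \mapsto \c^{(k_1)}$ restricted to $S(\a)_{k_1}$ is a bijection onto $S(\a^{(k_1)})$; combined with Proposition \ref{prop: 7.3.5} and Corollary \ref{cor: 7.4.3}, which characterize $\preceq_k$ via the standard modules and $S(\c)$, one transports $\b \preceq_k \a$ to $\d = \b^{(k_1)} \preceq_k \a^{(k_1)}$ and back. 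The degree bookkeeping $\deg(\b) + i = \deg(\a)$ matches $\deg(\d) + i = \deg(\a^{(k_1)})$ automatically because $(\cdot)^{(k_1)}$ drops degree by exactly $\varphi_{e(\a)}(k_1)$ on both sides.

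The main obstacle will be verifying that the lift of $\d$ back to a multisegment $\b$ indeed lands in $\Gamma(\a,k)$ — i.e., that $\b \preceq_k \a$ — rather than merely in $S(\b)_{k_1}$ with the right image. For this I expect to need Proposition \ref{prop: 7.3.5}: write $\d = \c_\Gamma$ for some $\c \in S(\a^{(k_1)})$ and $\Gamma \subseteq \c(k)$, lift $\c$ to the unique $\c' \in S(\a)_{k_1}$ with $\c'^{(k_1)} = \c$, observe $\c'(k) = \c(k)$ (again because $k_1 \neq k-1$ so the $(\cdot)^{(k_1)}$ operation does not alter segments with end $k$), set $\b = \c'_\Gamma$, and check $\b^{(k_1)} = \c_\Gamma = \d$; the commutation of the two operations $(\cdot)_\Gamma$ and $(\cdot)^{(k_1)}$ is exactly where the hypothesis $k_1 \neq k-1$ is used, since otherwise replacing a segment $\Delta$ ending at $k$ by $\Delta^{(k)} = \Delta^-$ would create a segment ending at $k-1 = k_1$ that $(\cdot)^{(k_1)}$ would then further shorten. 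Injectivity of $\psi_{k_1}$ then follows from the uniqueness clause in the bijection $S(\a)_{k_1} \simeq S(\a^{(k_1)})$ together with the uniqueness of minimal elements in $Q(\a,\b)$ from Lemma \ref{lem: 7.6.18}, and surjectivity from the explicit lift just described.
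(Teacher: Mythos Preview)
Your approach is different from the paper's and carries a genuine gap. The paper does not lift elements one at a time via Proposition~\ref{prop: 7.3.5}; instead it makes the structural observation that $\Gamma^i(\a,k)=S(\a_i)$ for each $i$, where $\a_i=(\a\setminus\a(k))\cup\{\Delta_j^{-}:j\le i\}\cup\{\Delta_j:j>i\}$ with $\a(k)=\{\Delta_1\succeq\cdots\succeq\Delta_r\}$. This single identification reduces the whole statement to the already-known bijection $\psi_{k_1}:S(\a_i)_{k_1}\to S(\a_i^{(k_1)})$ from \cite[Proposition~5.39]{Deng23}, applied level by level; all that remains is the elementary check $\a_i^{(k_1)}=(\a^{(k_1)})_i$ when $k_1\neq k-1,k$, with $k_1=k$ handled separately as a degenerate case where $\Gamma(\a,k)_k=S(\a)_k$ and $\Gamma(\a^{(k)},k)=S(\a^{(k)})$.

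The gap in your scheme is twofold. First, after setting $\b=\c'_\Gamma$ you never verify $\b\in S(\b)_{k_1}$; this is a genuine condition on $\b$ and does not follow formally from $\c'\in S(\a)_{k_1}$ plus the commutation of the two truncations. Second, your injectivity argument cannot work as written: the bijection $S(\a)_{k_1}\simeq S(\a^{(k_1)})$ concerns elements of $S(\a)$, but a typical $\b\in\Gamma(\a,k)_{k_1}$ does not lie in $S(\a)$ at all, so you have no common ambient $S(\cdot)_{k_1}$ in which to compare two preimages of the same $\d$. Lemma~\ref{lem: 7.6.18} is about minimal elements of $Q(\a,\b)$ and is not the right tool here. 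The missing common ambient set is precisely $S(\a_i)_{k_1}$, and recognizing that $\Gamma^i(\a,k)=S(\a_i)$ is exactly the paper's key step that your plan lacks.
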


\begin{proof}
In fact we have $\Gamma^i(\a, k)=S(\a_i)$ where $\a_i$ is constructed in the following way:
let $\a(k)=\{\Delta_1\succeq \cdots \succeq \Delta_r\}$, then 
\[
\a_{i}=(\a\setminus \a(k))\cup \{\Delta_j^{-}: j\leq i\}\cup \{\Delta_j: j>i\}.
\]
Note that $\Gamma^i(\a, k)=S(\a_i)$, which implies that we have 
\[
\Gamma^i(\a, k)_{k_1}=S(\a_i)_{k_1}.
\]
Finally, note that  by  \cite[Proposition 5.39]{Deng23}  we have a bijection
\[
\psi_{k_1}: S(\a_{i})_{k_1}\rightarrow S(\a_{i}^{(k_1)}).
\] 
Note that $k_1\neq k-1, k$ implies that $\a_i^{(k_1)}\in \Gamma^i(\a^{(k_1)}, k)$ and 
\[
\Gamma(\a^{(k_1)}, k)=\bigcup_{i}S(\a_i^{(k_1)}).
\]
If $k_1=k$, then 
\[
\Gamma(\a, k)_k=S(\a)_k, \quad \Gamma(\a^{(k_1)}, k)=S(\a^{(k)}).
\]
\end{proof}

\begin{lemma}
Let $k_1, k\in \Z$ then the map
\begin{align*}
{_{k_1}\psi}: &{_{k_1}\Gamma(\a, k)}\rightarrow \Gamma({^{(k_1)}\a}, k)\\
& \b\mapsto {^{(k_1)}\b}
\end{align*}
is bijective.
\end{lemma}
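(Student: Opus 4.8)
The plan is to reproduce the argument of Lemma~\ref{lem: 7.8.6} ``from the left''. The two structural inputs needed are: the description $\Gamma^i(\a,k)=S(\a_i)$ recalled in the proof of Lemma~\ref{lem: 7.8.6}, where
\[
 \a_i=(\a\setminus\a(k))\cup\{\Delta_j^{-}:j\le i\}\cup\{\Delta_j:j>i\},\qquad \a(k)=\{\Delta_1\succeq\cdots\succeq\Delta_r\};
\]
and the left analogue of \cite[Proposition 5.39]{Deng23}, namely a bijection ${_{k_1}S(\b)}\xrightarrow{\ \sim\ }S\bigl({^{(k_1)}\b}\bigr)$ given by $\c\mapsto{^{(k_1)}\c}$. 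First I would establish this left analogue. Since ${_{k_1}S(\cdot)}$, the operation ${^{(k_1)}}$ and the set $S(\cdot)$ all see the segments only through their beginnings and ends, it follows formally from \cite[Proposition 5.39]{Deng23} by conjugation with the order-reversing involution $\iota\colon[i,j]\mapsto[-j,-i]$ on (multi)segments, which exchanges beginnings with ends and carries $S(\b)_{k_1}$ and $\b^{(k_1)}$ to ${_{-k_1}S(\iota\b)}$ and ${^{(-k_1)}}(\iota\b)$; no condition on $k_1$ is introduced in the process.

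Next comes the compatibility of the decomposition with the left decoration. The key point is that $\a_i$ is obtained from $\a$ by modifying only the \emph{ends} of the segments in $\a(k)$, while ${^{(k_1)}}$ touches only the \emph{beginnings} equal to $k_1$; hence the two constructions commute,
\[
 {^{(k_1)}\a_i}=\bigl({^{(k_1)}\a}\bigr)_i\qquad\text{for all }i\text{ and all }k_1,k .
\]
This is exactly why, unlike in Lemma~\ref{lem: 7.8.6}, no value of $k_1$ has to be excluded: the forbidden case $k_1=k-1$ there arose from a collision between the ends $=k-1$ produced by $\a\mapsto\a_i$ and the target of the \emph{right} operation $\b\mapsto\b^{(k_1)}$, and here the left operation simply never meets the ends. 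Granting the commutation one has $\Gamma({^{(k_1)}\a},k)=\bigcup_i S\bigl(({^{(k_1)}\a})_i\bigr)=\bigcup_i S\bigl({^{(k_1)}\a_i}\bigr)$, and unwinding the (left) definition of ${_{k_1}\Gamma(\a,k)}$ gives, for each $i$,
\[
 {_{k_1}\Gamma^i(\a,k)}=\bigl\{\b\in S(\a_i):\ \b\in{_{k_1}S(\b)},\ {^{(k_1)}\b}\in S({^{(k_1)}\a_i})\bigr\}={_{k_1}S(\a_i)},
\]
the last equality being precisely the description of the domain of the bijection of the previous paragraph. Applying that bijection graded piece by graded piece carries ${_{k_1}\Gamma^i(\a,k)}$ bijectively onto $S({^{(k_1)}\a_i})=\Gamma^i({^{(k_1)}\a},k)$, the grading matching because ${^{(k_1)}}$ lowers the degrees of $\a$ and of any $\b\in{_{k_1}S(\b)}$ by the same amount, $0$ or $1$. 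Taking the union over $i$ yields the bijection ${_{k_1}\psi}\colon{_{k_1}\Gamma(\a,k)}\to\Gamma({^{(k_1)}\a},k)$, whose inverse is assembled from the inverses of the left analogue of \cite[Proposition 5.39]{Deng23}.

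The step I expect to cost the most care is the commutation ${^{(k_1)}\a_i}=({^{(k_1)}\a})_i$ in the overlap $k_1\le k$, where a segment $[k_1,k]\in\a(k)$ gets shortened on the right by the $\a_i$-construction and on the left by ${^{(k_1)}}$ at the same time: one must check that the two shortenings can be carried out in either order, that they are compatible with the preorder $\unlhd$ used to index the $\Delta_j$'s (so that ``$j\le i$'' remains unambiguous after either operation), and that the degenerate case $k_1=k$, $\Delta=[k,k]$, is absorbed by the convention $[k,k]^{-}={^{(k)}}[k,k]=\emptyset$, a segment shrunk to the empty segment being simply discarded. All of this is routine bookkeeping; once it is in place the argument goes through, and in particular no separate treatment of the case $k_1=k$ (as in the proof of Lemma~\ref{lem: 7.8.6}) is required here.
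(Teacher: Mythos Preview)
Your argument is fine for $k_1\neq k$, and there it agrees with the paper (which simply says ``the proof is the same as that of the previous lemma''). The gap is in the case $k_1=k$ when $\a(k)$ contains copies of the singleton $[k]$; contrary to your last paragraph, a separate treatment \emph{is} needed, and the paper supplies one.

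Write $\a(k)=\{\Delta_1\succeq\cdots\succeq\Delta_{r_0}\succ\underbrace{[k]=\cdots=[k]}_{r_1}\}$ with $r_1>0$. Then $({^{(k)}\a})(k)=\{\Delta_1,\ldots,\Delta_{r_0}\}$, so $({^{(k)}\a})_i$ only makes sense for $i\le r_0$ and your commutation ${^{(k)}\a_i}=({^{(k)}\a})_i$ is ill posed beyond that. More directly, your displayed equality ${_{k}\Gamma^i(\a,k)}={_{k}S(\a_i)}$ fails for $i>r_0$: since $\a_i$ has already discarded $i-r_0$ copies of $[k]$ one has $\ell_{b(\a_i),k}=\ell_{b(\a),k}-(i-r_0)$, so elements of ${_kS(\a_i)}$ drop degree by $\ell_{b(\a_i),k}$ under ${^{(k)}}$, whereas the requirement ${^{(k)}\b}\in\Gamma^i({^{(k)}\a},k)$ in the definition of ${_k\Gamma^i(\a,k)}$ forces the degree drop to equal $\ell_{b(\a),k}$. (This is also where your parenthetical ``by the same amount, $0$ or $1$'' goes wrong: the drop under ${^{(k_1)}}$ is $\ell_{b(\cdot),k_1}$, not bounded by~$1$, and it differs for $\a$ and for $\a_i$ when $k_1=k$, $i>r_0$.) A concrete instance: $\a=\{[1,k],[k]\}$, $i=2$, gives $\a_2=\{[1,k-1]\}$ and ${_kS(\a_2)}=\{\a_2\}$, but ${_k\Gamma^2(\a,k)}=\emptyset$ since $\Gamma^2({^{(k)}\a},k)=\Gamma^2(\{[1,k]\},k)=\emptyset$. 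The paper instead argues first that the condition ${^{(k)}\b}\in\Gamma^i({^{(k)}\a},k)$ forces $i\le r_0$, and only then checks ${^{(k)}\a_i}=({^{(k)}\a})_i$ and the rest of your bookkeeping in that range; the pieces with $i>r_0$ are empty on both sides.
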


\begin{proof}
If $k_1\neq k$, the proof  is the same as that of the previous lemma. Consider the case where $k_1=k$.
Let $\a(k)=\{\Delta_1\succeq \cdots \succeq \Delta_{r_0}\succ \underbrace{[k]=\cdots =[k]}_{r_1}\}$.
Then for $i\leq \l_k$, we have 
\[
\a_{i}=(\a\setminus \a(k))\cup \{\Delta_j^{-}: j\leq i\}\cup \{\Delta_j: j>i\},
\]
where $\Delta_j=[k]$ if $j> r_0$.
And we have $\Gamma^i(\a, k)=S(\a_i)$. By definition,  we have $\b\in {_k\Gamma^i(\a, k)}$ if and only if 
\[
\b\in {_kS(\b)}, \quad ^{(k)}\b\in \Gamma^{i}(^{(k)}\a, k).
\]
Since ${^{(k)}\a}(k)=\{\Delta_1, \cdots, \Delta_{r_0}\}$, we know that for $\b\in {_k\Gamma^i(\a, k)}$, we must have 
$i\leq r_0$. Let
\[
(^{(k)}\a)_{i}=(^{(k)}\a\setminus {^{(k)}\a}(k))\cup \{\Delta_j^{-}: j\leq i\}\cup \{\Delta_j: r_0\geq j>i\}.
\]
Then we have $\Gamma^{i}(^{(k)}\a, k)=S((^{(k)}\a)_{i})$ and
\[
^{(k)}\a_i=(^{(k)}\a)_i.
\]
Finally, we conclude that $\b\in {_k\Gamma^i(\a, k)}$ if and only if $\b\in {_kS(\a_i)}$.
Since the map 
\[
{_kS(\a_i)}\rightarrow S(^{(k)}\a_i)
\]
is bijective, we are done.
\end{proof}

\begin{prop}\label{prop-explicite-determination-partial-der}
Let $\b, \c$ be two multisegments and $k_1\in \Z$ such that 
\[
\b=^{(k_1)}\c, \quad \c\in {{_{k_1}}S(\c)}.
\]
If we write
\addtocounter{theo}{1}
\begin{equation}\label{eq: 7.8.8}
\D^k(L_{\c})=L_{\c}+\sum_{\d\in \Gamma(\c, k)\setminus \{\c\}} \tilde{n}(\d, \c)L_{\d},
\end{equation}
then 
\[
\D^k(L_{\b})=L_{\b}+\sum_{\d\in {_{k_1}\Gamma(\c, k)}\setminus \{\c\}}\tilde{n}(\d, \c)L_{{^{(k_1)}\d}}.
\]
\end{prop}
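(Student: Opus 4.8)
The statement is the multiplicity refinement of the bijection ${}_{k_1}\psi\colon{}_{k_1}\Gamma(\c,k)\xrightarrow{\sim}\Gamma({}^{(k_1)}\c,k)$, $\d\mapsto{}^{(k_1)}\d$, of the lemma just above: since by Proposition \ref{prop: 7.4.2} the constituents of $\D^k(L_\b)$ are precisely the $L_\e$ with $\e\in\Gamma(\b,k)=\Gamma({}^{(k_1)}\c,k)$, what has to be shown is that $\tilde n(\e,\b)=\tilde n\big(({}_{k_1}\psi)^{-1}(\e),\c\big)$ for all such $\e$. The plan is to pass between $\D^k(L_\c)$ and $\D^k(L_\b)$ through the left partial BZ operator ${}^{k_1}\D$ appearing in Proposition \ref{prop-red-to-para}. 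Like $\D^k$ (Proposition \ref{prop: 7.6.5}), ${}^{k_1}\D$ is an algebra endomorphism of $\aR\simeq B$, namely the exponential $\sum_{r\ge 0}\tfrac1{r!}({}^{k_1}e')^r$ of a derivation ${}^{k_1}e'$ which, on a generator $L_{[i,j]}$, returns $\delta_{i,k_1}L_{[i+1,j]}$; since $e_{k}'$ acts through the right ends of segments and ${}^{k_1}e'$ through the left ends, the two derivations commute — as one checks at once on the $L_{[i,j]}$ — hence $\D^k\,{}^{k_1}\D={}^{k_1}\D\,\D^k$ on $\aR$.

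First I would put on $\aR$ the grading $\aR=\bigoplus_{m\ge 0}\aR^{(m)}$, where $\aR^{(m)}$ is spanned by the $L_\e$ for which the value $k_1$ occurs exactly $m$ times among the beginnings of $\e$. This is an algebra grading: the multiset of beginnings is invariant under the order ``$\le$'' — an elementary operation on a linked pair $\{\Delta,\Delta'\}$ replaces $\{b(\Delta),b(\Delta')\}$ by $\{\min,\max\}$ of itself — hence it is constant on the irreducible constituents of a standard module, and $L_\d\times L_{\d'}$ is a non-negative combination of such constituents of $\pi(\d\sqcup\d')$. Now $\D^k$ is homogeneous of degree $0$ for this grading (it never alters beginnings), while ${}^{k_1}e'$ lowers the degree by exactly $1$. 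Consequently, if $N$ denotes the number of segments of $\c$ beginning at $k_1$, then for every multisegment $\gamma$ with the same property one has $\big[{}^{k_1}\D(L_\gamma)\big]^{(0)}=\tfrac1{N!}({}^{k_1}e')^{N}(L_\gamma)$; and by the hypothesis $\c\in{}_{k_1}S(\c)$ together with the left analogue of \cite[Corollary 5.38]{Deng23} this component is the single irreducible $L_{{}^{(k_1)}\c}=L_\b$ when $\gamma=\c$, and is $L_{{}^{(k_1)}\d}$ for every $\d\in{}_{k_1}\Gamma(\c,k)$ (such a $\d$ lies in ${}_{k_1}S(\d)$ by definition of ${}_{k_1}\Gamma$, and has $N$ beginnings equal to $k_1$ because $\d\preceq_k\c$ forces $b(\d)=b(\c)$).

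Applying $\D^k$ to ${}^{k_1}\D(L_\c)$, commuting the operators, and projecting onto $\aR^{(0)}$ (which commutes with $\D^k$), one gets
\[
\D^k(L_\b)=\big[\,{}^{k_1}\D\big(\D^k(L_\c)\big)\,\big]^{(0)}=\sum_{\d\in\Gamma(\c,k)}\tilde n(\d,\c)\,\big[\,{}^{k_1}\D(L_\d)\,\big]^{(0)}
\]
(with the convention $\tilde n(\c,\c)=1$). Splitting this sum according to whether $\d\in{}_{k_1}\Gamma(\c,k)$, substituting $[{}^{k_1}\D(L_\d)]^{(0)}=L_{{}^{(k_1)}\d}$ for those $\d$, and using the bijectivity of ${}_{k_1}\psi$ (equivalently Lemma \ref{lem: 7.8.6}) to identify $\{{}^{(k_1)}\d:\d\in{}_{k_1}\Gamma(\c,k)\}$ with $\Gamma(\b,k)$, the right-hand side becomes $\sum_{\e\in\Gamma(\b,k)}\tilde n\big(({}_{k_1}\psi)^{-1}(\e),\c\big)L_\e$ plus a remainder $R:=\sum_{\d\in\Gamma(\c,k)\setminus{}_{k_1}\Gamma(\c,k)}\tilde n(\d,\c)\,[{}^{k_1}\D(L_\d)]^{(0)}$. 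Here $R$ is effective, because ${}^{k_1}\D$ and the projection preserve effectivity; on the other hand $R=\D^k(L_\b)-\sum_{\e\in\Gamma(\b,k)}\tilde n(({}_{k_1}\psi)^{-1}(\e),\c)L_\e$ is supported on $\Gamma(\b,k)$. Hence it suffices to prove that $[{}^{k_1}\D(L_\d)]^{(0)}$ has no irreducible constituent indexed by an element of $\Gamma(\b,k)$ whenever $\d\in\Gamma(\c,k)\setminus{}_{k_1}\Gamma(\c,k)$ (equivalently, whenever $\d\notin{}_{k_1}S(\d)$, up to the easy degree condition): then $R$ is simultaneously supported on $\Gamma(\b,k)$ and disjoint from it, so $R=0$ and the desired equality $\tilde n(\e,\b)=\tilde n(({}_{k_1}\psi)^{-1}(\e),\c)$ follows.

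This combinatorial vanishing is the step I expect to be the main obstacle. I would prove it by imitating the reduction-to-the-parabolic-case technique of \cite[\S 6.2]{Deng23} on the left: strip the $k_1$-beginnings of $\c$ one at a time, so that the only genuinely new input is the one-segment case, keeping track of the index sets along the way via Lemma \ref{lem: 7.8.6} and the left analogue of \cite[Proposition 5.39]{Deng23}, and using the comparison of $S(\cdot)_{k_1}$ with $\tilde S(\cdot)_{k_1}$ to see that, for $\d\notin{}_{k_1}S(\d)$, the maximal $k_1$-reduction of $L_\d$ produces only orbits lying outside the locus cut out by ${}^{(k_1)}\c$. Finally, a couple of values of $k_1$ must be inspected separately — in particular $k_1=k$, already isolated in the proof of the bijectivity of ${}_{k_1}\psi$, where the $[k]$-part of $\c(k)$ is treated apart — but these amount only to re-checking the degree bookkeeping built into the definition of ${}_{k_1}\Gamma(\c,k)$, with no new idea.
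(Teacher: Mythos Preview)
Your approach — grading $\aR$ by the number of $k_1$-beginnings, exploiting $\D^k\,{}^{k_1}\D={}^{k_1}\D\,\D^k$, and projecting onto the bottom piece — is essentially the paper's proof for $k_1\neq k$. Two comments.

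First, your treatment of the remainder $R$ is heavier than it needs to be. You already note that $\d\in\Gamma(\c,k)\setminus{}_{k_1}\Gamma(\c,k)$ amounts, up to automatic degree bookkeeping, to $\d\notin{}_{k_1}S(\d)$; but then the left analogue of Proposition~\ref{teo: 3.0.6}(ii) — the very result whose part~(i) you invoke for the ``in'' case — gives $[{}^{k_1}\D(L_\d)]^{(0)}=0$ outright: every constituent has total degree strictly above $\deg({}^{(k_1)}\d)=\deg(\d)-N$, hence none lies in $\aR^{(0)}$. No reduction-to-parabolic argument is needed; the paper simply asserts this step (implicitly invoking the appendix) and moves on.

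Second, the case $k_1=k$ is \emph{not} merely bookkeeping, and your argument has a real gap there. Your claim that $\D^k$ is homogeneous of degree $0$ for the $k$-beginnings grading fails: $e'_k(L_{[k]})=1$, so whenever a singleton $[k]$ is present the $k$-beginning count drops under $\D^k$. Even if $\c$ itself has no $[k]$, some $\f\le\c$ may (whenever $k$ occurs both as a beginning and as an ending in $\c$, an elementary operation on a linked pair can create one), so constituents of ${}^k\D(L_\c)$ above the bottom may, after applying $\D^k$, leak into $\aR^{(0)}$ and corrupt the projection identity $\D^k(L_\b)=[{}^{k}\D(\D^k(L_\c))]^{(0)}$. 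The paper handles $k_1=k$ by a separate induction on the number of $k$-beginnings of $\c$: it manufactures an auxiliary $\c'$ with one fewer such beginning via a left shift, applies the already-proven $k_1\neq k$ case along a chain $k_2=k-1>k_3>\cdots>k_r$ to recover both $\D^k(L_\c)$ and $\D^k(L_\b)$ from $\D^k(L_{\c'})$, and then closes the loop by checking the index-set identity ${}_{k_r,\ldots,k_3,k,k_2,k}\Gamma(\c',k)={}_{k,k_r,\ldots,k_3,k_2}\Gamma(\c',k)$ via a short case analysis. That step carries genuine content and should not be dismissed.
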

\begin{proof}
-Suppose that $\deg(\c)=\deg(\b)+1$.
In fact, by Corollary \ref{cor: 3.5.2}, we have 
\[
^{k_1}\D(L_{\c})=L_{\c}+L_{\b}
\]
By applying the derivation $\D^k$ and using the fact $\D^k({^{k_1}\D})={^{k_1}\D}\D^k$, we have 
\[
\D^{k}(L_{\c})+\D^{k}(L_{\b})=L_{\c}+L_{\b}+\sum_{\d\in \Gamma(\c, k)\setminus \{\c\}}\tilde{n}(\d, \c){^{k_1}\D(L_{\d})}
\]
By assumption that $\deg(\b)+1=\deg(\c)$, we have 
\[
^{k_1}\D(L_{\d})=L_{\d}+L_{^{(k_1)}\d} \text{ or } L_{\d},
\]
where $^{k_1}\D(L_{\d})=L_{\d}+L_{^{(k_1)}\d}$ if and only if $\d\in {_{k_1}S(\d)}$ and $\deg(^{(k_1)}\d)=\deg(\d)-1$.
This is equivalent to say that $\d\in {_{k_1}\Gamma(\a, k)}$.

\bigskip
-For general case,  consider 
\[
\{\Delta\in \c:  b(\Delta)=k_1\}=\{\Delta_1\succeq \dots\succeq \Delta_r\}.
\]
Now by Proposition \ref{teo: 3.0.6} and Proposition \ref{prop: 7.4.2}, 
\[
^{k_1}\D(L_{\c})=L_{\b}+\sum_{ \ell_{\d, k_1}>\ell_{\b, k_1}}\tilde{n}(\d, \c)L_{\d},
\]
for some $\tilde{n}(\d, \c)\in \N$. 

If $k_1\neq k$, then 
We observe that for any $\d$ such that $\ell_{\d, k_1}>\ell_{\b, k_1}$ and $\d'\preceq_k \d$, we have 
\[
\ell_{\d', k_1}>\ell_{\b, k_1}, 
\] 
which implies that $L_{\d'}$ can not be a summand of $\D^{k}(L_{\b})$.  Therefore
\[
\D^k(L_{\b})
\]   
is the sum of all irreducible representations $L_{\d''}$ contained in $\D^k(^{k_1}\D)(L_{\c})$ satisfying 
\[
\ell_{\d'', k_1}=\ell_{\b, k_1}.
\]  
Applying the derivation $^{k_1}\D$ to  (\ref{eq: 7.8.8}), we get 
\[
{^{k_1}\D} \D^k(L_{\c})={^{k_1}\D}(L_{\c})+\sum_{\d\preceq_k \c}\tilde{n}(\d, \c) ({^{k_1}\D})(L_{\d}).
\]
Note that in this case the sub-quotient of ${^{k_1}\D} \D^k(L_{\c})$ consisting of irreducible representations $L_{\d''}$ 
satisfying 
\[
\ell_{\d'', k_1}=\ell_{\b, k_1}
\] 
is given by 
\[
L_{\b}+\sum_{\d\in {_{k_1}\Gamma(\c, k)}\setminus \{\c\}}\tilde{n}(\d, \c)L_{{^{(k_1)}\d}}.
\]
Compare the equation ${^{k_1}\D} \D^k(L_{\c})=\D^k(^{k_1}\D)(L_{\c})$ gives the results.\\
If $k_1=k$,  consider 
\[
\{\Delta\in \c:  b(\Delta)=k_1\}=\{\Delta_1\succeq \dots\succeq \Delta_r\}.
\]
Let $\c'$ be the multisegment obtained by replacing all segments $\Delta$ in $\c$
such that $b(\Delta)<k_1$ by ${^{+}\Delta}$, and $\Delta_1$ by ${^{+}\Delta_1}$.
Then there exists 
\[
k_2=k_1-1>k_3>\cdots >k_r
\]
such that 
\[
\c={^{(k_r, \cdots, k_2)}\c'},
\]
and 
\[
\b={^{(k_r, \cdots, k_3, k_1, k_2, k_1)}\c'}.
\]
Let $\b'=^{(k_1)}\c'$, then by induction on $f_{b(\c)}(k)$, we can assume that 
\[
\D^k(L_{\b'})=L_{\b'}+\sum_{\d\in {_{k}\Gamma(\c', k)}\setminus \c'} \tilde{n}(\d, \c')L_{^{(k)}\d}.
\]
Applying what we have proved before, we get 
\[
\D^k(L_{\b})=L_{\b}+\sum_{\d\in _{k_r, \cdots, k_3, k, k_2, k}\Gamma(\c', k)\setminus \{\c'\}}\tilde{n}(\d, \c')L_{^{(k_r, \cdots, k_3, k, k_2, k)}\d}.
\]
Also, we have 
\[
\D^k(L_{\c})=L_{\c}+\sum_{\d\in _{k_r, \cdots, k_3, k_2}\Gamma(\c', k)\setminus \{\c'\}} \tilde{n}(\d, \c')L_{^{(k_r, \cdots, k_3, k_2)}\d}.
\]
Since for any multisegment $\d$, we have
\[
^{(k, k_r, \cdots, k_3, k_2)}\d=^{(k_r, \cdots, k_3, k, k_2, k)}\d,
\]
it remains to show that 
\[
_{k_r, \cdots, k_3, k, k_2, k}\Gamma(\c', k)=_{k, k_r, \cdots, k_3, k_2}\Gamma(\c', k).
\]
By definition and the following lemma, 
we can assume that $r=2$. In this case we argue by contradiction.
Suppose that $\d\in {_{k, k-1, k}\Gamma^i(\c', k)}$
and $\d\notin {_{k, k-1}\Gamma(\c', k)}$, which
is equivalent to say that $\d\notin {_{k,k-1}S(\d)}$.
 Note that $\d\notin {_{k, k-1}S(\d)}$ implies that 
there exists two linked segments $\{\Delta, \Delta'\}$, such that 
\[
b(\Delta)=k, \quad b(\Delta')=k-1.
\]
Then ${^{(k-1, k)}\d}$ contains the pair of segments $\{{^{-}\Delta}, {^{-}\Delta'}\}$. The fact that 
${^{(k-1, k)}\d}\in {_{k}S(^{(k-1, k)}\d)}$ implies  that  ${^{-}\Delta'}=\emptyset$, i.e. $\Delta'=[k-1]$. 
However, this implies that ${^{(k, k-1, k)}\d}\notin \Gamma^i(^{(k, k-1, k)}\c', k)$
since $\deg({^{(k, k-1, k)}\d})+i=\deg({^{(k, k-1, k)}\a})+1$,
which is a  contradiction.

Conversely, assume that $\d\in {_{k, k-1}\Gamma(\c', k)}$ and $\d \notin {_{k, k-1, k}\Gamma^i(\c', k)}$, 
which by definition is equivalent to $\d\notin {_{k, k-1, k}S(\d)}$. Note that $\d\notin {_{k, k-1, k}S(\d)}$ implies 
that $\d\notin {_{k}S(\d)}$, which contradicts to $\d\in {_{k, k-1}S(\d)}$.  
\end{proof}

\begin{lemma}
Let $k>k-1>k'$ be two integers. Then for any multisegment $\c$, we have 
\[
{_{k, k'}\Gamma(\c, k)}={_{k', k}\Gamma(\c, k)}.
\]

\end{lemma}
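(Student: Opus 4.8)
The plan is to unfold both iterated sets and check that they reduce to one and the same ``symmetric'' description of $\b$; everything rests on the fact that, because $k-k'\ge 2$, the data attached to the index $k$ and to the index $k'$ do not interfere.

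First I would record that the two left reductions commute. Since $k'\le k-2$, the map $\mathbf{x}\mapsto {^{(k)}\mathbf{x}}$ modifies only the segments of $\mathbf{x}$ with beginning $k$ (turning $[k,j]$ into $[k+1,j]$ and deleting $[k]$) and produces segments with beginning $k+1$, whereas $\mathbf{x}\mapsto {^{(k')}\mathbf{x}}$ modifies only the segments with beginning $k'$ and produces segments with beginning $k'+1$; as $\{k,k+1\}\cap\{k',k'+1\}=\emptyset$, neither map creates or destroys a segment lying in the domain of the other, so ${^{(k,k')}\mathbf{x}}={^{(k',k)}\mathbf{x}}$ for all $\mathbf{x}$, in particular for $\c$. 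The same disjointness makes ${^{(k')}}$ compatible with any elementary operation (see \cite[Definition 2.12]{Deng23}) carried out on a pair of segments whose beginnings both avoid $k'$, and symmetrically for ${^{(k)}}$; through the characterization of $\preceq_k$ in Proposition \ref{prop: 7.3.5} this will give that the conjunction $({^{(k)}\b}\preceq_k {^{(k)}\c})\wedge({^{(k',k)}\b}\preceq_k {^{(k',k)}\c})$ is equivalent to $({^{(k')}\b}\preceq_k {^{(k')}\c})\wedge({^{(k,k')}\b}\preceq_k {^{(k,k')}\c})$ whenever $\b\preceq_k\c$.

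Next I would unfold the definitions. By construction $\b\in {_{k,k'}\Gamma(\c,k)}$ means: $\b\preceq_k\c$; $\b\in {_kS(\b)}$ and ${^{(k)}\b}\preceq_k {^{(k)}\c}$; and ${^{(k)}\b}\in {_{k'}S({^{(k)}\b})}$ and ${^{(k',k)}\b}\preceq_k {^{(k',k)}\c}$ --- with the obvious graded refinement by the degree drops, which I would treat identically since under the disjointness the drops at $k$ and at $k'$ are independent. Likewise $\b\in {_{k',k}\Gamma(\c,k)}$ is this list with $k$ and $k'$ interchanged. By the previous paragraph the last $\preceq_k$-relation is common to both, so the lemma reduces to the two independence equivalences
\[
\b\in {_kS(\b)}\iff {^{(k')}\b}\in {_kS({^{(k')}\b})},\qquad \b\in {_{k'}S(\b)}\iff {^{(k)}\b}\in {_{k'}S({^{(k)}\b})}.
\]
Granting them, both sides unfold to
\[
\{\b\preceq_k\c:\ \b\in {_kS(\b)}\cap {_{k'}S(\b)},\ {^{(k)}\b}\preceq_k {^{(k)}\c},\ {^{(k')}\b}\preceq_k {^{(k')}\c},\ {^{(k,k')}\b}\preceq_k {^{(k,k')}\c}\},
\]
and we are done.

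It remains to prove the independence equivalences, and the hard part is exactly here. The predicate ${_kS(\cdot)}$ is the left analogue of the condition cutting $S(\cdot)_k$ out of $\tilde{S}(\cdot)_k$ in \cite[Definition 5.9]{Deng23} (and compatible with the bijection of \cite[Proposition 5.39]{Deng23}); I would first make explicit that it is local at the indices $k-1,k$, depending only on the segments of beginning $k$ and $k-1$ and their linkedness relations. If $k'<k-2$ this is immediate: ${^{(k')}}$ fixes every segment with beginning $\ge k-1$ and creates none there, hence acts as a linkedness-preserving bijection on the relevant part and leaves ${_kS}$ unchanged (and symmetrically for ${_{k'}S}$ under ${^{(k)}}$). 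The \emph{main obstacle} is the borderline case $k'=k-2$: now ${^{(k')}}$ turns $[k-2,j]$ into $[k-1,j]$, so new segments with beginning $k-1$ appear and could a priori change the value of ${_kS(\cdot)}$. I would settle this by a direct case analysis showing that such a segment $[k-1,j]$ of ${^{(k')}\b}$ is linked to a segment $[k,j']$ precisely when $\{[k-2,j],[k,j']\}$ already realizes the corresponding configuration in $\b$; since the definition of ${_kS}$ only records whether some such configuration occurs, its truth value is unchanged. This short argument on linked pairs is where one genuinely uses $k-k'\ge 2$ (rather than $k-k'\ge 3$); the remaining verifications are routine, and the equality ${_{k,k'}\Gamma(\c,k)}={_{k',k}\Gamma(\c,k)}$ follows.
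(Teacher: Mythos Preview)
Your strategy coincides with the paper's. The paper's proof is extremely short: it records that ${^{(k',k)}\d}={^{(k,k')}\d}$ for every $\d$ (your first paragraph), and then asserts without further argument that, consequently, the equality of the two $\Gamma$-sets is equivalent to
\[
\d\in {_{k,k'}S(\d)}\ \Longleftrightarrow\ \d\in {_{k',k}S(\d)}
\]
for all relevant $\d$, which it declares true for $k'<k-1$. That is the whole proof in the paper.

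Your proposal is therefore essentially the same route, but you go substantially further in two respects. First, you unfold the iterated definition and isolate the two ``independence'' equivalences $\b\in{_kS(\b)}\Leftrightarrow{^{(k')}\b}\in{_kS({^{(k')}\b})}$ and its mirror, which together imply the paper's asserted $S$-equivalence; the paper leaves this step implicit. Second, you actually sketch why these hold, including the delicate borderline case $k'=k-2$ where ${^{(k')}}$ manufactures new segments with beginning $k-1$; the paper says nothing about this. So your approach is not different from the paper's, just more honest about where the content lies. One small remark: the paper silently absorbs the $\preceq_k$-conditions into the $S$-conditions (via the identification $\Gamma^i(\a,k)=S(\a_i)$ used in Lemma~\ref{lem: 7.8.6} and its left analogue), whereas you carry them along explicitly; either treatment is fine, but you could shorten your argument by invoking that identification instead of arguing directly with Proposition~\ref{prop: 7.3.5}.
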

\begin{proof}
Note that since for any multisegment $\d$
\[
{^{(k', k)}\d}={{^{k, k'}}\d}, 
\]
the fact 
\[
{_{k, k'}\Gamma(\c, k)}={_{k', k}\Gamma(\c, k)}
\]
is equivalent to 
\[
\d\in {_{k, k'}S(\d)}\Leftrightarrow \d\in {_{k', k}S(\d)}
\]
for all $\d\in {_{k, k'}\Gamma(\c, k)}$. But for any multisegment $\d$ and $k>k-1>k'$, we have 
\[
\d\in {_{k, k'}S(\d)}\Leftrightarrow \d\in {_{k', k}S(\d)}.
\]
Hence we are done.
\end{proof}

\begin{prop}\label{prop: 7.8.10}
Let $k_1\neq k-1, k, k+1$.
Let $\b, \c$ be two multisegments such that 
\[
\b=\c^{(k_1)}, \quad \c\in S(\c)_{k_1}.
\]
If we write
\addtocounter{theo}{1}
\begin{equation}
\D^k(L_{\c})=L_{\c}+\sum_{\d\in \Gamma(\c, k)\setminus \{\c\}} \tilde{n}(\d, \c)L_{\d},
\end{equation}
then 
\[
\D^k(L_{\b})=L_{\b}+\sum_{\d\in \Gamma(\c, k)_{k_1}\setminus \{\c\}}\tilde{n}(\d, \c)L_{d^{(k_1)}}.
\]
\end{prop}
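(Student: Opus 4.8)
The plan is to transcribe the proof of Proposition~\ref{prop-explicite-determination-partial-der}, replacing the left partial BZ operator ${}^{k_1}\D$ by the right one $\D^{k_1}$ throughout. The one genuinely new input is the commutation $\D^k\D^{k_1}=\D^{k_1}\D^k$, and this is precisely what the hypothesis $k_1\neq k-1,k,k+1$ provides. To prove it I would use Proposition~\ref{prop: 7.6.5}, which presents $\D^k$ as the ring endomorphism $D^k$ of $B$ with $D^k(t_{[j,m]})=t_{[j,m]}+\delta_{k,m}t_{[j,m-1]}$; since $D^k$ and $D^{k_1}$ are algebra homomorphisms it suffices to compare the two composites on a generator, and a direct computation gives $(D^kD^{k_1}-D^{k_1}D^k)(t_{[j,m]})=(\delta_{k_1,m}\delta_{k,m-1}-\delta_{k,m}\delta_{k_1,m-1})\,t_{[j,m-2]}$, which vanishes for every $m$ exactly when $|k-k_1|\ge 2$. (Equivalently, one may quote the commutation relations among the partial BZ operators directly.)

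Granting the commutation, the next step is to apply $\D^{k_1}$ to the given expansion $\D^k(L_{\c})=L_{\c}+\sum_{\d\in\Gamma(\c,k)\setminus\{\c\}}\tilde{n}(\d,\c)L_{\d}$ and use $\D^{k_1}\D^k=\D^k\D^{k_1}$ to obtain
\[
\D^k\!\left(\D^{k_1}(L_{\c})\right)=\D^{k_1}(L_{\c})+\sum_{\d\in\Gamma(\c,k)\setminus\{\c\}}\tilde{n}(\d,\c)\,\D^{k_1}(L_{\d}).
\]
Write $\ell_{\cdot,k_1}$ for the number of segments ending at $k_1$. Then $\ell_{\b,k_1}=0$ since $\b=\c^{(k_1)}$, and the crucial elementary fact is that $\ell_{\cdot,k_1}$ is invariant along the relation $\preceq_k$: the elementary operations preserve the multiset of ends, and passing from $\a$ to $\a_{\Gamma}$ merely trades an end $k$ for an end $k-1$, while $k_1\neq k,k-1$. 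It follows that every irreducible constituent of $\D^k(L_{\b})$ has $\ell_{\cdot,k_1}=0$, and likewise that $\D^k$ preserves $\ell_{\cdot,k_1}$ on each irreducible; hence $\D^k(L_{\b})$ coincides with the $\ell_{\cdot,k_1}=0$ part of the right-hand side of the displayed identity.

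It then remains to read off that part. By Proposition~\ref{teo: 3.0.6}, Proposition~\ref{prop: 7.4.2} and the hypothesis $\c\in S(\c)_{k_1}$, the multisegment $\b$ is the unique constituent of $\D^{k_1}(L_{\c})$ with $\ell_{\cdot,k_1}=0$, occurring with multiplicity one; and for $\d\in\Gamma(\c,k)$ the $\ell_{\cdot,k_1}=0$ part of $\D^{k_1}(L_{\d})$ equals $L_{\d^{(k_1)}}$ when $\d\in S(\d)_{k_1}$ and $\d^{(k_1)}\in\Gamma(\c^{(k_1)},k)$ --- that is, exactly when $\d\in\Gamma(\c,k)_{k_1}$ (the degree condition in the definition being automatic, since $\preceq_k$ preserves $\ell_{\cdot,k_1}$) --- and is zero otherwise; this last assertion is the right-sided analogue of Corollary~\ref{cor: 3.5.2} when $\deg(\c)=\deg(\b)+1$ and of \cite[Corollary 5.38]{Deng23} in general. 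Collecting terms yields $\D^k(L_{\b})=L_{\b}+\sum_{\d\in\Gamma(\c,k)_{k_1}\setminus\{\c\}}\tilde{n}(\d,\c)L_{\d^{(k_1)}}$, the summands $L_{\d^{(k_1)}}$ being pairwise distinct (and distinct from $L_{\b}$) because $\psi_{k_1}\colon\Gamma(\c,k)_{k_1}\to\Gamma(\c^{(k_1)},k)$ is a bijection by Lemma~\ref{lem: 7.8.6}, where the hypothesis $k_1\neq k-1$ enters. This runs in parallel with the subcase $k_1\neq k$ of the proof of Proposition~\ref{prop-explicite-determination-partial-der}, which I would therefore follow step by step.

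The main obstacle is the commutation $\D^k\D^{k_1}=\D^{k_1}\D^k$ together with the hand-in-hand invariance of $\ell_{\cdot,k_1}$ under $\preceq_k$; both degenerate when $k_1\in\{k-1,k+1\}$, and $k_1=k$ must also be excluded because then $\Gamma(\c,k)_{k}$ and $\Gamma(\c^{(k)},k)$ no longer stand in the bijective relation through $\psi_k$ in the form needed above. Once these facts are in place the remainder is a routine transcription of the left-sided argument.
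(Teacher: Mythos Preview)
Your proposal is correct and follows the same route as the paper: the paper's proof of Proposition~\ref{prop: 7.8.10} consists of the single sentence ``The proof is the same as Proposition~\ref{prop-explicite-determination-partial-der},'' and you have carried out precisely that transcription, replacing ${}^{k_1}\D$ by $\D^{k_1}$ and the invariant $\ell_{b(\cdot),k_1}$ by $\ell_{e(\cdot),k_1}$. Your explicit verification of the commutation $\D^k\D^{k_1}=\D^{k_1}\D^k$ via Proposition~\ref{prop: 7.6.5}, and your remark that the degree condition in the definition of $\Gamma(\c,k)_{k_1}$ is automatic because $\preceq_k$ preserves $\ell_{e(\cdot),k_1}$ when $k_1\notin\{k-1,k\}$, are details the paper leaves implicit.
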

\begin{proof}
The proof is the same as  Proposition \ref{prop-explicite-determination-partial-der}.
\end{proof}

Now let $\c'=\Phi(w)$ for some $w\in S_n^{J, \emptyset}$.

\begin{cor} \label{coro-fornula-derivative}
We have 
\begin{align*}
\D^k(L_{\a})=\sum_{r_0=0}^{\l_{\a, k}}&\sum_{v\in S_n^{J_2(r_0, k), \emptyset}, \Phi(v)\in _{k_1, \cdots, k_r}(\Gamma(\Phi(w), k)_{k_{r+1}, \cdots, k_{r+\l}})}\\&\theta_{J}^{J_{1}(\l_k-r_0, k)}(w, t_v)L_{^{(k_1, \cdots, k_r)}\Phi(v)^{(k_{r+1}, \cdots, k_{r+\l})}}.
\end{align*}
\end{cor}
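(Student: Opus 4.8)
The plan is to reduce the general multisegment $\a$ to the parabolic case already settled, by means of Proposition~\ref{prop-red-to-para}, and then to propagate the parabolic formula through the auxiliary Bernstein--Zelevinsky operators using Propositions~\ref{prop-explicite-determination-partial-der} and~\ref{prop: 7.8.10}. We may assume $\l_{\a,k}>0$, since otherwise $\D^k(L_{\a})=L_{\a}$. First I would apply Proposition~\ref{prop-red-to-para} to obtain a multisegment $\c'=\Phi(w)$ of parabolic type $(J_1(\a),\emptyset)$, with $J:=J_1(\a)$ and $w\in S_n^{J,\emptyset}$, together with integers $k_1,\dots,k_r,k_{r+1},\dots,k_{r+\l}$ satisfying $k_j>k+1$ for $j>r$, such that $L_{\a}$ is the minimal-degree, multiplicity-one constituent of
\[
M:={^{k_1}\D}\cdots{^{k_r}\D}\,\D^{k_{r+1}}\cdots\D^{k_{r+\l}}(L_{\c'}).
\]
The base case of the computation is the parabolic formula proved just before Proposition~\ref{prop-red-to-para}, namely
\[
\D^k(L_{\Phi(w)})=\sum_{r_0=0}^{\l_{\a,k}}\ \sum_{v\in S_n^{J_2(r_0,k),\emptyset}}\theta_J^{J_1(\l_{\a,k}-r_0,k)}(w,t_v)\,L_{\Phi(v)}.
\]

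Next I would propagate this identity one auxiliary operator at a time. Since $\D^k$ commutes with each ${^{k_i}\D}$ and each $\D^{k_j}$ (the commutation ${^{k_i}\D}\,\D^k=\D^k\,{^{k_i}\D}$ being the one already used in the proof of Proposition~\ref{prop-explicite-determination-partial-der}), the operators that produce $M$ from $L_{\c'}$ likewise produce, from $\D^k(L_{\c'})$, the expansion of $\D^k$ applied to each successively cut multisegment. Concretely, each right cut $\D^{k_j}$ with $j>r$ is governed by Proposition~\ref{prop: 7.8.10} --- whose hypothesis $k_j\neq k-1,k,k+1$ is guaranteed by $k_j>k+1$ --- which sends a constituent $L_{\d}$ to $L_{\d^{(k_j)}}$, preserves the coefficient, and replaces the indexing set $\Gamma(\cdot,k)$ by $\Gamma(\cdot,k)_{k_j}$; and each left cut ${^{k_i}\D}$ with $i\le r$ is governed by Proposition~\ref{prop-explicite-determination-partial-der}, whose case analysis (including $k_i=k$) sends $L_{\d}$ to $L_{{^{(k_i)}\d}}$, again preserves the coefficient, and replaces $\Gamma(\cdot,k)$ by ${_{k_i}\Gamma(\cdot,k)}$. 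The hypotheses $\c'\in S(\c')_{k_j}$ and $\c'\in{_{k_i}S(\c')}$, and their persistence after each successive cut, are the separation properties built into the construction in Proposition~\ref{prop-red-to-para}, where the replacements $\Delta\mapsto{^+\Delta}$ ensure the pairs of segments involved are unlinked. Iterating through $k_{r+\l},\dots,k_{r+1}$ and then $k_r,\dots,k_1$, and extracting at the end the sub-quotient of $\D^k(M)$ whose constituents share with $\a$ the invariants $\l_{\cdot,k_i}$ of the cut data --- which by the multiplicity-one assertion of Proposition~\ref{prop-red-to-para} is exactly $\D^k(L_{\a})$ --- one obtains
\[
\D^k(L_{\a})=\sum_{r_0=0}^{\l_{\a,k}}\ \sum_{v}\ \theta_J^{J_1(\l_{\a,k}-r_0,k)}(w,t_v)\,L_{{^{(k_1,\cdots,k_r)}\Phi(v)^{(k_{r+1},\cdots,k_{r+\l})}}},
\]
where the inner sum runs over $v\in S_n^{J_2(r_0,k),\emptyset}$ with $\Phi(v)\in{_{k_1,\cdots,k_r}(\Gamma(\Phi(w),k)_{k_{r+1},\cdots,k_{r+\l}})}$; this is the asserted formula, the data $t_v$ and $\theta_J^{J_1(\l_{\a,k}-r_0,k)}(w,t_v)$ being inherited unchanged from the base case.

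The step I expect to be the main obstacle is the last one: justifying that passing from $\D^k(M)$ to $\D^k(L_{\a})$ is legitimate. Here one must show that $\D^k$ preserves the weight modulo $\alpha_k$ together with the invariants $\l_{\cdot,k_i}$ attached to the ``frozen'' directions $k_i\neq k$, so that constituents originating from the strictly higher-degree part of $M$ cannot enter the sub-quotient singled out by $L_{\a}$; combined with the structure of $\preceq_k$ recorded in Proposition~\ref{prop: 7.3.5}, this isolates $\D^k(L_{\a})$ precisely. Two further pieces of bookkeeping are needed: that the iterated indexing sets compose independently of the order in which the left and right cuts are taken (an extension to the whole sequence $k_1,\dots,k_{r+\l}$ of the commutation lemma ${_{k,k'}\Gamma(\c,k)}={_{k',k}\Gamma(\c,k)}$ proved above), and that the hypotheses of Propositions~\ref{prop-explicite-determination-partial-der} and~\ref{prop: 7.8.10} genuinely survive each intermediate step --- both of which should follow, with some care, from the explicit shape of the reduction in Proposition~\ref{prop-red-to-para}.
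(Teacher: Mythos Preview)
Your proposal is correct and follows exactly the route the paper intends: the corollary is stated without proof precisely because it is the direct outcome of iterating Propositions~\ref{prop-explicite-determination-partial-der} and~\ref{prop: 7.8.10} on top of the parabolic formula, after the reduction of Proposition~\ref{prop-red-to-para}. One minor remark: the detour through $M$ and the extraction of a sub-quotient of $\D^k(M)$ is not really needed as a separate step, since Propositions~\ref{prop-explicite-determination-partial-der} and~\ref{prop: 7.8.10} already give $\D^k(L_{\b})$ directly from $\D^k(L_{\c})$ when $\b={^{(k_1)}\c}$ or $\b=\c^{(k_1)}$; the extraction argument you describe is precisely what those propositions have already packaged internally, so the iteration is clean and no final separation is required.
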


\begin{notation}\label{nota7-8-12}
For $\b\preceq_k \a$, we denote 
\[
\theta_k(\b, \a)=\theta_{J}^{J_{1}(\l_k-r_0, k)}(w, t_v)
\]
if $\b=^{(k_1, \cdots, k_r)}\Phi(v)^{(k_{r+1}, \cdots, k_{r+\l})}$. Otherwise, put $\theta_k(\b, \a)=0$.

\end{notation}
\remk 
The same way we define ${_{k}\theta}(\b, \a)$ by the formula
\[
(^{k}\D)(L_{\a})=\sum_{\b}{_{k}\theta}(\b, \a)L_{\b}.
\]
And let 
\[
\Gamma(k, \a)=\{\b: {_{k}\theta}(\b, \c)\neq 0 \text{ for some }\c\in S(\a)\},
\]
it shares similar properties with $\Gamma(\a, k)$.

\appendix

\section{Minimal Degree Terms in Partial BZ operator}

\begin{prop}\label{teo: 3.0.6}
\begin{description}
\item[(i)] 
Suppose that $\a$ satisfies the hypothesis $H_{k}(\a)$ (cf. \cite[Definition 5.3]{Deng23}).\\
Then $\D^{k}(L_{\a})$ contains  in $\mathcal{R}$ a unique 
irreducible representation of minimal degree, which is
 $L_{\a^{(k)}}$,
 and it appears with multiplicity one.
\item[(ii)]If $\a$ fails to satisfy the hypothesis $H_{k}(\a)$, then \\
 $L_{\a^{(k)}}$
 will not appear in $\D^{k}(L_{\a})$, 
 and the irreducible representations appearing
 are all of degree $>\deg(\a^{(k)})$. 
\end{description}
\end{prop}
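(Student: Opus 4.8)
The plan is to reduce the statement to a computation in the quantum algebra $U_q^+$ via the dictionary established in Section \ref{sec-recall-Lusztig}, using the identification of $\D^k$ with the operator $D^k = \sum_n \frac{1}{n!}e_k'^n$ on $B$ (Proposition \ref{prop: 7.6.5}) and the identification $\phi\colon B\simeq \aR$ sending $g^*(\a)$ to $L_{\a}$. First I would translate the problem: the minimal degree term of $\D^k(L_{\a})$ corresponds, after applying $\phi^{-1}$ and passing to the $q$-analogue, to the lowest-degree (in the weight grading, i.e.\ largest power of $e_k'$ that acts nontrivially) nonzero term in $D^k(g^*(\a))$. Since $e_k'$ strictly lowers the weight by $\alpha_k$, the minimal degree term of $\D^k(L_{\a})$ sits in weight $\wt(\a) - n\alpha_k$ where $n$ is maximal with $e_k'^n(g^*(\a))\neq 0$; by Lemma \ref{lem: 7.6.6} (its $q=1$, $E^*$-dual shadow) this $n$ equals the number of segments of $\a$ ending at $k$ precisely when a suitable non-cancellation occurs, and $\a^{(k)}$ is exactly the multisegment obtained by shortening all those segments. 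So the content of (i) is that under hypothesis $H_k(\a)$ the top iterate $\frac{1}{n!}e_k'^n(g^*(\a))$ has leading term (in the dual canonical basis) equal to $g^*(\a^{(k)})$ with coefficient $1$, and the content of (ii) is that under failure of $H_k(\a)$ either $n$ drops, or the coefficient of $g^*(\a^{(k)})$ in the top iterate vanishes, so $L_{\a^{(k)}}$ does not appear at all.

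The key steps, in order, are: (1) Recall the hypothesis $H_k(\a)$ from \cite[Definition 5.3]{Deng23} and reinterpret it combinatorially — it should say precisely that one can perform the simultaneous shortening of all segments of $\a$ ending at $k$ without creating a ``linked collision'', equivalently that $\a^{(k)}$ is obtained from $\a$ by an operation compatible with the dual canonical basis. (2) Use Proposition \ref{prop: 7.4.20} and the formula $E^*(\a) = \sum_{\b\leq \a}P_{\a,\b}(q)G^*(\b)$ together with its inverse to write $G^*(\a)$ in terms of $E^*(\b)$'s, and apply $E_k'$ using the explicit rule $E_i'(E^*([j,k])) = \delta_{ik}E^*([j,k-1])$ plus the twisted Leibniz rule; track which monomial of lowest weight survives. (3) Identify the surviving lowest-weight monomial: when $H_k(\a)$ holds it is $E^*(\a^{(k)})$ up to the combinatorial factor $\frac{1}{n!}$ cancelling against the $n!$ ways of distributing the $n$ derivations among the $n$ relevant segments, giving coefficient exactly $1$; conclude that $g^*(\a^{(k)})$ appears with multiplicity one and nothing of smaller or equal degree appears. (4) For (ii), show that when $H_k(\a)$ fails the relevant leading coefficient, after re-expanding into the $G^*$ basis, is killed — either because the naive iterate already has strictly larger-degree support, or because a cancellation in the $E^* \to G^*$ change of basis removes the $\a^{(k)}$ term; here one invokes the triangularity $E^*(\a) = \sum_{\b\leq\a}P_{\a,\b}(q)G^*(\b)$ and a degree/support argument on the Kazhdan--Lusztig polynomials.

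The main obstacle I anticipate is step (4) — controlling the change of basis between $\{E^*(\a)\}$ and $\{G^*(\a)\}$ finely enough to prove that $L_{\a^{(k)}}$ genuinely disappears (not merely that it might) when $H_k(\a)$ fails, rather than just showing the minimal degree increases. This requires knowing that the hypothesis $H_k(\a)$ is exactly the combinatorial condition governing whether $\a^{(k)}\leq \a$ in the relevant order and whether the coefficient $P_{\a,\a^{(k)}}$ (or the appropriate $q$-coefficient in the iterate) is a unit; I would extract this from the structure of $S(\a)_k$ and $\tilde S(\a)_k$ recalled before Lemma \ref{lem: 7.6.2} and from \cite[\S 5]{Deng23}, and if a clean algebraic argument is elusive I would fall back on the geometric description via orbital varieties $\line{O}_\a$ and a dimension count showing the stalk of $IC(\line O_\a)$ at a point of $O_{\a^{(k)}}$-type behaves as claimed. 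An alternative, possibly cleaner route for both parts is to argue directly with standard modules: expand $L_\a = \sum_{\b}\tilde m_{\b,\a}\pi(\b)$, apply the explicit formula \eqref{eq: 7.41} for $\D^k(\pi(\b))$, and isolate the minimal-degree contribution, using that the only $\b$ contributing in minimal degree is $\b=\a$ itself with the empty-$\Gamma$ term surviving — here $H_k(\a)$ enters as the condition that no cancellation among the $\pi(\a_\Gamma)$'s in minimal degree occurs, which can be checked against Lemma \ref{lem: 3.0.8}.
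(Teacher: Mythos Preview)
Your primary route through $U_q^+$ is genuinely different from the paper's, but step (3) contains a real gap. Applying $\frac{1}{n!}e_k'^n$ to $g^*(\a)$ does not produce a single monomial: writing $g^*(\a)=\sum_{\b\leq\a}Q_{\a,\b}\,e^*(\b)$ with the inverse Kazhdan--Lusztig matrix $Q$, and using that every $\b\leq\a$ has the same multiset of ends as $\a$ (so $\frac{1}{n!}e_k'^n(e^*(\b))=e^*(\b^{(k)})$), you obtain $\sum_{\b\leq\a}Q_{\a,\b}\,e^*(\b^{(k)})$, a combination of many $e^*$'s of the same weight. The assertion that this equals $g^*(\a^{(k)})$ is precisely the identity $\sum_{\b:\,\b^{(k)}=\c}Q_{\a,\b}=Q_{\a^{(k)},\c}$ for all $\c\leq\a^{(k)}$, a nontrivial statement about KL polynomials that is essentially equivalent to what you are trying to prove. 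Step (4) inherits the same problem. Your alternative at the end (expand $L_\a$ in standards) runs into the identical obstruction, since it again requires the inverse matrix $Q$.

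The paper avoids the inverse matrix entirely by inducting on $\ell(\a)$ and using the \emph{forward} expansion $\pi(\a)=L_\a+\sum_{\b<\a}m(\b,\a)L_\b$. Apply $\D^k$: the left side has minimal-degree piece $\pi(\a^{(k)})$ by \eqref{eq: 7.41}, and on the right the terms $\D^k(L_\b)$ for $\b<\a$ are controlled by induction. Three inputs from \cite{Deng23} do the work. For (i): Lemma~5.5 guarantees that when $H_k(\a)$ holds no $\b<\a$ has $\b^{(k)}=\a^{(k)}$, so by induction $L_{\a^{(k)}}$ does not occur in any $\D^k(L_\b)$ with $\b<\a$, forcing it to occur in $\D^k(L_\a)$ with multiplicity one; uniqueness of the minimal-degree term then follows from the bijection $S(\a)_k\to S(\a^{(k)})$ together with the multiplicity-preservation statement of Corollary~5.40, which matches the remaining summands of $\pi(\a^{(k)})$ against $\sum_{\c\in S(\a)_k\setminus\{\a\}}m(\c,\a)L_{\c^{(k)}}$. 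For (ii): when $\a\notin S(\a)_k$, Proposition~5.39 supplies $\b\in S(\a)_k$ with $\b<\a$ and $\b^{(k)}=\a^{(k)}$; then $\pi(\a)-\pi(\b)\geq 0$ contains $L_\a$ by Lemma~\ref{lem: 3.0.8}, and $\D^k(\pi(\a)-\pi(\b))$ visibly has no component in degree $\leq\deg(\a^{(k)})$, so neither does $\D^k(L_\a)$ by positivity.

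In short, your quantum-algebra translation is correct but does not by itself yield the crucial KL identity; the paper's induction with the forward expansion, fed by \cite[Lemma~5.5, Prop.~5.39, Cor.~5.40]{Deng23}, is the missing mechanism.
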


\begin{proof}
Let $\a=\{\Delta_1\preceq \cdots \preceq \Delta_r\}$, such that 
\[
 e(\Delta_1)\leq \cdots<e(\Delta_i)=\cdots=e(\Delta_j)<\cdots \leq e(\Delta_r),
\]
with $k=e(\Delta_i)$.

We prove the proposition by induction on $\ell(\a)$(cf. Definition \ref{def: 1.2.10}).
For, $\ell(\a)=0$, which means that $\a=\a_{\min}$, 
in this case $\a$ satisfies the $H_{k}(\a)$, and 
\[
 \D^{k}(L_{\a})=\D^{k}(\pi(\a))=
 \Delta_{1}\times \cdots \times (\Delta_{i}+\Delta_{i}^{-})\times \cdots 
 \times (\Delta_{j}+\Delta_{j}^{-}) \times \cdots  
\]
which contains
\[
 L_{\a^{(k)}}=\pi(\a^{(k)})=\Delta_{1}\times \cdots \times \Delta_{i}^{-}
 \times \cdots \Delta_{j}^{-}\times \cdots .
\]
Hence we are done in this case.

Next we consider general $\a$.
We write
\addtocounter{theo}{1}
\begin{equation}\label{eq: 1}
\pi(\a)=L_{\a}+ \sum_{\b<\a}m(\b,\a)L_{\b}. 
\end{equation}
Now applying $\D^{k}$ to both sides and consider only the lowest degree
terms,
on the left hand side, we get
\addtocounter{theo}{1}
\begin{equation}\label{eq: 2}
\pi(\a^{(k)})=\Delta_{1}\times \cdots \times \Delta_{i-1}\times \Delta_{i}^{-}\times \cdots \times \Delta_{j}^{-}\times \cdots \Delta_{r}.
\end{equation}
By \cite[Theorem 2.22]{Deng23}, both sides are a nonnegative sum 
of irreducible representations, then 

\begin{itemize}
\item 
If  $\a$ satisfies the hypothesis $H_{k}(\a)$, 
on the right hand side by combining \cite[Lemma 5.5]{Deng23} and induction, we know that for all $\b<\a$,
$\D^{k}(L_{\b})$ does not contain
$L_{\a^{(k)}}$ as subquotient. Hence
$\D^{k}(L_{\a})$
must contain $L_{\a^{(k)}}$ with multiplicity one. 
We have to show that it does not contain 
other subquotients of $\pi(\a^{(k)})$. 
Note that by induction, we have the following formula
\[
 \pi(\a^{(k)})=X+\sum_{\c\in S(\a)_{k}\setminus \a}m(\c, \a)L_{\c^{(k)}},
\]
where $X$ denotes the minimal degree terms in $\D^{k}(L_{\a})$. 
Now apply \cite[Corollary 5.40]{Deng23},  we conclude that $X=L_{\a^{(k)}}$.

\item 
Now if $\a$ fails to satisfy the hypothesis $H_{k}(\a)$, $\a\notin S(\a)_k$,
combining \cite[Proposition 5.39]{Deng23} and induction, we know that there exists $\b\in S(\a)_k$, such that
$\a^{(k)}=\b^{(k)}$ and $\D^{k}(L_{\b})$ contains $L_{\a^{(k)}}$ as a subquotient with
multiplicity one.

Now by the Lemma \ref{lem: 3.0.8}, $\pi(\a)-\pi(\b)$ is a nonnegative sum 
of irreducible representations which contain $L_{\a}$: by the positivity of partial BZ operator, 
 we obtain a nonnegative sum of irreducible representations after
applying $\D^{k}$. Now
$$
\D^{k}(\pi(\a)-\pi(\b))=\pi(\a^{(k)})-\pi(\b^{(k)})+\text{ higher degree terms}
$$ 
contains only terms of degree
$>\deg(\a^{(k)})$, so does $\D^{k}(L_{\a})$.

\end{itemize}
This finishes our arguments.

\end{proof}

\begin{cor}\label{cor: 3.5.2}
Let $\a$ be a multisegment such that $\varphi_{e(\a)}(k)=1$. Then 
\begin{itemize}
 \item If $\a\in S(\a)_k$, then $\D^k(L_{\a})=L_{\a}+L_{\a^{(k)}}$. 
 
 \item If $\a\notin S(\a)_k$, then $\D^k(L_{\a})=L_{\a}$.
\end{itemize}
 
\end{cor}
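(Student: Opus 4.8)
The plan is to read this off from Proposition \ref{teo: 3.0.6} together with some elementary degree bookkeeping; the hypothesis $\varphi_{e(\a)}(k)=1$ is precisely what collapses the ``higher part'' of $\D^k(L_\a)$ into a single degree. First I would identify the leading term. By Proposition \ref{prop: 7.6.5}, under $\phi\colon B\simeq\aR$ the operator $\D^k$ is $D^k=\sum_{n\ge 0}\tfrac1{n!}(e_k')^n$, and $e_k'$ is homogeneous of degree $-1$ for the total degree $\deg$ (the sum of the lengths of the segments, $\deg(\b)=\sum_i\varphi_\b(i)$): on a generator $t_{[j,m]}$ it gives $\delta_{m,k}\,t_{[j,k-1]}$, so by the Leibniz rule it maps the degree-$d$ component of $B$ into the degree-$(d-1)$ component. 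Since each $g^*(\b)$ is homogeneous of degree $\deg(\b)$ (elementary operations preserve degree), $(e_k')^n(L_\a)$ is homogeneous of degree $\deg(\a)-n$, so the degree-$\deg(\a)$ part of $\D^k(L_\a)$ equals $L_\a$. As $\D^k$ sends nonnegative combinations of the $L_\b$ to nonnegative combinations (\cite[Theorem 2.22]{Deng23}), comparing top-degree parts yields
\[
\D^k(L_\a)=L_\a+\sum_{\deg(\b)<\deg(\a)}c_\b L_\b,\qquad c_\b\in\Z_{\ge 0}.
\]

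Next, the hypothesis $\varphi_{e(\a)}(k)=1$ means exactly one segment of $\a$ ends at $k$, so passing from $\a$ to $\a^{(k)}$ (Definition \ref{def: 2.2.3}) drops the degree by one, i.e. $\deg(\a^{(k)})=\deg(\a)-1$. Now I apply Proposition \ref{teo: 3.0.6}. If $\a\in S(\a)_k$, equivalently $\a$ satisfies $H_k(\a)$, then by part (i) the representation $L_{\a^{(k)}}$ is the unique constituent of $\D^k(L_\a)$ of minimal degree — which is $\deg(\a)-1$ — and it occurs with multiplicity one, all other constituents having degree $>\deg(\a)-1$. Combined with the previous display (where the only constituent of degree $\ge\deg(\a)$ is $L_\a$, and all others have degree $\le\deg(\a)-1$), there is no room for any further term, so $\D^k(L_\a)=L_\a+L_{\a^{(k)}}$. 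If instead $\a\notin S(\a)_k$, part (ii) of Proposition \ref{teo: 3.0.6} says every constituent of $\D^k(L_\a)$ has degree $>\deg(\a^{(k)})=\deg(\a)-1$, hence degree $\ge\deg(\a)$; with the display this forces $\D^k(L_\a)=L_\a$.

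There is essentially no deep obstacle here: the statement is a repackaging of Proposition \ref{teo: 3.0.6} in the special case $\varphi_{e(\a)}(k)=1$. The only points needing care are the identification $H_k(\a)\Leftrightarrow\a\in S(\a)_k$ (already used implicitly in the proof of Proposition \ref{teo: 3.0.6}) and the homogeneity-plus-positivity argument pinning the coefficient of $L_\a$ to $1$. An alternative to the first step, which I would mention only if a referee objects, is to observe from the Lemma preceding Proposition \ref{prop: 7.4.2} that $\D^k(\pi(\b))=\pi(\b)+\pi(\b^{(k)})$ for every $\b\le\a$ — since $\varphi_{e(\cdot)}(k)$ is non-increasing under the elementary operations and hence stays $\le 1$ on all of $S(\a)$ — then expand $L_\a$ in standard modules and invoke Proposition \ref{teo: 3.0.6} only for the final identification.
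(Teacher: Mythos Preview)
Your proof is correct and follows essentially the same strategy as the paper: pin down the top-degree term of $\D^k(L_\a)$ as $L_\a$, invoke Proposition \ref{teo: 3.0.6} for the bottom, and squeeze using $\deg(\a^{(k)})=\deg(\a)-1$. The only difference is in how you isolate the top term: the paper does this by induction on $\ell(\a)$ via the expansion $\D^k(\pi(\a))=\D^k(L_\a)+\sum_{\b<\a}m(\b,\a)\D^k(L_\b)$, whereas you argue directly that $D^k=\sum_{n\ge 0}\tfrac{1}{n!}(e_k')^n$ with $e_k'$ strictly degree-lowering, so the $n=0$ term already gives the whole degree-$\deg(\a)$ part --- this is a cleaner and induction-free route to the same conclusion.
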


\begin{proof}
First of all, we observe 
that the highest degree term in $\D^k(L_{\a})$ is given by 
$L_{\a}$. In fact, we have 
\[
 \D^k(\pi(\a))=\D^k(L_{\a})+\sum_{\b<\a}m(\b, \a)\D^k(L_{\b}),
\]
meanwhile we have 
\[
 \D^k(\pi(\a))=\pi(\a)+\text{ lower terms. }
\]
By induction on $\ell(\a)$ we conclude that the highest degree terms
in $\D^k(L_{\a})$ is $L_{\a}$.

If $\a\in S(\a)_k$, then Proposition \ref{teo: 3.0.6} implies that 
the minimal degree term of $\D^k(L_{\a})$, but since 
$\deg(\a^{(k)})=\deg(\a)-1$, therefore we must have 
\[
 \D^k(L_{\a})=L_{\a}+L_{\a^{(k)}}.
\]
On the contrary, if  $\a\notin S(\a)_k$, then 
by (ii) of the Proposition \ref{teo: 3.0.6}, we know that 
all irreducible representations appearing in 
 $\D^k(L_{\a})$ are of degree $>\deg(\a^{(k)})=\deg(\a)-1$, which implies 
 \[
 \D^k(L_{\a})=L_{\a}.
\]

\end{proof}

\bibliographystyle{plain}
\bibliography{biblio}

\end{document}